\pdfoutput=1
\documentclass[a4paper, reqno, 12pt, toc]{amsart}
\usepackage[utf8]{inputenc}
\usepackage[T1]{fontenc}
\usepackage{lmodern, csquotes, graphicx}
\usepackage{amssymb, physics, bm}
\usepackage{hyperref}
\usepackage[subsetnonamb, bbsets]{jkmath}
\usepackage[backend=biber,style=alphabetic,sorting=nyt]{biblatex}
\usepackage{dynkin-diagrams}

\newtheorem{theorem}{Theorem}[section]
\newtheorem{definition}[theorem]{Definition}
\newtheorem{lemma}[theorem]{Lemma}
\newtheorem{proposition}[theorem]{Proposition}
\newtheorem{corollary}[theorem]{Corollary}

\newtheorem{notation}[theorem]{Notation}
\newtheorem{remark}[theorem]{Remark}
\newtheorem{conjecture}[theorem]{Conjecture}

\DeclareMathOperator{\id}{id}

\DeclareMathOperator{\ad}{ad}

\DeclareMathOperator{\Hom}{Hom}
\DeclareMathOperator{\lot}{l.o.t.}
\DeclareMathOperator{\ct}{ct}

\DeclareMathOperator{\diag}{diag}
\DeclareMathOperator{\Rad}{Rad}
\DeclareMathOperator{\gr}{gr}
\DeclareMathOperator{\wt}{wt}
\newcommand{\uq}{\mathbf {U}}

\newcommand{\uqb}{\mathbf{B}}

\newcommand{\adr}{\mathrm{ad}_{\mathrm{r}}}

\begin{document}
\title[ZSF of Quantum Symmetric Pairs]{Zonal Spherical Functions of Quantum Symmetric Pairs}

\author[P. Schlösser]{Philip Schlösser}
\address{IMAPP-Mathematics, PO Box 9010, 6500 GL Nijmegen, The Netherlands}
\email{philip.schloesser@ru.nl}
\urladdr{https://philip98.github.io}
\subjclass[2020]{Primary 33D80; Secondary 33D52, 17B37}
\keywords{Quantum symmetric pair, zonal spherical function, character spherical
function, Macdonald--Koornwinder polynomial}

\thanks{My thanks to Stein Meereboer whom I could always bounce ideas off of. This work is funded by grant \texttt{OCENW.M20.108} of the
Dutch Research Council.}

\begin{abstract}
    We identify the zonal and character spherical functions for quantum symmetric pairs with the symmetric Koornwinder--Macdonald polynomials.
    To this end, the methods of Letzter's 2004 paper are translated to modern conventions and right coideal subalgebras, and extended
    to non-standard cases and cases with non-reduced restricted root systems.
    For the last elusive Satake type, $\mathsf{FII}$, a conjecture is provided.
\end{abstract}

\maketitle

\section{Introduction}
The study of quantum symmetric pairs (QSP) and their spherical functions has advanced vastly in conventions, generality, and knowledge since
one of its most foundational papers, Gail Letzter's \cite{Le04}, was written.
This paper's aim is to translate the contents of loc.cit. into modern
notation, provide more details in the proofs and a better
understanding of the techniques that Letzter used, and also deliver
on Letzter's promise from loc.cit.'s introduction: show that these
techniques work in the general case as well.
We then use these techniques and combine them with results on rank-1 cases from the literature to identify the zonal spherical functions
of QSPs with twisted Macdonald--Koornwinder polynomials for all finite-type QSPs except for
type $\mathsf{FII}$ for which we provide a conjecture.
This happens in Theorem~\ref{thm-zsf} (and Conjecture~\ref{conj-FII}).

In particular, we determine the zonal spherical functions for all
Hermitean QSPs, i.e. those $(\uq,\uqb)$ where
$\uqb$ has characters besides the counit $\epsilon$.
This allows us to extend the results from \cite{Mee25} to
non-standard parameters, to different parameters on the left and the right, and to different characters on the left and the right
(Theorem~\ref{thm-chisf}).
For specific choices of characters, the resulting functions can also
be interpreted as $\epsilon$-symmetric Macdonald--Koornwinder polynomials (Proposition~\ref{prop-epsilon-koornwinder}).

Lastly, we provide a survey about which Koornwinder polynomials
(i.e. Macdonald for a $(\mathsf{C}_n^\vee, \mathsf{C}_n)$ affine root system)
can be realised as character spherical functions of a quantum symmetric
pair.

\subsection{Structure}
In Section~\ref{sec-general} we introduce standard notation around
QSPs, their root systems, and their spherical functions.
In particular, we introduce the restriction procedure that produces
symmetric polynomials from spherical functions.

In Section~\ref{sec-centraliser} we study the centraliser
$Z_{\widetilde{\uq}}(\widetilde{\uqb})$ and use it to construct a
commutative algebra of symmetric difference operators (the radial parts of $Z_{\widetilde{\uq}}(\widetilde{\uqb})$) that are 
diagonalised by the restrictions of zonal spherical functions.
Our goal is to show that these symmetric difference operators
are Macdonald operators.

In Section~\ref{sec-graded} we establish the existence of a power
series $p$ that determines the
leading terms of the radial parts of $Z_{\widetilde{\uq}}(\widetilde{\uqb})$ in a uniform manner.

In Section~\ref{sec-rk1-reduction} we establish that $p$ can be
computed from the $p$-functions of rank-1 QSPs.
In Section~\ref{sec-rk1-computation} we compute these rank-1
$p$-functions from cases that are known from literature.

In Section~\ref{sec-link-macdonald} we consider small cases (that
always exist) in which the leading term of a radial part actually 
determines the entire radial part, so that we can compute these
radial parts using our knowledge of $p$.
This knowledge is sufficient to identify the joint eigenfunctions, i.e.
the restrictions of ZSF, as symmetric Macdonald--Koornwinder polynomials.

In the next section (Section~\ref{sec-characters}), we complete
the computation of character spherical functions from \cite{Mee25}
by also considering non-standard cases and by considering
different parameters and characters on the left and right.
For specific non-standard cases, this also gives us an interpretation
of character spherical functions as $\epsilon$-symmetric
Koornwinder polynomials.
For specific standard cases, this also allows us to classify the
zonal spherical functions for quantisations of several non-symmetric
spherical pairs.

Finally, in Section~\ref{sec-classification-MK}, we classify which
parameter values of rank-$n$ Koornwinder polynomials are
\enquote{geometric}, i.e. can be realised as restrictions of character
spherical functions.

\section{General Definitions and Notations}\label{sec-general}
\subsection{Root System}
Let $I:=\set{1,\dots,n}$ and let $A=(a_{ij})_{i,j=1,\dots,n}$ be a
symmetrisable Cartan matrix of finite type and let $X,Y$ be two lattices equipped with
a perfect pairing $\langle\cdot,\cdot\rangle: Y\times X\to \Z$, and
with linearly independent elements
\[
    \alpha_1,\dots,\alpha_n\in X,\qquad
    h_1,\dots,h_n\in Y
\]
such that
\[
    \langle h_i,\alpha_j\rangle = a_{ij}\qquad (i,j\in I).
\]
Let there be a symmetric bilinear form $\cdot$ on $X$ such that
\[
    \forall i,j\in I\colon\quad \langle h_i,\alpha_j\rangle = 2\frac{\alpha_i\cdot\alpha_j}{\alpha_i\cdot\alpha_i}.
\]
This corresponds to a choice $(I,\cdot)$ of Cartan datum
(\cite[\S1.1.1]{Lus10}), and a choice $(X,Y)$ of $X$- and $Y$-regular
root datum (\cite[\S2.2.1]{Lus10}), where the embeddings of
$I\to X,Y$ are denoted by $i\mapsto \alpha_i,h_i$, respectively.
Furthermore, we write $h: X\to Y\otimes_\Z\Q$ for the linear map mapping
$\lambda\in X$ to the unique element $h\in Y\otimes_\Z\Q$ satisfying
\[
    \forall\mu\in X\colon\quad\lambda\cdot\mu = \langle h,\mu\rangle.
\]
In particular, $h_{\alpha_i} = \frac{\abs{\alpha_i}^2}{2}h_i$.
Assume that $\abs{\alpha_i}^2\in 2\Z$, so that $h_{\alpha_i}\in Y$.

Write $Q,Q^\vee$ for the sublattices of $X,Y$ generated by
$\alpha_i, h_i$ ($i\in I$), respectively.
Write $P^\vee, P$ for the corresponding dual lattices (which are then
quotients of $Y, X$, respectively).

Let $W$ be the corresponding Weyl group; it acts on $X, Y$ and their
sub- and quotient lattices $Q, Q^\vee, P, P^\vee$.
Let $R=\bigcup_{i\in I}W\alpha_i\subset Q$ be the root system associated
to $A$, and let $Q^+:= \bigoplus_{i\in I} \N_0\alpha_i$ and
$R^+:= R\cap Q^+$ (similarly, $R^\vee, Q^{\vee+},R^{\vee+}$).
Write furthermore $X^+:= \set{\mu\in X\where\langle Q^{\vee+},\mu\rangle\subset\N_0}$ for the set of dominant elements of $X$.

Let $2\rho\in X$ be the sum of positive roots.

\subsection{Quantum Group}
Let $q$ be an indeterminate and consider $k:= \overline{\C(q)}$.
Let $K$ be an invertible element of a $k$-algebra $A$; for
$a,b\in\Z$ we define
\[
    [a]_{q^b} := \frac{q^{ab}-q^{-ab}}{q^b-q^{-b}},\qquad
    [K;a]_{q^b} := \frac{Kq^{ab}-K^{-1}q^{-ab}}{q^b-q^{-b}}
\]
as well as $[K]_{q^b}:= [K;0]_{q^b}$ if this does not lead to confusion.
Define $q_i := q^{\frac{\abs{\alpha_i}^2}{2}}$.

\begin{definition}
    The \emph{Drinfel'd--Jimbo quantum group} $\uq$ is the associative
    $k$-algebra generated by
    \[
        (E_i)_{i\in I},\qquad (F_i)_{i\in I},\qquad
        (K_h)_{h\in Y}
    \]
    subject to the relations
    \begin{align*}
        K_0 &= 1\\
        K_h K_{h'} &= K_{h+h'}\\
        K_h E_i &= q^{\langle h,\alpha_i\rangle} E_iK_h\\
        K_h F_i &= q^{-\langle h,\alpha_i\rangle} F_iK_h\\
        \comm{E_i}{F_j} &= \delta_{ij}[K_{\alpha_i}]_{q_i}\\
        F_{ij}(E_i, E_j) &= 0\qquad (i\ne j)\\
        F_{ij}(F_i, F_j) &= 0\qquad (i\ne j),
    \end{align*}
    where
    \[
        F_{ij}(x,y) := \sum_{p+p'=1-a_{ij}}
        \frac{(-1)^{p'}}{[p]^!_{q_i} [p']^!_{q_i}} x^p y x^{p'},
    \]
    $h,h'\in Y, i,j\in I$,
    and where $K_\mu$ is understood to mean $K_{h_\mu}$ for
    $\mu\in X$ with $h_\mu\in Y$.
    Here, we used $[p]^!_q := [p]_q[p-1]_q\cdots [1]_q$ for
    $p\in\N_0$.

    We also define the completion $\widetilde{\uq}$ of
    $\uq$ to be the algebra generated by
    \[
        (E_i)_{i\in I},\qquad (F_i)_{i\in I},\qquad
        (K_h)_{h\in Y\otimes\Q}
    \]
    subject to the same relations.

    Write $\mathcal{A}$ for the vector space of matrix elements
    of finite-dimensional representations.
\end{definition}

There is a well-established way (e.g. \cite[\S3.3.4]{Lus10} or
\cite[Lemma~3.2]{qmsf}) of turning $\uq$ and $\mathcal{A}$ into dual Hopf
algebras with comultiplication $\Delta$, counit $\epsilon$, and antipode $S$.
Sweedler notation will be used whenever appropriate.

We define $\uq^0$ (resp. $\widetilde{\uq}^0$) to be the commutative 
$k$-subalgebra of $\uq$ (resp. $\widetilde{\uq}$) generated by $K_h$
for $h\in Y$ (resp. $h\in Y\otimes\Q$).

Write $\uq^{\pm}$ for the subalgebras generated by the $E_i$ 
(resp. $F_i$) for $i\in I$.
We have
\[
    \uq\cong\uq^+\otimes\uq^0\otimes \uq^-,\qquad
    \widetilde{\uq}\cong \uq^+\otimes\widetilde{\uq}^0\otimes\uq^-.
\]
Write $\ad,\adr$ for the left- and right-adjoint actions of $\uq$
(resp. $\widetilde{\uq}$) on itself.
They are given as follows:
\[
    \ad(x)(y) := x_{(1)} y S(x_{(2)}),\qquad
    \adr(x)(y) := S(x_{(1)}) y x_{(2)}.
\]
For any $\uq$-module $V$ and $\mu\in X$ let
\[
    V_\mu := \set{v\in V\where\forall h\in Y: K_hv=q^{\langle h,\mu\rangle}v},
\]
the \emph{$\mu$-weight space}.
Applying this to the left-adjoint action of $\uq$ on itself,
we obtain a decomposition $\uq = \bigoplus_{\mu\in Q} \uq_\mu$
into weight spaces.

For a group homomorphism $\xi: Q\to k^\times$, write
$\ad(\xi)$ for the automorphism of $\uq$ (resp. $\widetilde{\uq}$)
that acts as $\xi(\mu)$ on $\uq_\mu$.
Write $\tilde{H}$ for the group generated by such $\ad(\xi)$.
In particular, we can view $X$ as subgroup of $\tilde{H}$
by having $\mu\in X$ correspond to $\ad(\xi)$ where
$\xi: h\mapsto q^{\langle h,\mu\rangle}$.

For $w\in W$ write $T_w$ for the automorphism $T''_{w,1}\uq\to\uq$ (resp.
$\widetilde{\uq}\to\widetilde{\uq}$) from
\cite[\S37.1.3, \S39.4.7]{Lus10}.
It satisfies $T_w(K_h)=K_{wh}$ for $h\in Y$ (resp. $h\in Y\otimes Q$).

For $\lambda\in X^+$ we write $L(\lambda)$ for the
finite-dimensional $\uq$- (resp. $\widetilde{\uq}$-) module with
highest weight $\lambda$.

\subsection{Quantum Symmetric Pairs}
\begin{notation}
    For any subset $I_\bullet\subset I$ write $w_\bullet$ for the
    longest element of the parabolic subgroup of $W$ generated by
    $I_\bullet$.
\end{notation}

\begin{definition}
    A \emph{compatible admissible pair} is a pair $(I_\bullet, \tau)$
    of a subset $I_\bullet\subset I$ and an involutive diagram
    automorphism $\tau: I\to I$ such that
    \begin{enumerate}
        \item $\tau|_{I_\bullet}=-w_\bullet$;
        \item if $i\in I\setminus I_\bullet$ and
        $\tau(i)=i$, then $\langle\rho_\bullet^\vee,\alpha_i\rangle\in\Z$ (where $\rho_\bullet^\vee$ is the
        half-sum of positive coroots of the root system generated by
        $I_\bullet$); and
        \item there is an involution $\tau$ of $X$ (and hence
        also on $Y$ if we require the pairing to be invariant) 
        such that
        $\tau\alpha_i = \alpha_{\tau(i)}$.
    \end{enumerate}
    Write $I_\circ := I\setminus I_\bullet$ (\enquote{the set of white nodes}) and
    \[
    I_{\mathrm{ns}}:=\set{i\in I_\circ\where\tau(i)=i, \forall j\in I_\bullet\colon{} \langle h_i,\alpha_j\rangle=0}.
    \]
\end{definition}

Let $(I_\bullet,\tau)$ be a compatible admissible pair.
Define
\[
    \mathcal{S}:= \set{i\in I_{\mathrm{ns}}\where
    \forall j\in I_{\mathrm{ns}}:
    i=j\vee \langle h_j,\alpha_i\rangle=-2}
\]
(\enquote{set of non-standard nodes})
and $\Theta := -w_\bullet\circ\tau$.
This is an involution on $X$ and $Y$, as well as $Q,Q^\vee, P,P^\vee$.

For $\mu\in X$ set $\tilde{\mu}:= \frac{\mu-\Theta(\mu)}{2}$.
Let $\Sigma:= \set{\tilde{\alpha}\where\alpha\in R}\setminus\set{0}\subset\frac{1}{2}Q$ be the \emph{restricted root system}.

Write $2L\subset X$ for the set of elements $\mu\in X$ such that
$\langle Y^\Theta, \mu\rangle=0$ and
\[
    \forall \alpha\in\Sigma\colon\quad
    \frac{\alpha\cdot\mu}{\alpha\cdot\alpha} \in\Z.
\]
Note that the quotient map $X\to P$ maps $2L$ to $P(2\Sigma)$, the
weights of $2\Sigma$.

Write $\uq_\bullet$ for the subalgebra of $\uq$ generated by
\[
    (E_i)_{i\in I_\bullet}, (F_i)_{i\in I_\bullet}, (K_{\alpha_i}^{\pm1})_{i\in I_\bullet}.
\]
For $i\in I_\circ$ and two numbers $\bm{c}_i\in k^\times, \bm{s}_i\in k$, define
\[
    B_i := F_i + \bm{c}_i T_{w_\bullet}(E_{\tau(i)})K_{-\alpha_i}
    + \bm{s}_i K_{-\alpha_i}.
\]
\begin{definition}
    Let $\bm{c}\in (k^\times)^{I_\circ}$ and
    $\bm{s}\in k^{I_\circ}$ satisfy the following conditions:
    \begin{enumerate}
        \item If $\alpha_i\cdot\Theta(\alpha_i)=0$, we have
        $\bm{c}_i=\bm{c}_{\tau(i)}$;
        \item $\overline{\bm{c}_i}=\bm{c}_i^{-1}$;
        \item $\bm{c}_i\bm{c}_{\tau(i)}=(-1)^{\langle 2\rho_\bullet^\vee,\alpha_i\rangle} q^{\alpha_i\cdot(\Theta(\alpha_i)-2\rho_\bullet)}$;
        \item if $\bm{s}_j\ne0$, we have $j\in \mathcal{S}$;
        \item $\overline{\bm{s}_i}=\bm{s}_i$.
    \end{enumerate}
    Define $\uqb=\uqb_{\bm{c},\bm{s}}$ to be the subalgebra of
    $\uq$ generated by $\uq_\bullet$, $k[K_h\mid h\in Y^\Theta]$, and
    the $B_i$ with $i\in I_\circ$.

    Similarly, let $\widetilde{\uqb}= \widetilde{\uqb}_{\bm{c},\bm{s}}$
    be the subalgebra of $\widetilde{\uq}$ generated by $\uq_\bullet$,
    $k[K_h\mid h\in (Y\otimes\Q)^\Theta]$, and the $B_i$ for
    $i\in I_\circ$.
    We call $(\uq,\uqb)$ the \emph{quantum symmetric pair} (QSP) for the
    Cartan data $(X,Y,A)$, the compatible admissible pair $(I_\bullet,\tau)$, and the parameters $\bm{c},\bm{s}$.
    Similarly, $(\widetilde{\uq},\widetilde{\uqb})$ is the
    completed QSP for the same data.
\end{definition}

\begin{remark}
    Conditions (i) and (iv) on the data are the standard conditions on
    parameters that can also be found in \cite{Kol14}.
    Conditions (ii), (iii), (v) have been added so that the results of
    \cite{Wat23} hold; in particular that simple $\uq$-modules are
    semisimple over $\uqb$, and the existence of a
    bar involution on $\uqb$.

    Of these three extra conditions, (iii) does not represent a large
    restriction as this can always be effected by an appropriate
    action of $\tilde{H}$.
    (ii) and (v), however, are proper restrictions.
    Concerning the results of this paper, they can be eliminated
    using a Zariski argument.
\end{remark}

\begin{lemma}
    The elements $K_{-\alpha_i}E_i$ and $F_i$ generate finite-dimensional
    $\ad_r(\uq_\bullet)$-submodules of $S(\uq^+)$ and $\uq^-$, 
    respectively.
\end{lemma}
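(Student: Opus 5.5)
We have to show that, for $i\in I_\circ$, the cyclic modules $\adr(\uq_\bullet)(F_i)\subset\uq^-$ and $\adr(\uq_\bullet)(K_{-\alpha_i}E_i)\subset S(\uq^+)$ are finite-dimensional. (For $i\in I_\bullet$ the claim also holds, since these elements lie in $\uq_\bullet$ up to a torus factor and the argument below applies verbatim.) The plan is to compute the right adjoint action of the generators $E_j,F_j,K_{\pm\alpha_j}$, $j\in I_\bullet$, explicitly, and then reduce finite-dimensionality to the quantum Serre relations. We fix the coproduct $\Delta(E_j)=E_j\otimes1+K_{\alpha_j}\otimes E_j$, $\Delta(F_j)=F_j\otimes K_{-\alpha_j}+1\otimes F_j$. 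With the other convention the computation is mirrored and nothing changes.

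\textbf{Step 1: the elements $F_i$.} From $\adr(x)(y)=S(x_{(1)})yx_{(2)}$ we get:
\begin{itemize}
\item $\adr(K_h)(y)=K_{-h}yK_h$, which rescales weight vectors;
\item $\adr(F_j)(y)=-F_jK_{\alpha_j}yK_{-\alpha_j}+yF_j$, a $q$-commutator of $y$ with $F_j$, so it stays in $\uq^-$;
\item $\adr(E_j)(y)=-K_{-\alpha_j}E_jy+K_{-\alpha_j}yE_j=K_{-\alpha_j}[y,E_j]$.
\end{itemize}
For $y\in\uq^-$ of weight not involving $\alpha_j$ in a way that produces a $K$-term, the commutator $[y,E_j]$ lies in $\uq^-\uq^0$. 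Since $i\ne j$, we have $[F_i,E_j]=0$, so $\adr(E_j)(F_i)=0$. One checks inductively that $\adr(\uq_\bullet)(F_i)$ lies in the span of monomials $K_{\dots}\adr(F_{j_1}\cdots F_{j_r})(F_i)$. The $\adr(E_j)$-terms only lower the number of $F_j$'s; by the standard $[E_j,\cdot]$ computation on $q$-commutators they land back in $\uq^-$, and the $K$-factors cancel when the twisting by $K$ is absorbed.

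The key point is then that $\adr(F_j)^{1-a_{ji}}(F_i)$ is, up to a nonzero scalar, $F_{ji}(F_j,F_i)$ rewritten in the form $\sum\pm x^{p'}yx^{p}$. Hence it vanishes by the Serre relation. Finite-dimensionality follows from a standard PBW/weight argument: $\adr(\uq_\bullet^-)(F_i)$ is spanned by iterated $q$-commutators $\adr(F_{j_1}\cdots F_{j_r})(F_i)$. These have weight $-\alpha_i-\sum\alpha_{j_s}$, and they are nonzero only if $\alpha_i+\sum\alpha_{j_s}$ is a root. This is the usual fact that the $\adr(\uq^-_\bullet)$-module generated by $F_i$ is the quantum analogue of $\mathfrak n^-$-span of a root vector. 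Since $R$ is finite, only finitely many weights occur, and each weight space of $\uq^-$ is finite-dimensional.

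\textbf{Step 2: the elements $K_{-\alpha_i}E_i$.} We take $S(\uq^+)$ to be the image of $\uq^+$ under the antipode; $S(E_i)=-K_{-\alpha_i}E_i$. Since $S$ is an anti-algebra, anti-coalgebra map, it intertwines $\ad$ and $\adr$:
\[
\adr(x)(S(y))=S(\ad(S^{-1}(x))(y)) .
\]
This identity can be checked directly from the definitions. Moreover $S^{-1}(\uq_\bullet)=\uq_\bullet$. Hence
\[
\adr(\uq_\bullet)(K_{-\alpha_i}E_i)=S\bigl(\ad(\uq_\bullet)(E_i)\bigr).
\]
It therefore suffices to know that $\ad(\uq_\bullet)(E_i)\subset\uq^+$ is finite-dimensional. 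This is the left-adjoint analogue of Step 1, where $\ad(E_j)^{1-a_{ji}}(E_i)=0$ is exactly the Serre relation $F_{ji}(E_j,E_i)=0$ in its classical formulation. (Alternatively, it is the well-known result of Kolb/Letzter, \cite{Kol14}, that $\ad(\uq_\bullet)(E_i)$ is finite-dimensional.)

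\textbf{Expected obstacle.} The main difficulty is controlling the mixed action: showing that the $E_j$-part of $\uq_\bullet$ does not generate infinitely many new elements, for instance through $K$-factors in $\uq^0$ piling up. I would handle this with the triangular decomposition $\uq_\bullet=\uq_\bullet^-\uq_\bullet^0\uq_\bullet^+$. We let $\uq_\bullet^+$ act first on $F_i$, which kills it up to scalars since $[E_j,F_i]=0$ for $j\ne i$. Then $\uq_\bullet^0$ acts by scalars, and finally $\uq_\bullet^-$ acts, which is controlled by the Serre/root argument. This reduces everything to $\adr(\uq_\bullet^-)(F_i)$, respectively $\ad(\uq_\bullet^+)(E_i)$.
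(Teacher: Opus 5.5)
Your Step~2 is the paper's proof. The paper applies $S$ to \cite[Lemma~3.5]{Kol14}, using $\adr(x)(S(y))=S(\ad(S^{-1}(x))(y))$ and $S^{\pm1}(\uq_\bullet)=\uq_\bullet$, and it does this for $F_i$ as well. Your direct treatment of $F_i$ in Step~1 is where the proposal has a gap.

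\textbf{The gap in Step~1.} The decisive claim is that $\adr(F_{j_1}\cdots F_{j_r})(F_i)=0$ unless $\alpha_i+\sum_s\alpha_{j_s}\in R$. You assert this rather than prove it, and no PBW or weight count can give it, because $\uq^-_{-\beta}\neq0$ for every $\beta\in Q^+$. The Serre relation only gives $\adr(F_j)^{1-a_{ji}}(F_i)=0$ for one $j$ at a time and says nothing directly about mixed iterates. The root-support property is true, but it follows from knowing the module is the finite-dimensional simple one with the classical character; it cannot be used to prove that.

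\textbf{How to close it.} One way is highest weight theory for $\uq_\bullet$. For the right action, $F_i$ has weight $\alpha_i$, since $\adr(K_h)(F_i)=q^{\langle h,\alpha_i\rangle}F_i$. This weight is anti-dominant for $I_\bullet$, because $\langle h_j,\alpha_i\rangle=a_{ji}\le0$. The vector $F_i$ is annihilated by all $\adr(E_j)$, $j\in I_\bullet$, which lower weights by $\alpha_j$. The Serre relation says exactly that $\adr(F_j)^{1-\langle h_j,\alpha_i\rangle}$ annihilates it. So the cyclic module it generates is a quotient of the finite-dimensional simple module with this extremal weight. With that repair, Step~1 is a self-contained alternative to citing Kolb.

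The simpler way is to treat $F_i$ exactly as you treated $E_i$. Since $S^{-1}(F_i)=-K_{\alpha_i}F_i$, you get $\adr(\uq_\bullet)(F_i)=S\bigl(\ad(\uq_\bullet)(K_{\alpha_i}F_i)\bigr)$. Finite-dimensionality of $\ad(\uq_\bullet)(F_iK_{\alpha_i})$ is the mirror of the statement you cite for $E_i$, and this is how the paper gets the $F_i$ part.

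\textbf{Two smaller corrections.}

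First, $\adr$ is a right action: $\adr(xy)=\adr(y)\circ\adr(x)$. So in $\uq_\bullet=\uq_\bullet^-\uq_\bullet^0\uq_\bullet^+$ it is $\uq_\bullet^-$ that acts first, not $\uq_\bullet^+$. Use $\uq_\bullet=\uq_\bullet^+\uq_\bullet^0\uq_\bullet^-$ instead. Then $\adr(E_j)(F_i)=K_{-\alpha_j}[F_i,E_j]=0$ gives $\adr(\uq_\bullet)(F_i)=\adr(\uq_\bullet^-)(F_i)\subset\uq^-$ at once. This replaces your vague remark about $K$-factors cancelling.

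Second, your parenthetical claim for $i\in I_\bullet$ is false. Here $\adr(E_i)(F_i)=-(1-K_{-2\alpha_i})/(q_i-q_i^{-1})\notin\uq^-$. Moreover $\adr(E_i)(K_{-m\alpha_i}E_i^k)=(1-q_i^{2m})K_{-(m+1)\alpha_i}E_i^{k+1}$. Hence $\adr(E_i)^{n+1}(F_i)$ is a nonzero multiple of $K_{-(n+2)\alpha_i}E_i^n$ for every $n\ge1$, and the generated module is infinite-dimensional. The lemma is meant for $i\in I_\circ$, where $[F_i,E_j]=0$ for all $j\in I_\bullet$ is exactly what the argument uses.
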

\begin{proof}
    Apply the antipode $S$ to \cite[Lemma~3.5]{Kol14}.
\end{proof}

Write $\mathcal{N}^+$ and $\mathcal{N}^-$ for the subalgebras of
$S(\uq^+)$ and $\uq^-$
generated by $\adr(\uq_\bullet)(K_i^{-1}E_i)$ and 
$\adr(\uq_\bullet)(F_i)$ ($i\in I_\circ$), respectively.

For $i\in I_\circ$, write $I[i]$ for the rank-1 subdiagram for $i$;
i.e. the set of all nodes connected to $i$ or $\tau(i)$ in
$\set{i,\tau(i)}\cup I_\bullet$ in the Satake diagram associated with
$(I_\bullet,\tau)$.

\begin{remark}
    Conditions (i) and (iv) on the parameters show that
    $\bm{c}_i=\bm{c}_{\tau(i)}$ in case $I[i]$ is of type
    $\operatorname{diag}(\mathsf{A}_1)$, and that
    $\bm{s}_i\ne0$ is only allowed in case $I[i]$ is of type
    $\mathsf{AI}_1$ and if $\tilde{\alpha}_i$ is the long simple root
    in a restricted root system $\Sigma$ of type $\mathsf{C}_r$.
\end{remark}

\subsection{Spherical Functions}
For the rest of the paper we will fix two
QSPs $(\uq,\uqb)$ and $(\uq,\uqb')$ for the same
compatible admissible pair and potentially different parameters
($\uqb=\uqb_{\bm{c},\bm{s}}$ and $\uqb'=\uqb_{\bm{d},\bm{t}}$).

\begin{definition}
    Let $\chi:\uqb\to k$ and $\chi':\uqb'\to k$ be characters.
    A function $f\in\mathcal{A}$ is called a \emph{character spherical
    function} for $\chi',\chi$ if
    \[
        \forall b'\in\uqb',x\in\uq,b\in\uq\colon\quad
        f(b'xb) = \chi'(b') \chi(b) f(x).
    \]
    In case $\chi=\chi'=\epsilon$, $f$ is called a \emph{zonal spherical function}.
    Write $E^{\epsilon}=E^{\epsilon,\epsilon}_{\uqb',\uqb}$ for the
    vector space of zonal spherical functions, and
    $E^{\chi',\chi}=E^{\chi',\chi}_{\uqb',\uqb}$ for the vector
    space of $(\chi',\chi)$-character spherical functions.
\end{definition}

\begin{definition}
    Let $M$ be a simple $\uq$-module, and write $\mathcal{A}(M)$ for its matrix elements.
    A character spherical function $f\in E^{\chi',\chi}\cap \mathcal{A}(M)$ is called \emph{elementary}.
    Write $E^{\chi',\chi}_{\uqb',\uqb}(\mu) = E^{\chi',\chi}(\mu)$ for the vector space of elementary character spherical functions for $M=L(\mu)$.

    Write $X^+(\chi,\chi')$ for the set of all $\mu\in X^+$
    such that $E^{\chi',\chi}(\mu)\ne0$.
\end{definition}

\begin{proposition}
    Let $X^+(\chi,\chi')\ne\emptyset$.
    Then there exists a unique element $b\in X^+$ such that
    for all $\mu\in X^+$ the following are equivalent:
    \begin{enumerate}
        \item $\dim(E^{\chi',\chi}(\mu))=1$;
        \item $\mu-b\in 2L$.
    \end{enumerate}
\end{proposition}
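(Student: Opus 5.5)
The argument reduces both conditions to statements about $\uqb$-invariant (more precisely, $\chi$-eigen) vectors in $L(\mu)$ and its dual, and then invokes the known classification of spherical representations for quantum symmetric pairs.

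\emph{Step 1: reduction to multiplicities.} Since $\mathcal{A}(L(\mu))\cong L(\mu)^*\otimes L(\mu)$, with $f=c_{\phi,v}\colon x\mapsto \phi(xv)$, the condition $f(b'xb)=\chi'(b')\chi(b)f(x)$ says the following. The element $\sum\phi\otimes v$ lies in
\[
  \bigl(L(\mu)^*\bigr)^{\uqb',\chi'}\otimes L(\mu)^{\uqb,\chi},
\]
where $V^{\uqb,\chi}=\set{v\where bv=\chi(b)v}$. Hence
\[
  \dim E^{\chi',\chi}(\mu)=\dim (L(\mu)^*)^{\uqb',\chi'}\cdot \dim L(\mu)^{\uqb,\chi}.
\]
By the results of \cite{Wat23} (complete reducibility of $L(\mu)$ over $\uqb$), multiplicity-freeness of the one-dimensional $\uqb$-type $\chi$ in $L(\mu)$ holds. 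I would deduce it from the quantum analogue of the Cartan--Helgason / Iwasawa argument: $\uqb\cdot\uq^+\uq^0$ is big enough that $L(\mu)=\uqb\, v_\mu$, so a $\chi$-eigenvector is determined by its coefficient on the highest weight vector. Thus each factor is $0$ or $1$, and $\dim E^{\chi',\chi}(\mu)\in\set{0,1}$. Statement (i) is therefore equivalent to both factors being nonzero.

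\emph{Step 2: description of the nonzero set.} For $\chi=\epsilon$ I would invoke Letzter's (and Kolb's) quantum Cartan--Helgason theorem: $L(\mu)^{\uqb}\neq0$ iff $\mu\in 2L$ (for the simply connected part), independently of the parameters $\bm c,\bm s$. For general $\chi$, fix one $\mu_0$ with $E^{\chi',\chi}(\mu_0)\neq0$, using the hypothesis $X^+(\chi,\chi')\ne\emptyset$. Tensoring a $\chi$-eigenvector of $L(\mu_0)$ with an invariant vector of $L(\nu)$, $\nu\in 2L\cap X^+$, and projecting to the Cartan component $L(\mu_0+\nu)$ gives a $\chi$-eigenvector there. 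The projection is nonzero because the highest weight coefficients multiply, by Step 1. This shows the set of admissible $\mu$ is stable under adding $2L\cap X^+$. Conversely, the weight of a $\chi$-eigenvector is constrained: $K_h$ for $h\in Y^\Theta$ acts by the scalar $\chi(K_h)$, and the rank-one computations at each $B_i$ force $\langle\cdot\rangle$-congruences modulo $2\Sigma$. Together these imply that the admissible set is a single coset $\mu_0+2L$ intersected with $X^+$, and likewise for $\chi'$ on the dual side. Since the dual side has highest weight $-w_0\mu$, one checks the two cosets coincide, or intersect in a coset of $2L$.

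\emph{Step 3: the base point.} Take $b$ to be the unique minimal element of $(\mu_0+2L)\cap X^+$, namely the one with $\langle h_{\alpha},b\rangle$ minimal in each restricted direction. Uniqueness holds because two dominant elements with difference in $2L$ and both minimal must agree, using a fundamental-domain argument for $2L$ acting on the restricted-weight coordinates. The main obstacle is the converse in Step 2: showing that any $\mu$ carrying a $\chi$-eigenvector differs from $\mu_0$ by an element of $2L$, rather than merely satisfying weaker congruences. I would attack it by restricting to each rank-one subpair $I[i]$, where the one-dimensional $\uqb$-types of $L(\mu)$ are explicitly known, and reading off $\langle h_{\tilde\alpha_i},\mu-\mu_0\rangle$ modulo $2$, together with the $Y^\Theta$-condition from the torus part.
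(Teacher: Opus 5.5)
\textbf{Gap in Step 2.} Your Step~2 claims that the set of $\mu\in X^+$ for which $L(\mu)$ contains the $\uqb$-type $\chi$ is a coset intersected with the chamber, $(\mu_0+2L)\cap X^+$. This is false, not just unproved. Take $\mathsf{AI}_1$ with $\bm{s}_1=[\sigma]_q$ and the character $\chi_{\sigma,3}$, which sends $B_1\mapsto[\sigma+3]_q$. By Lemma~\ref{lem-csf-ai1}, $B_1$ acts on $L(\omega_1)$ with eigenvalues $[\sigma\pm1]_q$, so $\chi_{\sigma,3}$ does not occur in $L(\omega_1)$. It does occur in $L(3\omega_1)$, and $3\omega_1-\omega_1=2\omega_1\in 2L$. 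The correct shape is a translated cone, here $\abs{l}\omega_1+2L^+$ (cf.\ Section~\ref{sec-char-rank1}); likewise $X^+(\chi_{\sigma,3},\chi_{\tau,3})=3\omega_1+2\N_0\omega_1$. The lower bound $\abs{l}\le m$ is exactly what torus weights and parities of $\langle h_{\tilde\alpha_i},\mu-\mu_0\rangle$ cannot detect. So your proposed attack on the ``main obstacle'' cannot succeed, and Step~3 picks the wrong base point: $\omega_1$ instead of $3\omega_1$.

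\textbf{How (ii) must be read, and what is missing.} Condition (ii) has to be read as $\mu-b\in 2L^+$, which is what the paper's proof establishes. With $2L$ taken literally, uniqueness of $b$ fails outright: every $b'\in(b+2L)\cap X^+$ defines the same condition. Your ``unique minimal element'' in Step~3 is a silent switch to the cone version. The missing ingredient is a per-character statement: the set of $\mu\in X^+$ for which $\chi$ occurs in $L(\mu)$ equals $b_\chi+2L^+$ for some $b_\chi\in X^+$. The paper takes this from \cite[Lemma~2.7]{Mee25}. Your tensor-and-project argument only gives upward closure of this set. The existence of a minimum below which $\chi$ never occurs needs genuinely spectral input, such as rank-one eigenvalue information for the $B_i$, not congruences.

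\textbf{What does carry over.} Your Step~1 is fine; it plays the role of the paper's appeal to multiplicity-freeness \cite[Lemma~4.2]{Mee25sym} and \cite[Theorem~4.11]{Mee25sym}. Combined with the per-character lemma, it gives $X^+(\chi,\chi')=(b_1+2L^+)\cap(b_2+2L^+)$. The final step is then that a nonempty intersection of two translates of $2L^+$ is again a translate $b+2L^+$, not that ``two cosets coincide''. No $-w_0$ twist enters, since both conditions concern $\uqb'$- and $\uqb$-submodules of $L(\mu)$ itself.
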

\begin{proof}
    By \cite[Lemma~2.7]{Mee25} there are $b_1, b_2\in X^+$
    such that $L(\mu)$ contains a $\uqb'$-submodule
    isomorphic to $\chi'$ iff $\mu-b_1\in 2L^+$,
    and such that $L(\mu)$ contains a $\uqb$-submodule
    isomorphic to $\chi$ iff $\mu-b_2\in 2L^+$.
    
    By \cite[Lemma~4.2]{Mee25sym}, $(\uq,\uqb,\chi)$ and
    $(\uq,\uqb',\chi')$ are multiplicity-free triples,
    i.e. $\dim(E^{\chi',\chi}(\mu))\le1$ in any case.
    Consequently, by \cite[Theorem~4.11]{Mee25sym},
    (i) is equivalent to $L(\mu)$ containing a
    $\uqb'$-sub\-mod\-ule isomorphic to $\chi'$, and
    a $\uqb$-submodule isomorphic to $\chi$,
    hence to $\mu\in (b_1+2L^+)\cap (b_2+2L^+)$.
    This shows that $X^+(\chi,\chi')=(b_1+2L^+)\cap (b_2+2L^+)$, which is again of the shape
    $b+2L^+$ since it is nonempty.
\end{proof}

\begin{proposition}
    $E^{\epsilon}$ is a ring,
    and $E^{\chi',\chi}$ is a module over $E^{\epsilon}$.
    Let $X^+(\chi,\chi')=b+2L^+$ and $0\ne\phi\in E^{\chi',\chi}(b)$,
    then $E^{\chi',\chi}$ is a free $E^\epsilon$ module
    with basis $\phi$.
\end{proposition}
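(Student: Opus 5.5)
Throughout, $\mathcal{A}$ is the matrix-coefficient Hopf algebra, with product dual to $\Delta$: $(fg)(x)=f(x_{(1)})g(x_{(2)})$. I will use three facts in turn: characters of $\uqb,\uqb'$ multiply nicely, spherical functions are detected by their restriction to the torus directions, and the preceding Proposition says where nonzero spherical functions live.

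\textbf{Step 1: the algebraic structure.} For $f\in E^{\epsilon}$ and $g\in E^{\chi',\chi}$ I would compute $(fg)(b'xb)$ with Sweedler notation. Since $\uqb$ and $\uqb'$ are right coideal subalgebras, $\Delta(b)\in \uqb\otimes\uq$, and the first tensor factor can be absorbed by $f$ on the right. That alone does not suffice, because $g$ sees the second factor. The clean route is to write $f(b'xb)=f(x)\epsilon(b)\epsilon(b')$ equivalently as $b'\cdot f=\epsilon(b')f$ and $f\cdot b=\epsilon(b)f$ for the left and right regular actions of $\uq$ on $\mathcal{A}$. These regular actions act on products through $\Delta$: for $(f\cdot b)(x)=f(bx)$ we get $((fg)\cdot b)(x)=f(b_{(1)}x_{(1)})g(b_{(2)}x_{(2)})$. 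Then $fg\in E^{\chi',\chi}$ follows from the standard fact that the invariants of a coideal subalgebra acting on a module algebra form a subalgebra, and that semi-invariants of a character form a module over them. I would verify this with $\Delta(b)\in\uqb\otimes\uq$ together with the fact that characters of $\uqb$ are compatible with that coideal structure. The character condition is multiplicative, so $E^{\chi',\chi}E^{\chi'',\chi'''}\subset E^{\chi'\chi'',\chi\chi'''}$. In particular $E^\epsilon$ is a ring and $E^{\chi',\chi}$ is an $E^\epsilon$-module.

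\textbf{Step 2: freeness and generation.} I would use the restriction map to $\uq^0$ (or to the torus $k[K_h\mid h\in Y]$ modulo $\Theta$-fixed parts), which is an algebra homomorphism. By the $KAK$-type decomposition $\uq=\uqb'\,\uq^0\,\uqb$ modulo lower terms (Letzter, or \cite{Mee25}), restriction is injective on $E^{\chi',\chi}$. The leading term of an elementary function in $E^{\chi',\chi}(\mu)$ is the monomial $e^{\mu}$, i.e.\ the $K$-weight $\mu$ restricted to the $-\Theta$ part, plus strictly lower terms. With $\phi\in E^{\chi',\chi}(b)$ nonzero, I then argue as follows.

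\begin{enumerate}
\item \emph{Freeness.} If $f\phi=0$ for $f\in E^\epsilon$, then restricting gives $\mathrm{res}(f)\,\mathrm{res}(\phi)=0$ in a Laurent polynomial domain. Since $\mathrm{res}(\phi)\neq0$, this forces $\mathrm{res}(f)=0$, hence $f=0$.
\item \emph{Generation by induction.} Let $\psi\in E^{\chi',\chi}(\mu)$ with $\mu=b+2\lambda$ and $\lambda\in L^+$, using the preceding Proposition. Take $f_\lambda\in E^\epsilon(2\lambda)$, which is nonzero since $X^+(\epsilon,\epsilon)=2L^+$. The product $f_\lambda\phi$ is a matrix coefficient of $L(2\lambda)\otimes L(b)$ and has top component in $L(\mu)$ with nonzero leading coefficient. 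Hence $\psi-cf_\lambda\phi$ lies in $\bigoplus_{\nu<\mu}E^{\chi',\chi}(\nu)$ for a suitable scalar $c$, and induction on the dominance order (finitely many dominant $\nu<\mu$) finishes the argument.
\end{enumerate}

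\textbf{Main obstacle.} The delicate point is justifying that the projection of $f_\lambda\phi$ onto the $L(\mu)$ isotypic component is nonzero. I would first try to do this via leading terms of the restrictions: the product of leading monomials is $e^{2\lambda+b}$, and the restriction is injective and compatible with the weight filtration. If that fails, I would instead use the Cartan component $L(\mu)\subset L(2\lambda)\otimes L(b)$ and evaluate at a highest-weight-type element. There, nonvanishing comes from the $\uqb$-fixed vector having nonzero highest weight component, as in \cite[Lemma~2.7]{Mee25}.
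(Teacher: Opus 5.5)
Your proof is correct and is, in substance, the argument the paper delegates to \cite[Proposition~2.12]{qmsf} and to a proof analogous to \cite[Theorem~4.3]{Mee25}: the Sweedler computation with the right coideal property gives the ring and left-module structure, and injectivity of $\mathrm{Res}'$ on $E^{\epsilon}$ plus induction on $\mu\in b+2L^+$ via leading terms gives freeness and generation. Your first idea for the ``main obstacle'' already settles it: only the $L(\mu)$-component of $f_\lambda\phi$ can contribute to the coefficient of $e^{\mu}$ in $\mathrm{Res}'(f_\lambda)\mathrm{Res}'(\phi)$, and that coefficient is the product of the two nonzero leading coefficients.

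Two corrections to Step~1. First, the direct computation does suffice, since $f(b'_{(1)}x_{(1)}b_{(1)})g(b'_{(2)}x_{(2)}b_{(2)})=f(x_{(1)})g(b'x_{(2)}b)$ for $f\in E^{\epsilon}$. Second, your general claim $E^{\chi',\chi}E^{\chi'',\chi'''}\subset E^{\chi'\chi'',\chi\chi'''}$ is false: characters of $\uqb$ do not multiply, and for a non-trivial first factor the second must be spherical for the base-point-shifted coideals $\rho_{\chi'}(\uqb'),\rho_\chi(\uqb)$ (cf.\ Lemma~\ref{lem-product-csf}). You only ever use the true case with $\epsilon$ in the first factor, so this does not affect the proof.
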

\begin{proof}
    Follows from \cite[Proposition~2.12]{qmsf} and a proof analogous to
    \cite[Theorem~4.3]{Mee25}.
\end{proof}

\begin{definition}\label{def-res}
    Define the \emph{restriction function} to be the following map
    \[
        \Res'\colon\:\mathcal{A}\to k[X],\qquad
        c^M_{f,v} \mapsto f(v)e^\mu
    \]
    whenever $f\in M^*$ and $v\in M_\mu$ is a vector of
    weight $\mu\in X$, and extending linearly.

    Let moreover $\Res\colon\:\mathcal{A}\to k[X]$ be
    the \emph{shifted restriction function}, defined
    by
    \[
        \Res(\phi):= \Res'(\phi)(\cdot-\rho),
    \]
    i.e. by composing $\Res$ with the ring
    automorphism of $k[X]$ mapping
    $e^\mu\mapsto q^{-\rho\cdot\mu} e^\mu$.
\end{definition}

\begin{remark}
    In the language of the modified form $\dot{\uq}$ (cf.
    \cite[\S23]{Lus10}), the definition of $\Res'$ can be simplified
    as follows:
    \[
        \Res'(f) = \sum_{\mu\in X} f(1_\mu)e^\mu.
    \]
\end{remark}

\begin{lemma}
    Both $\Res$ and $\Res'$ are ring homomorphisms.
    Moreover, $\Res'$ restricts to a ring isomorphism
    $E^\epsilon\cong k[2L]^{W_\Sigma}$.
\end{lemma}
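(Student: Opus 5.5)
Multiplicativity is checked directly on matrix coefficients, and the isomorphism is obtained in four steps: the image of $E^\epsilon$ lies in $k[2L]$, it is $W_\Sigma$-invariant, $\Res'$ is injective on $E^\epsilon$, and surjectivity follows by a triangularity argument.

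\textbf{Multiplicativity.} The product on $\mathcal{A}$ is dual to $\Delta$, and $c^M_{f,v}c^N_{g,w}=c^{M\otimes N}_{f\otimes g,v\otimes w}$. If $v\in M_\mu$ and $w\in N_\nu$, then $v\otimes w\in(M\otimes N)_{\mu+\nu}$, because $\Delta(K_h)=K_h\otimes K_h$. Hence
\[
\Res'(c_{f,v}c_{g,w})=f(v)g(w)e^{\mu+\nu}=\Res'(c_{f,v})\Res'(c_{g,w}).
\]
Well-definedness follows from the description $\Res'(f)=\sum_\mu f(1_\mu)e^\mu$. The shift $e^\mu\mapsto q^{-\rho\cdot\mu}e^\mu$ is a ring automorphism of $k[X]$, so $\Res=(\text{shift})\circ\Res'$ is also a ring homomorphism.

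\textbf{Image lies in $k[2L]$.} Let $\phi\in E^\epsilon(\lambda)$ and write $\phi=c_{f,v}$, where $v$ spans the $\uqb$-invariants of $L(\lambda)$ and $f$ spans the $\uqb'$-invariants of $L(\lambda)^*$. For $h\in Y^\Theta$ we have $K_h\in\uqb\cap\uqb'$. Invariance gives $f(K_h v_\mu)=f(v_\mu)$ for every weight component $v_\mu$ of $v$, so $f(v_\mu)\neq0$ forces $\langle h,\mu\rangle=0$. The integrality condition $\alpha\cdot\mu/\alpha\cdot\alpha\in\Z$ for $\alpha\in\Sigma$ would come from $\lambda\in 2L^+$ (the earlier proposition with $b=0$ in the zonal case) together with $\lambda-\mu\in Q$ and $\mu\in Y^{\Theta\perp}$, so that $\mu=\tilde\mu\in\widetilde{Q}\subset 2L$. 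I expect this to need a small lattice computation using the definition of $\Sigma$.

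\textbf{$W_\Sigma$-invariance and injectivity.} This is the main obstacle. The standard route is Letzter's: for each restricted simple root $\tilde\alpha_i$, work in the rank-one subpair $I[i]$. The rank-one spherical functions are $W_{\tilde\alpha_i}$-invariant, for instance via the explicit Askey--Wilson / $q$-Jacobi form known for rank one. Alternatively, use the Lusztig automorphism $T_{w}$ for $w\in W^\Theta$ lifting the reflection in $\tilde\alpha_i$, and show that $T_w$ preserves $\uqb$ up to an action of $\tilde H$ that does not change $\Res'$ on $Y^\Theta$-orthogonal weights. I would first try the second approach: Kolb's results give $T_w(\uqb_\bullet)$-type invariance on the relevant parts, and $\tilde H$ acts by multiplication by $q^{\langle h,\mu\rangle}$, which is trivial here.

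For injectivity, I would use the Iwasawa-type triangular decomposition $\uq=\uqb'\,\widetilde{\uq}^0\,\uqb$ up to lower terms (Letzter / Kolb). By this decomposition, a zonal function is determined by its values on $\uq^0$, which are exactly encoded in $\Res'$.

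\textbf{Surjectivity.} For $\phi\in E^\epsilon(\lambda)$ with $\lambda\in 2L^+$, the restriction $\Res'\phi$ has top term $e^{\lambda}$ with nonzero coefficient. This holds because $v$ has a nonzero highest-weight component and $f$ pairs with it nontrivially: otherwise $\phi$ would vanish on $\uq^0$ and hence, by injectivity, be zero. All other terms are strictly lower in the dominance order. Since the monomial symmetric functions $m_\lambda$, $\lambda\in 2L^+$, form a basis of $k[2L]^{W_\Sigma}$, this triangularity gives a bijection and finishes the proof.
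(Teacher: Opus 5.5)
Your multiplicativity argument is the same as the paper's. For the isomorphism $E^\epsilon\cong k[2L]^{W_\Sigma}$ the paper gives no argument of its own; it quotes \cite[Lemma~3.58]{qmsf} and \cite[Corollary~5.4]{Le03}. Your attempt to reprove it has genuine gaps.

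The first gap is the support step. The inclusion $\widetilde{Q}\subset 2L$ on which it rests is false. Take type $\mathsf{AI}_2$ with $X=P$: there $\Theta=-\mathrm{id}$, $Y^\Theta=0$ and $\Sigma=R$. So $\widetilde{Q}=Q$, while $2L$ consists of the $\mu$ with $\langle h_i,\mu\rangle$ even for all $i$. Since $\langle h_2,\alpha_1\rangle=-1$, for $\lambda\in 2L^+$ with $\langle h_1,\lambda\rangle>0$ the weight $\lambda-\alpha_1$ of $L(\lambda)$ passes both of your tests but is not in $2L$. That the spherical vector has no component there is a genuine consequence of $\uqb$-invariance. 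Classically it is invariance under the finite group $A\cap K$: the $SO(3)$-fixed vector $\sum_i e_i^2$ of $S^2\C^3$ avoids the weights $\epsilon_i+\epsilon_j$. No lattice computation will produce it.

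The second gap is $W_\Sigma$-invariance. Your preferred route fails because Lusztig's $T_w$ does not map $\uqb$ into $\ad(\xi)(\uqb)$ for any $\xi$. Already in type $\mathsf{AI}_1$ one gets $T_1(B_1)=-K_{-\alpha_1}E_1-\bm{c}_1F_1K_{2\alpha_1}$. The Cartan part $K_{2\alpha_1}$ never occurs in PBW expansions of elements of $\ad(\xi)(\uqb)$, whose Cartan parts lie in $-\N_0 h_{\alpha_1}+Y^\Theta$. The rank-one route can be made to work. Decompose $L(\lambda)$ over the Levi subalgebra $\uq[i]$; the components of $v$ and $f$ are $\uqb[i]$- resp. $\uqb'[i]$-invariant. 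Apply the rank-one result summand by summand and use that the $r_i$ generate $W_\Sigma$. But that is the actual proof, and you only name it.

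The third gap is the nonvanishing of the top coefficient $f(v_\lambda)$, where your justification is a non sequitur. $f(v_\lambda)=0$ does not make $\phi$ vanish on $\uq^0$, because the other weight components still contribute. What you need is Letzter's result: a $\uqb$-invariant vector of $L(\lambda)$ has nonzero highest-weight component, and a $\uqb'$-invariant functional is nonzero on the highest-weight vector (cf.\ \cite[Lemma~3.3]{Le03}, which the paper invokes for $b_0\neq0$). With that, triangularity gives injectivity directly, and the imprecise decomposition ``$\uq=\uqb'\widetilde{\uq}^0\uqb$ up to lower terms'' is no longer needed.
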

\begin{proof}
    It suffices to show that $\Res'$ is a ring homomorphism.
    We have
    \begin{align*}
        \Res'(c^M_{f,v} c^{M'}_{f',v'})
        &= \Res'(c^{M\otimes M'}_{f'\otimes f, v\otimes v'})
        = (f'\otimes f)(v\otimes v')
        e^{\mu+\mu'}\\
        &= f(v) f'(v') e^{\mu+\mu'}
        = \Res'(c^M_{f,v})\Res'(c^{M'}_{f',v'}).
    \end{align*}
    The last claim follows from \cite[Lemma~3.58]{qmsf}
    and \cite[Corollary~5.4]{Le03}.
\end{proof}

\section{Centraliser of \texorpdfstring{$\widetilde{\uqb}$}{B}}\label{sec-centraliser}
Our goal is to identify the shifted restrictions of ZSF
(and character spherical functions) with Macdonald--Koornwinder 
polynomials by identifying them as eigenfunctions of sufficiently
many $q$-difference operators that happen to also have the
Koornwinder--Macdonald polynomials as eigenfunctions.

For this we take a closer look at the centraliser $Z_{\widetilde{\uq}}(\widetilde{\uqb})$.
Write $Y\otimes_\Z\Q\cong T\oplus A$, where $T$ consists of the
$\Theta$-invariants and $A$ of the anti-invariants.

\begin{lemma}\label{lem-existence-radial-part}
    Let $x\in Z_{\widetilde{\uq}}(\uqb)$ and let $h\in Y\otimes_\Z\Q$,
    then there is a unique $q$-difference operator
    $\Rad'(x)\in k(Q)[A]$, say
    \[
        \Rad'(x) = \sum_{\mu\in A} a_\mu T_\mu
    \]
    (we write $T_he^\mu := q^{\langle h,\mu\rangle}e^\mu$, and by abuse of 
    notation, $T_\lambda e^\mu= q^{\lambda\cdot\mu}e^\mu$)
    such that for a Zariski dense subset of $h\in Y\otimes_\Z\Q$ we have
    \[
        K_h x - \sum_{\mu\in Y} a_\mu(h) K_{h+\mu}
        \in \widetilde{\uqb}'_+\widetilde{\uq}
        + \widetilde{\uq}\widetilde{\uqb}_+.
    \]
\end{lemma}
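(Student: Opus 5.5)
The approach is Letzter's: a quantum Iwasawa-type decomposition of $\widetilde{\uq}$ modulo the two-sided ideal-like subspace $\widetilde{\uqb}'_+\widetilde{\uq}+\widetilde{\uq}\widetilde{\uqb}_+$. Here $\widetilde{\uqb}_+ := \widetilde{\uqb}\cap\ker\epsilon$, and similarly for $\widetilde{\uqb}'$. The aim is to show that every element of $\widetilde{\uq}$ is congruent modulo this subspace to an element of $k[K_h\mid h\in A]$, and that this representative is unique in a suitable generic sense. The operator $\Rad'(x)$ then simply records the coefficients of this representative for $K_hx$.

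\textbf{Step 1: reduction to the torus.} I will prove
\[
\widetilde{\uq}=\widetilde{\uqb}'\cdot\widetilde{\uq}^0_A\cdot\widetilde{\uqb}
\]
up to lower terms, where $\widetilde{\uq}^0_A$ is generated by the $K_h$ with $h\in A$. Write $\widetilde{\uq}=\uq^+\widetilde{\uq}^0\uq^-$ and filter by degree.
\begin{itemize}
\item A monomial in $\uq^-$ can be pushed to the right. Each $F_i$ ($i\in I_\circ$) equals $B_i$ minus terms $\bm c_iT_{w_\bullet}(E_{\tau i})K_{-\alpha_i}+\bm s_iK_{-\alpha_i}$ of higher $\mathcal N^+$-degree or of degree $0$. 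The generators of $\uq_\bullet$ and the $K_h$ with $h\in T$ lie in $\widetilde{\uqb}$ and act by $\epsilon$ modulo $\widetilde{\uq}\widetilde{\uqb}_+$.
\item Symmetrically, the $E$'s are pushed to the left into $\widetilde{\uqb}'_+$, using the analogous expression of $E_i$ (times a torus element) through $B'_{\tau i}$ and $\uq_\bullet$. Here I use the finite-dimensionality of the $\adr(\uq_\bullet)$-modules $\mathcal N^\pm$ from the Lemma, which keeps the error terms inside $\mathcal N^\pm\widetilde{\uq}^0$.
\end{itemize}
An induction on the degree then shows $K_hx\equiv\sum_\mu c_\mu K_{h+\mu}$ with $\mu\in A$; the $T$-part is absorbed via $K_t\equiv q^{(\cdot)}$-scalars, which produces $h$-dependence of the coefficients. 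Because commuting $K_h$ past $E$'s and $F$'s produces factors $q^{\langle h,\beta\rangle}$, each coefficient $c_\mu$ is a rational function of $q^{\langle h,\cdot\rangle}$. This gives $a_\mu\in k(Q)$, where only finitely many $\mu$ occur since $x$ has bounded degree.

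\textbf{Step 2: the role of $x\in Z(\uqb)$.} Commuting $x$ with $\widetilde{\uqb}$ ensures that the right-hand reduction is compatible with left multiplication by $K_h$. Concretely, $K_hx\widetilde{\uqb}_+=K_h\widetilde{\uqb}_+x$, which lies in the ideal, so the reduction is well-defined. It also shows that the coefficients depend only on the projection of $h$ to $A$ up to a character shift. Hence the expression is of the form $\sum a_\mu(h)K_{h+\mu}$ with the same rational functions $a_\mu$ for all $h$ outside the poles. The set where no denominator vanishes is Zariski dense.

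\textbf{Step 3: uniqueness, the main obstacle.} I expect uniqueness to be the hardest step. Uniqueness amounts to showing that $k[K_h\mid h\in A]$ meets $\widetilde{\uqb}'_+\widetilde{\uq}+\widetilde{\uq}\widetilde{\uqb}_+$ trivially. I will test an element $\sum b_\nu K_\nu$ against functionals: take zonal spherical functions $\phi\in E^\epsilon_{\uqb',\uqb}(\lambda)$, which vanish on the subspace. Their values $\phi(K_\nu)$ are read off from $\Res'(\phi)$. By the preceding Lemma, $\Res'$ identifies $E^\epsilon$ with $k[2L]^{W_\Sigma}$, and this family is rich enough to separate the $K_\nu$ up to Zariski-dense evaluation. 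So if $\sum b_\nu K_\nu$ vanished modulo the subspace, then $\sum b_\nu e^{\nu}$ would pair trivially with all of $k[2L]^{W_\Sigma}$.

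The difficulty is that $W_\Sigma$-invariance only separates orbits. I would circumvent this by using the $h$-translation: vanishing for a Zariski dense set of shifts $h$ forces each $b_\nu(h)$ to vanish, by a Vandermonde / linear-independence-of-characters argument on $A$. This gives uniqueness of the $a_\mu$ as rational functions.
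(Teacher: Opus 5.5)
Your Step~3, in its corrected form, is essentially fine. The existence part (Steps~1--2), however, has a genuine gap. You use genericity of $h$ only for uniqueness, but it is needed to produce the decomposition in the first place, and the $h$-independent rewriting you describe cannot work.

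Replacing an $F_i$ on the right via $B_i$ turns it into $-\bm{c}_iT_{w_\bullet}(E_{\tau(i)})K_{-\alpha_i}$ plus torus terms. Moving this to the left and replacing it via a generator of $\widetilde{\uqb}'$ turns it back into an $F$-term. Only commutator terms lower the degree, so the ``induction on the degree'' never closes. Already classically, for $(\mathfrak{sl}_2,\mathfrak{so}_2)$ with $\mathfrak{k}=\C(E-F)$, these substitutions only give the cycle $EF\equiv E^2\equiv FE\equiv F^2\equiv EF$. Expressing $EF$ through $\C[H]$ needs an extra input such as $[E-F,H(E+F)]=2H^2-2(E+F)^2$.

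More decisively, commuting $K_h$ past $E$'s and $F$'s produces only monomials $q^{\langle h,\beta\rangle}$, never quotients. Your procedure would therefore output Laurent-polynomial coefficients. By the uniqueness you prove in Step~3, that output would then be the radial part, which genuinely has denominators (see Lemma~\ref{lem-letzter-radial-part}). So Step~1 as designed must fail. Step~2 does not repair it: $K_hx\widetilde{\uqb}_+\subset\widetilde{\uq}\widetilde{\uqb}_+$ holds for every $x$ and says nothing about the coefficients forming one fixed family of rational functions.

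The missing idea is the generic Cartan-type decomposition $K_hx\in\widetilde{\uqb}'\widetilde{\uq}^0\widetilde{\uqb}$ for $h$ off a proper Zariski-closed set. The paper imports this from Chapter~4 of \cite{qmsf}, and only then splits $\widetilde{\uqb}=k\oplus\widetilde{\uqb}_+$, $\widetilde{\uqb}'=k\oplus\widetilde{\uqb}'_+$ and $\widetilde{\uq}^0=k[A]\oplus k[A]k[T]_+$.

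Its mechanism is a $2\times2$ system in $F_i$ and $T_{w_\bullet}(E_{\tau(i)})K_{-\alpha_i}$, formed by $B_i$ and $K_{-h}B'_iK_h=q^{\langle h,\alpha_i\rangle}F_i+\bm{d}_iq^{\langle h,\Theta(\alpha_i)\rangle}T_{w_\bullet}(E_{\tau(i)})K_{-\alpha_i}+\bm{t}_iK_{-\alpha_i}$. Its determinant is $\bm{d}_iq^{\langle h,\Theta(\alpha_i)\rangle}-\bm{c}_iq^{\langle h,\alpha_i\rangle}$, which vanishes only if $q^{2\langle h,\tilde{\alpha}_i\rangle}=\bm{d}_i/\bm{c}_i$. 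For all other $h$, both elements are combinations of $B_i$, $K_{-h}B'_iK_h$ and the torus. Since $K_h\cdot K_{-h}B'_iK_h=B'_iK_h$, and moving these past the remaining letters only creates commutators of lower degree, the induction now closes. Inverting these determinants is exactly where the Zariski-dense set and the coefficients in $k(Q)$ come from.

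Finally, in Step~3 your first formulation, that $k[A]$ meets $\widetilde{\uqb}'_+\widetilde{\uq}+\widetilde{\uq}\widetilde{\uqb}_+$ trivially, is false; classically $H=\tfrac12[E-F,E+F]\in\mathfrak{k}U(\mathfrak{g})+U(\mathfrak{g})\mathfrak{k}$. What makes the generic version work is not a Vandermonde argument on $A$. It is that for generic $h$ the finitely many points $h+\mu$ lie in pairwise distinct $W_\Sigma$-orbits (for the shifted action), and the $W_\Sigma$-invariants in $k[2L]$ separate such orbits.
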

\begin{proof}
    By a straightforward generalisation of \cite[Chapter~4]{qmsf}, we have
    \[
        K_h x \in \widetilde{\uqb}'\widetilde{\uq}^0\widetilde{\uqb}
    \]
    for a dense set of $h$.
    Decomposing $\widetilde{\uqb} = k \oplus\widetilde{\uqb}_+$ and $\widetilde{\uqb}' = k\oplus\widetilde{\uqb}'_+$, as well
    as $\widetilde{U}^0=k[T]\otimes k[A] = k[A] \oplus k[A]k[T]_+$ we obtain
    \[
        K_h x \in k[A]\oplus(\widetilde{\uqb}'_+\widetilde{\uq}
        + \widetilde{\uq}\widetilde{\uqb}_+).
    \]
    From the proof of \cite[Chapter~4]{qmsf} we furthermore know that the coefficients are rational functions in $h$, rational
    functions contained in $k(Q)$.
\end{proof}

\begin{definition}\label{def-radial-part}
    In the setting of Lemma~\ref{lem-existence-radial-part}, 
    $\Rad'(x)$ is called the \emph{radial part} of $x$.

    Moreover, we define the \emph{shifted radial part}
    $\Rad(x)\in k(Q)[A]$ as follows:
    \[
        \Rad(x)(h) = \Rad'(x)(h-\rho),
    \]
    i.e. if $\Rad'(x) = \sum_{\mu\in A} a_\mu T_\mu$, we have
    \[
        \Rad(x) = \sum_{\mu\in A} a_{\mu}(\cdot -\rho) T_\mu.
    \]
\end{definition}

\begin{lemma}\label{lem-action-rad}
    Let $x\in Z_{\widetilde{\uq}}(\widetilde{\uqb})$ and
    $\phi\in E^{\epsilon}_{\uqb',\uqb}$.
    Then
    \[
        \Res(x\triangleright \phi)
        = \Rad(x) \Res(\phi),
    \]
    where $x\triangleright\phi\in\mathcal{A}$ is the function mapping
    $y\mapsto \phi(yx)$.
    Moreover, $\Rad$ is invariant under $W_\Sigma$.
\end{lemma}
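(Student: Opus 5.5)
The plan is to first establish the identity for the unshifted versions, $\Res'(x\triangleright\phi)=\Rad'(x)\Res'(\phi)$, and then obtain the shifted statement by conjugating with $e^\mu\mapsto q^{-\rho\cdot\mu}e^\mu$.

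Fix $\phi\in E^\epsilon$. By the remark after Definition~\ref{def-res}, evaluating $\Res'(\psi)$ at a weight amounts to pairing $\psi$ with the weight projectors. Equivalently, viewing $\Res'(\psi)$ as a function of $h\in Y\otimes_\Z\Q$, its value at $h$ is $\psi(K_h)$, where $e^\mu(h):=q^{\langle h,\mu\rangle}$. Hence
\[
\Res'(x\triangleright\phi)(h)=\phi(K_hx).
\]
Take $h$ in the Zariski dense set from Lemma~\ref{lem-existence-radial-part}. We may write $K_hx=\sum_\mu a_\mu(h)K_{h+\mu}+r$ with $r\in\widetilde{\uqb}'_+\widetilde{\uq}+\widetilde{\uq}\widetilde{\uqb}_+$. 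Since $\phi$ is zonal, it is left $\uqb'$- and right $\uqb$-invariant, so it vanishes on $r$. For the completed algebras one first extends $\phi$ to $\widetilde{\uq}$, which is possible because matrix coefficients of finite-dimensional modules extend to $K_h$ with rational $h$ after passing to a root of $q$ in $k=\overline{\C(q)}$. The invariance under $\widetilde{\uqb}$ then follows, since the extra $K_h$ with $h\in(Y\otimes\Q)^\Theta$ act trivially on $\epsilon$-spherical vectors; this is because $\langle Y^\Theta,2L\rangle=0$ and the supporting weights lie in $2L$ modulo $\Theta$-invariant shifts. We therefore get
\[
\phi(K_hx)=\sum_\mu a_\mu(h)\phi(K_{h+\mu})=\sum_\mu a_\mu(h)\Res'(\phi)(h+\mu)=\bigl(\Rad'(x)\Res'(\phi)\bigr)(h),
\]
because $T_\mu$ acts as translation by $\mu$ on functions of $h$. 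Both sides are rational in $h$ and agree on a dense set, so they agree identically. Shifting $h\mapsto h-\rho$ then gives the statement for $\Res$ and $\Rad$, by Definition~\ref{def-radial-part}.

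For the $W_\Sigma$-invariance of $\Rad(x)$, the idea is to use that $\Res$ maps $E^\epsilon$ onto $k[2L]^{W_\Sigma}$ after the $\rho$-shift, by the Lemma in Section~2. This image, viewed as a space of functions on $A$, is Zariski dense among $W_\Sigma$-invariant functions. Moreover, $x\triangleright\phi\in E^\epsilon$ again because $x$ centralises $\widetilde{\uqb}$: we have $(x\triangleright\phi)(yb)=\phi(ybx)=\phi(yxb)=\epsilon(b)\phi(yx)$, and left invariance is clear. So $\Rad(x)$ maps $W_\Sigma$-invariant polynomials to $W_\Sigma$-invariant ones. For $w\in W_\Sigma$, the operator $D:=\Rad(x)-w\Rad(x)w^{-1}$ therefore kills all of $\Res(E^\epsilon)=k[2L]^{W_\Sigma}$.

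It remains to show that a difference operator with coefficients in $k(Q)$ that annihilates all of $k[2L]^{W_\Sigma}$ is zero. This is the main obstacle, and here I would use a standard independence-of-characters argument. Take symmetric polynomials $m_\lambda$ whose leading monomials $e^{\lambda}$, for $\lambda$ deep in the dominant chamber, separate the finitely many translations $\mu$ occurring in $D$. The leading behaviour of $\sum_\mu d_\mu T_\mu m_\lambda$ is then $\sum_\mu d_\mu q^{\mu\cdot\lambda}e^\lambda$ plus terms from other Weyl conjugates. Varying $\lambda$ over a lattice cone, Artin's lemma forces every $d_\mu=0$. One must take care that the $\rho$-shift is precisely what makes the conjugation by $w$ act honestly, without a $\rho$-twist, on functions. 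That is the reason the invariance is stated for $\Rad$ rather than for $\Rad'$.
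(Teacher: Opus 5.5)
Your proof of the identity is the paper's computation. You evaluate at $K_h$, expand $K_hx$ via Lemma~\ref{lem-existence-radial-part}, let zonality kill the remainder, and shift by $\rho$. The paper substitutes $h-\rho$ directly and takes for granted that $\phi$ extends to $\widetilde{\uq}$ and vanishes on $\widetilde{\uqb}'_+\widetilde{\uq}+\widetilde{\uq}\widetilde{\uqb}_+$. You make this explicit, though the reason is simpler than the one you give: $K_hv=v$ for $h\in Y^\Theta$ already forces every weight of the spherical vector $v$ to pair to zero with $Y^\Theta$, hence with $(Y\otimes_\Z\Q)^\Theta$. The same holds for the spherical form on the left.

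The paper's proof gives no argument for the $W_\Sigma$-invariance, so your second half is a genuine addition, and it is sound. Since $x\triangleright\phi\in E^\epsilon$, the operator $\Rad(x)$ preserves $\operatorname{Res}(E^\epsilon)=k[2L]^{W_\Sigma}$, so $D=\Rad(x)-w\Rad(x)w^{-1}$ annihilates it.

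For the last step, Dedekind's lemma is cleaner than your leading-term sketch, which has to cope with the $d_\mu$ being rational functions rather than constants. The argument runs as follows.
\begin{itemize}
\item The $T_\mu$ are field automorphisms of the rational functions on $A$.
\item Their restrictions to the domain $k[2L]^{W_\Sigma}$ are pairwise distinct: $T_\nu m_\lambda=m_\lambda$ for all $\lambda\in 2L$ forces $\nu\perp 2L$, i.e.\ $\nu=0$.
\item Distinct multiplicative characters of a domain are linearly independent over any field containing their values, so every $d_\mu=0$.
\end{itemize}

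You are also right that the invariance of the image must be read for the shifted map, even though the lemma in Section~2 is stated for $\operatorname{Res}'$. Take $\mathsf{AI}_1$ with $\bm{c}_1=\bm{d}_1=q^{-1}$ and $\bm{s}=\bm{t}=0$. The zonal function on $L(2\omega_1)$ has $\operatorname{Res}'=e^{2\omega_1}+q^{-2}e^{-2\omega_1}$ but $\operatorname{Res}=q^{-1}(e^{2\omega_1}+e^{-2\omega_1})$. With the literal reading, your argument would give invariance of $\Rad'$ rather than $\Rad$.
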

\begin{proof}
    Write $\Rad'(x) = \sum_{\mu\in A} a_\mu K_\mu$, then
    \begin{align*}
        \Res(x\triangleright\phi)(h)
        &= \phi(K_{h-\rho} x)\\
        &= \sum_{\mu\in A} a_\mu(h-\rho) \phi(K_{h-\rho+\mu})\\
        &= \sum_{\mu\in A} a_\mu(h-\rho) \Res(\phi)(h+\mu)\\
        &= \Rad(x)\Res(\phi)(h).\qedhere
    \end{align*}
\end{proof}

\begin{definition}\label{def-hc}
    Define $\gamma\colon \widetilde{\uq}\to k[A]$, the
    \emph{Harish-Chandra map} $\gamma$, as follows:
    \[
        \widetilde{\uq} \cong \mathcal{N}^-\otimes k[A]\otimes\uqb
        \xrightarrow{\epsilon\otimes\id\otimes\epsilon} k[A].
    \]
\end{definition}

\begin{lemma}\label{lem-gamma-surjective-morphism}
    $\gamma$'s restriction to $Z_{\widetilde{\uq}}(\widetilde{\uqb})$
    is a ring homomorphism mapping surjectively to (suitably $\rho$-shifted)
    $W_\Sigma$-invariants of $k[2\tilde{L}]$ (where we obtain 
    $\tilde{L}$ from the definition of $L$ by replacing $X$ with
    $X\otimes\Q$).
    Moreover, we have
    \[
        \forall \lambda\in 2L, \phi\in E^\epsilon(\lambda), x\in Z_{\widetilde{\uq}}(\widetilde{\uqb}): \quad x\triangleright\phi = \gamma(x)(\lambda)\phi.
    \]
\end{lemma}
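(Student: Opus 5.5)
We work throughout with the decomposition $\widetilde{\uq}\cong\mathcal{N}^-\otimes k[A]\otimes\widetilde{\uqb}$ from Definition~\ref{def-hc}, writing $\widetilde{\uq}=k[A]\oplus\bigl(\mathcal{N}^-_+k[A]\widetilde{\uqb}+\widetilde{\uq}\widetilde{\uqb}_+\bigr)$.

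\emph{Step 1: multiplicativity.} Let $x,y\in Z_{\widetilde{\uq}}(\widetilde{\uqb})$ and write $y=\gamma(y)+y_-+y_+$ with $y_-\in\mathcal{N}^-_+k[A]\widetilde{\uqb}$ and $y_+\in\widetilde{\uq}\widetilde{\uqb}_+$.
\begin{itemize}
    \item Since $y$ centralises $\widetilde{\uqb}$, we have $\widetilde{\uqb}_+y=y\widetilde{\uqb}_+$. Hence the left ideal $\widetilde{\uq}\widetilde{\uqb}_+$ is stable under right multiplication by $y$.
    \item The first summand satisfies $x\gamma(y)\equiv\gamma(x)\gamma(y)$ modulo the kernel. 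To see this, write $x=\gamma(x)+x_-+x_+$ and note that $x_-\gamma(y)$ stays in $\mathcal{N}^-_+k[A]\widetilde{\uqb}$. This holds because $k[A]$ normalises $\mathcal{N}^-_+$, which is a sum of weight spaces.
    \item The second summand requires $x\,y_-\in\ker\gamma$. I would handle it by the standard trick of evaluating on spherical vectors. For $\lambda\in 2L$, pick $v_\lambda\in L(\lambda)$ spanning the $\widetilde{\uqb}$-invariants, and pick a lowest-weight-type functional $f$ that kills $\mathcal{N}^-_+L(\lambda)$. Then $f(zv_\lambda)=\gamma(z)(\lambda')$ for a suitably shifted $\lambda'$. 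Since $x$ acts on the one-dimensional $\widetilde{\uqb}$-invariants of $L(\lambda)$ by a scalar, this gives $\gamma(xy)(\lambda')=\gamma(x)(\lambda')\gamma(y)(\lambda')$.
    \item Zariski density of $2L$ in the torus then yields multiplicativity.
\end{itemize}
The nonvanishing of $f(v_\lambda)$, i.e.\ that the spherical vector has nonzero component along the extremal weight after projecting along $\mathcal{N}^-$, is what I expect to be delicate. I would take it from Letzter's \cite{Le03}, in the form used in \cite{qmsf}.

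\emph{Step 2: the eigenvalue formula.} Let $\phi\in E^\epsilon(\lambda)$. Writing $\phi=c_{g,v_\lambda}$ with $v_\lambda$ the $\widetilde{\uqb}$-spherical vector, we have $x\triangleright\phi=c_{g,xv_\lambda}$. Since $x$ commutes with $\widetilde{\uqb}$, the vector $xv_\lambda$ is again $\widetilde{\uqb}$-invariant. By multiplicity one (the multiplicity-free statement cited above), $xv_\lambda=c\,v_\lambda$ for a scalar $c$. The scalar is read off through the computation in Step 1: $c=\gamma(x)(\lambda)$, with the $\rho$-shift absorbed into the convention \enquote{suitably $\rho$-shifted}. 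Alternatively, one can combine Lemma~\ref{lem-action-rad} with the leading coefficient of $\Res(\phi)$.

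\emph{Step 3: image and surjectivity.} Lemma~\ref{lem-action-rad} shows that $\Rad(x)$ is $W_\Sigma$-invariant and has $\Res(\phi)$ as eigenfunctions. Together with Step 2, the function $\lambda\mapsto\gamma(x)(\lambda)$ on $2L$ is therefore given by the eigenvalues of a $W_\Sigma$-invariant operator on the $W_\Sigma$-symmetric polynomials. Via the isomorphism $E^\epsilon\cong k[2L]^{W_\Sigma}$, this forces $\gamma(x)$ to be shifted $W_\Sigma$-invariant, and density extends this to $k[2\tilde L]$.

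For surjectivity I would construct enough central elements. Centralisers contain the image of the centre $Z(\widetilde{\uq})$, whose Harish-Chandra images restrict to shifted $W$-invariants; restricted to the $\Theta$-anti-invariant part, these give $W_\Sigma$-invariants. Following Letzter \cite{Le04}, every shifted $W_\Sigma$-invariant in $k[2\tilde L]$ is obtained from such restrictions in the completed setting $\widetilde{\uq}$. This surjectivity, especially in the non-reduced and non-standard cases, is the main obstacle. I would first try to adapt the corresponding statement of \cite{qmsf}, generalised as in Lemma~\ref{lem-existence-radial-part}.
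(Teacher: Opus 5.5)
\textbf{Step 3 has a genuine gap.}

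\emph{Surjectivity.} Your route to surjectivity goes through the centre $Z(\widetilde{\uq})$, and that cannot work in general. In \cite{Le08} Letzter proves a quantum version of Helgason's theorem: the image of the centre is the full ring of dotted $W_\Sigma$-invariants if and only if the symmetric pair is not of one of four exceptional types. Classically these are $\mathsf{EIII}$, $\mathsf{EIV}$, $\mathsf{EVII}$, $\mathsf{EIX}$; for example, for $\mathsf{EIV}$ the restricted Weyl group of type $\mathsf{A}_2$ has a cubic invariant, while $E_6$ has none. $\mathsf{EIII}$ and $\mathsf{EVII}$ are used later in this very paper. So you need genuinely non-central elements of $Z_{\widetilde{\uq}}(\widetilde{\uqb})$. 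Constructing enough of them is exactly the content of \cite[Theorem~A]{Le08}, which the paper cites after identifying $\gamma$ with $S\circ\mathcal{P}_{S(\uqb)}\circ S$.

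\emph{Image in $k[2\tilde{L}]$.} The values $\gamma(x)(\lambda)$ for $\lambda\in 2L^+$ determine $\gamma(x)\in k[A]$. They say nothing, however, about which $K_\mu$ occur in it. So no density argument gives the integrality $\frac{\alpha\cdot\mu}{\alpha\cdot\alpha}\in\Z$ that defines $2\tilde{L}$. This is the analogue of \cite[Theorem~4.6]{Le08} that the paper invokes.

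\emph{Dotted invariance.} The eigenvalues are known only on the dominant cone, and the dotted $W_\Sigma$-action moves points out of it. Being diagonal in the basis of restricted zonal spherical functions puts no constraint on the eigenvalues by itself. So the $W_\Sigma$-invariance of $\Rad(x)$ gives dotted invariance of $\gamma(x)$ only after an analysis of the coefficients of $\Rad(x)$ expanded in the other Weyl chambers, which you do not provide.

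\textbf{Steps 1 and 2.} These are essentially sound and more self-contained than the paper's citations of \cite[Theorem~4.4, Lemma~5.3]{Le08}, but one claim in them is wrong. In the second bullet of Step~1, $x_+\gamma(y)$ need not lie in $\ker\gamma$. The reason is that $K_\mu$ ($\mu\in A$) rescales $F_i$ and $T_{w_\bullet}(E_{\tau(i)})K_{-\alpha_i}$ by inverse factors, so it does not normalise $\widetilde{\uqb}$. Concretely, for $\mathsf{AI}_1$ with $x=B_1^2$ and $y$ the Casimir element, one has $\gamma(x)=0$ but $\gamma(x\gamma(y))\neq0$.

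Expanding $x$ instead of $y$ avoids this. Write $xy=\gamma(x)y+x_-y+x_+y$. Then $x_-y\in\mathcal{N}^-_+\widetilde{\uq}$, and $x_+y\in\widetilde{\uq}\widetilde{\uqb}_+y=\widetilde{\uq}y\widetilde{\uqb}_+$. Finally $\gamma(x)y\equiv\gamma(x)\gamma(y)$ because $k[A]$ normalises $\mathcal{N}^-_+$.

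Your spherical-vector bullet is valid on its own. The nonvanishing you call delicate is in fact automatic: $f(zv_\lambda)=\gamma(z)(\lambda)f(v_\lambda)$ for all $z\in\widetilde{\uq}$, and $\widetilde{\uq}v_\lambda=L(\lambda)$, together force $f(v_\lambda)\neq0$. The same identity gives Step~2 with eigenvalue exactly $\gamma(x)(\lambda)$; no $\rho$-shift enters there.
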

\begin{proof}
    Note that our definition of $\gamma$ corresponds to
    $S\circ\mathcal{P}_{S(\uqb)}\circ S$ from \cite{Le08}.
    
    By loc.cit., Theorem~4.4, $\gamma$ is a ring homomorphism.
    By a proof analogous to loc.cit., Theorem~4.6, $\gamma$ 
    maps to $k[2\tilde{L}]$.
    By loc.cit., Theorem~A, $\gamma$ is surjective onto the
    set of dotted $W_\Sigma$-invariants in $k[2\tilde{L}]$.
    Lastly, by a proof analogous to loc.cit., Lemma~5.3,
    $\gamma$ describes the eigenvalues of elementary MSF under
    the action of $Z_{\widetilde{\uq}}({\widetilde{\uqb}})$.
\end{proof}

\begin{lemma}\label{lem-can-choose-good-operator}
    For $\mu\in 2L^+$, there exists
    $x\in Z_{\widetilde{\uq}}(\widetilde{\uqb})$ such that
    $\gamma(x)=K_\mu + \text{l.o.t.}$ and such that
    $\Rad(x)$ is supported on elements in the saturated hull of
    $T_\mu$.
\end{lemma}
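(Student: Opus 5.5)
We would start from a concrete central-type element. By Lemma~\ref{lem-gamma-surjective-morphism}, $\gamma$ maps $Z_{\widetilde{\uq}}(\widetilde{\uqb})$ onto the (dotted) $W_\Sigma$-invariants of $k[2\tilde L]$, so we can pick some $x_0$ with $\gamma(x_0)=\sum_{w\in W_\Sigma/\mathrm{Stab}(\mu)}K_{w\mu}+\text{l.o.t.}$, and in particular with top term $K_\mu$ for the dominance order. The task is then to modify $x_0$ by elements of the centraliser with smaller $\gamma$-image, so that every $T_\nu$ with $\nu$ outside the saturated hull of $\mu$ disappears from $\Rad(x)$. For this, the link between $\gamma(x)$ and $\Rad(x)$ is needed. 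Comparing Lemma~\ref{lem-action-rad} with the eigenvalue statement of Lemma~\ref{lem-gamma-surjective-morphism} gives, for every $\lambda\in 2L^+$ and nonzero $\phi\in E^\epsilon(\lambda)$,
\[
\Rad(x)\Res(\phi)=\gamma(x)(\lambda)\Res(\phi).
\]
Here $\Res(\phi)$ is a $W_\Sigma$-symmetric polynomial with leading monomial $e^{\lambda}$ (up to the $\rho$-shift) in the dominance order.

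The core step is a triangularity argument. Write $\Rad(x)=\sum_\nu a_\nu T_\nu$, with $W_\Sigma$-invariance of $\Rad$ (Lemma~\ref{lem-action-rad}). Take $\nu$ maximal in dominance among the shifts with $a_\nu\ne0$. First, $\nu$ must be dominant up to $W_\Sigma$. Second, acting on $\Res(\phi)$ for $\lambda$ deep in the dominant chamber, the coefficient of $e^{\lambda+\nu}$ in $\Rad(x)\Res(\phi)$ comes only from $a_\nu T_\nu e^\lambda$, after expanding the rational coefficients $a_\nu\in k(Q)$ as power series in $e^{-\alpha}$ for positive restricted roots. These expansions only lower weights, which is the point where the content of Section~\ref{sec-graded} (the power series $p$) is morally used. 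The right-hand side has support in $\lambda-Q^+$, so $\nu\le 0$ after accounting for the expansion. Symmetrically, using the opposite chamber, the support of $\Rad(x)$ is controlled by the support of $\gamma(x)$.

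From this we would conclude that $\Rad(x)$ is supported in the $W_\Sigma$-saturated hull of the support of $\gamma(x)$. Since $\gamma(x_0)$ has top term $K_\mu$, its support lies in the saturated hull of $\mu$, which gives the claim directly. If instead the argument only bounds the support of $\Rad(x_0)$ by a hull of some larger set, we would induct on the dominance order: subtract elements $x_\nu$ with $\gamma(x_\nu)=K_\nu+\text{l.o.t.}$ for the offending dominant $\nu$. These $x_\nu$ exist by the surjectivity of $\gamma$, and since $\gamma$ is injective on the relevant filtration pieces the leading term $K_\mu$ is preserved. The induction terminates because $2L^+$ has finitely many elements below any given one.

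The main obstacle is the second step: controlling the supports of the rational coefficients $a_\nu$. They lie in $k(Q)$, not in $k[Q]$, so "support" has to be read via power series expansion in a fixed chamber. We must check that only finitely many $T_\nu$ occur, and that the leading coefficient of the extreme shift is nonzero, so that the eigenvalue comparison really forces $\nu$ to lie in the hull. The first thing to try is to use the Zariski density of $2L^+$ inside the relevant torus: the identity $\Rad(x)\Res(\phi_\lambda)=\gamma(x)(\lambda)\Res(\phi_\lambda)$ for all such $\lambda$ then pins down the extreme terms of $\Rad(x)$ by comparing coefficients of $e^{\lambda+\nu}$ as rational functions of $q^\lambda$.
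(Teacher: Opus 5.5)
Your first step, using the surjectivity in Lemma~\ref{lem-gamma-surjective-morphism} to produce $x_0$ with $\gamma(x_0)=K_\mu+\lot$, is exactly what the paper does. The gap is in how you then bound the support of $\Rad(x)$: the eigenvalue identity $\Rad(x)\mathrm{Res}(\phi_\lambda)=\gamma(x)(\lambda)\mathrm{Res}(\phi_\lambda)$ cannot deliver it by triangularity.

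To begin with, $T_\nu$ does not move monomials: $T_\nu e^\lambda=q^{\nu\cdot\lambda}e^\lambda$. So $a_\nu T_\nu e^\lambda$ contributes nothing at $e^{\lambda+\nu}$, and a shift $\nu$ is visible only through the $\lambda$-dependence $q^{\nu\cdot\lambda}$. Done correctly (expand each $a_\nu$ in the dominant chamber and compare top coefficients for Zariski-dense $\lambda$), the comparison tells you only two things: the expansions of the $a_\nu$ have no positive terms, and $\sum_\nu \ct(a_\nu)\,T_\nu$ reproduces $\gamma(x)$ up to the $\rho$-shift. That says nothing about shifts whose coefficient has vanishing constant term. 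A summand like $\frac{e^{-\beta}}{1-e^{-2\beta}}T_\nu$ with $\nu$ far outside the hull of $\mu$ is invisible at top order. Its coefficient is again of lower order when expanded in the opposite chamber, so your \emph{symmetric} argument does not catch it either.

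The lower-order coefficients of the identity involve the unknown lower terms of $\mathrm{Res}(\phi_\lambda)$, which is precisely what the paper is trying to determine. So they give no constraint without further input. Invoking $p$ here is circular: Proposition~\ref{prop-gamma-res-gr}(i) and Theorem~\ref{thm-initial-term-given-by-p} already start from $\Rad'(x)=\sum_{\nu\le\mu}a_\nu T_\nu$, i.e.\ from the very support bound you want.

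The fallback induction cannot repair this. A difference operator with coefficients in $k(Q)$ is determined by its action on $W_\Sigma$-invariant polynomials: for generic $h$ the points $h+\nu$ lie in distinct $W_\Sigma$-orbits, and invariants separate orbits. Hence $\Rad(x)$ is determined by $\gamma(x)$, and there is nothing to cancel. The offending shifts are not $\le\mu$, so subtracting some $x_\nu$ with $\gamma(x_\nu)=K_\nu+\lot$ destroys $\gamma(x)=K_\mu+\lot$. Subtracting elements with lower $\gamma$-image helps only if you already know the support bound for them. The finiteness of $\{\nu\in 2L^+:\nu\le\mu\}$ is beside the point, since the offending $\nu$ are not below $\mu$.

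The missing ingredient is structural, not spectral. The bound comes from how $\Rad'(x)$ is computed: write $K_hx\in\widetilde{\uqb}'\widetilde{\uq}^0\widetilde{\uqb}$ and track filtration degrees. All generators of $\widetilde{\uqb}$ and $\widetilde{\uqb}'$ have degree $\le0$, so only $K_{h+\nu}$ with $\nu$ at most the degree of $x$ survive (compare the degree argument in the proof of Proposition~\ref{prop-gamma-res-gr}). $W_\Sigma$-invariance of $\Rad$ then gives the saturated hull. That is the content of \cite[Lemma~5.8]{Le08}, which together with the surjectivity of $\gamma$ constitutes the paper's proof.
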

\begin{proof}
    Follows from the surjectivity assertion in Lemma~\ref{lem-gamma-surjective-morphism}
    and \cite[Lemma~5.8]{Le08}.
\end{proof}

If $x$ and $\mu$ are as in Lemma~\ref{lem-can-choose-good-operator}, we can write
$\Rad(x) = aK_\mu+\lot$ for a rational function $a\in k(Q)$.
We will spend the following four sections to determine $a$ for the
following reason:
as showed in Lemma~\ref{lem-p-determines-DO} (in that context
we have $pT_\omega p^{-1}=a T_\omega$), $\mu$ is pseudominuscule for $2\Sigma$, $a$ (and $\gamma(x)$)
already determines $\Rad(x)$.

In particular, as will be shown in Theorem~\ref{thm-initial-term-given-by-p}, there is a power
series $p$ that is independent of $x$ (or $\mu$) such that
$\Rad(x) = p T_\mu p^{-1} + \lot$.

\section{Graded Rings and Modules}\label{sec-graded}
In this section, we will show $p$'s existence.
To achieve this, it is useful to rigorously be able to neglect
lower-order terms, so we introduce a filtration on $\widetilde{\uq}$
and consider its associated graded ring.
In particular, this construction is necessary to make Lemma~\ref{lem-definition-p'} possible and to show that $p'$ (and hence also $p$)
is independent of $\mu$.

\begin{lemma}
    Let $\mu\in A$ and define
    \[
        \widetilde{\uq}_\mu = \uq^- K_\mu k[T] S(\uq^+)
    \]
    and
    \[
        \mathcal{F}_\lambda(\widetilde{\uq}) := \bigoplus_{\lambda-\mu\in Q(\Sigma)^+} \widetilde{\uq}_\mu.
    \]
    Then $(\mathcal{F}_\lambda(\widetilde{\uq}))_{\lambda\in A}$ is
    a filtration of rings.
\end{lemma}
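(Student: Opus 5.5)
The statement has three parts: the spaces $\mathcal{F}_\lambda(\widetilde{\uq})$ exhaust $\widetilde{\uq}$, they increase with $\lambda$, and they are multiplicative, $\mathcal{F}_\lambda\mathcal{F}_{\lambda'}\subset\mathcal{F}_{\lambda+\lambda'}$. I will prove them in that order. Throughout, $\mu$ is tied to $\Sigma$ via $\mu\mapsto\tilde\mu$, and $\lambda-\mu\in Q(\Sigma)^+$ is read inside $A$.

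The first step is to check that $\widetilde{\uq}=\bigoplus_{\mu\in A}\widetilde{\uq}_\mu$. For this I use the triangular decomposition $\widetilde{\uq}\cong\uq^-\otimes\widetilde{\uq}^0\otimes S(\uq^+)$. This is the stated decomposition with the order of the factors permuted and with $S$ applied to $\uq^+$; $S$ is bijective on $\uq^+\widetilde{\uq}^0$ up to toral factors. I then write $\widetilde{\uq}^0=k[A]\otimes k[T]$, using $Y\otimes\Q=T\oplus A$. Hence $\widetilde{\uq}\cong\bigoplus_{\mu}\uq^-K_\mu k[T]S(\uq^+)$.

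Monotonicity is then immediate. If $\lambda\le\lambda'$ in the $Q(\Sigma)^+$-order, then $\lambda-\mu\in Q(\Sigma)^+$ implies $\lambda'-\mu\in Q(\Sigma)^+$.

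The real content is multiplicativity. It suffices to show that for $u^-K_\mu t\,s^+\in\widetilde{\uq}_\mu$ and $v^-K_\nu t'\,s'^+\in\widetilde{\uq}_\nu$ the product lies in $\bigoplus_{\mu+\nu-\kappa\in Q(\Sigma)^+}\widetilde{\uq}_\kappa$. The $K$'s and $k[T]$ commute past $v^-$ at the cost of scalars, so the only issue is reordering $s^+v^-$, with $s^+\in S(\uq^+)$ and $v^-\in\uq^-$. Here $S(\uq^+)$ is spanned by elements $K_{-\beta}E$-monomials of weight $\beta\in Q^+$.

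Using the commutation relation $[E_i,F_j]=\delta_{ij}[K_{\alpha_i}]_{q_i}$, I rewrite $s^+v^-$ as a sum of terms $v''^-\,K_{-\gamma}\,(\text{toral})\,s''^+$. In each term, the toral factors that arise, the $K_{\pm\alpha_i}$ from the commutators together with the shift of the $K_{-\beta}$, contribute $K_{-\delta}$ with $\delta\in Q^+$. I expect this to work out because $K_{-\alpha_i}E_iF_i=F_iK_{-\alpha_i}E_i+(\text{terms in }K_0,K_{-2\alpha_i})$, so $[K_{\alpha_i}]K_{-\alpha_i}$ gives $K_0$ and $K_{-2\alpha_i}$. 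Projecting $K_{-\delta}$ onto the $A$-component gives $K_{-\tilde\delta}$ times an element of $k[T]$. Since $\delta\in Q^+$, its restriction $\tilde\delta$ lies in $Q(\Sigma)^+$ (restricted roots of positive roots are nonnegative combinations of restricted simple roots, or zero). Hence each resulting $\kappa$ equals $\mu+\nu-\tilde\delta$ with $\tilde\delta\in Q(\Sigma)^+$, as required.

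I expect the main obstacle to be this bookkeeping: verifying that every commutator term produces only nonpositive $K$-shifts, i.e. working with $S(\uq^+)$ rather than $\uq^+$. The signs must go the right way, and the $\frac12$-normalisation of $\tilde{\ }$ must be compatible with $Q(\Sigma)^+$. If the sign turns out reversed, I would instead use the opposite order convention on $A$.
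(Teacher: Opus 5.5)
Your proof is correct and essentially the paper's. Both arguments rest on one fact: the only inhomogeneous relation is $[E_i,F_j]=\delta_{ij}[K_{\alpha_i}]_{q_i}$, and after normalising $E_i$ to $S(E_i)\propto K_{-\alpha_i}E_i$ it reads $K_{-\alpha_i}E_iF_j=q^{\alpha_i\cdot\alpha_j}F_jK_{-\alpha_i}E_i+\delta_{ij}\frac{1-K_{-2\alpha_i}}{q_i-q_i^{-1}}$, so it produces only terms of degree $0$ and $-2\tilde{\alpha}_i$. The paper phrases this through the degrees $0,\tilde{\alpha}_i,\tilde{\mu}$ of $F_i,E_i,K_\mu$, while you reorder elements of the normal form $\uq^-K_\mu k[T]S(\uq^+)$, which makes explicit why a word of degree $d$ lies in $\mathcal{F}_d$. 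The sign does go the right way, so your fallback to the opposite order on $A$ is unnecessary; induction on word length with the displayed relation gives exactly $S(\uq^+)\uq^-\subset\sum_{\delta\in Q^+}\uq^-K_{-2\delta}S(\uq^+)$, which is the computation the paper carries out in Lemma~\ref{lem-action-on-uqminus}.
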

\begin{proof}
    To show this, we note that the vector space grading assigns to the
    generators $F_i, E_i, K_\mu$ the degrees $0, \tilde{\alpha}_i, \tilde{\mu}$, respectively.
    Furthermore, the relations $K_\lambda K_\mu = K_{\lambda+\mu}$
    \[
        K_\mu E_i K_{-\mu} = q^{\langle\mu,\alpha_i\rangle} E_i,\qquad
        K_\mu F_i K_{-\mu} = q^{-\langle\mu,\alpha_i\rangle} F_i,\qquad
    \]
    and the Serre relations are homogeneous, and the relation
    \[
        \comm{E_i}{F_j} = \delta_{ij} \frac{K_i-K_{i}^{-1}}{q_i-q_i^{-1}}
    \]
    is homogeneous modulo terms of order lower than $\tilde{\alpha}_i$.
    Consequently, $\mathcal{F}_\lambda\mathcal{F}_\mu\subset\mathcal{F}_{\lambda+\mu}$.
\end{proof}

\begin{remark}
    Our filtration is related to the one defined in
    \cite[Chapter~5]{Le04}:
    Let $\alpha\in X$ be written as
    \[
        \alpha = \sum_{i\in I_\circ} n_i \alpha_i
    \]
    and let $n=\sum_{i\in I_\circ} n_i$.
    The subspace $S(\mathcal{F}_{h_\alpha})$ then consists of
    elements of degree $n$ or less (in Letzter's language).
\end{remark}

\begin{lemma}\label{lem-adr-invariance-of-filtration}
    For every $\mu\in A$, $\mathcal{F}_\mu(\widetilde{\uq})$ is $\adr(\uq)$-invariant.
\end{lemma}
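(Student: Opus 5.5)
Since $\adr$ is an action of $\uq$ by linear maps (anti-multiplicative in the acting element), it suffices to check invariance of $\mathcal{F}_\mu(\widetilde{\uq})$ under $\adr(g)$ for the generators $g\in\{E_i,F_i,K_h\}$ of $\uq$. The key tool is the twisted derivation rule
\[
    \adr(x)(yz) = \adr(x_{(1)})(y)\,\adr(x_{(2)})(z),
\]
which lets us test invariance on the algebra generators $F_j$, $K_\nu$ ($\nu\in A$), $K_t$ ($t\in T$) and $S(E_j)=-K_{-\alpha_j}E_j$ of the pieces $\uq^-K_\nu k[T]S(\uq^+)$. The filtration is multiplicative by the previous lemma, and each tensor factor $g_{(1)}, g_{(2)}$ of $\Delta(g)$ is again a generator times a $K$. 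So it is enough to show that $\adr(g)$ maps each algebra generator of degree $\nu$ into $\mathcal{F}_\nu$.

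For $g=K_h$, $\adr(K_h)$ acts by the scalar $q^{\pm\langle h,\cdot\rangle}$ on weight spaces. It therefore preserves every $\widetilde{\uq}_\nu$, and in particular $\mathcal{F}_\mu$.

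For $g=F_i$ or $E_i$, I would compute explicitly using $\Delta(E_i)=E_i\otimes 1+K_i\otimes E_i$ and $\Delta(F_i)=F_i\otimes K_i^{-1}+1\otimes F_i$ (or whichever convention \cite{Lus10} fixes). Then $\adr(E_i)(y)$ and $\adr(F_i)(y)$ are $q$-commutators of $y$ with $E_i$ or $F_i$, up to factors $K_i^{\pm1}$. We distinguish the following cases.
\begin{itemize}
    \item Applied to $F_j$: the results are $F_j$-multiples of $K$'s, or $[E_i,F_j]$-type terms equal to $\delta_{ij}(K_i-K_i^{-1})/(q_i-q_i^{-1})$ times $K$'s. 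Now $K_i^{\pm1}$ has $A$-component $\pm\tilde\alpha_i$, and the $K$ factors coming from the coproduct carry the compensating $K_i^{-1}$. So all resulting terms have degree $\le 0$ in the $Q(\Sigma)^+$ order, i.e. they lie in $\mathcal{F}_0$.
    \item Applied to $S(E_j)$: $\adr(E_i)$ produces an $S(\uq^+)$ element of degree $\tilde\alpha_i+\tilde\alpha_j$ times a $K$. The apparent increase of degree is cancelled because the $\adr$ formula involves $S(E_i)=-K_{-\alpha_i}E_i$, whose total degree is $\tilde\alpha_i-\tilde\alpha_i=0$.
    \item Applied to $K_\nu$: the result is a multiple of $(1-q^{c})$ times $S(E_i)K_\nu$ (degree $\nu$), respectively $F_iK_\nu$ (degree $\nu$).
\end{itemize}

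The main obstacle is the bookkeeping of degrees of the $K$ factors in the coproduct. The decisive point is that $E_i$ never appears alone but always as $S(E_i)=-K_{-\alpha_i}E_i$ in $\adr$, so every $\adr(g)$ has degree $0$ with respect to the grading of the previous lemma. The only non-homogeneous contributions arise from the relation $[E_i,F_j]$, and these are strictly lower, so they land in $\mathcal{F}_\nu$. Writing $\adr(g)(y)$ as $S(g_{(1)})\,y\,g_{(2)}$, we then conclude $\adr(g)(\mathcal{F}_\mu)\subset\mathcal{F}_0\mathcal{F}_\mu\mathcal{F}_0\subset\mathcal{F}_\mu$ by multiplicativity. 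This one-line argument could even replace the case analysis, provided I verify that $S(g_{(1)})$ and $g_{(2)}$ each lie in $\mathcal{F}_0$. That holds for $F_i$ and $K_h$ (with $A$-part $0$, or with $K_{-\alpha_i}$, which has negative degree). For $E_i$, only the product $S(E_i)$ lies in $\mathcal{F}_0$, while the bare $E_i$ appearing as $g_{(2)}$ has degree $\tilde\alpha_i$. For this term I would group it with the accompanying $K_i$ from $g_{(1)}$, which gives a factor $K_i^{-1}$ on the left, and commute that factor through $y$ (a scalar on weight spaces) to recover degree $0$.
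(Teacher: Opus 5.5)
Your proof is correct, but its main line differs from the paper's. The paper never passes to algebra generators of $\widetilde{\uq}_\mu$ and never uses the twisted Leibniz rule. For an arbitrary $x\in\widetilde{\uq}_\mu$ it first observes that $\adr(K_h)$ only rescales each monomial $F_I f(K)K_\mu S(E_J)$, so it preserves $\widetilde{\uq}_\mu$. It then writes
\[
\adr(E_i)(x)=S(E_i)\,x-\adr(K_{\alpha_i})(x)\,S(E_i),\qquad \adr(F_i)(x)=x\,F_i-F_i\,\adr(K)(x),
\]
with $K=K_{\alpha_i}^{\pm1}$ depending on the coproduct convention. Since $S(E_i),F_i\in\widetilde{\uq}_0$, multiplicativity of the filtration finishes the proof. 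This is exactly your closing ``one-line argument'' once the $K$-factors are regrouped.

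Your main route also works. You apply $\adr(x)(yz)=\adr(x_{(1)})(y)\,\adr(x_{(2)})(z)$ over the iterated coproduct, whose tensor components are only $E_i,K_{\alpha_i},1$ (resp.\ $F_i,K_{\alpha_i}^{-1},1$). You then check the generators $F_j$, $K_\nu$, $K_t$, $S(E_j)$ one by one. The $\adr(K_{\alpha_i}^{\pm1})$ factors are harmless because these generators are weight vectors, so those factors merely rescale them. This route costs more bookkeeping, but it shows explicitly where the lower-order terms come from. With your coproduct convention they are only
\[
\adr(E_i)(F_i)=\adr(F_i)(S(E_i))=-\frac{1-K_{\alpha_i}^{-2}}{q_i-q_i^{-1}},
\]
which has degree $\le 0$.

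Two corrections to your write-up.

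First, in the one-line version, the claim that $S(g_{(1)})$ and $g_{(2)}$ lie in $\mathcal{F}_0$ is false for $g=F_i$. With your convention $S(F_i)=-F_iK_{\alpha_i}$, which has degree $\tilde\alpha_i$. So $F_i$ needs the same regrouping you gave for $E_i$, namely $S(F_i)\,y\,K_{\alpha_i}^{-1}=-F_i\,\adr(K_{\alpha_i}^{-1})(y)$. Likewise $K_h$ with nonzero $A$-part does not lie in $\mathcal{F}_0$, but you rightly treat $K_h$ separately.

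Second, in the $S(E_j)$ case there is no ``apparent increase of degree'' to cancel. $\adr(E_i)(S(E_j))$ is a combination of $S(E_i)S(E_j)$ and $S(E_j)S(E_i)$, so it lies in $S(\uq^+)\subset\widetilde{\uq}_0$ directly by the definition of the grading.
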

\begin{proof}
    Let $x\in \uq_\mu$, say
    \[
        x = \sum_i F_{I_i} f_i(K) K_\mu S(E_{J_i})
    \]
    for multi-indices $I_i, J_i$, and polynomials
    $f_i\in k[T]$.
    Then
    \[
        \adr(K_h)(x) = \sum_i q^{\langle h, \wt(J_i)-\wt(I_i)\rangle}
        F_{I_i} f_i(K) K_\mu S(E_{J_i})\in\uq_\mu.
    \]
    For $j\in I$, we furthermore have
    \begin{align*}
        \adr(E_i)(x) &= S(E_i)x
        - \adr(K_{\alpha_i})(x) S(E_i)\in\mathcal{F}_\mu(\widetilde{\uq}),\\
        \adr(F_i)(x) &= x F_i - F_i \adr(K_{\alpha_i})(x)\in\mathcal{F}_\mu(\widetilde{\uq}).
    \end{align*}
\end{proof}

\begin{lemma}
    We have $\uq_\bullet, k[T],\uq^- \subset \widetilde{\uq}_0$.
    Furthermore, $B_i \in \widetilde{\uq}_0 \oplus \widetilde{\uq}_{-\tilde{\alpha}_i}$ where the $-\tilde{\alpha}_i$ component is
    proportional to the non-standard parameter $\bm{s}_i$.
\end{lemma}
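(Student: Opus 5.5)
The plan is to check each generator directly against the definition $\widetilde{\uq}_\mu = \uq^- K_\mu k[T] S(\uq^+)$. A monomial $F_I K_h S(E_J)$ lies in $\widetilde{\uq}_\mu$ precisely when its $A$-component of $h$ is $\mu$. Recall the degrees from the proof of the filtration lemma: $F_i$ has degree $0$, $S(E_i)$ (equivalently $K_{-\alpha_i}E_i$ up to a scalar) has degree $\tilde{\alpha}_i$, and $K_h$ has degree the projection of $h$ to $A$. Everything then reduces to writing each element in the normal ordering $\uq^-\cdot\widetilde{\uq}^0\cdot S(\uq^+)$ and reading off the $A$-part of the Cartan factor.

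First, $\uq^-\subset\widetilde{\uq}_0$ is immediate, since $\uq^-=\uq^-K_0\cdot 1\cdot 1$. Likewise $k[T]\subset\widetilde{\uq}_0$, because $K_h$ with $h\in T=(Y\otimes\Q)^\Theta$ has $A$-component $0$. For $\uq_\bullet$, note that $\Theta=-w_\bullet\tau$ fixes $\alpha_j$ for $j\in I_\bullet$: indeed $\tau(\alpha_j)=-w_\bullet\alpha_j$, so $\Theta(\alpha_j)=w_\bullet w_\bullet\alpha_j=\alpha_j$. Hence $K_{\pm\alpha_j}\in k[T]$. Moreover $E_j = K_{\alpha_j}\cdot(K_{-\alpha_j}E_j)$, and $K_{-\alpha_j}E_j$ is a scalar multiple of $S(E_j)$, so $E_j\in k[T]S(\uq^+)\subset\widetilde{\uq}_0$. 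Since the subspaces $\widetilde{\uq}_0$ and the triangular-decomposition reordering are compatible, $\uq_\bullet\cong\uq_\bullet^-\otimes k[K_{\alpha_j}^{\pm1}]\otimes\uq_\bullet^+$ lands in $\uq^-k[T]S(\uq^+)$. One needs that $S(\uq^+_\bullet)$ is a subalgebra of $k[T]S(\uq^+)$ generated by these elements, which is clear.

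The main step is $B_i$. The term $F_i$ lies in $\widetilde{\uq}_0$. For the middle term, $T_{w_\bullet}(E_{\tau(i)})$ lies in $\uq^+$ of weight $w_\bullet\alpha_{\tau(i)}$. Since $T_{w_\bullet}(E_{\tau(i)})\in\adr(\uq_\bullet)$-type expressions are built from $E$'s, write it as $K_{w_\bullet\alpha_{\tau(i)}}\cdot y$ with $y\in S(\uq^+)$, using that $S(E_J)$ equals $K_{-\wt J}E_J$ up to scalars and reordering. Then
\[
T_{w_\bullet}(E_{\tau(i)})K_{-\alpha_i} \in K_{w_\bullet\alpha_{\tau(i)}-\alpha_i}S(\uq^+).
\]
Here $w_\bullet\alpha_{\tau(i)}=-\Theta(\alpha_i)$, so the Cartan part is $K_{-(\alpha_i+\Theta\alpha_i)}$, and $\alpha_i+\Theta\alpha_i$ is $\Theta$-invariant with zero $A$-component. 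This term therefore lies in $k[T]S(\uq^+)\subset\widetilde{\uq}_0$. Finally, $\bm{s}_iK_{-\alpha_i}$ has $A$-component $-\tilde{\alpha}_i$, so it lies in $\widetilde{\uq}_{-\tilde{\alpha}_i}$ with coefficient $\bm{s}_i$.

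The only delicate point is the bookkeeping of which $K$-factor accompanies $T_{w_\bullet}(E_{\tau(i)})$ when it is rewritten in $S(\uq^+)$. The weight computation $w_\bullet\alpha_{\tau(i)}=-\Theta\alpha_i$ is what makes the term land in degree $0$.
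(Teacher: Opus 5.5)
Your proposal is correct and follows essentially the paper's argument: you check $\uq^-$, $k[T]$ and $\uq_\bullet$ (via $\Theta(\alpha_j)=\alpha_j$ for $j\in I_\bullet$), then treat the three summands of $B_i$ separately, and your observation that the middle term has Cartan factor $K_{-(\alpha_i+\Theta(\alpha_i))}$ is the same computation as the paper's $\widetilde{w_\bullet\alpha_{\tau(i)}}-\tilde{\alpha}_i=0$. One slip in your opening recollection: it is $E_i$ that has degree $\tilde{\alpha}_i$, whereas $S(E_i)\propto K_{-\alpha_i}E_i$ lies in $\uq^-K_0k[T]S(\uq^+)$ and so has degree $0$; your later computations already use degree $0$ for it, so the argument itself is unaffected.
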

\begin{proof}
    For $\uq_\bullet$ and $k[T]$, and $\uq^-$ note that
    \[
        E_i\in\widetilde{\uq}_{\tilde{\alpha}_i},\quad 
        F_i\in\widetilde{\uq}_0,\qquad
        K_\mu \in \widetilde{\uq}_{\tilde{\mu}},
    \]
    and that $\tilde{\alpha}_i=0$ for $i\in I_\bullet$, and that
    $\tilde{\mu}=0$ for $\mu\in T$.

    Lastly, for $i\in I_\circ$ we have
    \[
        B_i = F_i + c_i T_{w_\bullet}(E_{\tau(i)})K_i^{-1} + s_i K_i^{-1},
    \]
    where $F_i\in\widetilde{\uq}_0$, $T_{w_\bullet}(E_{\tau(i)})\in U^+_{w_\bullet\alpha_{\tau(i)}}$, and hence has degree
    $\widetilde{w_\bullet\alpha_{\tau(i)}} = \tilde{\alpha}_i$.
    Note however that $K_i^{-1}$ has degree $-\tilde{\alpha}_i$, so that
    the middle term of $B_i$ also has degree 0.
    The last term of $B_i$ has degree $-\tilde{\alpha}_i$.
\end{proof}

\begin{lemma}\label{lem-B-embedded}
    The map $\widetilde{\uqb}\to\gr(\widetilde{\uqb})$
    mapping 
    \[
        E_i, F_i, K_\mu, B_j\mapsto \gr(E_i), \gr(F_i), \gr(K_\mu),
        \gr(B_j)
    \]
    for $i\in I_\bullet, j\in I_\circ, \mu\in T$ is a ring isomorphism.
\end{lemma}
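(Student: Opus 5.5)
The map in Lemma~\ref{lem-B-embedded} is really the composition of ``take the principal symbol'' with the identification of $\widetilde{\uqb}$ with a subalgebra of $\gr(\widetilde{\uq})$. So the plan is to show three things: it is well defined, it is multiplicative, and it is bijective onto $\gr(\widetilde{\uqb})$ (the associated graded of $\widetilde{\uqb}$ for the induced filtration $\widetilde{\uqb}\cap\mathcal{F}_\lambda(\widetilde{\uq})$).

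\emph{Step 1: the generators sit in degree $0$.} By the preceding lemma, the generators $E_i,F_i$ ($i\in I_\bullet$), $K_\mu$ ($\mu\in T$) and $B_j$ ($j\in I_\circ$) all lie in $\mathcal{F}_0(\widetilde{\uq})$. For $B_j$ the only piece outside $\widetilde{\uq}_0$ is $\bm{s}_jK_{-\alpha_j}$, which has strictly lower degree $-\tilde{\alpha}_j$. Hence $\widetilde{\uqb}\subset\mathcal{F}_0(\widetilde{\uq})$, and the symbols $\gr(\cdot)$ of the generators are their images in $\mathcal{F}_0/\sum_{\lambda<0}\mathcal{F}_\lambda$. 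In particular $\gr(B_j)=F_j+\bm{c}_jT_{w_\bullet}(E_{\tau(j)})K_{-\alpha_j}$, which is the standard generator with $\bm{s}_j$ set to $0$.

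\emph{Step 2: the map is a well-defined ring homomorphism.} Since $\gr$ of the filtration is a graded ring, the degree-$0$ component is a ring. I then need to check that the $\gr$ images of the generators satisfy the defining relations of $\widetilde{\uqb}_{\bm{c},\bm{s}}$.

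The cleanest route avoids writing down a presentation. The map $x\mapsto x+\sum_{\lambda<0}\mathcal{F}_\lambda$ restricted to $\widetilde{\uqb}\subset\mathcal{F}_0$ is automatically a ring homomorphism, because the filtration is multiplicative. It sends each generator to its $\gr$, so it is exactly the map in the statement.

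\emph{Step 3: bijectivity, which is the main obstacle.}

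Surjectivity onto $\gr(\widetilde{\uqb})$ needs care, because elements of $\widetilde{\uqb}$ could a priori lie in $\mathcal{F}_\lambda$ with $\lambda<0$, producing symbols in negative degree. For injectivity I must show $\widetilde{\uqb}\cap\sum_{\lambda<0}\mathcal{F}_\lambda=0$; dually, this says that every nonzero element of $\widetilde{\uqb}$ has a nonzero $\widetilde{\uq}_0$-component. Together these show that $\gr(\widetilde{\uqb})$ is concentrated in degree $0$, and the map becomes an isomorphism.

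To prove this I use Kolb's PBW-type basis of $\uqb$ (\cite[Prop.~6.2]{Kol14}): monomials $B_J$ in the $B_j$ times elements of $\uq_\bullet k[T]$. Expanding $B_J$, its $\widetilde{\uq}_0$-component is $F_J+{}$(terms involving at least one $E$-factor, i.e.\ terms whose $\uq^-$-part has lower weight). Using the triangular decomposition $\widetilde{\uq}\cong\uq^-\otimes\widetilde{\uq}^0\otimes S(\uq^+)$, the leading $F_J$-parts of these components stay linearly independent, as in Kolb's proof. Hence a nontrivial linear combination of basis elements has a nonzero $\widetilde{\uq}_0$-component, which gives injectivity.

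The same independence shows that the top-degree symbols of elements of $\widetilde{\uqb}$ are precisely the degree-$0$ parts, so the image is all of $\gr(\widetilde{\uqb})$. The image is also the subalgebra generated by the $\gr$ of the generators, i.e.\ it is isomorphic to $\widetilde{\uqb}_{\bm{c},0}$.

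The delicate point is controlling the $\bm{s}$-terms in products. These only lower the degree, so they never affect the $\widetilde{\uq}_0$-component's leading $F_J$ term, and the triangular-decomposition argument goes through.
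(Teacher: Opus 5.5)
Your proposal is correct, but it takes a different route from the paper.

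\textbf{The paper's route.} The paper argues in two lines. For $\bm{s}=0$, every generator of $\widetilde{\uqb}$ has degree $0$, so it declares the induced filtration trivial and the claim obvious. For $\bm{s}\neq0$, it cites \cite[Section~7]{Kol14} for the fact that the algebra $\uqb_{\bm{c},\bm{s}}$ does not depend on $\bm{s}$, and transports the $\bm{s}=0$ case.

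\textbf{Your route.} You treat all $\bm{s}$ at once. The homomorphism property comes for free from multiplicativity of the filtration. Kolb's basis theorem (his \S6, not the presentation of \S7) together with a leading-term argument gives $\widetilde{\uqb}\cap\sum_{\lambda<0}\mathcal{F}_\lambda(\widetilde{\uq})=0$, which is the whole content of the lemma.

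\textbf{What your route buys.} First, it supplies a step the paper passes over even when $\bm{s}=0$. Products of degree-$0$ generators do pick up components of negative degree: in $\mathsf{AI}_1$, reordering $E_1K_{-\alpha_1}F_1$ shows that $B_1^2$ contains a nonzero multiple of $K_{-2\alpha_1}$, of degree $-2\tilde{\alpha}_1$. So ``the filtration restricts to the trivial one'' is exactly the injectivity statement you prove, and it is not automatic. Second, $\gr(B_j)$ does not see $\bm{s}_j$. Running your argument for $(\bm{c},\bm{s})$ and for $(\bm{c},0)$ therefore identifies both algebras with the same subalgebra of $\gr(\widetilde{\uq})$. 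The $\bm{s}$-independence that the paper imports from Kolb thus comes out as a corollary, as you note at the end. The paper's version is shorter, at the price of relying on the presentation theorem.

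\textbf{Two points of precision in Step~3.}

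First, Kolb's basis is $\{B_J\}$ with $J$ running over multi-indices in all of $I$ (with $B_j=F_j$ for $j\in I_\bullet$) such that the $F_J$ form a basis of $\uq^-$. The algebra $\uqb$ is free on it as a left module over the algebra generated by the $E_j$ ($j\in I_\bullet$) and the torus. Monomials in the $B_j$ with $j\in I_\circ$ times $\uq_\bullet k[T]$ do not form a basis.

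Second, taking $\widetilde{\uq}_0$-components is not multiplicative, so the independence argument should be run on $x=\sum_J m_JB_J$ itself. Let $\nu$ have maximal height among the $\wt(J)$ with $m_J\neq0$. Then the component of $x$ in $\uq^-_{-\nu}\widetilde{\uq}^0S(\uq^+)$ is $\sum_{\wt(J)=\nu}F_Jm_J'$, where $m_J'\in k[T]S(\uq^+)$ is obtained from $m_J$ by a $q$-shift of its torus part. Every other term produced by expanding $m_JB_J$ has $\uq^-$-weight of smaller height; this covers the $E$-parts and $\bm{s}$-parts of the $B_j$ and the commutators arising when $E$'s are moved past $F$'s. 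The displayed component lies in $\widetilde{\uq}_0$ and is nonzero by the triangular decomposition, which is the claim.
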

\begin{proof}
    In case $\bm{s}=0$, all generators of $\widetilde{\uqb}$ have
    degree zero, so the filtration restricts to the trivial filtration
    and the statement is obviously true.

    Furthermore, by \cite[Section~7]{Kol14}, as an algebra, 
    $\uqb$ and hence also $\widetilde{\uqb}$ does not depend on
    $\bm{s}$, so that the claim also holds for $\bm{s}\ne0$.
\end{proof}

\begin{lemma}
    The grading on $\mathcal{N}^\pm$ is trivial, so the initial
    maps $\mathcal{N}^\pm\to\gr(\mathcal{N}^\pm)$ are ring isomorphisms.
\end{lemma}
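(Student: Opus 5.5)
The plan is to show that every generator of $\mathcal{N}^-$ and of $\mathcal{N}^+$ lies in a single homogeneous piece $\widetilde{\uq}_\mu$, and then to use multiplicativity of the filtration. For $\mathcal{N}^-$ the generators are the elements $\adr(y)(F_i)$ with $y\in\uq_\bullet$ and $i\in I_\circ$. Since $\adr(\uq_\bullet)(F_i)\subset\uq^-$ by the earlier lemma (the one obtained from \cite[Lemma~3.5]{Kol14}), and $\uq^-\subset\widetilde{\uq}_0$ by the preceding lemma, every generator lies in degree $0$. The product of elements of $\uq^-$ stays in $\uq^-\subset\widetilde{\uq}_0$ with no lower-order correction, so $\mathcal{N}^-\subset\widetilde{\uq}_0$ entirely.

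For $\mathcal{N}^+\subset S(\uq^+)$ I will argue through weights. An element of $S(\uq^+)$ of $\ad$-weight $\beta\in Q^+$ can be written as $K_{-\beta}\cdot(\text{element of }\uq^+_\beta)$ up to a scalar. The term $\uq^+_\beta$ contributes degree $\tilde\beta$ (as $E_j$ has degree $\tilde{\alpha}_j$), while $K_{-\beta}=K_{-\tilde\beta}K_{-(\beta+\Theta\beta)/2}$ contributes $-\tilde\beta$. So each weight-homogeneous element of $S(\uq^+)$ lies in $\widetilde{\uq}_0$. One check is needed here: in the filtration lemma a monomial $\uq^-K_\mu k[T]S(\uq^+)$ counts $S(E_j)=-K_{-\alpha_j}E_j$ as degree $0$, because $S(\uq^+)$ is placed to the right of $K_\mu$. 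This is consistent with the computation above, since the $E$'s and $K^{-1}$'s cancel. A more direct route is the same one used for $B_i$: $K_{-\alpha_i}E_i$ has degree $-\tilde{\alpha}_i+\tilde{\alpha}_i=0$, and $\adr(\uq_\bullet)$ preserves $\uq_0$ because $\uq_\bullet\subset\widetilde{\uq}_0$ and the generators $E_j,F_j,K_{\alpha_j}$ with $j\in I_\bullet$ have degree $0$. Thus all generators of $\mathcal{N}^+$ lie in degree $0$, and products of elements of $S(\uq^+)$ remain in $S(\uq^+)$, which sits in $\widetilde{\uq}_0$.

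Both algebras therefore lie inside $\mathcal{F}_0$ and meet $\mathcal{F}_\lambda$ for $\lambda<0$ trivially, because the decomposition $\widetilde{\uq}=\bigoplus_\mu\widetilde{\uq}_\mu$ is direct by the triangular decomposition. The induced filtration on $\mathcal{N}^\pm$ is therefore concentrated in the single degree $0$, and $\gr_0(\mathcal{N}^\pm)=\mathcal{N}^\pm/0$. The initial map is then the identity on underlying vector spaces, and it is multiplicative because no product drops to lower filtration degree. This gives the claimed ring isomorphisms.

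The main obstacle I expect is the bookkeeping for $\mathcal{N}^+$: one has to make sure that commuting $K$'s past $E$'s inside $S(\uq^+)$ produces only scalars, so that no cross terms escape the degree-$0$ piece. This follows from the $q$-commutation relations $K_hE_i=q^{\langle h,\alpha_i\rangle}E_iK_h$, which are homogeneous, so I expect this step to close without difficulty.
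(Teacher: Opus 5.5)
Your argument is correct and is essentially the paper's. The paper's proof just notes $\mathcal{N}^+\subset S(\uq^+)$ and $\mathcal{N}^-\subset\uq^-$; both are subalgebras lying in $\widetilde{\uq}_0=\uq^-K_0k[T]S(\uq^+)$ by definition, so your weight bookkeeping for $\mathcal{N}^+$ and the $\adr(\uq_\bullet)$ detour are unnecessary (and the detour's justification is too quick, since products of degree-$0$ elements can acquire lower-order terms via $[E_i,F_i]$ with $i\in I_\circ$); your observation that $\widetilde{\uq}_0\cap\mathcal{F}_{<0}=0$ by directness of the triangular decomposition is the step the paper leaves implicit.
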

\begin{proof}
    This follows since $\mathcal{N}^+\subset S(\uq^+)$ and $\mathcal{N}^-\subset \uq^-$.
\end{proof}

Now we define the graded equivalent of Verma modules.
The goal is to realise $p$ as the shifted restriction of a
\enquote{universal} zonal spherical function
\begin{definition}
    Let $\lambda\in (X\otimes_\Z\Q)^{-\Theta}$ and define the left $\gr(k[A\oplus T])$-module
    $kv_\lambda$ by having $K_h v_\lambda = q^{\langle h,\lambda\rangle}$, and let furthermore
    $\gr(\uq_\bullet)$ and $\gr(S(\uq^+))$ act trivially on $v_\lambda$.

    Define $\overline{M}(\lambda):= \gr(\widetilde{\uq})\otimes_{\gr(\uq_\bullet k[A\oplus T] S(\uq^+))} kv_\lambda$.

    Similarly, we can have $kv_\lambda$ be a right
    $\gr(\uq_\bullet k[A\otimes T]\uq^-)$-module and define
    \[
        \overline{M}(\lambda)^{\text{r}} :=
        kv_\lambda\otimes_{\gr(\uq_\bullet k[A\oplus T]\uq^-)} \gr(\widetilde{\uq}).
    \]
\end{definition}

\begin{lemma}\label{lem-mlambda-N}
    We have $\overline{M}(\lambda) = \gr(\mathcal{N}^-)\otimes kv_\lambda$ and $\overline{M}(\lambda)^{\mathrm{r}} = kv_\lambda\otimes \gr(\mathcal{N}^+)$.
\end{lemma}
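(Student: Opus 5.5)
The natural route is a graded analogue of the quantum Iwasawa decomposition. It should suffice to show that multiplication gives a linear isomorphism
\[
    \gr(\mathcal{N}^-)\otimes\gr(\uq_\bullet k[A\oplus T] S(\uq^+))\xrightarrow{\ \sim\ }\gr(\widetilde{\uq}),
\]
and that this isomorphism is compatible with the right $\gr(\uq_\bullet k[A\oplus T]S(\uq^+))$-module structure. Given this, tensoring over $\gr(\uq_\bullet k[A\oplus T] S(\uq^+))$ with $kv_\lambda$ yields $\overline{M}(\lambda)\cong\gr(\mathcal{N}^-)\otimes kv_\lambda$. The right-handed statement will follow by the mirror argument, with $\mathcal{N}^+$, $S(\uq^+)$ and $\uq^-$ exchanging roles.

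The isomorphism will be established first on the ungraded level and then passed to the associated graded. On the ungraded level we have the triangular decomposition $\widetilde{\uq}\cong\uq^-\otimes\widetilde{\uq}^0\otimes S(\uq^+)$; here we use $S(\uq^+)$ in place of $\uq^+$, which is harmless since $S$ is an antiautomorphism and $\widetilde{\uq}^0$ normalises both. Next, $\uq^-$ itself factors as $\mathcal{N}^-\otimes\uq_\bullet^-$ via multiplication. This is the standard decomposition used by Kolb and Letzter: $\mathcal{N}^-$ is generated by the $\adr(\uq_\bullet)$-modules generated by the $F_i$, $i\in I_\circ$, and $\uq^-$ is the corresponding smash-product-type extension (cf.\ \cite[Prop.~6.2/Lemma~3.5]{Kol14}, transported by $S$ to right conventions). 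We then move the $\uq_\bullet^-$ factor to the right past $\widetilde{\uq}^0$, which only rescales, and combine it with $\uq_\bullet^0\uq_\bullet^+\subset\uq_\bullet$. This gives
\[
    \widetilde{\uq}=\mathcal{N}^-\cdot\uq_\bullet\, k[A\oplus T]\, S(\uq^+),
\]
where $\uq_\bullet^+$ is absorbed into $S(\uq^+)$ up to $K$-factors. Hence $\widetilde{\uq}=\mathcal{N}^-\otimes(\uq_\bullet k[A\oplus T]S(\uq^+))$ as a free right module, with the subalgebra on the right being exactly the one induced from.

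To pass to $\gr$, we use that the filtration is defined by the $A$-degree of the Cartan part, with $F_i$ in degree $0$. By the previous lemma the grading on $\mathcal{N}^\pm$ is trivial, so $\mathcal{N}^-\cong\gr(\mathcal{N}^-)$. Every piece of the decomposition is then homogeneous: it consists of a product of an element of $\mathcal{N}^-$ (degree $0$), an element of $\uq_\bullet$ (degree $0$), some $K_\mu$, and a monomial in $S(E_j)$. The multiplication map is therefore a filtered isomorphism whose inverse is also filtered, and so it induces an isomorphism on associated graded objects.

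The main obstacle is checking the last point: that reordering $\uq_\bullet^-$ past $S(\uq^+)$ and $K$'s only produces corrections of strictly lower filtration degree. The commutator $\comm{E_i}{F_j}$ is homogeneous only modulo lower-order terms, but for $i\in I_\bullet$ both sides have degree $0$, so no degree drop occurs. I would therefore verify that, for $j\in I_\bullet$, commuting $F_j$ past $S(E_i)$ yields a term in $\uq^0_\bullet$ of the same $A$-degree. This makes the decomposition respect the filtration exactly.
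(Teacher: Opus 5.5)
Your argument is correct, but it uses a different decomposition from the paper. The paper proves the lemma in one line by citing the quantum Iwasawa decompositions $\widetilde{\uq}\cong\mathcal{N}^\pm\otimes k[A]\otimes\widetilde{\uqb}$, which factor $\widetilde{\uq}$ relative to the coideal subalgebra. You use the parabolic refinement of the triangular decomposition instead:
\[
\widetilde{\uq}\cong\mathcal{N}^-\otimes\uq^-_\bullet\otimes\widetilde{\uq}^0\otimes S(\uq^+)=\mathcal{N}^-\otimes\mathcal{P},\qquad \mathcal{P}:=\uq_\bullet k[A\oplus T]S(\uq^+).
\]
This needs only the elementary factorisation $\uq^-=\mathcal{N}^-\uq^-_\bullet$.

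Your route speaks directly about the subalgebra $\mathcal{P}$ from which $\overline{M}(\lambda)$ is induced. So freeness of $\gr(\widetilde{\uq})$ as a right $\gr(\mathcal{P})$-module on a basis of $\mathcal{N}^-$, which is what the tensor product over $\gr(\mathcal{P})$ requires, is immediate. The Iwasawa decomposition involves $\widetilde{\uqb}$ rather than $\mathcal{P}$, so it must be translated before it says anything about induction from $\mathcal{P}$. The paper's choice pays off in Lemma~\ref{lem-simple-cyclic}, where the same decomposition (with $\mathcal{N}^+$) gives cyclicity over $\gr(\widetilde{\uqb})$.

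The obstacle you flag at the end is not really one. Each $\widetilde{\uq}_\mu=\uq^-K_\mu k[T]S(\uq^+)$ is stable under left multiplication by $\uq^-$. Also, $\mathcal{P}=\bigoplus_\mu\mathcal{P}_\mu$ with $\mathcal{P}_\mu=\uq^-_\bullet K_\mu k[T]S(\uq^+)$ is a graded subspace. Hence multiplication $\mathcal{N}^-\otimes\mathcal{P}_\mu\to\widetilde{\uq}_\mu$ is a bijection degree by degree, and no lower-order terms can arise. The $\comm{E_j}{F_j}$ check for $j\in I_\bullet$ is only needed to see that $\uq^-_\bullet\widetilde{\uq}^0S(\uq^+)$ is closed under multiplication. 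Likewise, there is no need to move $\uq^-_\bullet$ past $\widetilde{\uq}^0$: it already sits to its left.
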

\begin{proof}
    This is a consequence of the quantum Iwasawa decompositions
    \[
        \widetilde{\uq}\cong \mathcal{N}^\pm\otimes k[A]\otimes\widetilde{\uqb}.\qedhere
    \]
\end{proof}

\begin{lemma}\label{lem-action-on-uqminus}
    $S(\uq^+)$ acts as follows on $\uq^-$: for $x\in S(\uq^+)$ and
    $y\in\uq^-$ we have
    \[
        \adr(S^{-1}(x))(y) \in \bigoplus_{\alpha,\beta\in Q(R)^+}\uq^-
        K_{-2\alpha}S(\uq^+)_\beta.
    \]
    Let $\pi_{0}$ be the projection to the $\alpha=\beta=0$ summand on the
    right-hand side, then define
    \[
        x\ast y := \pi_0(\adr(S^{-1}(x)))(y).
    \]
    This defines a representation of $S(\uq^+)$ on $\uq^-$.
\end{lemma}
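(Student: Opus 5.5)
We will prove both assertions by reducing to generators, using that $\adr$ is an anti-homomorphism of algebras, i.e. $\adr(xy)=\adr(y)\circ\adr(x)$.

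\emph{Containment.} Since $S^{-1}$ is an anti-automorphism, $x\mapsto \adr(S^{-1}(x))$ is an algebra homomorphism $S(\uq^+)\to\operatorname{End}(\widetilde{\uq})$. Hence it suffices to treat a product of generators $x=S(E_{i_1})\cdots S(E_{i_m})$, for which
\[
\adr(S^{-1}(x))=\adr(E_{i_1})\circ\cdots\circ\adr(E_{i_m}).
\]
For a single $E_i$ we have $\Delta(E_i)=E_i\otimes 1+K_{\alpha_i}\otimes E_i$ (or the analogous convention). This gives
\[
\adr(E_i)(z)=S(E_i)z+S(K_{\alpha_i})zE_i
\]
up to the exact form of the coproduct; it is the identity already used in the proof of Lemma~\ref{lem-adr-invariance-of-filtration}. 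Write $E_i = -K_{\alpha_i}S(E_i)$, up to a unit.

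Now let $z=yK_{-2\alpha}s$ with $y\in\uq^-$ and $s\in S(\uq^+)_\beta$. We move every factor into the normal order $\uq^-\cdot\widetilde{\uq}^0\cdot S(\uq^+)$. The term $zE_i$, after rewriting $E_i$, already has the required shape: $K$-factors only pick up scalars when commuted, and the $S(\uq^+)$-degree rises by $\alpha_i$. For $S(E_i)z$ we must commute $S(E_i)$ past $y$. The relation
\[
\comm{E_i}{F_j}=\delta_{ij}[K_{\alpha_i}]_{q_i}
\]
produces either a term with $S(E_i)$ passed to the right, which raises $\beta$ by $\alpha_i$, or a term in which $F_i$ is annihilated and $K_{\alpha_i}^{\pm1}$ is created. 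Multiplied by the prefactor $K_{-\alpha_i}$ coming from $S(E_i)=-K_{-\alpha_i}E_i$, this yields $K$-exponents $0$ or $-2\alpha_i$. By induction on $m$, this establishes the decomposition with $\alpha,\beta\in Q(R)^+$. The bookkeeping of $K$-exponents is the main point to check: one has to confirm that only $K_{-2\alpha}$ with $\alpha\ge0$ appear and never positive powers. I expect this to be the main obstacle, because it depends on the coproduct convention. If the conventions instead produce $K_{+2\alpha}$, the statement should be read with that sign.

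\emph{Representation property.} $\pi_0$ extracts the component with $\alpha=\beta=0$, which lies in $\uq^-$. To see that $x\ast(x'\ast y)=(xx')\ast y$, we use the homomorphism property
\[
\adr(S^{-1}(xx'))=\adr(S^{-1}(x))\circ\adr(S^{-1}(x')).
\]
Then we show that $\pi_0\circ\adr(S^{-1}(x))$ kills every summand with $(\alpha,\beta)\neq(0,0)$. By the computation above, applying further $\adr(E_i)$ to an element of $\uq^-K_{-2\alpha}S(\uq^+)_\beta$ lands in summands with indices at least $(\alpha,\beta)$. This is because $K$-exponents and $S(\uq^+)$-degrees only grow in the $Q^+$-order, exactly as in the containment argument applied to general $z$. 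Hence
\[
\pi_0\circ\adr(S^{-1}(x))=\pi_0\circ\adr(S^{-1}(x))\circ\pi_0
\]
on the direct sum, which gives multiplicativity. Finally, $1\ast y=y$ holds trivially.
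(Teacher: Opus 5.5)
Your proposal is correct and takes essentially the same route as the paper. The paper likewise computes $-\adr(E_i)$ on a normally ordered element $F_JK_{-2\alpha}S(E_K)$ and uses $\comm{E_i}{F_J}\in\uq^-(K_{\alpha_i}\oplus K_{-\alpha_i})$ to see that the $K$-exponent shifts only by $0$ or $-2\alpha_i$ and the $S(\uq^+)$-degree only by $0$ or $\alpha_i$. It then deduces the representation property from this triangularity, which you make explicit via $\pi_0\circ\adr(S^{-1}(x))=\pi_0\circ\adr(S^{-1}(x))\circ\pi_0$. With the paper's (Lusztig's) convention $S(E_i)=-K_{-\alpha_i}E_i$, your bookkeeping indeed yields $K_{-2\alpha}$, so your sign caveat is unnecessary.
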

\begin{proof}
    Write $x:= F_J K_{-2h_\alpha}S(E_K)$, then
    \begin{align*}
        -\adr(E_i)(x) &= K_{-\alpha_i} E_i x - K_{-\alpha_i}xE_i\\
        &= K_{-\alpha_i} E_i F_J K_{-2\alpha} S(E_K)
        - K_{-\alpha_i} F_J K_{-2\alpha}  S(E_K)E_i\\
        &= K_{-\alpha_i} \comm{E_i}{F_J} K_{-2\alpha} S(E_K)\\
        &\quad+ q^{\alpha_i\cdot (\wt(J)+2\alpha)} F_J K_{-2\alpha} K_{-\alpha_i}E_iS(E_K)\\
        &\quad- q^{\alpha_i\cdot (\wt(J)-\wt(K))}
        F_J K_{-2\alpha} S(E_K)K_{-\alpha_i}E_i\\
        &= K_{-\alpha_i} \comm{E_i}{F_J} K_{-2\alpha} S(E_K)\\
        &\quad- q^{\alpha_i\cdot(\wt(J)+2\alpha)} F_J K_{-2\alpha}S(E_KE_i)\\
        &\quad+ q^{\alpha_i\cdot (\wt(J)-\wt(K))}
        F_J K_{-2\alpha} S(E_iE_K).
    \end{align*}
    Here, $\comm{E_i}{F_J}\in \uq^-(K_{\alpha_i}\oplus K_{-\alpha_i})$, which shows the claim.
    Since all terms not projected onto by $\pi_0$ only ever increase
    the weight of $K$ or decrease the argument of $K$, the map
    thus defined is a representation.
\end{proof}

\begin{lemma}\label{lem-isomorphic-suplus-modules}
    Let $\lambda\in (X\otimes_\Z\Q)^{-\Theta}$, then
    for $x\in\mathcal{N}^-$ we have
    \[
        y x\otimes v_\lambda = (y\ast x)\otimes v_\lambda
    \]
    for $y\in S(\uq^+)$.
    In other words, as $S(\uq^+)$-modules we have $\overline{M}(\lambda)\cong\mathcal{N}^-$.

    In particular, the $S(\uq^+)$-module $\overline{M}(\lambda)$
    does not depend on $\lambda$.
\end{lemma}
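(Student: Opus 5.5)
We reduce to the case where $y$ is a single generator and then argue by induction. Both sides are multiplicative in $y$: the left-hand side because $\overline{M}(\lambda)$ is a left $\gr(\widetilde{\uq})$-module, and the right-hand side because $\ast$ is a representation of $S(\uq^+)$ by Lemma~\ref{lem-action-on-uqminus}. So it suffices to treat $y=S(E_i)=-K_{-\alpha_i}E_i$ with $i\in I$, and to induct on the length of a monomial in these generators. One point needs care for the induction: after one step we must land in $\mathcal{N}^-\otimes v_\lambda$ again, or at least in $\uq^-\otimes v_\lambda$. So we prove the statement for all $x\in\uq^-$, which is what Lemma~\ref{lem-action-on-uqminus} is set up for. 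Combined with Lemma~\ref{lem-mlambda-N}, this identifies $\overline{M}(\lambda)$ with $\mathcal{N}^-$.

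The key computation is to commute $y$ past $x$ via the right adjoint action. For a Hopf algebra we have $y_{(1)}\,\adr(y_{(2)})(x)=x\,y$ after suitable manipulation. Here we use the version adapted to $S^{-1}$: write $y x = \sum \adr(S^{-1}(y_{(2)}))(x)\, y_{(1)}$, or the appropriate variant, and check it on generators using $\Delta(E_i)=E_i\otimes 1+K_{\alpha_i}\otimes E_i$ or Lusztig's convention. Applying this to $yx\otimes v_\lambda$, every term in which a positive-degree factor of $S(\uq^+)$ remains on the right kills $v_\lambda$, because $\gr(S(\uq^+))$ acts trivially. The only surviving term is $\adr(S^{-1}(y))(x)\otimes v_\lambda$, up to a $K$-factor coming from the coproduct, which acts on $v_\lambda$ by a scalar.

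Next we expand $\adr(S^{-1}(y))(x)$ by Lemma~\ref{lem-action-on-uqminus} into summands in $\uq^- K_{-2\alpha}S(\uq^+)_\beta$. Summands with $\beta\neq0$ vanish on $v_\lambda$. Summands with $\beta=0$ and $\alpha\ne0$ carry $K_{-2\alpha}$, whose $A$-part has filtration degree strictly below that of the $\alpha=\beta=0$ summand. Hence they vanish in the associated graded module, where only the top-degree component is retained. The $\Theta$-invariant part of $K_{-2\alpha}$ acts trivially on $v_\lambda$ since $\lambda$ is anti-invariant. What remains is exactly $\pi_0(\adr(S^{-1}(y))(x))\otimes v_\lambda=(y\ast x)\otimes v_\lambda$. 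Independence of $\lambda$ follows because $\ast$ does not involve $\lambda$.

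The main obstacle is the degree bookkeeping in the previous step. We must check two things: that the $K$-scalar from the coproduct is $1$ in $\gr$ (or cancels), and that the $\alpha\ne0$ terms, whose $K_{-2\alpha}$ has $A$-component $-2\tilde{\alpha}$, are strictly lower in the filtration $\mathcal{F}$ rather than merely different. We would first verify both points explicitly for $y=S(E_i)$ using the commutator formula in the proof of Lemma~\ref{lem-action-on-uqminus}. The term $K_{-\alpha_i}[E_i,F_J]$ produces $K_{\alpha_i}$ and $K_{-\alpha_i}$ pieces, and only the former should survive at top degree.
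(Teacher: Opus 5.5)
Your single-generator computation is the paper's own. Because $E_iv_\lambda=0$, the vector $S(E_i)x\otimes v_\lambda$ becomes $\adr(E_i)(x)\otimes v_\lambda$, and the non-$\pi_0$ summands are then discarded. The gap is in how you close the induction. You propose to prove the identity for all $x\in\uq^-$, but that statement is false whenever $I_\bullet\neq\emptyset$.

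Take $j\in I_\bullet$, $y=S(E_j)$ and $x=F_j$. Since $F_j\in\uq_\bullet$ acts on $v_\lambda$ through $\epsilon$, the left-hand side is $S(E_j)\otimes F_jv_\lambda=0$. On the other hand
\[
\adr(E_j)(F_j)=-K_{-\alpha_j}[E_j,F_j]=-\frac{1-K_{-2\alpha_j}}{q_j-q_j^{-1}},
\]
so $y\ast x=-(q_j-q_j^{-1})^{-1}$, and the right-hand side is a nonzero multiple of $1\otimes v_\lambda$.

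The step that breaks is your claim that every summand with $\beta=0$, $\alpha\neq0$ drops in the filtration. If $\alpha$ is a combination of the $\alpha_k$ with $k\in I_\bullet$, then $\tilde\alpha=0$, so $K_{-2\alpha}\in k[T]$ sits in degree $0$. As you note yourself, it then acts on $v_\lambda$ by $q^{-2\alpha\cdot\lambda}=1$. Such a summand survives and is not captured by $\pi_0$. In particular, $\ast$ does not preserve $\sum_{k\in I_\bullet}\uq^-F_k$, the kernel of $x\mapsto x\otimes v_\lambda$. So the identity cannot hold on all of $\uq^-$, and your strengthened induction hypothesis collapses.

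The hypothesis $x\in\mathcal{N}^-$ is exactly what removes these terms. $\adr$ is a module-algebra action, and $\mathcal{N}^-$ is generated by the $\adr(\uq_\bullet)$-stable subspace $\sum_{i\in I_\circ}\adr(\uq_\bullet)(F_i)\subset\uq^-$. Hence $\adr(E_j)(\mathcal{N}^-)\subset\mathcal{N}^-\subset\uq^-$ for $j\in I_\bullet$, and nothing is projected away. For $i\in I_\circ$, the only extra summand lies in $\uq^-K_{-2\alpha_i}$ with $\tilde\alpha_i\neq0$, and it dies in $\gr$. This gives the generator case on $\mathcal{N}^-$.

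To run your induction you then still need $S(\uq^+)\ast\mathcal{N}^-\subset\mathcal{N}^-$. The ``in other words'' clause presupposes this too, though the paper does not spell it out. One way to get it uses Lusztig's skew derivations $r_i,{}_ir$ of $\uq^-$, with the coproduct implicit in $S(E_i)=-K_{-\alpha_i}E_i$:
\[
\adr(E_i)(x)=\frac{-{}_ir(x)+K_{-\alpha_i}r_i(x)K_{-\alpha_i}}{q_i-q_i^{-1}}\qquad(x\in\uq^-).
\]
The argument then runs as follows.
\begin{enumerate}
\item By the formula, $S(E_i)\ast x$ is a multiple of ${}_ir(x)$.
\item The previous paragraph shows $\mathcal{N}^-\subset\bigcap_{k\in I_\bullet}\ker r_k$.
\item Equality holds, because both spaces are complements of $\sum_{k\in I_\bullet}\uq^-F_k$. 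For $\mathcal{N}^-$ this is Lemma~\ref{lem-mlambda-N}; for the kernel it holds because $r_k$ is adjoint to right multiplication by $F_k$ for Lusztig's nondegenerate form.
\item Since ${}_ir$ commutes with every $r_k$, it preserves $\bigcap_{k\in I_\bullet}\ker r_k=\mathcal{N}^-$.
\end{enumerate}
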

\begin{proof}
    Let $i\in I$ then
    \begin{align*}
        K_i^{-1}E_i x\otimes v_\lambda
        &= (K_i^{-1}E_i x - K_i^{-1}xE_i)\otimes v_\lambda
        = -\adr(E_i)(x)\otimes v_\lambda\\
        &= \adr(S^{-1}(K_i^{-1}E_i))(x)\otimes v_\lambda.
    \end{align*}
    Note that the terms projected away in the definition of $\ast$
    precisely vanish in $\gr(\widetilde{\uq})$ (namely the terms with
    $\alpha>0$) or annihilate $v_\lambda$ (namely the summands with $\beta>0$).
    We conclude
    \[
        K_i^{-1}E_i x\otimes v_\lambda
        = (K_i^{-1}E_i)\ast x\otimes v_\lambda.\qedhere
    \]
\end{proof}

\begin{lemma}\label{lem-simple-cyclic}
    $\overline{M}(\lambda)$ is irreducible over $\gr(\widetilde{\uq})$,
    and cyclic over $\gr(\widetilde{\uqb})$ (independently of the
    parameters).

    In particular, as $\widetilde{\uqb}\cong\gr(\widetilde{\uqb})$-modules, we have $\overline{M}(\lambda)\cong M(\lambda)$ from
    \cite{Wat23}.
\end{lemma}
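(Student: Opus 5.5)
Cyclicity over $\gr(\widetilde{\uqb})$ comes first, since it is the easy half. By Lemma~\ref{lem-mlambda-N} we have $\overline{M}(\lambda)=\gr(\mathcal{N}^-)\otimes kv_\lambda$. By Lemma~\ref{lem-B-embedded}, $\gr(\widetilde{\uqb})$ is generated by $\gr(\uq_\bullet)$, $\gr(k[T])$ and the $\gr(B_j)$. The degree-zero part of $B_j$ is $F_j+\bm{c}_jT_{w_\bullet}(E_{\tau(j)})K_{-\alpha_j}$.

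Acting on $x\otimes v_\lambda$ with $x\in\mathcal{N}^-$, the second summand acts through the $\ast$-action of Lemma~\ref{lem-isomorphic-suplus-modules}. That action lowers the $\uq^-$-weight, so $\gr(B_j)(x\otimes v_\lambda)=F_jx\otimes v_\lambda+(\text{terms of smaller height})$. The $\bm{s}_j$-part has negative degree and so vanishes in $\gr$.

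An induction on the height of the $\uq^-$-weight then shows that every $F_{j_1}\cdots F_{j_r}\otimes v_\lambda$ lies in $\gr(\widetilde{\uqb})v_\lambda$. Applying $\adr(\uq_\bullet)$, which is available through $\gr(\uq_\bullet)\subset\gr(\widetilde{\uqb})$, extends this to all generators of $\mathcal{N}^-$. One has to check that $\uq_\bullet$ acting on $\mathcal{N}^-\otimes v_\lambda$ agrees with $\adr$: the $\uq_\bullet$ part fixes $v_\lambda$ up to scalars, since $\lambda$ is $\Theta$-anti-invariant and therefore pairs trivially with $\alpha_i$ for $i\in I_\bullet$. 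This gives cyclicity, and it uses only the degree-zero parts, so it holds independently of the parameters.

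For irreducibility, let $N\subset\overline{M}(\lambda)$ be a nonzero $\gr(\widetilde{\uq})$-submodule. By Lemma~\ref{lem-isomorphic-suplus-modules}, $\overline{M}(\lambda)\cong\mathcal{N}^-$ as an $S(\uq^+)$-module under $\ast$. I would show that for every nonzero $y\in\mathcal{N}^-$ some element of $S(\uq^+)$ sends $y$, under $\ast$, to a nonzero multiple of $1$. Then $v_\lambda\in N$, and cyclicity (already over the subalgebra $\gr(\widetilde{\uqb})$) gives $N=\overline{M}(\lambda)$.

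To produce such an element, take the top-weight component of $y$ in $\uq^-_{-\beta}$. The projection $\pi_0$ keeps only the terms of $\comm{E_i}{F_J}$ carrying $K_{-\alpha_i}$ after the prefactor. So $\ast$ is, up to nonzero powers of $q$, the action of Lusztig's skew derivations ${}_ir$ (or $r_i$) on $\uq^-$. The nondegeneracy of Lusztig's bilinear form (\cite[\S1.2]{Lus10}) says that no nonzero element of positive degree is killed by all the derivations. Iterating therefore reaches degree zero with a nonzero result.

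This identification of $\ast$ with the Lusztig derivations, with exact control of the $q$-factors and the $K$-shifts coming from the $\lambda$-independence, is the step I expect to be the main obstacle. I would first verify it on a single $F_J$ using the commutator formula from the proof of Lemma~\ref{lem-action-on-uqminus}.

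Finally, for the comparison with \cite{Wat23}: $M(\lambda)$ there is, by construction, the cyclic $\widetilde{\uqb}$-module generated by a vector with the same annihilator data, namely $\uq_\bullet$ acting trivially, $k[T]$ acting by the character coming from $\lambda$, and a free action of $\mathcal{N}^-$ via the Iwasawa decomposition. Through the isomorphism $\widetilde{\uqb}\cong\gr(\widetilde{\uqb})$ of Lemma~\ref{lem-B-embedded}, both $\overline{M}(\lambda)$ and $M(\lambda)$ are quotients of the same induced module. Both have the same graded character, that of $\mathcal{N}^-$. So the surjection $v\mapsto v_\lambda$ is an isomorphism.
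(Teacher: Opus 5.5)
Your proposal is correct. For irreducibility it follows the paper's route; for cyclicity over $\gr(\widetilde{\uqb})$ it takes a different one.

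\textbf{Irreducibility.} The paper also reduces this to the claim that, under $\ast$, only the scalars in $\mathcal{N}^-$ are killed by $S(\uq^+)_+$. It merely asserts that this ``follows from the definition of $\ast$''. Your identification of $\ast$ with one of Lusztig's skew derivations, together with the nondegeneracy result in \cite[\S1.2]{Lus10}, supplies the justification the paper omits. One correction to your description: the summand that survives $\pi_0$ is the part of $[E_i,F_J]$ carrying $K_{\alpha_i}$, which becomes $K_0$ after the prefactor $K_{-\alpha_i}$. It is not a term carrying $K_{-\alpha_i}$.

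\textbf{Cyclicity.} The paper gets this in one line from the graded Iwasawa decomposition $\gr(\widetilde{\uq})=\gr(\widetilde{\uqb})\gr(k[A])\mathcal{N}^+$. Since $k[A]$ and $\mathcal{N}^+$ act on $v_\lambda$ by scalars, this gives $\gr(\widetilde{\uq})v_\lambda=\gr(\widetilde{\uqb})v_\lambda$. That is shorter, but it tacitly assumes the Iwasawa decomposition survives passage to the associated graded. Your triangularity argument with the degree-zero parts of the $B_j$ avoids this assumption and makes the independence of the parameters visible.

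\textbf{Fixing your induction.} As stated, ``first all $F$-monomials, then apply $\adr(\uq_\bullet)$'' does not close, for two reasons. The correction terms produced by $T_{w_\bullet}(E_{\tau(j)})K_{-\alpha_j}$ are not monomials in the $F_j$ with $j\in I_\circ$. Also, $\uq_\bullet$ does not preserve total height. Instead, induct over all of $\mathcal{N}^-$ on the $I_\circ$-height of the weight, and use $\uq_\bullet$ at every step via
\[
\adr(u)(F_j)\,x\otimes v_\lambda=S(u_{(1)})\,F_j\,u_{(2)}x\otimes v_\lambda .
\]
Here $u_{(2)}$ preserves $I_\circ$-height. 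The correction terms lie two $I_\circ$-levels lower than $F_jx$, so the induction hypothesis covers them.

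\textbf{Comparison with Watanabe's module.} Your identification with $M(\lambda)$ of \cite{Wat23}, via a common presentation and a size count, goes beyond the paper, which only states this as a consequence.
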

\begin{proof}
    We need to show that every vector $x\otimes v_\lambda$ for
    $x\in\mathcal{N}^-$ is cyclic, which is equivalent to showing that
    only $1\otimes v_\lambda$ is annihilated by $S(\uq^+)_+$,
    but that follows from the definition of the action $\ast$.

    Furthermore, note that $\widetilde{\uq}\cong \widetilde{\uqb}\otimes k[A]\otimes\mathcal{N}^+$.
    This translates to the graded version as
    \[
        \gr(\widetilde{\uq})= \gr(\widetilde{\uqb}) \gr(k[A])\mathcal{N}^+,
    \]
    consequently
    \[
        \gr(\widetilde{\uq})\otimes v_\lambda =
        \gr(\widetilde{\uqb})\otimes v_\lambda.\qedhere
    \]
\end{proof}

\begin{lemma}
    Let $F(\overline{M}(\lambda)^*)$ be the direct sum of
    $\overline{M}(\lambda)^*$'s weight spaces.
    Then $F(\overline{M}(\lambda)^*\cong \overline{M}(\lambda)^{\mathrm{r}}$ as $\gr(\widetilde{\uq})$-modules.
\end{lemma}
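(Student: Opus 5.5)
The plan is to build an explicit nondegenerate pairing $\overline{M}(\lambda)^{\mathrm{r}}\times\overline{M}(\lambda)\to k$. It should be contravariant in the sense that $\langle v'y, v\rangle = \langle v', yv\rangle$ for $y\in\gr(\widetilde{\uq})$. We then show that it identifies $\overline{M}(\lambda)^{\mathrm{r}}$ with the graded dual $F(\overline{M}(\lambda)^*)$. Here $\overline{M}(\lambda)^*$ is regarded as a right $\gr(\widetilde{\uq})$-module via $(fy)(v)=f(yv)$.

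For the definition of the pairing, consider the tensor product
\[
\overline{M}(\lambda)^{\mathrm{r}}\otimes_{\gr(\widetilde{\uq})}\overline{M}(\lambda)
= kv_\lambda\otimes_{\gr(\uq_\bullet k[A\oplus T]\uq^-)}\gr(\widetilde{\uq})\otimes_{\gr(\uq_\bullet k[A\oplus T]S(\uq^+))}kv_\lambda .
\]
Using the triangular decomposition $\widetilde{\uq}\cong\uq^-\otimes\widetilde{\uq}^0\otimes S(\uq^+)$, which survives in $\gr$, this space is spanned by $v_\lambda\otimes K_h\otimes v_\lambda$. One has to check that the resulting functional $\uq^-\,\widetilde{\uq}^0\,S(\uq^+)\ni y K_h z\mapsto \epsilon(y)q^{\langle h,\lambda\rangle}\epsilon(z)$ is well defined on $\gr(\widetilde{\uq})$. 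This is a Harish-Chandra-type projection. By construction it respects both tensor relations, since $F_i$ kills $v_\lambda$ on the left, $S(E_i)$ kills it on the right, and $\uq_\bullet$ acts by $\epsilon$. The paper chose the right module to be trivial under $\uq^-$ and the left under $S(\uq^+)$, so this is consistent.

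Next, restricting via Lemma~\ref{lem-mlambda-N} gives a pairing $\gr(\mathcal{N}^+)\times\gr(\mathcal{N}^-)\to k$, namely $(a,b)\mapsto$ the $\widetilde{\uq}^0$-component of $ab$ evaluated at $\lambda$. Contravariance is automatic from associativity. Weight compatibility forces the pairing to be block diagonal with respect to the (finite-dimensional) weight spaces. The map $v'\mapsto\langle v',\cdot\rangle$ therefore lands in $F(\overline{M}(\lambda)^*)$ and is a $\gr(\widetilde{\uq})$-module map.

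The main obstacle is nondegeneracy on each weight space. I would derive it from Lemma~\ref{lem-simple-cyclic}: $\overline{M}(\lambda)$ is irreducible. The left radical of the pairing in $\overline{M}(\lambda)$ is a submodule. It does not contain $1\otimes v_\lambda$, since $\langle v_\lambda,v_\lambda\rangle=1$. Hence this radical is zero.

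The symmetric argument needs irreducibility of $\overline{M}(\lambda)^{\mathrm{r}}$, which I would obtain by the mirror version of Lemmas~\ref{lem-isomorphic-suplus-modules} and~\ref{lem-simple-cyclic}. Here $\uq^-$ acts on $\mathcal{N}^+$ through the analogous projected right adjoint action, and only $v_\lambda$ is killed by $\uq^-_+$. Once both radicals vanish, the weight spaces are finite-dimensional and of equal dimension. This follows because $\mathcal{N}^+$ and $\mathcal{N}^-$ have matching graded characters, via $\omega$ and the antipode; alternatively it follows directly from injectivity in both directions. The map is then bijective onto each weight space of the dual, giving $F(\overline{M}(\lambda)^*)\cong\overline{M}(\lambda)^{\mathrm{r}}$.

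If the well-definedness of the projection in $\gr$ is delicate, I would fall back on a different argument. It proves that the subspace of $F(\overline{M}(\lambda)^*)$ annihilated by $\uq^-_+$ is one-dimensional, spanned by the dual of $v_\lambda$, using cyclicity of $\overline{M}(\lambda)$. This gives a nonzero map $\overline{M}(\lambda)^{\mathrm{r}}\to F(\overline{M}(\lambda)^*)$ via the universal property. I would then argue injectivity and surjectivity as above.
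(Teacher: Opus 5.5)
Your proof is correct. Your fallback is exactly the paper's argument, but your main route obtains the comparison map differently.

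The paper builds no form. It takes $v_\lambda^*$, the functional that is $1$ on $v_\lambda$ and $0$ on all other weight spaces. A pure weight argument shows $v_\lambda^*F_i=0$, because $F_iv$ has weight strictly below $\lambda$. It then notes that $\gr(\uq_\bullet)$ and $k[A\oplus T]$ act on $v_\lambda^*$ as on $v_\lambda$. The universal property of the induced module $\overline{M}(\lambda)^{\mathrm{r}}$ then gives $v_\lambda\mapsto v_\lambda^*$, and the paper closes with \enquote{for dimension reasons}.

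Your Shapovalov-type form yields the same homomorphism, since $\langle v_\lambda,\cdot\rangle=v_\lambda^*$. It costs a well-definedness check that the universal-property route avoids, and that check is not quite \enquote{by construction}. The relations of $\gr(\uq_\bullet)$ are not truncated in $\gr$, because $\tilde\alpha_j=0$ for $j\in I_\bullet$. So commuting $E_j$ with $F_j$ (or $S(E_j)$ with $F_j$) produces Cartan terms such as $[K_{\alpha_j}]_{q_j}$. These vanish at $\lambda$ only because $\alpha_j\cdot\lambda=0$: $\alpha_j$ is $\Theta$-fixed and $\lambda$ is $\Theta$-anti-fixed.

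In return, your version makes the bijectivity explicit where the paper is terse. The radical of the form in $\overline{M}(\lambda)$ is a proper submodule, hence zero by Lemma~\ref{lem-simple-cyclic}. This gives surjectivity onto each $(\overline{M}(\lambda)_\nu)^*$. Equality of the finite dimensions of $\mathcal{N}^+_\beta$ and $\mathcal{N}^-_{-\beta}$ then gives injectivity. You therefore do not need a separate mirror proof of irreducibility of $\overline{M}(\lambda)^{\mathrm{r}}$; the paper derives that irreducibility as a corollary of this very lemma.

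Finally, your form is precisely the pairing $v_\lambda^*(x\,y\,v_\lambda)$ through which $p'$ is later defined in Lemma~\ref{lem-definition-p'}.
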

\begin{proof}
    Let $v_\lambda^*\in F(\overline{M}(\lambda)^*)$ be the linear
    form mapping $v_\lambda\to 1$ and every other weight space to 0.
    Then $v_\lambda^*$ is a lowest-weight vector of weight $\lambda$:
    let $i\in I$ and $v\in\overline{M}(\lambda)$. Then $v$ has weight
    $\le\lambda$.
    In particular, $F_iv$ has weight $<\lambda$, so
    $v_\lambda^*(F_iv)=0$, whence $v_\lambda^* F_i=0$.
    Moreover, since $v_\lambda$ generates the trivial
    $\gr(\uq_\bullet)$-module, the same is true for $v_\lambda^*$.

    Consequently, there is a canonical $\gr(\widetilde{\uq})$-homomorphism
    $\overline{M}(\lambda)^{\mathrm{r}}\to F(\overline{M}(\lambda)^*)$
    mapping $v_\lambda$ to $v_\lambda^*$.
    For dimension reasons, this homomorphism is an isomorphism.
\end{proof}

\begin{corollary}
    The claims from Lemma~\ref{lem-simple-cyclic} are also true
    for $\overline{M}(\lambda)^{\mathrm{r}}$.
\end{corollary}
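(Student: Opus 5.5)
The corollary asserts that $\overline{M}(\lambda)^{\mathrm{r}}$ is irreducible over $\gr(\widetilde{\uq})$, cyclic over $\gr(\widetilde{\uqb})$ independently of the parameters, and hence isomorphic as a right module to the (right) analogue of $M(\lambda)$ from \cite{Wat23}. The cleanest route is to transport everything through the isomorphism $F(\overline{M}(\lambda)^*)\cong\overline{M}(\lambda)^{\mathrm{r}}$ of the preceding lemma. I would argue with $F(\overline{M}(\lambda)^*)$ and only re-check the cyclicity part directly on $\overline{M}(\lambda)^{\mathrm{r}}$.

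For irreducibility I would argue as follows. By Lemma~\ref{lem-simple-cyclic}, $\overline{M}(\lambda)$ is simple, and it is a weight module with finite-dimensional weight spaces: by Lemma~\ref{lem-mlambda-N} it is $\gr(\mathcal{N}^-)\otimes kv_\lambda$, and $\mathcal{N}^-\subset\uq^-$ has finite-dimensional graded pieces. Let $N\subset F(\overline{M}(\lambda)^*)$ be a nonzero submodule. It is a sum of weight spaces, since $\gr(k[A\oplus T])$ acts semisimply with distinct characters on distinct weights. Its annihilator $N^\perp\subset\overline{M}(\lambda)$ is then a submodule that is a sum of weight spaces. Because $N\ne0$ and weight spaces are finite-dimensional, $N^\perp\ne\overline{M}(\lambda)$, so simplicity gives $N^\perp=0$. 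Weightwise finite-dimensionality then forces $N$ to be everything.

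Cyclicity over $\gr(\widetilde{\uqb})$ I would prove directly, mirroring the proof of Lemma~\ref{lem-simple-cyclic} with left and right swapped. The quantum Iwasawa decomposition $\widetilde{\uq}\cong\mathcal{N}^+\otimes k[A]\otimes\widetilde{\uqb}$ passes to $\gr(\widetilde{\uq})=\mathcal{N}^+\gr(k[A])\gr(\widetilde{\uqb})$. The factor $\mathcal{N}^+$ lies in $\gr(S(\uq^+))$, but that factor acts freely rather than trivially on $v_\lambda$ from the right, so I would reorder the decomposition as $\gr(\widetilde{\uq})=\gr(\uq_\bullet k[A\oplus T]\uq^-)\cdot\mathcal{N}^+$. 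This gives $v_\lambda\gr(\widetilde{\uq})=v_\lambda\mathcal{N}^+$, matching Lemma~\ref{lem-mlambda-N}. To conclude $v_\lambda\gr(\widetilde{\uqb})=\overline{M}(\lambda)^{\mathrm{r}}$, it then suffices to show that every nonzero vector generates the module, and that is just irreducibility again. Indeed, irreducibility alone gives cyclicity from $v_\lambda$ over $\gr(\widetilde{\uq})$. To get cyclicity over $\gr(\widetilde{\uqb})$, I would use $\gr(\widetilde{\uq})=\gr(k[A])\,\mathcal{N}^-\,\gr(\widetilde{\uqb})$ written in the opposite order. This yields $v_\lambda\gr(\widetilde{\uq})=v_\lambda\gr(\widetilde{\uqb})$, because $\mathcal{N}^-\subset\uq^-$ and $k[A]$ act on $v_\lambda$ by scalars or trivially.

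The parameter-independence follows because Lemma~\ref{lem-B-embedded} identifies $\gr(\widetilde{\uqb})$ with $\widetilde{\uqb}$ independently of $\bm{s}$. Identifying the module with the \cite{Wat23} right module then follows by the universal property: both are cyclic, generated by a $\widetilde{\uqb}$-vector with the same defining relations. The main obstacle I expect is this last comparison. It requires checking that the defining relations of the \cite{Wat23} module on its generator match the action on $v_\lambda$ under the isomorphism of Lemma~\ref{lem-B-embedded}. I would try to obtain this by applying the antipode, or an anti-automorphism exchanging left and right coideals, to the left-module statement.
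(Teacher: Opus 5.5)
Your argument is correct and follows the route the paper intends. Irreducibility is transported through the preceding duality $F(\overline{M}(\lambda)^*)\cong\overline{M}(\lambda)^{\mathrm{r}}$, and your annihilator argument with finite-dimensional weight spaces is exactly what is needed; cyclicity comes from mirroring the proof of Lemma~\ref{lem-simple-cyclic} with $\gr(\widetilde{\uq})=\gr(k[A])\,\mathcal{N}^-\gr(\widetilde{\uqb})$, since from the right $v_\lambda$ is killed by $\mathcal{N}^-_+$ and scaled by $k[A]$. Drop the aside that cyclicity over $\gr(\widetilde{\uqb})$ is \enquote{just irreducibility again}, since irreducibility is over $\gr(\widetilde{\uq})$ and does not give this; the comparison with \cite{Wat23} is left at the same level of detail as in the paper's own Lemma~\ref{lem-simple-cyclic}.
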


\begin{lemma}\label{lem-verma-adjunction}
    Let $V$ be a finite-dimensional simple right $\widetilde{\uqb'}$-module,
    and let $W$ be a finite-dimensional simple left $\widetilde{\uqb}$-module.
    There are vector space isomorphisms
    \begin{align*}
        \Hom_{\widetilde{\uqb'}}(V, \overline{M}(\lambda)^*)
        &\cong \Hom_{\uq_\bullet}(kv_\lambda, V^*)\\
        \Hom_{\widetilde{\uqb}}(W, \overline{M}(\lambda)^{\mathrm{r}*})&\cong \Hom_{\uq_\bullet}(kv_\lambda, W^*).
    \end{align*}
\end{lemma}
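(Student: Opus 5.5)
The plan is the standard Verma-module Frobenius reciprocity, transported to the graded/Iwasawa setting. I will treat the second isomorphism in detail; the first follows by the same argument with left and right interchanged (using $\overline{M}(\lambda)$, $\mathcal{N}^-$ and $\widetilde{\uqb'}$ in place of $\overline{M}(\lambda)^{\mathrm{r}}$, $\mathcal{N}^+$ and $\widetilde{\uqb}$).

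\textbf{Step 1: dualise.} Since $W$ is finite-dimensional, a $\widetilde{\uqb}$-homomorphism $W\to\overline{M}(\lambda)^{\mathrm{r}*}$ is the same as a bilinear pairing $W\times\overline{M}(\lambda)^{\mathrm{r}}\to k$ that is $\widetilde{\uqb}$-balanced. Equivalently, it is a right $\widetilde{\uqb}$-module map
\[
    \overline{M}(\lambda)^{\mathrm{r}}\to W^*,
\]
where $\widetilde{\uqb}$ acts on $\overline{M}(\lambda)^{\mathrm{r}}$ through the isomorphism $\widetilde{\uqb}\cong\gr(\widetilde{\uqb})$ of Lemma~\ref{lem-B-embedded}.

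\textbf{Step 2: reduce to the generator.} By the Corollary following Lemma~\ref{lem-simple-cyclic}, $\overline{M}(\lambda)^{\mathrm{r}}$ is cyclic over $\gr(\widetilde{\uqb})$, generated by $v_\lambda$. Hence such a map $\Phi$ is determined by the single vector $w^*:=\Phi(v_\lambda)\in W^*$. Since $v_\lambda$ spans the trivial $\gr(\uq_\bullet)$-module and carries the $k[T]$-character $q^{\langle\cdot,\lambda\rangle}$, the vector $w^*$ must be $\uq_\bullet$-invariant with the corresponding $T$-weight. This gives an injection into $\Hom_{\uq_\bullet}(kv_\lambda,W^*)$, where the $T$-weight condition is included in the meaning of this Hom as $\uq_\bullet k[T]$-maps.

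\textbf{Step 3: surjectivity.} This is the main obstacle. The annihilator of $v_\lambda$ in $\gr(\widetilde{\uqb})$ is not visibly generated by $\uq_\bullet$-relations, because the $B_i$ mix $\uq^-$ and $S(\uq^+)$. I would describe $\overline{M}(\lambda)^{\mathrm{r}}$ as a right $\gr(\widetilde{\uqb})$-module by comparing the two factorisations. Lemma~\ref{lem-mlambda-N} gives $\overline{M}(\lambda)^{\mathrm{r}}=kv_\lambda\otimes\gr(\mathcal{N}^+)$, and the quantum Iwasawa decomposition gives $\gr(\widetilde{\uq})=\gr(\widetilde{\uqb})\gr(k[A])\mathcal{N}^+$. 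Passing to $\gr$, the leading term of $B_i$ is $F_i+\bm c_iT_{w_\bullet}(E_{\tau(i)})K_{-\alpha_i}$. Modulo the left ideal of $\gr(\uq_\bullet k[A\oplus T]\uq^-)$, the $F_i$-part acts on $v_\lambda$ by zero, so $v_\lambda\cdot B_i$ reduces to the action of an element of $\mathcal{N}^+$ of degree one.

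From this I expect to show that $\overline{M}(\lambda)^{\mathrm{r}}$ is the induced module
\[
    kv_\lambda\otimes_{\uq_\bullet k[T]}\widetilde{\uqb},
\]
that is, $\widetilde{\uqb}$ is free over $\uq_\bullet k[T]$ with a complement mapping isomorphically onto $\mathcal{N}^+$ via $v_\lambda$. To prove this I would use a PBW-type basis of $\widetilde{\uqb}$ in the $B_i$-monomials (Kolb, \cite{Kol14}, Section~6). Its leading terms in $\gr$ match a basis of $\mathcal{N}^+$, and a triangularity argument with respect to the filtration $\mathcal{F}$ would give the isomorphism. This basis comparison is the step I would check most carefully.

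\textbf{Step 4: conclude.} Once Step 3 is established, the tensor–Hom adjunction for induction from $\uq_\bullet k[T]$ to $\widetilde{\uqb}$ gives the second isomorphism directly.
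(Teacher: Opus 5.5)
This is the paper's argument: injectivity comes from the cyclicity of the graded Verma module on $v_\lambda$ (Lemma~\ref{lem-simple-cyclic} and its corollary), and surjectivity comes from the fact that $v_\lambda$ satisfies no relations beyond those coming from $\uq_\bullet$ and $k[T]$, so Frobenius reciprocity for the induced module applies. Your reading of the right-hand side as $\uq_\bullet k[T]$-maps is what the well-definedness check actually needs, with $\lambda\perp T$ making the $T$-character of $v_\lambda$ trivial.

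The only difference is how the induced-module description is obtained. The paper identifies $\overline{M}(\lambda)$ with Watanabe's $M(\lambda)$ and cites \cite[Proposition~3.3.3]{Wat23}, whereas you propose to rederive it from Kolb's basis. If you do, keep two points in mind:
\begin{itemize}
\item Kolb's $B_J$ are indexed by a basis of all of $\uq^-$, and they form a basis only over the subalgebra generated by $\uq_\bullet\cap\uq^+$ and the torus. You must still split off $\uq_\bullet\cap\uq^-$, using $\uq^-=(\uq_\bullet\cap\uq^-)\,\mathcal{N}^-$, to get freeness over $\uq_\bullet k[T]$ with a complement matching $\mathcal{N}^+$.
\item The triangularity has to be taken with respect to the weights of $kv_\lambda\otimes\mathcal{N}^+$, not the filtration $\mathcal{F}$. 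In $\mathcal{F}$, every generator of $\widetilde{\uqb}$ has its leading part in degree $0$, so $\mathcal{F}$ does not separate the $B$-monomials.
\end{itemize}
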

\begin{proof}
    Let $f\in\Hom_{\widetilde{\uqb'}}(V, \overline{M}(\lambda)^*)$.
    Define
    \[
        \phi(f)\colon{} kv_\lambda\to V^*,\qquad
        v_\lambda\mapsto (v\mapsto f(v)(v_\lambda)).
    \]
    Furthermore, for $x\in\uq_\bullet$ we have
    \[
        \phi(f)(xv_\lambda)(v)
        = f(v)(xv_\lambda)
        = f(vx)(v_\lambda)
        = (x\phi(f)(v_\lambda))(v)
    \]
    because $f$ intertwines $\uq_\bullet\subset\widetilde{\uqb'}$.
    We conclude $\phi(f)(xv\lambda) = x\phi(f)(v_\lambda)$.
    Thus the map $\phi$ is a well-defined linear map.
    Furthermore, by Lemma~\ref{lem-simple-cyclic} $f(v)\in\overline{M}(\lambda)^*$ is
    fully determined by its action on $v_\lambda$, i.e.
    by $\phi(f)(v_\lambda)(v)$.
    Consequently, $\phi$ is injective.
    For surjectivity, let $g\in\Hom_{\uq_\bullet}(kv_\lambda, V^*)$
    and define
    \[
        f(v)(b v_\lambda) :=
        g(v_\lambda)(vb).
    \]
    This is well-defined for the following reason:
    as $\uqb'$-modules, $\overline{M}(\lambda)$ is isomorphic
    to $M(\lambda)$ from \cite{Wat23} (for nonstandard parameter
    $\bm{t}=0$).
    By \cite[Proposition~3.3.3]{Wat23}, the only relations satisfied
    by $v_\lambda$ are $K_h v_\lambda= v_\lambda$ with
    $h\in T$, and $E_i v_\lambda=0$ for $i\in I_\bullet$.
    Both are satisfied since $g$ is a $\uq_\bullet$-intertwiner,
    and since $\lambda\perp T$.

    A similar statement follows for the second equality.
\end{proof}

\begin{corollary}\label{cor-unique-invariant-element}
    Write $\widehat{M}(\lambda)$ and $\widehat{M}(\lambda)^{\mathrm{r}}$
    for the direct products of the weight spaces of
    $\overline{M}(\lambda)$ and $\overline{M}(\lambda)^{\mathrm{r}}$,
    so that $\widehat{M}(\lambda)\cong (\overline{M}(\lambda)^{\mathrm{r}})^*$ and
    $\widehat{M}(\lambda)^{\mathrm{r}}\cong(\overline{M}(\lambda))^*$.
    
    Then up to scalars there is a unique $\widetilde{\uqb}$-invariant element
    of $\widehat{M}(\lambda)$ and a unique $\widetilde{\uqb'}$-invariant element of $\widehat{M}(\lambda)^{\mathrm{r}}$.
\end{corollary}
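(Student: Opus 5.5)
We deduce the corollary from Lemma~\ref{lem-verma-adjunction} by taking $V$ and $W$ to be the trivial one-dimensional modules and then using the stated duality between the completions and the Verma-type modules. We treat the first claim in detail; the second is symmetric.

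\textbf{Step 1: identify the invariants with homomorphisms.} Let $\epsilon$ denote the trivial one-dimensional $\widetilde{\uqb}$-module, given by the counit. A $\widetilde{\uqb}$-invariant element of $\widehat{M}(\lambda)$ is the same thing as a $\widetilde{\uqb}$-homomorphism $\epsilon\to\widehat{M}(\lambda)$. Under the identification $\widehat{M}(\lambda)\cong(\overline{M}(\lambda)^{\mathrm{r}})^*$, this becomes a homomorphism $W=\epsilon\to\overline{M}(\lambda)^{\mathrm{r}*}$. The identification should hold because the weight spaces of $\overline{M}(\lambda)$ and $\overline{M}(\lambda)^{\mathrm{r}}$ are finite-dimensional. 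They are dually paired weight space by weight space, via the pairing induced by $v_\lambda^*$ as in the lemma identifying $F(\overline{M}(\lambda)^*)$ with $\overline{M}(\lambda)^{\mathrm{r}}$. The dual of a direct sum of finite-dimensional weight spaces is then the direct product of their duals.

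\textbf{Step 2: apply the adjunction.} The second isomorphism of Lemma~\ref{lem-verma-adjunction} now gives
\[
    \Hom_{\widetilde{\uqb}}(\epsilon,\overline{M}(\lambda)^{\mathrm{r}*})\cong\Hom_{\uq_\bullet}(kv_\lambda,\epsilon^*).
\]
Here $kv_\lambda$ carries the trivial $\uq_\bullet$-action by definition, and $\epsilon^*$ is also trivial as a $\uq_\bullet$-module. The right-hand side is therefore $\Hom_{\uq_\bullet}(k,k)\cong k$, which is one-dimensional. This gives existence and uniqueness up to scalars.

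\textbf{Step 3: the right-handed version.} For the second claim we argue in the same way with $V=\epsilon$ as a right $\widetilde{\uqb'}$-module, using $\widehat{M}(\lambda)^{\mathrm{r}}\cong\overline{M}(\lambda)^*$ and the first isomorphism of the lemma.

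\textbf{Main obstacle.} The main subtlety is Step 1: checking that the identification of completions with algebraic duals is compatible with the $\widetilde{\uqb}$-actions. Because $\widetilde{\uqb}$ does not preserve weights, an invariant element of the completion must be understood as a functional on $\overline{M}(\lambda)^{\mathrm{r}}$ that is invariant under the contragredient action.

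The compatibility holds because the action on the completion is defined precisely as the dual action under this pairing. Each generator ($F_i$, $E_i$, $K_h$) shifts weight by a fixed amount, so its action on the product of weight spaces is well-defined and agrees with the dual action on $\overline{M}(\lambda)^{\mathrm{r}*}$. Once this compatibility is checked, the rest is a formal consequence of Lemma~\ref{lem-verma-adjunction}.
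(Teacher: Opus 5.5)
Your proposal is correct and is essentially the paper's proof: take the trivial modules (via $\epsilon$) in Lemma~\ref{lem-verma-adjunction} and read off that $\overline{M}(\lambda)^{\mathrm{r},*,\widetilde{\uqb}}\cong\widehat{M}(\lambda)^{\widetilde{\uqb}}$ and $\overline{M}(\lambda)^{*,\widetilde{\uqb'}}\cong\widehat{M}(\lambda)^{\mathrm{r},\widetilde{\uqb'}}$ are one-dimensional. Your extra checks (finite-dimensional weight spaces, and compatibility of the actions with the duality) only spell out what the paper builds into the statement of the corollary.
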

\begin{proof}
    Consider $k$ as $\widetilde{\uqb}$ and $\widetilde{\uqb}'$-module via
    $\epsilon$,
    then by Lemma~\ref{lem-verma-adjunction}, the vector space
    $\overline{M}(\lambda)^{*,\widetilde{\uqb'}}\cong
    \widehat{M}(\lambda)^{\mathrm{r},\widetilde{\uqb'}}$ is
    one dimensional,
    as is the vector space
    \[
        \overline{M}(\lambda)^{\mathrm{r},*,\widetilde{\uqb}}
        \cong
        \widehat{M}(\lambda)^{\widetilde{\uqb}}.\qedhere
    \]
\end{proof}

\begin{definition}
    Write $\widehat{\mathcal{N}}^\pm$ for the direct product of
    $\mathcal{N}^\pm$'s weight spaces.
\end{definition}

\begin{corollary}
    There exist unique elements $b^*\in\widehat{\mathcal{N}}^+$ and
    $b\in\widehat{\mathcal{N}}^-$ such that
    $b^*_0 = b_0 = 1$ and such that
    \[
        b\otimes v_\lambda \in \widehat{M}(\lambda)^{\widetilde{\uqb}},\qquad
        v_\lambda \otimes b^* \in \widehat{M}(\lambda)^{\mathrm{r},\widetilde{\uqb'}}
    \]
    for all $\lambda\in (X\otimes_\Z\Q)^{-\Theta}$.
\end{corollary}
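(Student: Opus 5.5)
We obtain this from Corollary~\ref{cor-unique-invariant-element} by identifying the completed modules with completions of $\mathcal{N}^\mp$, and then checking that the normalised invariant vector does not depend on $\lambda$.

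\emph{Step 1: existence and uniqueness for fixed $\lambda$.} By Lemma~\ref{lem-mlambda-N}, $\overline{M}(\lambda)=\gr(\mathcal{N}^-)\otimes kv_\lambda\cong\mathcal{N}^-\otimes kv_\lambda$. Passing to direct products of weight spaces gives $\widehat{M}(\lambda)=\widehat{\mathcal{N}}^-\otimes v_\lambda$, and similarly $\widehat{M}(\lambda)^{\mathrm{r}}=v_\lambda\otimes\widehat{\mathcal{N}}^+$. Corollary~\ref{cor-unique-invariant-element} then supplies a nonzero invariant element $b(\lambda)\otimes v_\lambda$, unique up to scalars, and likewise $v_\lambda\otimes b^*(\lambda)$.

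To normalise, we need the weight-$\lambda$ component $b(\lambda)_0$ to be nonzero. Suppose instead $b(\lambda)_0=0$. Under the duality $\widehat{M}(\lambda)\cong(\overline{M}(\lambda)^{\mathrm{r}})^*$, the invariant element corresponds to a $\widetilde{\uqb}$-invariant functional on $\overline{M}(\lambda)^{\mathrm{r}}$ that vanishes on $v_\lambda$. By the corollary after Lemma~\ref{lem-simple-cyclic}, $v_\lambda$ is cyclic for $\overline{M}(\lambda)^{\mathrm{r}}$ over $\gr(\widetilde{\uqb})$. An invariant functional vanishing on a cyclic vector vanishes on the whole module, so the element would be $0$, a contradiction. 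We may therefore normalise $b(\lambda)_0=1$, which makes $b(\lambda)$ unique. The same argument gives a unique $b^*(\lambda)$.

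\emph{Step 2: independence of $\lambda$.} This is the main obstacle. It suffices to show that the invariance condition, viewed as a system of equations on $b\in\widehat{\mathcal{N}}^-$, does not involve $\lambda$. For this we use that $\gr(\widetilde{\uqb})$ is generated by $\gr(\uq_\bullet)$, $\gr(k[T])$ and the $\gr(B_i)$ (Lemma~\ref{lem-B-embedded}), and check each generator separately.

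\begin{itemize}
\item $\gr(k[T])$ acts on $x\otimes v_\lambda$ through the weight of $x$ only, since $\lambda\perp T$. Invariance under it is therefore the condition that $b$ lies in the $T$-weight-zero part, which is independent of $\lambda$.
\item For $\uq_\bullet$: $v_\lambda$ spans the trivial $\gr(\uq_\bullet)$-module. Moreover, $\langle h_j,\lambda\rangle=0$ for $j\in I_\bullet$, because $\lambda$ is $\Theta$-anti-invariant and $\alpha_j$ is $\Theta$-invariant. Commuting $E_j,F_j,K_j^{\pm1}$ past elements of $\mathcal{N}^-$ therefore produces no $\lambda$-dependence.
\item For $\gr(B_i)$, the first term $F_i$ acts by left multiplication on $\mathcal{N}^-$, independently of $\lambda$. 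The nonstandard term $\bm{s}_iK_i^{-1}$ has degree $-\tilde{\alpha}_i$ and so vanishes in $\gr$.
\item The remaining term of $\gr(B_i)$, namely $\bm{c}_iT_{w_\bullet}(E_{\tau(i)})K_i^{-1}$, needs more care. We rewrite it as an element of $S(\uq^+)$ times a factor of $k[T]$, with only a $q$-power correction. The $S(\uq^+)$-action on $\overline{M}(\lambda)$ is the $\ast$-action, which is independent of $\lambda$ by Lemma~\ref{lem-isomorphic-suplus-modules}. The leftover $K$-factor has degree $0$ only up to a $T$-part, and this $T$-part again acts through weights of $\mathcal{N}^-$ alone. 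This is the step where the bookkeeping must be done carefully.
\end{itemize}

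Consequently, the equations $\gr(B_i)(b\otimes v_\lambda)=\epsilon(B_i)\,b\otimes v_\lambda$, together with those for $\uq_\bullet$ and $k[T]$, are the same for all $\lambda$. Hence the normalised solution $b(\lambda)$ is independent of $\lambda$, and we set $b:=b(\lambda)$. The analogous computation in $\overline{M}(\lambda)^{\mathrm{r}}$, using right multiplication and the dual $\ast$-action of $\uq^-$ on $S(\uq^+)$, gives $b^*$.
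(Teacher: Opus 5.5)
Your proof is correct. It has the same skeleton as the paper's: Corollary~\ref{cor-unique-invariant-element} together with the freeness in Lemma~\ref{lem-mlambda-N}, then normalisation and $\lambda$-independence. You justify both of the last two steps differently, however.

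For $b_0\neq0$, the paper simply cites \cite[Lemma~3.3]{Le03}. You instead pair the invariant vector with $\overline{M}(\lambda)^{\mathrm{r}}$ and use that $v_\lambda$ is cyclic there over $\gr(\widetilde{\uqb})$. This is exactly the injectivity half of Lemma~\ref{lem-verma-adjunction}, so your route keeps the argument internal to the paper and makes the external citation unnecessary.

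For $\lambda$-independence, the paper appeals to Lemma~\ref{lem-simple-cyclic}, i.e.\ to the identification with the module $M(\lambda)$ of \cite{Wat23}, whose presentation as a cyclic $\widetilde{\uqb}$-module involves no $\lambda$. You instead check generator by generator that the action of $\gr(\widetilde{\uqb})$ on $\widehat{\mathcal{N}}^-\otimes v_\lambda$ does not involve $\lambda$. This uses $\lambda\perp T$, $\langle h_j,\lambda\rangle=0$ for $j\in I_\bullet$, and the $\lambda$-independence of the $\ast$-action from Lemma~\ref{lem-isomorphic-suplus-modules}.

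The paper's version is shorter. But an abstract isomorphism $\overline{M}(\lambda)\cong\overline{M}(\lambda')$ only yields a common $b$ once one knows it is $xv_\lambda\mapsto xv_{\lambda'}$ for $x\in\mathcal{N}^-$. Working in the fixed coordinates $\widehat{\mathcal{N}}^-$ gives you this directly. Your equations also correctly keep $\epsilon(B_i)=\bm{s}_i$ on the right-hand side, which is where the non-standard parameter survives after passing to $\gr$.

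One small imprecision in your last bullet. Set $\beta:=w_\bullet\alpha_{\tau(i)}=-\Theta(\alpha_i)$. Then, up to a power of $q$, $T_{w_\bullet}(E_{\tau(i)})K_{-\alpha_i}=(K_{-\beta}T_{w_\bullet}(E_{\tau(i)}))\,K_{-\alpha_i-\Theta(\alpha_i)}$. The first factor lies in $S(\uq^+)$ and the second lies entirely in $k[T]$, so the leftover $K$-factor is simply an element of $k[T]$.
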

\begin{proof}
    Follows from Corollary~\ref{cor-unique-invariant-element},
    Lemma~\ref{lem-mlambda-N}, and Lemma~\ref{lem-simple-cyclic}, which
    implies that $\overline{M}(\lambda)$ and its completion as a $\widetilde{\uqb}$-module (and $\overline{M}(\lambda)^{\mathrm{r}}$ and its completion as $\widetilde{\uqb'}$-module) is independent
    of $\lambda$.

    To see that $b_0^*,b_0\ne0$, we refer to \cite[Lemma~3.3]{Le03}.
\end{proof}

\begin{definition}
    Define $\Upsilon\colon\:\overline{M}(\lambda)^{\mathrm{r}}\otimes
    \overline{M}(\lambda)\to k[X]$ by
    \[
        f\otimes v\mapsto \sum_{\mu\in X} f(v_\mu) e^\mu
    \]
    where $v=\sum_{\mu\in X}v_\mu$ is the decomposition into weight
    spaces, and where we identify $\overline{M}(\lambda)^{\mathrm{r}}$
    with a subset of $\overline{M}(\lambda)^*$.
    Analogously, we define
    \[
        \widehat{\Upsilon}: \widehat{M}(\lambda)^{\mathrm{r}}\otimes
        \widehat{M}(\lambda)\to k[[X]].
    \]
\end{definition}

\begin{remark}
    Note that $\Upsilon(f\otimes v)(h) = f(K_hv) = c^{\overline{M}(\lambda)}_{f,v}(K_h)$,
    so that $\Upsilon$ is the associated graded version of
    \[
        f\otimes v\mapsto \Res'(c^M_{f,v})
    \]
    (for a $\uq$-module $M$).
    Note that neither $f\otimes v\mapsto c^M_{f,v}$ nor
    $\Res'$ have suitable equivalents for the completions of $\overline{M}(\lambda)^{\mathrm{r}},\overline{M}(\lambda)$,
    only their composition.
    
    Note furthermore that elements in the image of $\widehat{\Upsilon}$
    are supported in the weights of $\overline{M}(\lambda)$, which are $\lambda + Q^-$.
    In particular, $\widehat{\Upsilon}(f\otimes v) = e^\lambda p$,
    where $p$ is a power series in $e^{-\alpha_i}$ for $i\in I$.
\end{remark}

We now expand on this last statement in particular and show that such
a power series $p$ (now $p'$ for consistency reasons) can be chosen
independently of $\lambda$, and as a power series in $e^{-\tilde{\alpha}_i}$ for $i\in I_\circ$.

\begin{lemma}\label{lem-definition-p'}
    There is $p'=1+\lot \in k[[e^{-\tilde{\alpha}_i}, i\in I_\circ]]$
    such that
    \[
        \widehat{\Upsilon}(v_\lambda b^*\otimes bv_\lambda)
        = e^\lambda p'
    \]
    for all $\lambda\in k[X\otimes_\Z\Q]^{-\Theta}$.
\end{lemma}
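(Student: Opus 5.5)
Write $b=\sum_{\beta} b_\beta$ and $b^*=\sum_{\beta} b^*_\beta$ as sums of weight components, with $b_\beta\in\mathcal{N}^-_{-\beta}$ and $b^*_\beta\in\mathcal{N}^+_{\beta}$, where $\beta$ runs over $Q^+$. By the corollary preceding the definition of $\widehat{\Upsilon}$, these elements are independent of $\lambda$. Then $b v_\lambda=\sum_\beta b_\beta\otimes v_\lambda$ has weight components $(bv_\lambda)_{\lambda-\beta}=b_\beta\otimes v_\lambda$, so
\[
    \widehat{\Upsilon}(v_\lambda b^*\otimes bv_\lambda)
    = \sum_{\beta} (v_\lambda b^*)(b_\beta\otimes v_\lambda)\, e^{\lambda-\beta}
    = e^\lambda\sum_\beta c_\beta(\lambda) e^{-\beta}.
\]
Here $c_\beta(\lambda)=(v_\lambda b^*_\beta)(b_\beta v_\lambda)$ is the pairing between $\overline{M}(\lambda)^{\mathrm r}$ and $\overline{M}(\lambda)$; only the matching weight component of $b^*$ contributes. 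I would prove three things: (a) $c_\beta(\lambda)$ does not depend on $\lambda$; (b) $c_0=1$; (c) $c_\beta=0$ unless $\beta$ lies in the monoid generated by the $\tilde\alpha_i$ with $i\in I_\circ$, and more precisely $c_\beta$ only depends on $\tilde\beta$ after regrouping. Part (b) is immediate from $b_0=b_0^*=1$ and $v_\lambda^*(v_\lambda)=1$.

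For (a), the natural route is to compute the pairing as $v_\lambda^*(b^*_\beta b_\beta v_\lambda)$ inside $\overline{M}(\lambda)$. The identification $F(\overline{M}(\lambda)^*)\cong\overline{M}(\lambda)^{\mathrm r}$ is the canonical map $v_\lambda\mapsto v^*_\lambda$, so $(v_\lambda y)(x v_\lambda)=v_\lambda^*(yxv_\lambda)$. Now $b^*_\beta\in\mathcal{N}^+\subset S(\uq^+)$, and by Lemma~\ref{lem-isomorphic-suplus-modules} it acts on $\overline{M}(\lambda)\cong\mathcal{N}^-$ via the $\lambda$-independent action $\ast$. So $b^*_\beta b_\beta\otimes v_\lambda=(b^*_\beta\ast b_\beta)\otimes v_\lambda$, with $b^*_\beta\ast b_\beta\in\mathcal{N}^-_0=k$. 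This scalar is independent of $\lambda$, and we set $c_\beta:=b^*_\beta\ast b_\beta$. The key point is that $\ast$ discards exactly the $K$-dependent terms, which vanish in $\gr$; this is where $\lambda$ could otherwise enter. I expect this to be the main technical check: one must confirm that the pairing really is computed through the left $S(\uq^+)$-action on $\overline{M}(\lambda)$, rather than through some twisted action in which $K_h$-factors pick up $q^{\langle h,\lambda\rangle}$.

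For (c), the series is a priori in $e^{-\alpha_i}$, $i\in I$. I would use that $v_\lambda b^*$ and $bv_\lambda$ are invariant under $\uq_\bullet$ (trivially) and under $k[T]$. Then for $h\in T$,
\[
    v^*_\lambda(b^*_\beta b_\beta K_h v_\lambda)=v^*_\lambda(b^*_\beta K_h b_\beta v_\lambda),
\]
and comparing the two sides gives $q^{\langle h,\lambda\rangle}=q^{\langle h,\lambda-\beta\rangle}$ whenever $c_\beta\neq0$. Hence $\langle T,\beta\rangle=0$, i.e.\ $\Theta\beta=-\beta$, and therefore $\beta=\tilde\beta$ is in the span of the $\tilde\alpha_i$. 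Since $\beta\in Q^+$ and the restricted simple roots $\tilde\alpha_i$ ($i\in I_\circ$) form a basis of the restricted root lattice, with positive combinations coming from positive ones, $e^{-\beta}$ is a monomial in the $e^{-\tilde\alpha_i}$. Thus $p':=\sum_\beta c_\beta e^{-\beta}\in k[[e^{-\tilde\alpha_i}\colon i\in I_\circ]]$, with constant term $1$ and independent of $\lambda$, as claimed.
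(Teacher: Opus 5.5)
Your proposal is correct and follows essentially the same route as the paper. The coefficient of $e^{\lambda-\beta}$ is $v_\lambda^*(b^*_\beta b_\beta v_\lambda)$, which is a $\lambda$-independent scalar, and $k[T]$-invariance of $bv_\lambda$ forces the support into $-\Theta$-invariant weights, i.e.\ into $Q(\Sigma)^-$.

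The one cosmetic difference is how you reduce $b^*_\beta b_\beta v_\lambda$ to a scalar multiple of $v_\lambda$. You use the $\lambda$-independent action $\ast$ from Lemma~\ref{lem-isomorphic-suplus-modules}. The paper instead projects with $P$ onto $\gr(k[T\oplus A])$, uses $P(S(\uq^+)\uq^-)\subset k[T]$, and evaluates at $0$. Both rest on the same fact: commuting $S(\uq^+)$ past $\uq^-$ in $\gr(\widetilde{\uq})$ only produces $k[T]$-terms, and these act trivially on $v_\lambda$.
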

\begin{proof}
    Write
    \[
        P: \gr(\widetilde{\uq})=\gr(k[T\oplus A])\oplus (\gr(\widetilde{\uq}S(\uq^+)_+)+\gr(\uq^-_+\widetilde{\uq}))
    \to \gr(k[T\oplus A])
    \]
    for the projection.
    Note that for $i\in I_\circ$ we have
    \[
        \gr(K_i^{-1}E_iF_j)
        = \delta_{ij}(q_i-q_i^{-1})^{-1} + q_i^{\langle h_i,\alpha_j\rangle} \gr(F_j K_i^{-1}E_i),
    \]
    so that $P(S(\uq^+)\uq^-)\subset k[T]$.

    Define
    \[
        p' := \sum_{\alpha\in Q^+} P(b_{\alpha}^*b_{-\alpha})(0) e^{-\alpha}
    \]
    ($P(b_{-\alpha}^*b_{-\alpha})\in k[T]$, which we evaluate in 0).
    Then we have
    \begin{align*}
        \widehat{\Upsilon}(v_\lambda b^*\otimes bv_\lambda)
        &= \sum_{\alpha\in Q^+} \Upsilon(v_\lambda b_{\alpha}^*\otimes
        b_{-\alpha}v_\lambda)\\
        &= \sum_{\alpha\in Q^+} v_\lambda^*(b_\alpha^* b_{-\alpha}v_\lambda) e^{\lambda-\alpha}\\
        &= \sum_{\alpha\in Q^+} v_\lambda^*(P(b_\alpha^*b_\alpha)v_\lambda)e^{\lambda-\alpha}\\
        &= e^\lambda p',
    \end{align*}
    where $\gr(\widetilde{\uq}S(\uq^+)_+)v_\lambda=0$
    and $v_\lambda^*\gr(\uq^-_+\widetilde{\uq})=0$, which explains
    the occurrence of $P$, and where $fv_\lambda=f(0)v_\lambda$ for
    $f\in k[T]$.

    It now only remains to see that $p'$ is supported in $Q(\Sigma)^-$.
    For this we note that $bv_\lambda$ can only be invariant under
    $\gr(k[T])\subset\gr(\widetilde{\uqb})$ if it is supported in
    $-\Theta$-invariant weights.
    Consequently, $b_{-\alpha}v_\lambda\ne0$ (and hence without loss of generality $b_{-\alpha}\ne0$) only if
    $\lambda-\alpha$ is $-\Theta$-invariant.
    Since $\lambda$ has the same property, we obtain the same for
    $\alpha$, and hence $\alpha=\tilde{\alpha}\in Q(\Sigma)$.
\end{proof}

Having now constructed the restriction $p'$ of our universal
ZSF of the associated graded rings, our next goal is to use $p'$ to
determine the leading function of $\Rad'(x)$ for elements
$x\in Z(\widetilde{\uq})$.

\begin{proposition}\label{prop-gamma-res-gr}
    Let $x\in Z(\widetilde{\uq})$.
    If $\gamma(x) = K_{2\mu} + \lot$, then the following hold:
    \begin{enumerate}
        \item there is a rational function $a_\mu\in k(Q(\Sigma))$ such that
        $\Res'(x) = a_\mu T_{2\mu} +\lot$;
        \item $x\in\mathcal{F}(\widetilde{\uq})_{2\mu}$, so that
        we can write $\tilde{x} = x + \mathcal{F}_{<2\mu}(\widetilde{\uq})\in\gr(\widetilde{\uq})$;
        \item $\gr(\gamma)(\tilde{x}) = K_{2\mu}$;
        \item $\gr(\Res')(\tilde{x}) = a_\mu T_{2\mu}$.
    \end{enumerate}
\end{proposition}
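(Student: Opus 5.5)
The plan is to compare three decompositions of $x$: the Harish-Chandra decomposition $\widetilde{\uq}\cong\mathcal{N}^-\otimes k[A]\otimes\widetilde{\uqb}$ defining $\gamma$, the radial part decomposition of Lemma~\ref{lem-existence-radial-part}, and the triangular decomposition $\uq^-\otimes\widetilde{\uq}^0\otimes S(\uq^+)$ that underlies the filtration $\mathcal{F}$. The working principle is that all three are compatible with $\mathcal{F}$, because the generators of $\widetilde{\uqb}$ and of $\mathcal{N}^\pm$ lie in degree $\le 0$. This is the content of the lemma placing $B_i$ in $\widetilde{\uq}_0\oplus\widetilde{\uq}_{-\tilde\alpha_i}$, together with Lemma~\ref{lem-B-embedded} and the triviality of the grading on $\mathcal{N}^\pm$.

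I would prove (ii) first. Since $x$ is central, $\gamma(x)$ is read off from the $k[A]$-component. Write $x=\sum n_j K_{\nu_j} b_j$ with $n_j\in\mathcal{N}^-$ and $b_j\in\widetilde{\uqb}$. Using the quantum Iwasawa decomposition and Lemma~\ref{lem-adr-invariance-of-filtration}, which says the filtration is stable under $\adr(\uq)$, I would show the following. Since $\widetilde{\uqb}$ and $\mathcal{N}^-$ sit in $\mathcal{F}_0$, the filtration degree of $x$ equals the maximal $\nu_j$ occurring. The leading term $K_{2\mu}$ of $\gamma(x)$ together with $W_\Sigma$-invariance (Lemma~\ref{lem-gamma-surjective-morphism}) forces all occurring $\nu_j$ to satisfy $2\mu-\nu_j\in Q(\Sigma)^+$. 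Here I expect an honest check: the $\bm{s}$-terms of $B_i$ only \emph{lower} degrees, so they cause no trouble. This gives $x\in\mathcal{F}_{2\mu}$.

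For (iii) I would pass to $\gr$. The Harish-Chandra decomposition descends to $\gr(\widetilde{\uq})\cong\gr(\mathcal{N}^-)\otimes\gr(k[A])\otimes\gr(\widetilde{\uqb})$, using Lemma~\ref{lem-mlambda-N} and Lemma~\ref{lem-B-embedded}. Consequently $\gr(\gamma)(\tilde x)$ is the top-degree part of $\gamma(x)$, namely $K_{2\mu}$. Equivalently, acting on $\overline{M}(\lambda)$: since $x$ is central, $\tilde x$ acts on $v_\lambda$ by the leading part of the scalar $\gamma(x)(\lambda)$, which is $q^{2\mu\cdot\lambda}$.

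For (i) and (iv), Lemma~\ref{lem-existence-radial-part} gives
\[
K_hx\equiv\sum_\nu a_\nu(h)K_{h+\nu}
\]
modulo $\widetilde{\uqb}'_+\widetilde{\uq}+\widetilde{\uq}\widetilde{\uqb}_+$. Both of these ideals are filtered subspaces. Multiplication by $K_h$ shifts the degree by $\tilde h$, so only $\nu$ with $2\mu-\nu\in Q(\Sigma)^+$ can occur, and the top coefficient $a_{2\mu}$ is a rational function in $k(Q(\Sigma))$. For the last assertion, the coefficients depend only on $\Sigma$-data after projecting to the anti-invariant part $A$. Taking $\gr$ of the same congruence yields $\gr(\Res')(\tilde x)=a_\mu T_{2\mu}$. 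This uses the fact that $\gr$ of the two-sided ideal sum is the corresponding graded ideal, which again follows from the graded Iwasawa decomposition.

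The main obstacle is the claim in (ii) that the filtration degree of $x$ is detected by $\gamma$. In other words, I must rule out cancellation where a higher-degree piece of $x$ has vanishing Harish-Chandra projection. My plan for this step is to use the injectivity of $\gamma$ on $Z_{\widetilde{\uq}}(\widetilde{\uqb})$, which is Letzter's result. I would apply it on the graded level: the top-degree component of $x$ is again a centralising element of $\gr(\widetilde{\uqb})$. If that component had zero $\gr(\gamma)$, it would act by zero on every $\overline{M}(\lambda)$. Since $\overline{M}(\lambda)$ is irreducible and $\gr(\widetilde{\uqb})$-cyclic (Lemma~\ref{lem-simple-cyclic}), with $\lambda$ ranging over a Zariski-dense set, this forces the component to vanish.
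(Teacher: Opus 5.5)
\textbf{There is a genuine gap, at exactly the step you flagged in (ii), and it cannot be closed the way you propose.}

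The hypothesis $\gamma(x)=K_{2\mu}+\lot$ only constrains the terms $n_jK_{\nu_j}b_j$ with $n_j\in k$ and $\epsilon(b_j)\neq0$. For the remaining terms you invoke two things: injectivity of $\gamma$ on $Z_{\widetilde{\uq}}(\widetilde{\uqb})$, and, on the graded level, the claim that a central element of $\gr(\widetilde{\uq})$ killing every $\overline{M}(\lambda)$ must vanish. Both fail.

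\emph{Injectivity of $\gamma$.} By its definition, $\gamma$ vanishes on $Z_{\widetilde{\uq}}(\widetilde{\uqb})\cap\widetilde{\uq}\widetilde{\uqb}_+$. For example, for $\mathsf{AI}_1$ with $\bm{s}=0$, $B_1$ centralises $\widetilde{\uqb}$ and $\gamma(B_1)=\epsilon(B_1)=0$. The results of Letzter used in Lemma~\ref{lem-gamma-surjective-morphism} concern the image of $\gamma$, not its kernel.

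\emph{Restricting to the centre.} This does not help once $\Sigma$ has smaller rank than $R$. For $\diag(\mathsf{A}_1)$, let $C_i=F_iE_i+\frac{qK_i+q^{-1}K_i^{-1}}{(q-q^{-1})^2}$ be the Casimir elements of the two factors. They act by the same scalar on every spherical module $L(m(\omega_1+\omega_2))$. So by Lemma~\ref{lem-gamma-surjective-morphism}, $\gamma(C_1-C_2)$ vanishes on $2L^+$, i.e.\ $\gamma(C_1-C_2)=0$.

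\emph{The graded claim.} The component of $C_1-C_2$ of degree $\tilde\alpha_1$ is nonzero and central in $\gr(\widetilde{\uq})$. It acts on $v_\lambda$ by a multiple of $q^{\langle h_1,\lambda\rangle}-q^{\langle h_2,\lambda\rangle}=0$, since $\tau\lambda=\lambda$. Hence it annihilates every $\overline{M}(\lambda)$. Irreducibility and cyclicity of $\overline{M}(\lambda)$ only show that central elements act by scalars. The family indexed by $(X\otimes_\Z\Q)^{-\Theta}$ is too small to separate them.

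\textbf{Consequence: (ii) is not controlled by $\gamma(x)$ at all.} The element $x=1+(C_1-C_2)\in Z(\widetilde{\uq})$ has $\gamma(x)=1=K_0$, yet $x\notin\mathcal{F}_0(\widetilde{\uq})$. So (ii) must come from information about $x$ itself, not from a cancellation argument.

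\textbf{What the paper does instead.} It expands $x=\sum_{\tilde\lambda\le\mu}a_\lambda c_\lambda$ into the distinguished central elements $c_\lambda\in K_{2\lambda}+\adr(\uq_+)(K_{2\lambda})$. Since $K_{2\lambda}\in K_{2\tilde\lambda}k[T]\subset\mathcal{F}_{2\mu}(\widetilde{\uq})$, the $\adr$-stability of the filtration (Lemma~\ref{lem-adr-invariance-of-filtration}) gives $x\in\mathcal{F}_{2\mu}(\widetilde{\uq})$ immediately. By the example above, this expansion is an extra input rather than a consequence of $\gamma(x)=K_{2\mu}+\lot$. It does hold for $x=ac_\lambda$, which is the case used in Theorem~\ref{thm-initial-term-given-by-p}.

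\textbf{The rest of your proposal.} Once (ii) is available, your treatment of (i), (iii) and (iv) is essentially the paper's argument. You take $\gr$ of the expansion of $K_hx$ in $\widetilde{\uqb}'\,K_{h+\nu}\,\widetilde{\uqb}$, using that the generators of $\widetilde{\uqb},\widetilde{\uqb}'$ have degree $\le0$. You then read off the scalar by which the central $\tilde x$ acts on $\overline{M}(\lambda)$ from the top part of $\gamma(x)$.
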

\begin{proof}
    Claim~(i) follows from \cite[Lemma~5.8]{Le08}.
    We can write
    \[
        x = \sum_{\tilde{\lambda}\le\mu} a_\lambda c_\lambda
    \]
    where $c_\lambda$ is the distinguished element
    \[
        c_\lambda \in (K_{2\lambda} + \adr(\uq_+)(K_{2\lambda}))
        \cap Z(\widetilde{\uq}).
    \]
    Since the results are additive, we can henceforth assume
    without loss of generality
    that $x=a_\lambda c_\lambda$ with $\tilde{\lambda}\le\mu$.
    By Lemma~\ref{lem-adr-invariance-of-filtration}, we can conclude
    that $x \in \mathcal{F}_{2\mu}(\widetilde{\uq})$ and hence Claim~(ii).
    For the last two claims let $h\in A$ and expand
    \[
        K_h x = \sum_j b'_j(h) K_{h+\mu_j}b_j(h)
    \]
    for elements $b'_j(h)\in\widetilde{\uqb}'$, $b_j(h)\in\widetilde{\uqb}$, and weights $\mu_j\in A$.
    Since every generator of $\widetilde{\uqb}$ and
    $\widetilde{\uqb}'$ lies in $\widetilde{\uq}_0\oplus\widetilde{\uq}_{-\tilde{\alpha}_i}$ for some
    $i\in I_\circ$ with a nonzero first component, we have
    $\mu_j\le 2\mu$.
    We thus obtain that
    \[
        K_h \tilde{x} = \sum_{\substack{j\\\mu_j=2\mu}}
        \widetilde{b'_j(h)} K_{h+2\mu} \widetilde{b_j(h)},
    \]
    so that
    \[
        \gr(\Res')(\tilde{x}) = 
        \sum_{\substack{j\\\mu_j=2\mu}}
        \epsilon(\widetilde{b'_j(h)})
        \epsilon(\widetilde{b_j(h)}) T_{2\mu},
    \]
    where we interpret $\epsilon(\widetilde{b'_j(h)})$ to
    mean $\epsilon(b'_j(h))$ (by Lemma~\ref{lem-B-embedded}
    this is well-defined).
    This therefore equals
    \[
        \sum_{\substack{j\\\mu_j=2\mu}} \epsilon(b'_j(h))
        \epsilon(b_j(h)) T_{2\mu}
        = a_\mu T_{2\mu}.
    \]
    Moreover, by \cite[Lemma~5.7]{Le08}, the constant term of
    $a_\mu$ is 1.
    With an analogous proof we can show loc.cit. in the associated
    graded setting, showing that $\gr(\gamma)(\tilde{x})=K_{2\mu}$.
\end{proof}

\begin{lemma}\label{lem-centre-leading-terms}
    Let $x\in Z_{\widetilde{\uq}}(\widetilde{\uqb})\cap\mathcal{F}_{2\mu}(\widetilde{\uq})$.
    Write $\tilde{x}\in\gr(\widetilde{\uq})$ for the associated element.
    If $\gr(\gamma)(\tilde{x}) = K_{2\mu}$, then
    $\gr(\Rad')(\tilde{x}) = p' T_{2\mu} p^{\prime-1}$.
\end{lemma}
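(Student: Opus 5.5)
The plan is to evaluate the graded universal spherical pairing $\widehat{\Upsilon}(v_\lambda b^*\otimes \tilde{x}\, b v_\lambda)$ in two different ways and compare the answers. Throughout, fix $\lambda\in(X\otimes_\Z\Q)^{-\Theta}$ generic.

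\emph{First evaluation: through the centraliser property.} Since $x$ centralises $\widetilde{\uqb}$, so does $\tilde{x}$ in $\gr(\widetilde{\uq})$. By Lemma~\ref{lem-B-embedded}, $\gr(\widetilde{\uqb})\cong\widetilde{\uqb}$, so $\tilde{x}bv_\lambda$ is again a $\gr(\widetilde{\uqb})$-invariant vector of $\widehat{M}(\lambda)$. By Corollary~\ref{cor-unique-invariant-element} it is therefore a scalar multiple $c(\lambda)\,bv_\lambda$. To find $c(\lambda)$, compare lowest-order components, i.e.\ the components along $v_\lambda$, using the Iwasawa decomposition $\widetilde{\uq}\cong\mathcal{N}^-\otimes k[A]\otimes\widetilde{\uqb}$ behind Definition~\ref{def-hc}. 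Write $\tilde{x}=\sum n_j a_j \beta_j$ and note that $\beta_j b v_\lambda=\epsilon(\beta_j)bv_\lambda$. Taking the $v_\lambda$-coefficient, only $n_j\in k$ contributes, and this gives $c(\lambda)=\gr(\gamma)(\tilde{x})(\lambda)=q^{2\mu\cdot\lambda}$. Hence
\[
\widehat{\Upsilon}(v_\lambda b^*\otimes\tilde{x}bv_\lambda)=q^{2\mu\cdot\lambda}e^\lambda p'
\]
by Lemma~\ref{lem-definition-p'}.

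\emph{Second evaluation: through the radial part.} Use the graded analogue of Lemma~\ref{lem-existence-radial-part}. Proposition~\ref{prop-gamma-res-gr} and its proof show that $K_h\tilde{x}\equiv\sum_\nu a_\nu(h)K_{h+\nu}$ modulo $\gr(\widetilde{\uqb}'_+)\gr(\widetilde{\uq})+\gr(\widetilde{\uq})\gr(\widetilde{\uqb}_+)$, where $\gr(\Rad')(\tilde{x})=\sum_\nu a_\nu T_\nu$ has only the single shift $T_{2\mu}$ in the graded setting. Pairing with $v_\lambda b^*$ on the left, which is $\widetilde{\uqb}'$-invariant, and with $bv_\lambda$ on the right, which is $\widetilde{\uqb}$-invariant, kills the error terms. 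This yields
\[
\widehat{\Upsilon}(v_\lambda b^*\otimes K_h\tilde{x}bv_\lambda)=\bigl(\gr(\Rad')(\tilde{x})\,e^\lambda p'\bigr)(h),
\]
exactly as in the computation proving Lemma~\ref{lem-action-rad}. Comparing with the first evaluation gives
\[
\gr(\Rad')(\tilde{x})(e^\lambda p')=q^{2\mu\cdot\lambda}e^\lambda p'=e^\lambda p'T_{2\mu}\bigl(p'^{-1}p'\bigr)
\]
for all generic $\lambda$. Write $\gr(\Rad')(\tilde{x})=aT_{2\mu}$. Since $T_{2\mu}(e^\lambda f)=q^{2\mu\cdot\lambda}e^\lambda T_{2\mu}f$, this becomes $a\cdot T_{2\mu}(p')=p'$, that is, $a=p'/T_{2\mu}(p')$. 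This is precisely the statement that $\gr(\Rad')(\tilde{x})=p'T_{2\mu}p'^{-1}$ as operators. The rational function $a$ is independent of $\lambda$, so this extraction is legitimate once $\lambda$ is Zariski dense.

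\emph{Main obstacle.} The hardest step is justifying the second evaluation in the completed, graded setting. Three things must be checked. First, the expansion of $K_h\tilde{x}$ must make sense acting on $\widehat{M}(\lambda)$, since the coefficients are rational in $h$ and the vectors $b,b^*$ lie in completions. Second, the ideal terms must genuinely annihilate against the invariant vectors, which requires that right multiplication by $\gr(\widetilde{\uqb}_+)$ kills $bv_\lambda$ even after completion. Third, only weights compatible with $T_{2\mu}$ may survive. I would first handle the third point by degree bookkeeping with the filtration, as in the proof of Proposition~\ref{prop-gamma-res-gr}. For the first two, I would work weight-space by weight-space: each coefficient of $e^{\lambda-\alpha}$ involves only finitely many terms, so the completion poses no genuine difficulty.
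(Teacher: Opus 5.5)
Your proposal is correct and follows the paper's proof: both evaluate $\widehat{\Upsilon}(v_\lambda b^*\otimes\tilde{x}bv_\lambda)$ once through $\gr(\Rad')(\tilde{x})$ acting on $e^\lambda p'$ and once through the eigenvalue $q^{2\mu\cdot\lambda}$. Your version also makes explicit a step the paper leaves implicit, namely that $\tilde{x}bv_\lambda=q^{2\mu\cdot\lambda}bv_\lambda$ by uniqueness of the invariant vector and reading off the $v_\lambda$-coefficient via $\gr(\gamma)$. The only difference is at the end, where you import the single-shift form $\gr(\Rad')(\tilde{x})=aT_{2\mu}$; this is unnecessary, because agreement on $p'e^\lambda$ for all $\lambda$ already separates the different shifts $T_\nu$ (the characters $\lambda\mapsto q^{\nu\cdot\lambda}$ are linearly independent), so your third obstacle can simply be dropped.
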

\begin{proof}
    On one hand we have
    \[
        \widehat{\Upsilon}(v_\lambda b^*\otimes \tilde{x}bv_\lambda)
        = \gr(\Rad')(\tilde{x}) \widehat{\Upsilon}(v_\lambda b^*\otimes bv_\lambda)
        = \gr(\Rad')(\tilde{x}) p' e^\lambda.
    \]
    On the other hand, we have
    \[
        \widehat{\Upsilon}(v_\lambda b^*\otimes \tilde{x}bv_\lambda)
        = q^{2\mu\cdot\lambda}
        \widehat{\Upsilon}(v_\lambda b^*\otimes bv_\lambda)
        = q^{2\mu\cdot\lambda} p' e^\lambda
        = p' T_{2\mu} p^{\prime-1} p' e^\lambda
    \]
    We thus conclude that $\gr(\Rad')(\tilde{x})$ and $p' T_{2\mu}p^{\prime-1}$ coincide on the span of $p' e^\lambda$ for all
    $\lambda\in (X\otimes_\Z\Q)^{-\Theta}$.
    Since $\gr(\Rad')(\tilde{x})$ can be written in terms of $K_h$ for $h\in A$,
    this is enough to show equality.
\end{proof}

\begin{theorem}\label{thm-initial-term-given-by-p}
    There exists a power series $p'=1+\lot \in k[[e^{-\tilde{\alpha}_i}\mid i\in I_\circ]]$ (see Lemma~\ref{lem-definition-p'}) satisfying
    the following property:
    For any $x\in Z_{\widetilde{\uq}}(\widetilde{\uqb})$ with
    $\gamma(x) = K_{\mu}+\lot$ we have
    $\Rad'(x) = p' T_{\mu} p^{\prime-1}+\lot$.

    Moreover, writing $p:= p'(\cdot-\rho)$, we get
    $\Rad(x) = p T_{\mu}p^{-1} + \lot$.
\end{theorem}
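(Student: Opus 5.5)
The plan is to pass to the associated graded ring and assemble Lemma~\ref{lem-definition-p'}, Proposition~\ref{prop-gamma-res-gr} and Lemma~\ref{lem-centre-leading-terms}. The constant $p'$ is the one from Lemma~\ref{lem-definition-p'}. It is defined through the unique invariant vectors $b$ and $b^*$, which do not depend on $\lambda$, and it is manifestly independent of $x$ and $\mu$. So the only task is to show that the leading term of $\Rad'(x)$ agrees with $p'T_\mu p'^{-1}$.

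First I fix $x\in Z_{\widetilde{\uq}}(\widetilde{\uqb})$ with $\gamma(x)=K_\mu+\lot$ and write $\mu=2\nu$. This is possible because, by Lemma~\ref{lem-gamma-surjective-morphism}, $\gamma$ lands in $k[2\tilde L]$. Next I check that $x\in\mathcal{F}_{\mu}(\widetilde{\uq})$ and that $\gr(\gamma)(\tilde x)=K_\mu$. Proposition~\ref{prop-gamma-res-gr} proves this only for central $x$, so I would rerun its argument for centraliser elements. The filtration bound comes from writing $K_hx\in\widetilde{\uqb}'\widetilde{\uq}^0\widetilde{\uqb}$ as in Lemma~\ref{lem-existence-radial-part}. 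Every generator of $\widetilde{\uqb},\widetilde{\uqb}'$ has degree $\le0$ with a nonzero degree-$0$ part, so only $K$-terms of degree $\le\mu$ can occur. The identity $\gr(\gamma)(\tilde x)=K_\mu$ then follows because $\gamma$ is computed via the Iwasawa decomposition $\mathcal{N}^-\otimes k[A]\otimes\widetilde{\uqb}$, which is compatible with the filtration; here Lemma~\ref{lem-B-embedded} and the triviality of the grading on $\mathcal{N}^\pm$ are used. Since $\tilde x$ centralises $\gr(\widetilde{\uqb})$, I can apply Lemma~\ref{lem-centre-leading-terms} to get $\gr(\Rad')(\tilde x)=p'T_\mu p'^{-1}$.

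Second, I identify $\gr(\Rad')(\tilde x)$ with the top-degree part of $\Rad'(x)$. I write $\Rad'(x)=\sum_{\eta}a_\eta T_\eta$ and take the expansion $K_hx=\sum_j b'_j(h)K_{h+\mu_j}b_j(h)$ as in the proof of Proposition~\ref{prop-gamma-res-gr}. Applying $\epsilon\otimes\epsilon$ to this expansion gives $\Rad'(x)$. Keeping only the terms with $\mu_j=\mu$ and passing to $\gr$ gives $\gr(\Rad')(\tilde x)$, again using Lemma~\ref{lem-B-embedded} to make $\epsilon$ of graded elements well defined. Hence $\Rad'(x)=\gr(\Rad')(\tilde x)+\lot=p'T_\mu p'^{-1}+\lot$.

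Finally, the shifted statement follows by substituting $h\mapsto h-\rho$ as in Definition~\ref{def-radial-part}. Conjugation by $p'$ commutes with this shift in the sense that $(p'T_\mu p'^{-1})(\cdot-\rho)=p(\cdot)\,T_\mu\,p(\cdot)^{-1}$, where $p=p'(\cdot-\rho)$.

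I expect the main obstacle to be the first step. There I must make sure that the expansion of $K_hx$ in $\widetilde{\uqb}'\widetilde{\uq}^0\widetilde{\uqb}$ respects the filtration, particularly for non-standard parameters where $B_i$ has a nonzero component of degree $-\tilde\alpha_i$. I also need the coefficients, which are rational in $h$, to behave well under taking leading terms. I would handle this by first assuming $\bm s=0$, where $\widetilde{\uqb}\subset\widetilde{\uq}_0$. I would then reduce the general case via the algebra isomorphism of Lemma~\ref{lem-B-embedded} together with a Zariski-density argument in $h$.
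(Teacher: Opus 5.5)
There is a genuine gap in your first step. You claim that an arbitrary $x\in Z_{\widetilde{\uq}}(\widetilde{\uqb})$ with $\gamma(x)=K_\mu+\lot$ lies in $\mathcal{F}_\mu(\widetilde{\uq})$, but your argument does not give this.

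Write $K_hx=\sum_j b'_j(h)K_{h+\mu_j}b_j(h)$. Since $\widetilde{\uqb},\widetilde{\uqb}'\subset\mathcal{F}_0(\widetilde{\uq})$, this only yields $x\in\sum_j\mathcal{F}_{\mu_j}(\widetilde{\uq})$. That bounds the degree of $x$ by the exponents $\mu_j$, but nothing you say bounds the $\mu_j$ by $\mu$. The expansion is not unique. Moreover, $\gamma(x)$, $\Rad'(x)$ and the support bound of \cite[Lemma~5.8]{Le08} only see terms surviving $\epsilon\otimes\epsilon$ (resp.\ $\epsilon\otimes\id\otimes\epsilon$ in the Iwasawa decomposition). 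Terms with large $\mu_j$ and $b_j\in\widetilde{\uqb}_+$ or $b'_j\in\widetilde{\uqb}'_+$ are invisible to all of them.

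In Proposition~\ref{prop-gamma-res-gr} the logic runs in the opposite direction. Claim (ii) is proved first, from $c_\lambda\in K_{2\lambda}+\adr(\uq_+)(K_{2\lambda})$ and the $\adr$-invariance of the filtration (Lemma~\ref{lem-adr-invariance-of-filtration}). Only afterwards is the degree-$\le0$ property of the generators of $\widetilde{\uqb}$ used to discard terms with $\mu_j\not\le2\mu$. That input exists only for central elements, so you cannot ``rerun'' the argument for a general centraliser element. Your second step (keeping only the terms with $\mu_j=\mu$) inherits the same missing bound. Your proposed remedy ($\bm{s}=0$ first, then Lemma~\ref{lem-B-embedded} plus Zariski density) only deals with the degree $-\tilde\alpha_i$ components of non-standard $B_i$; the gap is already present for $\bm{s}=0$.

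The paper never needs filtration control on $x$ itself. It applies Proposition~\ref{prop-gamma-res-gr} and Lemma~\ref{lem-centre-leading-terms} only to the scaled central elements $ac_\lambda$ ($\lambda\in X^+$), obtaining $\Rad'(ac_\lambda)=p'T_{2\tilde\lambda}p^{\prime-1}+\lot$. For $x$ it uses only the support statement $\Rad'(x)=\sum_{\nu\le\mu}a_\nu T_\nu$ from \cite[Lemma~5.8]{Le08}. Since $x$ commutes with $c_\lambda$, their radial parts commute. Comparing coefficients of $T_{\mu+2\tilde\lambda}$ shows that $a_\mu p'(\cdot+\mu)/p'$ is invariant under $T_{2\tilde\lambda}$ for every $\lambda\in X^+$, hence constant. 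Normalising constant terms then gives $a_\mu=p'/p'(\cdot+\mu)$.

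To salvage your direct route you would need a separate result that the filtration degree of a centraliser element is governed by the leading term of $\gamma(x)$. One example would be injectivity of $\gr(\gamma)$ on the centraliser of $\gr(\widetilde{\uqb})$ in $\gr(\widetilde{\uq})$. That is substantially harder than the commutation trick.
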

\begin{proof}
    Write
    \[
        \Rad'(x) = \sum_{\nu\le \mu} a_\nu T_\nu
    \]
    (possible by \cite[Lemma~5.8]{Le08}).
    For every $\lambda\in X^+$, an appropriately scaled version
    $ac_\lambda$ of the Casimir element from the proof of
    Proposition~\ref{prop-gamma-res-gr} satisfies the following:
    $\gamma(a c_\lambda) = K_{2\tilde{\lambda}} +\lot$ and
    $\Rad'(a c_\lambda) = p' T_{2\tilde{\lambda}}p^{\prime-1}+\lot$
    by Proposition~\ref{prop-gamma-res-gr} and
    Lemma~\ref{lem-centre-leading-terms}.

    We write $\Rad'(ac_\lambda)=\sum_{\nu\le 2\tilde{\lambda}} b_\nu T_\nu$
    for appropriate rational functions $b_\nu\in k(Q(\Sigma))$.
    Note that $b_{2\tilde{\lambda}} = \frac{p'}{p'(\cdot + 2\tilde{\lambda})}$
    Since $c_\lambda$ and $x$ commute, so do their radial parts,
    hence
    \[
        \sum_{\nu_1\le \mu}\sum_{\nu_2\le 2\lambda}
        a_{\nu_1} T_{\nu_1} b_{\nu_2} T_{\nu_2}
        = \sum_{\nu_1\le\mu} \sum_{\nu_2\le2\lambda} b_{\nu_2}T_{\nu_2} a_{\nu_1} T_{\nu_1}.
    \]
    By only considering the coefficients of $T_{\mu+2\tilde{\lambda}}$, we
    obtain the following equation
    \[
        \frac{a_\mu p'(\cdot+\mu)}{p'(\cdot +\mu+2\tilde{\lambda})}=a_\mu b_{2\tilde{\lambda}}(\cdot+\mu) =
        b_{2\tilde{\lambda}} a_\mu(\cdot + 2\tilde{\lambda})
        = \frac{p' a_\mu(\cdot+2\tilde{\lambda})}{p'(\cdot+2\tilde{\lambda})}.
    \]
    Reordering yields
    \[
        \frac{a_\mu p'(\cdot+\mu)}{p'} = T_{2\tilde{\lambda}} \frac{a_\mu p'(\cdot+\mu)}{p'}.
    \]
    This holds for all $\lambda\in X^+$, hence the Laurent series $\frac{a_\mu p'(\cdot+\mu)}{p'}$ is periodic, hence constant.
    Since $p'$ and any shift thereof has constant term 1, as does $a_\mu$,
    we conclude $a_\mu = \frac{p'}{p'(\cdot+\mu)}$, i.e.
    $a_\mu T_\mu = p' T_\mu p^{-1}$.
\end{proof}

Having thus established that there is a uniform manner in which the
leading term of $\Rad'(x)$ and $\Rad(x)$ of an element $x\in Z_{\widetilde{\uq}}(\widetilde{\uqb})$ depends on $\gamma(x)$,
we have progressed a lot.

We now take a closer look at the power series $p', p$ that everything
depends on.
In particular, we make sense of it using a rank-1 reduction and knowledge
about the rank 1 cases that can be gleaned from the literature.

\section{Rank-1 Reduction}\label{sec-rk1-reduction}
For the remainder of this section let $i\in I_\circ$ and
write $I[i]$ for the union of the connected components of $I_\bullet\cup\set{i,\tau(i)}$ containing $i$ and $\tau(i)$.
In other words: $I[i]$ is the subdiagram containing $i$, $\tau(i)$
and any black vertices connected to them.
$I[i]$ is $\tau$-invariant, and we can associate a quantum symmetric
pair of rank 1 to it: write $\uq[i]$ for the Levi subalgebra of $\uq$
associated to $I[i]$, i.e. the algebra generated by $\uq^0$ and
$E_j$, $F_j$ for $j\in I[i]$.
Write $\uqb[i]$ and $\uqb'[i]$ for the subalgebras of $\uqb,\uqb'$
generated by $k[Y^\Theta]$, $E_j, F_j$ for $I[i]\cap I_\bullet$, and
$B_i$ and $B_{\tau(i)}$.

\begin{definition}
    $(\uq[i],\uqb[i])$ and $(\uq[i],\uqb'[i])$ are the rank-1 QSPs
    associated with $I[i]$.
    We add the suffix $[i]$ to a lot of previous constructions to indicate
    that they were performed for these QSPs instead of $(\uq,\uqb)$ or
    $(\uq,\uqb')$.
\end{definition}

\begin{lemma}\label{lem-rank1-iso}
    Let $\lambda\in (X\otimes_\Z\Q)^{-\Theta}$ and let
    $0\ne w\in \overline{M}(\lambda)_\mu$ be a weight vector annihilated by
    $\uq_\bullet[i]_+$, $k[T]_+$,
    and $S(\uq[i]^+)_+$, then
    $\mu\in (X\otimes_\Z\Q)^{-\Theta}$ and there is a unique
    $\gr(\widetilde{\uq}[i])$-homomorphism
    $\overline{M}[i](\mu)\to\overline{M}(\lambda)$ mapping
    $v_\mu\mapsto w$.
    This homomorphism is injective.
\end{lemma}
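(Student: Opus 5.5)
We prove the three assertions in turn: first that $\mu$ is $-\Theta$-invariant, then existence and uniqueness of the homomorphism via the universal property of $\overline{M}[i](\mu)$, and finally injectivity from the simplicity of $\overline{M}[i](\mu)$.

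\emph{Weight of $w$.} The torus $\gr(k[T])$ acts on $\overline{M}(\lambda)_\mu$ by $K_h\mapsto q^{\langle h,\mu\rangle}$. Since $w$ is annihilated by $k[T]_+$, every $K_h$ with $h\in T$ acts on $w$ by $\epsilon(K_h)=1$. Hence $\langle h,\mu\rangle=0$ for all $h\in T=(Y\otimes\Q)^\Theta$, i.e. $\mu\perp T$. Because the pairing is $\Theta$-invariant and $X\otimes\Q$ splits into $\pm1$-eigenspaces of $\Theta$, this forces $\mu\in(X\otimes_\Z\Q)^{-\Theta}$. This is the same argument as at the end of the proof of Lemma~\ref{lem-definition-p'}.

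\emph{Existence and uniqueness.} By definition,
\[
\overline{M}[i](\mu)=\gr(\widetilde{\uq}[i])\otimes_{\gr(\uq_\bullet[i]k[A\oplus T]S(\uq[i]^+))}kv_\mu .
\]
By the tensor--hom adjunction, $\gr(\widetilde{\uq}[i])$-homomorphisms $\overline{M}[i](\mu)\to\overline{M}(\lambda)$ correspond bijectively to vectors $w\in\overline{M}(\lambda)$ on which the subalgebra $\gr(\uq_\bullet[i]k[A\oplus T]S(\uq[i]^+))$ acts through the character defining $kv_\mu$. We check this for each factor. The torus $k[A\oplus T]$ acts through the weight $\mu$, since $w$ has weight $\mu$ and we just showed $\mu\perp T$. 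The augmentation ideals $\gr(\uq_\bullet[i])_+$ and $\gr(S(\uq[i]^+))_+$ kill $w$ by hypothesis. For $\gr(\uq_\bullet[i])$ we only need the generators $E_j,F_j$, $j\in I_\bullet\cap I[i]$, to act by $0$; the $K$'s are already handled by the torus. So the assignment $v_\mu\mapsto w$ is well defined. Uniqueness holds because $v_\mu$ generates $\overline{M}[i](\mu)$.

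\emph{Injectivity.} I would apply Lemma~\ref{lem-simple-cyclic} to the rank-one pair $(\uq[i],\uqb[i])$: $\overline{M}[i](\mu)$ is irreducible over $\gr(\widetilde{\uq}[i])$. The kernel is a submodule, and it is proper because $w\neq0$, so it is zero. Lemma~\ref{lem-simple-cyclic} does apply here: its proof used only the Iwasawa decomposition and the description of the $S(\uq^+)$-action on $\mathcal{N}^-$ through $\ast$ (Lemma~\ref{lem-isomorphic-suplus-modules}), and both hold verbatim for the Levi subalgebra $\uq[i]$ and its rank-one QSP.

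\emph{Main obstacle.} The step needing most care is irreducibility in the graded setting. Recall the argument: any nonzero submodule contains a nonzero vector of maximal weight, which is killed by $S(\uq[i]^+)_+$ under $\ast$, and under $\ast$ only multiples of $v_\mu$ have this property. The weights lie in $\mu-Q^+$, so a maximal one exists in each finite-dimensional weight-bounded piece, and the argument goes through. The other delicate point is checking that the hypotheses on $w$ match precisely the defining relations of the induced module. I expect this to be unproblematic, because in $\gr$ the $F_j$, $j\in I_\bullet$, lie in $\gr(\uq_\bullet[i])_+$ and are therefore covered by the hypothesis.
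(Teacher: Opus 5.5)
Your proof is correct. The first two steps are the paper's own: $k[T]_+$ killing $w$ forces $\mu\perp T$, hence $\mu\in(X\otimes_\Z\Q)^{-\Theta}$. Existence and uniqueness then follow from the universal property of the induced module $\overline{M}[i](\mu)$ together with cyclicity of $v_\mu$.

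The genuine difference is the injectivity step.

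\emph{The paper's route (freeness).} Both $\overline{M}[i](\mu)=\gr(\mathcal{N}^-[i])\otimes kv_\mu$ and $\overline{M}(\lambda)=\gr(\mathcal{N}^-)\otimes kv_\lambda$ are free. So you can write $w=y\otimes v_\lambda$ with $0\neq y\in\mathcal{N}^-$. The homomorphism is then $n\otimes v_\mu\mapsto ny\otimes v_\lambda$. This is injective because $\mathcal{N}^-[i]\subseteq\mathcal{N}^-\subseteq\uq^-$ and $\uq^-$ has no zero divisors.

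\emph{Your route (simplicity).} You instead use simplicity of $\overline{M}[i](\mu)$, i.e.\ Lemma~\ref{lem-simple-cyclic} applied to the rank-one pair. This is more conceptual and gives slightly more: every nonzero homomorphism out of $\overline{M}[i](\mu)$ is injective.

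\emph{What each buys.} Your argument inherits whatever gaps the irreducibility statement has, and the paper proves that statement only in sketch. It is genuinely delicate when $I_\bullet\cap I[i]\neq\emptyset$. For $j\in I_\bullet$ the commutation relation between $K_j^{-1}E_j$ and $F_j$ does not degenerate in $\gr(\widetilde{\uq})$, because $K_j^{-2}\in k[T]$ has degree $0$. So one really must show that the only vectors of $\mathcal{N}^-[i]\otimes v_\mu$ killed by all raising operators are the scalar multiples of $v_\mu$. The freeness argument sidesteps this entirely: it needs only the Iwasawa decomposition and the domain property of $\uq^-$.
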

\begin{proof}
    Let $\nu\in Y^\Theta$, then $0=(K_\nu-1)w=(q^{\langle \nu,\mu\rangle}-1)w$, consequently $\langle\nu,\mu\rangle=0$.
    This shows that $\nu\in (X\otimes_\Z\Q)^{-\Theta}$.

    We now consider $\overline{M}[i](\mu)$.
    It has $v_\mu$ as cyclic vector, consequently a $\uq[i]$-homomorphism
    to $\overline{M}(\lambda)$ is completely determined by what it
    maps $v_\lambda$ to.
    This shows uniqueness.

    For existence, we note that $v_\lambda$ is annihilated by
    $S(\uq[i]^+)_+$ and $\uq_\bullet[i]_+$ and has weight $\lambda$
    (and nothing further).
    The same is true for $w$, therefore there is such a morphism.

    For injectivity, we note that $\overline{M}[i](\lambda)$
    is free over $\mathcal{N}^-[i](\mu)$, as is $\overline{M}(\lambda)$.
\end{proof}

Next we show that $b$ (resp. $b^*$) factors through $b[i]$ (resp.
$b^*[i]$).
For that we first prove a technical lemma

\begin{lemma}\label{lem-recursive-step}
    Let $\lambda\in (X\otimes_\Z\Q)^{-\Theta}$, let
    $w\in\widehat{\mathcal{N}}^-$ be such that
    $wv_\lambda$ is $\uqb[i]$-cyclic.
    Let $\nu$ be a maximal weight such that $w_\nu\ne0$,
    then $b[i]w_\nu v_\lambda$ is also $\uqb[i]$-cyclic.
\end{lemma}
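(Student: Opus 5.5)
The idea is to show that the leading component $w_\nu v_\lambda$ is a highest-weight vector for the rank-1 Levi, embed the rank-1 module $\overline{M}[i](\mu)$ at it via Lemma~\ref{lem-rank1-iso}, and transport the rank-1 invariant element $b[i]v_\mu$ into $\overline{M}(\lambda)$. I read \enquote{$\uqb[i]$-cyclic} as \enquote{annihilated by $\gr(\widetilde{\uqb}[i])_+$}, i.e.\ $\uqb[i]$-invariant, consistent with Corollary~\ref{cor-unique-invariant-element}.

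\textbf{Step 1: invariance of $w_\nu v_\lambda$ under $\uq_\bullet[i]$ and $k[T]$.} The generators of $\uq_\bullet[i]$ and $k[T]$ act on $\overline{M}(\lambda)$ homogeneously with respect to the weight decomposition, shifting weights only by roots of $I_\bullet$, which are killed by $\mu\mapsto\tilde\mu$. Hence the $\tilde\nu$-isotypic part of $wv_\lambda$ is again invariant under them. After refining to genuine weight spaces, this gives $k[T]_+\,w_\nu v_\lambda=0$ and $\uq_\bullet[i]_+\,w_\nu v_\lambda=0$. If a maximal restricted degree contains several weights, I would take $\nu$ maximal and note that $K_h$ for $h\in T$ separates the components.

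\textbf{Step 2: the raising parts of the $B_j$ kill $w_\nu v_\lambda$.} For $j\in\{i,\tau(i)\}$, split $\gr(B_j)=F_j+\bm{c}_jT_{w_\bullet}(E_{\tau(j)})K_j^{-1}$. The nonstandard term has lower filtration degree and vanishes in $\gr$. By Lemma~\ref{lem-isomorphic-suplus-modules}, the second summand acts through $\ast$ and raises weights by $w_\bullet\alpha_{\tau(j)}$, while $F_j$ lowers weights by $\alpha_j$. Now consider the component of $B_j\,wv_\lambda=0$ of weight $\nu+w_\bullet\alpha_{\tau(j)}$. A contribution from $F_j$ would have to come from a component $w_{\nu'}$ with $\nu'=\nu+w_\bullet\alpha_{\tau(j)}+\alpha_j>\nu$, which does not exist by maximality of $\nu$. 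Hence
\[
    \bigl(T_{w_\bullet}(E_{\tau(j)})K_j^{-1}\bigr)\ast w_\nu v_\lambda=0 .
\]
Next I combine this with the $\uq_\bullet[i]$-invariance from Step~1 via the $\adr$-compatibility in the proof of Lemma~\ref{lem-action-on-uqminus}. This shows that all of $\adr(\uq_\bullet[i])(K_j^{-1}E_j)$ annihilates $w_\nu v_\lambda$. Since these elements, together with $\uq_\bullet[i]$, generate $S(\uq[i]^+)$, we obtain $S(\uq[i]^+)_+\,w_\nu v_\lambda=0$. This step is the main obstacle. It requires the extreme vector $T_{w_\bullet}(E_{\tau(j)})$ of the $\adr(\uq_\bullet)$-module, plus $\uq_\bullet$-invariance, to control the whole module, and also a careful use of the partial order (maximality among weights with nonzero components) rather than a total order.

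\textbf{Step 3: transport of $b[i]$.} Steps~1 and~2 are exactly the hypotheses of Lemma~\ref{lem-rank1-iso}. So, with $\mu$ the weight of $w_\nu v_\lambda$, there is an injective $\gr(\widetilde{\uq}[i])$-homomorphism $\iota\colon\overline{M}[i](\mu)\to\overline{M}(\lambda)$ with $v_\mu\mapsto w_\nu v_\lambda$. It extends to completions, since it is weight-graded with finite-dimensional weight spaces. By the rank-1 version of the existence of $b$, the element $b[i]v_\mu$ is $\widetilde{\uqb}[i]$-invariant in $\widehat{M}[i](\mu)$. Since $\iota$ is $\gr(\widetilde{\uqb}[i])$-linear (and $\gr(\widetilde{\uqb}[i])\cong\widetilde{\uqb}[i]$ by Lemma~\ref{lem-B-embedded}), the image
\[
    \iota(b[i]v_\mu)=b[i]\,w_\nu v_\lambda
\]
is $\uqb[i]$-invariant, as claimed. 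Injectivity of $\iota$ ensures this image is nonzero.
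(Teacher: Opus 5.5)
Your proof is correct and follows the paper's own argument. You verify the hypotheses of Lemma~\ref{lem-rank1-iso} for $w_\nu v_\lambda$ by taking weight components of the invariance equations for $wv_\lambda$, with maximality of $\nu$ removing the $F_j$-contributions. You then push the rank-1 invariant $b[i]v_\mu$ forward along the embedding $\overline{M}[i](\mu)\hookrightarrow\overline{M}(\lambda)$, exactly as the paper does.

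The only local difference is for $j\in\{i,\tau(i)\}$. The paper extracts $E_j$ directly through $T_{w_\bullet}^{-1}(B_{\tau(j)}K_{\tau(j)})$, an element outside $\uqb[i]$ whose action on $wv_\lambda$ is not controlled by invariance. You instead extract $T_{w_\bullet}(E_{\tau(j)})K_j^{-1}$ and propagate using $\uq_\bullet$-triviality, which is the cleaner route.

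You should phrase that propagation through the annihilator of $w_\nu v_\lambda$: it is a left ideal stable under right multiplication by $\uq_\bullet$. It therefore contains $\uq_\bullet T_{w_\bullet}(E_{\tau(j)})\uq_\bullet K_j^{-1}\ni E_{\tau(j)}K_j^{-1}$, by irreducibility of the left module $\ad(\uq_\bullet)(E_{\tau(j)})$. The $\adr$-module of $K_{\tau(j)}^{-1}E_{\tau(j)}$ does not do this job: with Lusztig's conventions $T_{w_\bullet}(E_{\tau(j)})$ is the extreme vector of the $\ad$-module. The extreme vector of that $\adr$-module is instead $S(T_{w_\bullet}(E_{\tau(j)}))$.
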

\begin{proof}
    We show that $w_\nu v_\lambda$ satisfies the conditions of
    Lemma~\ref{lem-rank1-iso}.
    We first note that $w_\nu v_\lambda$ has weight $\nu+\lambda$,
    which is $-\Theta$ invariant.
    Let $j\in I_\bullet[i]$, then $E_iK_i w_\nu v_\lambda$ is
    the $\nu+\lambda+\alpha_i$-weight component of $E_iK_i wv_\lambda$,
    which is 0.
    Let $j=i$ or $\tau(i)$, then $E_jK_j w_\nu v_\lambda$ is
    the $\nu+\lambda+\alpha_j$ component of
    \[
        \bm{c}_{\tau(j)}^{-1}T_{w_\bullet}^{-1}(B_{\tau(j)}K_{\tau(j)})
        K_j
        wv_\lambda
        = \frac{s_{\tau(j)}}{\bm{c}_{\tau(j)}} wv_\lambda
    \]
    which is 0.
    Consequently, $w_\nu v_\lambda$ is annihilated by $S(\uq[i]^+)_+$.

    Moreover, $w_\nu v_\lambda$ generates a finite-dimensional
    $\uq_\bullet[i]$-module, and is a highest-weight vector of weight
    $0$.
    Consequently, $w_\nu v_\lambda$ is annihilated by
    $\uq_\bullet[i]_+$.
    We can now apply Lemma~\ref{lem-rank1-iso} and find that
    $w_\nu v_\lambda$ is the highest-weight vector of a $\uq[i]$-submodule isomorphic
    to $\overline{M}[i](\nu+\lambda)$.
    As a result $b[i] w_\nu v_\lambda$ is $\uqb[i]$-cyclic.
\end{proof}

\begin{proposition}\label{prop-existence-w}
    There exists $w\in\widehat{\mathcal{N}}^-$ such that
    for every $\lambda\in (X\otimes_\Z\Q)^{-\Theta}$ every nonzero weight
    component of $wv_\lambda$ satisfies the conditions of
    Lemma~\ref{lem-rank1-iso} and
    $b[i]wv_\lambda = bv_\lambda$.
\end{proposition}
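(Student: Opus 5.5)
We construct $w$ by a descending recursion on weights, using Lemma~\ref{lem-recursive-step} at each step. Write $b=\sum_{\alpha}b_{-\alpha}$ for the weight decomposition in $\widehat{\mathcal{N}}^-$; the weights lie in $-Q(\Sigma)^+$ by the proof of Lemma~\ref{lem-definition-p'}. The aim is to find $w=\sum_\nu w_\nu$ with $w_0=1$, each $w_\nu v_\lambda$ satisfying the hypotheses of Lemma~\ref{lem-rank1-iso}, and $\sum_\nu b[i]w_\nu v_\lambda=bv_\lambda$. Since $\overline{M}(\lambda)\cong\mathcal{N}^-$ independently of $\lambda$ as $S(\uq^+)$-modules (Lemma~\ref{lem-isomorphic-suplus-modules}), and the annihilation conditions only involve $S(\uq[i]^+)_+$, $\uq_\bullet[i]_+$ and $k[T]_+$, the recursion will produce $w$ independent of $\lambda$.

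\emph{Recursion.} Set $r^{(0)}:=b$, so $r^{(0)}v_\lambda$ is $\uqb$-invariant, in particular $\uqb[i]$-invariant. Suppose $r^{(k)}\in\widehat{\mathcal{N}}^-$ has $r^{(k)}v_\lambda$ invariant under $\uqb[i]$ (this is the sense of \enquote{cyclic} in Lemma~\ref{lem-recursive-step}). Choose a maximal weight $\nu$ with $r^{(k)}_\nu\ne0$. The proof of Lemma~\ref{lem-recursive-step} shows that $r^{(k)}_\nu v_\lambda$ satisfies the conditions of Lemma~\ref{lem-rank1-iso}. Lemma~\ref{lem-rank1-iso} then embeds $\overline{M}[i](\nu+\lambda)$ into $\overline{M}(\lambda)$ with $v_{\nu+\lambda}\mapsto r^{(k)}_\nu v_\lambda$. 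This carries the invariant vector $b[i]v_{\nu+\lambda}$ to $b[i]r^{(k)}_\nu v_\lambda$, which is therefore $\uqb[i]$-invariant. Put $w_\nu:=r^{(k)}_\nu$ and $r^{(k+1)}:=r^{(k)}-b[i]w_\nu$.

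Because $b[i]$ has constant term $1$, the weight-$\nu$ component of $r^{(k+1)}$ vanishes. The new components introduced all lie strictly below $\nu$, and $r^{(k+1)}v_\lambda$ is again $\uqb[i]$-invariant as a difference of invariant vectors. When several maximal weights occur, we treat them all simultaneously; this is harmless since the contributions live in incomparable weights. The process exhausts each weight after finitely many steps, because only finitely many weights of $-Q(\Sigma)^+$ lie above any given one. The formal sum $w:=\sum w_\nu$ therefore converges in $\widehat{\mathcal{N}}^-$, and $b=\sum_\nu b[i]w_\nu=b[i]w$ weight by weight. Acting on $v_\lambda$ gives $b[i]wv_\lambda=bv_\lambda$.

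\emph{Main obstacle.} Two points need care. First, $b[i]w_\nu$ must be read as a product in $\widehat{\mathcal{N}}^-$ acting on $v_\lambda$: the image of $b[i]v_{\nu+\lambda}$ under the embedding of Lemma~\ref{lem-rank1-iso} must equal $b[i]\,w_\nu v_\lambda$, with $b[i]\in\widehat{\mathcal{N}}^-[i]\subset\widehat{\mathcal{N}}^-$. This holds because the embedding is a $\gr(\widetilde{\uq}[i])$-homomorphism and $\mathcal{N}^-[i]\subset\gr(\widetilde{\uq}[i])$ acts by left multiplication; the extension to completions is weight-by-weight. Second, the annihilation check inside Lemma~\ref{lem-recursive-step} uses that $B_j$ acts trivially on $r^{(k)}v_\lambda$ while $T_{w_\bullet}^{-1}(B_{\tau(j)}K_{\tau(j)})$ raises weight by exactly $\alpha_j$ modulo lower-degree terms. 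That lower-degree part, proportional to $\bm{s}$, has to disappear in $\gr$, or else lands in a different weight component and so does not affect the top component. I expect this second point, especially in the non-standard case, to be the delicate step. I would handle it via Lemma~\ref{lem-B-embedded}, which lets us replace $B_j$ by its graded image.
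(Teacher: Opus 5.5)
Your proposal is correct and is essentially the paper's proof: a height-by-height recursion that peels off the maximal weight components of the $\uqb[i]$-invariant remainder. Lemma~\ref{lem-recursive-step}, via Lemma~\ref{lem-rank1-iso}, shows that $b[i]$ applied to each such component is again invariant, and these components are assembled into $w$ with $b=b[i]w$. The only difference is cosmetic: you run the recursion directly in $\widehat{\mathcal{N}}^-$, which makes the independence of $\lambda$ immediate, whereas the paper works inside $\widehat{M}(\lambda)$ for one $\lambda$ and transfers back using freeness over $\widehat{\mathcal{N}}^-$.
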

\begin{proof}
    Since every $\overline{M}(\lambda)$ is a free $\mathcal{N}^-$-module,
    every $\widehat{M}(\lambda)$ is a free $\widehat{\mathcal{N}}^-$-module,
    so the equation $b[i]wv_\lambda=bv_\lambda$ implies
    $b[i]w = b$.
    From the construction of $w$, it will also become clear that it
    suffices to prove this statement for one choice of $\lambda$
    and it will follow for all others.
    
    Let $\lambda\in (X\otimes_\Z\Q)^{-\Theta}$ be arbitrary.
    We will now recursively define sets $X_n$ ($n\in\N_0$)
    of tuples $(\mu, w)$ where $\mu\in Q^-$ of height $-n$, and
    $w\in\overline{M}(\lambda)_{\lambda+\mu}$ such that
    $w$ is the highest-weight vector of a $\gr(\widetilde{\uq}[i])$-module isomorphic to $\overline{M}[i](\lambda+\mu)$ and such that
    \[
        bv_\lambda - \sum_{m\le n}\sum_{(\mu,w)\in X_m} b[i]w
    \]
    is supported in weights of height $<-n$ and is $\uqb[i]$-cyclic.

    For $n=0$, define $X_0:=\set{(0, v_\lambda)}$, then the
    claim follows from the fact that $b$ and $b[i]$ both have leading
    term 1 and the fact that $v_\lambda$ is a highest-weight vector
    even for $\gr(\widetilde{\uq})$.

    For $n>0$, we first define
    \[
        x:= bv_\lambda - \sum_{m<n}\sum_{(\mu,w)\in X_m} b[i]w
    \]
    and then define $X_n$ to be the set of $(\mu,x_{\lambda+\mu})$ where
    $\mu$ ranges over the maximal weights of height $n$ satisfying $x_{\lambda+\mu}\ne0$.
    By Lemma~\ref{lem-recursive-step}, $b[i]w$ is cyclic for
    any $(\mu,w)\in X_n$.
    Furthermore, $b[i]w = w + \lot$, so that
    \[
        x-\sum_{(\mu,w)\in X_n} b[i]w = bv_\lambda -
        \sum_{m\le n}\sum_{(\mu,w)\in X_m} b[i]w
    \]
    is supported in weights of height $<-n$.

    For each $(\mu,w)\in X_n$ there exists $w_\lambda\in\mathcal{N}^-_\mu$
    such that $w=w_\mu v_\lambda$.
    Then $w = (w_\lambda)_{\lambda\in Q^-}$ satisfies the claim.
\end{proof}

Analogously, we also obtain $w^*=(w^*_\lambda)_{\lambda\in Q^+}
\in\widehat{\mathcal{N}}^+$ such that $b^* = w^*b^*[i]$.

\begin{lemma}\label{lem-existence-ki}
    Let $k'[i]:= \sum_{\mu\in Q^-} P(w_{-\mu}^*w_\mu)(0)e^\mu$,
    then $k'[i]=1+\lot$ and $p'=p'[i]k'[i]$.
    ($P$ as in Lemma~\ref{lem-definition-p'}.)
\end{lemma}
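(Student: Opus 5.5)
We will compute $\widehat{\Upsilon}(v_\lambda b^*\otimes bv_\lambda)$ a second time, now using the factorisations $b=b[i]w$ from Proposition~\ref{prop-existence-w} and $b^*=w^*b^*[i]$, and compare the result with Lemma~\ref{lem-definition-p'}. Since $w_0=w^*_0=1$ (the construction starts from $X_0=\set{(0,v_\lambda)}$), the leading coefficient of $k'[i]$ is $P(1)(0)=1$, so $k'[i]=1+\lot$ is immediate.

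For the product formula, first expand
\[
    v_\lambda b^*\otimes bv_\lambda = \sum_{\mu,\mu'} v_\lambda w^*_{-\mu'} b^*[i]\otimes b[i]w_\mu v_\lambda .
\]
By Proposition~\ref{prop-existence-w}, every nonzero $w_\mu v_\lambda$ satisfies the conditions of Lemma~\ref{lem-rank1-iso}. It is therefore the image of $v_{\lambda+\mu}$ under an injective $\gr(\widetilde{\uq}[i])$-embedding $\overline{M}[i](\lambda+\mu)\hookrightarrow\overline{M}(\lambda)$, and $b[i]w_\mu v_\lambda$ is the image of $b[i]v_{\lambda+\mu}$. The dual statement holds for $v_\lambda w^*_{-\mu'}$ in $\overline{M}(\lambda)^{\mathrm r}$.

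Next, evaluate a single pairing $(v_\lambda w^*_{-\mu'}b^*[i])(\,K_h\, b[i]w_\mu v_\lambda)$. The key step is to show that the weight-$(\lambda+\mu-\alpha)$ component of $b[i]w_\mu v_\lambda$ is paired against $v_\lambda w^*_{-\mu'}b^*[i]$ as
\[
    \bigl(v_\lambda w^*_{-\mu'}\bigr)\bigl(w_\mu v_\lambda\bigr)\cdot P[i]\bigl(b^*[i]_\alpha b[i]_{-\alpha}\bigr)(0).
\]
To do this, write $v_\lambda w^*_{-\mu'}$ and $w_\mu v_\lambda$ as lowest- and highest-weight vectors for $\gr(\widetilde{\uq}[i])$. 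Then move $b^*[i]_\alpha b[i]_{-\alpha}$ (which lies in $\gr(\mathcal N^+[i])\gr(\mathcal N^-[i])$) through, using the same projection argument as in Lemma~\ref{lem-definition-p'}. Its components in $\gr(\widetilde{\uq}[i]S(\uq[i]^+)_+)$ kill $w_\mu v_\lambda$, since that vector is annihilated by $S(\uq[i]^+)_+$. Its components in $\gr(\uq[i]^-_+\widetilde{\uq}[i])$ are killed by the lowest-weight vector $v_\lambda w^*_{-\mu'}$. What remains is an element of $k[T]$, acting on $w_\mu v_\lambda$ by its value at $0$, because $w_\mu v_\lambda$ is annihilated by $k[T]_+$. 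Finally, the remaining pairing $(v_\lambda w^*_{-\mu'})(w_\mu v_\lambda)$ vanishes unless $\mu=\mu'$, and for $\mu=\mu'$ it equals $P(w^*_{-\mu}w_\mu)(0)$ by the argument of Lemma~\ref{lem-definition-p'}.

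Summing over $\mu$ and $\alpha$ with $e^{\lambda+\mu-\alpha}$ then gives $e^\lambda\,k'[i]\,p'[i]$. Uniqueness in Lemma~\ref{lem-definition-p'} yields $p'=p'[i]k'[i]$; here we use that $p'[i]$ is independent of the highest weight, so the shift to $\lambda+\mu$ is harmless.

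The main obstacle is the cross terms. We must justify that $b^*[i]$ pairs only with the $\gr(\widetilde{\uq}[i])$-submodule generated by the \emph{same} $w_\mu v_\lambda$, i.e. that the contributions with $\mu\ne\mu'$ vanish after the $b[i],b^*[i]$ factors are inserted. The first approach to try is to show that the pairing between $\overline{M}(\lambda)^{\mathrm r}$ and $\overline{M}(\lambda)$, restricted to the two rank-1 submodules, is $\gr(\widetilde{\uq}[i])$-invariant. It would then factor through the pairing $\overline{M}[i](\lambda+\mu')^{\mathrm r}\times\overline{M}[i](\lambda+\mu)$. This pairing is zero unless the highest and lowest weights match, and it is otherwise a scalar multiple of the canonical one, which is determined by the value on $v\otimes v^*$. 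If weights alone do not separate distinct summands of equal weight, we would instead expand $b[i]w_\mu v_\lambda$ in $\gr(\mathcal N^-[i])$ applied to $w_\mu$ directly, and absorb everything into the single projection $P(w^*_{-\mu'}\,b^*[i]_\alpha b[i]_{-\alpha}\,w_\mu)$. That projection factorises because $b^*[i]_\alpha b[i]_{-\alpha}$ projects into $k[T]$ modulo the two ideals.
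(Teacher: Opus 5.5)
Your proposal is correct and is essentially the paper's proof. You factor $b=b[i]w$ and $b^*=w^*b^*[i]$, use that $w_\mu v_\lambda$ is killed by $\gr(\widetilde{\uq}[i]S(\uq[i]^+)_+)$ and $k[T]_+$ and that $v_\lambda w^*_{-\mu'}$ is killed by $\gr(\uq[i]^-_+\widetilde{\uq}[i])$, replace the middle factor by its projection in $k[T]$, evaluate at $0$, and sum to get $e^\lambda p'[i]k'[i]$.

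The cross terms you single out as the main obstacle need neither the invariant-pairing argument nor the embeddings of Lemma~\ref{lem-rank1-iso}. The decomposition defining $P$ is weight-graded, so $P(b^*[i]_{\alpha'}b[i]_{-\alpha})\in k[T]$ has weight $\alpha'-\alpha$ and vanishes unless $\alpha'=\alpha$; total weight then forces $\mu'=\mu$. The paper uses this tacitly when it writes the double sum diagonally.
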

\begin{proof}
    Note that $\gr(\widetilde{\uq}[i]S(\uq[i]^+)_+)$ annihilates $w_\mu v_\lambda$ and that $\gr(\uq[i]^-_+\widetilde{\uq}[i])$ annihilates
    $v_\lambda w^*_{-\mu}$ for all $\mu\in Q^-$.
    The same is true for $\gr(k[T]_+)$, so we have
    \begin{align*}
        \Upsilon(v_\lambda b^*\otimes bv_\lambda)
        &= \Upsilon(v_\lambda w^* b^*[i]\otimes b[i]wv_\lambda)\\
        &= \sum_{\mu\in Q^-}\sum_{\nu\in Q^-[i]}
        v^*_\lambda (w^*_{-\mu} b^*[i]_{-\nu} b[i]_\nu w_\mu v_\lambda)e^{\lambda+\mu+\nu}\\
        &= \sum_{\mu\in Q^-}\sum_{\nu\in Q^-[i]}
        v^*_\lambda (w^*_{-\mu} P(b^*[i]_{-\nu}b[i]_\nu)w_\mu v_\lambda)e^{\lambda+\mu+\nu}\\
        &= p'[i] \sum_{\mu\in Q^-} v_\lambda^*(w^*_{-\mu} w_\mu v_\lambda)
        e^{\lambda+\mu}\\
        &= p'[i] k'[i] e^\lambda
    \end{align*}
    analogously to the proof of Lemma~\ref{lem-definition-p'}.
\end{proof}

\begin{definition}
    Let $\uq^-[i]^\perp$ be the subalgebra of $\uq^-$ generated
    by
    \[
        \adr(\uq[i])(F_j)\qquad (j\not\in I[i]).
    \]
    Then $\uq^-\cong \uq^-[i]\otimes\uq^-[i]^\perp$.
\end{definition}

\begin{lemma}\label{lem-uminperp-weights}
    Let $\mu$ be a weight of $\uq^-[i]^\perp$, then
    \[
        \tilde{\mu}\in-\sum_{\alpha\in\Sigma^+\setminus\Q\Sigma[i]}\N_0\alpha.
    \]
\end{lemma}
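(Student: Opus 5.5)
Since the map $\mu\mapsto\tilde\mu$ is linear and $\uq^-[i]^\perp$ is generated as an algebra by the elements $\adr(u)(F_j)$ with $u\in\uq[i]$ and $j\notin I[i]$, the weights of $\uq^-[i]^\perp$ are sums of weights of these generators. The set $-\sum_{\alpha\in\Sigma^+\setminus\Q\Sigma[i]}\N_0\alpha$ is closed under addition. It therefore suffices to show that every weight $\mu$ of a single weight vector in $\adr(\uq[i])(F_j)\subset\uq^-$ with $j\notin I[i]$ satisfies the claim.

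First step: describe these weights. The right adjoint action of $E_k,F_k$ ($k\in I[i]$) shifts weights by $\pm\alpha_k$, and $\adr(\uq[i])(F_j)$ lies in $\uq^-$. Hence such a $\mu$ has the form
\[
    \mu = -\alpha_j - \sum_{k\in I[i]} m_k\alpha_k, \qquad m_k\in\Z.
\]
Since $\mu\in -Q^+$, we have $m_k\ge0$; in fact $-\mu$ is a positive root or sum of roots with $\alpha_j$-coefficient exactly $1$. More precisely, $\adr(\uq[i])(F_j)$ is a finite-dimensional $\uq[i]$-module generated by a vector of weight $-\alpha_j$, so its weights are weights of $\uq^-$ of the form $-\beta$, where $\beta=\alpha_j+\sum_{k\in I[i]}m_k\alpha_k$. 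Such an element of $\uq^-$ is a polynomial in root vectors, but because the $\alpha_j$-coefficient is $1$, only one factor can involve $\alpha_j$. The remaining factors lie in $\uq^-[i]$, and applying $\adr(\uq[i])$ to $F_j$ does not produce such factors anyway. I would argue that each weight $\beta$ occurring is an actual root: the span of these weights in $\uq^-$ is the quantum analogue of the $\mathfrak{l}[i]$-module $\mathfrak{n}^-$ modulo $\mathfrak{n}^-[i]$, whose weights are the roots of $\mathfrak g$ with $\alpha_j$-coefficient $1$. This can be justified by specialisation $q\to1$, or by noting that the character of $\adr(\uq[i])(F_j)$ agrees with its classical counterpart.

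Second step: restrict. For a root $\beta\in R^+$ with nonzero coefficient at some $j\notin I[i]$, we get $\tilde\beta\in\Sigma\cup\{0\}$. It is nonzero, and in fact positive, because the $\alpha_j$-coefficient survives. If $j\in I_\circ$, then $\tilde\alpha_j\neq0$ and $\tilde\beta$ has positive $\tilde\alpha_j$-coefficient. If $j\in I_\bullet$ is not in $I[i]$, then $\beta$ must also involve some white node outside $I[i]$, since $\beta$ is supported on a connected subdiagram containing $j$ and meeting $I[i]$ only through white nodes. Hence $\tilde\beta\in\Sigma^+$. Moreover, $\Q\Sigma[i]$ is spanned by $\tilde\alpha_i$, and $\tilde\beta$ has a nonzero coefficient at some $\tilde\alpha_{j'}$ with $j'\in I_\circ$ and $\tilde\alpha_{j'}\ne\tilde\alpha_i$, using that $\tilde\alpha_{\tau(i)}=\tilde\alpha_i$. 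Therefore $\tilde\beta\notin\Q\Sigma[i]$, which gives $\tilde\mu=-\tilde\beta\in-(\Sigma^+\setminus\Q\Sigma[i])$.

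The main obstacle is the first step: rigorously showing that the weights of $\adr(\uq[i])(F_j)$ are negatives of roots of the stated kind, and not merely sums. I would handle this with the classical limit. An alternative is to bound the weights using the Serre relations, which show that $\adr(F_k)^{1-a_{kj}}(F_j)=0$, so the module is a quotient of the corresponding integrable $\uq[i]$-module, with weights matching the classical ones. A secondary subtlety is the connectedness claim when $j$ is black: this is what forces a white node outside $\{i,\tau(i)\}$ to appear.
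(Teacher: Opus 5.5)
Your overall route is the paper's: reduce to the generators $\adr(x)(F_j)$, argue that their weights are $-\beta$ for roots $\beta=\alpha_j+\sum_{k\in I[i]}m_k\alpha_k$, and then restrict. For $j\in I_\circ$ your restriction step is correct, and a bit cleaner than the paper's $\Theta$-invariance computation: $\tilde\beta=\tilde\alpha_j+m\tilde\alpha_i$ with $\tilde\alpha_j\neq\tilde\alpha_i$, so $\tilde\beta\in\Sigma^+\setminus\Q\Sigma[i]$. Your Serre-relation argument is also sound. It shows that $\adr(\uq[i])(F_j)$ is the simple integrable $\uq[i]$-module generated by a vector of weight $-\alpha_j$, hence has the classical character. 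That justifies the fact that these weights are roots, which the paper merely asserts.

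The case $j\in I_\bullet\setminus I[i]$, however, is handled incorrectly. Such a $j$ is not adjacent to any node of $I[i]$. Otherwise it would lie in the connected component of $I_\bullet\cup\{i,\tau(i)\}$ containing $i$ or $\tau(i)$, i.e.\ in $I[i]$. Hence $F_j$ commutes with $E_k$, $F_k$ and $K_{\alpha_k}^{\pm1}$ for all $k\in I[i]$, and $\adr(\uq[i])(F_j)$ is just the line spanned by $F_j$. The only possible $\beta$ is $\alpha_j$, and $\tilde\mu=-\tilde\alpha_j=0$.

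Your claim that $\beta$ must involve a white node outside $I[i]$ contradicts your own first step, which confines the support of $\beta$ to $\{j\}\cup I[i]$. Consequently your conclusion $\tilde\mu\in-(\Sigma^+\setminus\Q\Sigma[i])$ is false here: for $\mathsf{AII}_5$ with $i=2$, the generator $F_5$ has $\tilde\mu=0$.

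The lemma survives only because $0$ (the empty sum) lies in $-\sum_{\alpha\in\Sigma^+\setminus\Q\Sigma[i]}\N_0\alpha$. So the fix is one line: black generators contribute restricted weight $0$, and the restricted-root argument is needed only for white $j$. The paper's proof has the same blind spot: for black $j$ its claim $2\tilde\mu\in-\alpha_j-\alpha_{\tau(j)}+Q(R[i])$ fails, since $2\tilde\mu=0$.
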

\begin{proof}
    Since weights are additive, it suffices to show the claim
    for multiplicative generators of $\uq^-[i]^\perp$.
    $\uq^-[i]^\perp$ is generated by $\ad_r(x)(F_j)$ of weight
    $\mu\in R$.

    First note that $\mu\in -\alpha_j+R[i]$, hence $\mu$ cannot be 
    an element of $R[i]$.
    Secondly, since $R[i]$ is $\tau$-invariant and contains
    entire connected components of black dots, $R[i]$ is $\Theta$-invariant,
    as is $R\setminus R[i]$, hence $\Theta(\mu)\not\in R[i]$.
    Moreover, since $\Theta(\alpha_j) \in -\alpha_{\tau(j)}+Q(R_\bullet)$,
    $\Theta(\mu)\in \alpha_{\tau(j)}+Q(R_\bullet[i])$.

    We conclude $2\tilde{\mu} = \mu-\Theta(\mu)\in -\alpha_j-\alpha_{\tau(j)} + Q(R[i])$, which has empty intersection with
    $\Q\Sigma[i]$.
\end{proof}

\begin{proposition}
    Let $w\in\mathcal{N}^-$ be an element of weight $\mu$ such that 
    $wv_\lambda$ satisfies the conditions of Lemma~\ref{lem-rank1-iso},
    then
    \[
        \mu\in -\sum_{\alpha\in\Sigma^+\setminus\Q\Sigma[i]} \N_0 \alpha.
    \]
    In particular, $k'[i]\in k[[e^\beta\mid \beta\in\Sigma^-\setminus\Q\Sigma[i]]]$.
\end{proposition}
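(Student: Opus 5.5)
The plan is to use the tensor decomposition $\uq^-\cong\uq^-[i]\otimes\uq^-[i]^\perp$ and Lemma~\ref{lem-uminperp-weights}. Write $w=\sum_k y_k z_k$ with $y_k\in\uq^-[i]$ and $z_k\in\uq^-[i]^\perp$ weight vectors. Choose the $z_k$ linearly independent, and group terms so that each $z_k$ has a fixed weight $\mu_k$ and each $y_k$ has weight $\mu-\mu_k\in Q^-[i]$. Lemma~\ref{lem-uminperp-weights} already gives $\tilde\mu_k\in-\sum_{\alpha\in\Sigma^+\setminus\Q\Sigma[i]}\N_0\alpha$. It therefore suffices to show that only terms with $y_k$ constant can occur, i.e.\ that $w\in\uq^-[i]^\perp$ (possibly after rewriting). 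Then $\mu$ itself is a weight of $\uq^-[i]^\perp$. Since $wv_\lambda$ has $-\Theta$-invariant weight by Lemma~\ref{lem-rank1-iso}, we have $\mu=\tilde\mu$, which gives the claim.

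To show $w\in\uq^-[i]^\perp$, I would use the action $\ast$ of $S(\uq[i]^+)$ from Lemma~\ref{lem-isomorphic-suplus-modules}. The key point is that $\uq^-[i]^\perp$ is stable under $\adr(\uq[i])$ by construction. So for $j\in I[i]$, the operator $K_j^{-1}E_j\ast(\cdot)$ acts on $y z$ through a twisted Leibniz rule: it hits $y$ via the $\uq[i]$-action on $\uq^-[i]$, and it hits $z$ by something that stays in $\uq^-[i]\otimes\uq^-[i]^\perp$ with the $\uq^-[i]$ factor unchanged in weight. Consider the component with maximal-weight (nonconstant) $y$-part. Annihilation by $S(\uq[i]^+)_+$ then forces those $y_k$ to be annihilated by $K_j^{-1}E_j\ast$ for all $j\in I[i]$. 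In $\uq^-[i]$ the only such elements are scalars, because the action of $S(\uq[i]^+)$ on $\uq^-[i]$ via $\ast$ is the graded (Verma-type) action, whose only vector killed by all positive parts is $1$; this is the argument of Lemma~\ref{lem-simple-cyclic} for the rank-1 algebra. By induction on the $\uq^-[i]$-weight, all nonconstant $y_k$ vanish.

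I expect the main obstacle to be making the Leibniz-type computation precise. One has to control how $\adr(S^{-1}(K_j^{-1}E_j))$ passes through $z\in\uq^-[i]^\perp$, and to ensure that $\pi_0$ does not mix the components. I would first try to prove a commutation formula of the form $(K_j^{-1}E_j)\ast(yz)=((K_j^{-1}E_j)\ast y)\,\adr(K_j^{\pm1})(z)+y'\,(\text{element of }\uq^-[i]^\perp)$. Here $y'$ has the same weight as $y$, so a leading-weight argument in the $\uq^-[i]$-factor applies. The conditions coming from $\uq_\bullet[i]_+$ and $k[T]_+$ are not needed for the weight statement beyond $-\Theta$-invariance.

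Finally, the \enquote{in particular} follows from the definition of $k'[i]$ in Lemma~\ref{lem-existence-ki}. It is built from the components $w_\mu$ of the $w$ in Proposition~\ref{prop-existence-w}, each of which satisfies the conditions of Lemma~\ref{lem-rank1-iso}. So every weight in its support lies in $-\sum_{\alpha\in\Sigma^+\setminus\Q\Sigma[i]}\N_0\alpha$, which lies in the monoid generated by $\Sigma^-\setminus\Q\Sigma[i]$.
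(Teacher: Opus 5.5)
\textbf{The gap.} Your intermediate goal, that $w\in\uq^-[i]^\perp$, is false, so the induction ``all nonconstant $y_k$ vanish'' cannot succeed.

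Here is a counterexample. Take the split case $\mathsf{AI}_2$ with $i=1$. Then $I_\bullet=\emptyset$, $\Theta=-\mathrm{id}$, $\mathcal{N}^-=\uq^-$ and $I[1]=\{1\}$; put $c:=\alpha_1\cdot\alpha_2\neq 0$. With the Hopf structure implicit in the proof of Lemma~\ref{lem-action-on-uqminus}, $(K_1^{-1}E_1)\ast(F_1F_2)$ and $q^{-c}\,(K_1^{-1}E_1)\ast(F_2F_1)$ are the same multiple of $F_2$. Hence $w:=q^{c}F_1F_2-F_2F_1$ satisfies the hypotheses of Lemma~\ref{lem-rank1-iso}. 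On the other hand, $\uq^-[1]^\perp$ is spanned in this weight by $\adr(F_1)(F_2)=F_2F_1-q^{-c}F_1F_2$. So $w=-\adr(F_1)(F_2)+(q^{c}-q^{-c})F_1F_2$ has a nonzero $F_1\otimes F_2$ component.

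The failure sits exactly in your leading-weight step. The correct twisted Leibniz rule is $(K_j^{-1}E_j)\ast(yz)=((K_j^{-1}E_j)\ast y)\,z-\adr(K_j)(y)\,\adr(E_j)(z)$. Suppose $\nu$ is maximal among the weights of the \emph{nonconstant} $y_k$ and $\nu=-\alpha_j$. Then the constant term $y_0z_0$ contributes $-y_0\,\adr(E_j)(z_0)$ in the same bidegree as $((K_j^{-1}E_j)\ast y_k)z_k$. Annihilation therefore no longer forces $(K_j^{-1}E_j)\ast y_k=0$. This is the familiar phenomenon that highest-weight vectors in a tensor product have the form $1\otimes z$ plus correction terms.

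\textbf{The repair.} Aim only for what the statement needs: $\mu$ itself is a weight of $\uq^-[i]^\perp$, i.e.\ some $y_k$ is a nonzero scalar.
\begin{enumerate}
\item Take $\nu$ maximal among \emph{all} weights of the $y_k$, constants included. This is the paper's choice of a minimal weight $\mu'$ among the weights of the $z_k$.
\item Then $\nu+\alpha_j$ is not the weight of any $y_k$, so the second term of the Leibniz rule cannot land in the relevant bidegree.
\item Linear independence of the $z_k$ gives $(K_j^{-1}E_j)\ast y_k=0$ for all $j\in I[i]$.
\item Hence $y_k$ is a scalar, $\nu=0$, and $\mu=\mu'$.
\end{enumerate}
From there your remaining steps go through and coincide with the paper's proof. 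These are the $-\Theta$-invariance giving $\mu=\tilde\mu$ (which the paper leaves implicit), Lemma~\ref{lem-uminperp-weights}, and the ``in particular'' via Lemma~\ref{lem-existence-ki}.
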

\begin{proof}
    Write
    \[
        w = \sum_k a_k b_k
    \]
    where $a_k\in\uq^-[i]$ and $b_k\in\uq^-[i]^\perp$.
    Without loss of generality let the $a_k,b_k$ be weight vectors,
    of weight $\lambda_k,\mu_k$,
    and the $b_k$ be linearly independent.

    Pick $\mu'$ to be a minimal element of $\set{\mu_k\where k}$ (with
    respect to the dominance ordering), then $\widetilde{\mu'}\in -\sum_{\alpha\in\Sigma^+\setminus[i]}\N_0\alpha$ by Lemma~\ref{lem-uminperp-weights}.
    It suffices to show that $\mu=\mu'$.

    By assumption, $wv_\lambda$ is an $S(\uq^+[i])$-highest-weight vector, i.e. for $j\in I[i]$ we have
    \[
        K_j^{-1}E_j wv_\lambda = 0.
    \]
    Recall from Lemma~\ref{lem-isomorphic-suplus-modules} that
    \[
        0 = K_j^{-1}E_jwv_\lambda = (K_j^{-1}E_j\ast w)v_\lambda,
    \]
    hence by freeness that $K_j^{-1}E_j\ast w=0$ for all $j\in I[i]$.
    Note that
    \begin{align*}
        \adr(E_j)(w) &= \sum_k \adr(E_j)(a_kb_k)\\
        &= \sum_k \big(\ad_r(E_j)(a_k)b_k\\
        &\qquad + \ad_r(K_j)(a_k)\ad_r(E_j)(b_k)\big)
        \in \uq^-[i]\otimes \uq^-[i]^\perp.
    \end{align*}
    The terms that have weight $\mu'$ in the second tensor component
    are
    \[
        \sum_{\substack{k\\\mu_k=\mu'}} \ad_r(E_j)(a_k)b_k
    \]
    (since $\mu'$ is minimal, it cannot be written as
    $\mu_k+\alpha_j$ for any $k$).
    By linear independence, we have $\ad_r(E_j)(a_k)=0$ for all
    $k$ such that $\mu_k=\mu'$.
    Since $a_k\in\uq[i]^-$, we conclude that $a_k$ is a scalar, and
    hence that $\mu=\mu'$.

    For the last conclusion, we note that every weight vector of
    $w$ from Proposition~\ref{prop-existence-w} satisfies the
    conditions of Lemma~\ref{lem-rank1-iso}, so that the claim follows
    from the definition of $k'[i]$ in Lemma~\ref{lem-existence-ki}.
\end{proof}

\begin{lemma}
    Write $k[i]:= k'[i](\cdot-\rho)$, then
    $r_i k[i] = k[i]$.
\end{lemma}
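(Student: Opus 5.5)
We need to prove that the $\rho$-shifted quotient $k[i]=p[i]^{-1}p$ is invariant under the simple reflection $r_i\in W_\Sigma$ attached to $\tilde\alpha_i$. The approach compares radial parts computed in two ways, once through $p$ and once through the rank-one data.

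First, by Lemma~\ref{lem-existence-ki} we have $p'=p'[i]k'[i]$, hence after the $\rho$-shift $p=p[i]k[i]$. Here $p[i]$ denotes $p'[i](\cdot-\rho)$, shifted by the rank-one $\rho[i]$ if necessary; the discrepancy $\rho-\rho[i]$ is orthogonal to $\tilde\alpha_i$ and so only rescales coefficients by constants. By the preceding proposition, $k[i]$ is a power series in $e^{\beta}$ with $\beta\in\Sigma^-\setminus\Q\Sigma[i]$. The reflection $r_i$ permutes $\Sigma^+\setminus\Q\Sigma[i]$, since a positive root not proportional to $\tilde\alpha_i$ stays positive. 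Therefore $r_ik[i]$ is again a power series of the same shape with constant term $1$, and the claim is meaningful.

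Next, take $x\in Z_{\widetilde{\uq}}(\widetilde{\uqb})$ with $\gamma(x)=K_\mu+\lot$ for a suitable $\mu$; such $x$ exist by Lemma~\ref{lem-can-choose-good-operator}. By Theorem~\ref{thm-initial-term-given-by-p} we have $\Rad(x)=pT_\mu p^{-1}+\lot$. Lemma~\ref{lem-action-rad} says $\Rad(x)$ is $W_\Sigma$-invariant, so applying $r_i$ shows that the $T_{r_i\mu}$-coefficient of $\Rad(x)$ equals $r_i\bigl(p/p(\cdot+\mu)\bigr)$. Choose $\mu$ dominant with $\langle\tilde\alpha_i^\vee,\mu\rangle\neq0$.

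The main obstacle is to compute this $T_{r_i\mu}$-coefficient independently, using the rank-one reduction. Consider the subalgebra generated by $x$ and $Z_{\widetilde{\uq}[i]}(\widetilde{\uqb}[i])$. Rank-one Theorem~\ref{thm-initial-term-given-by-p} together with rank-one $W_{\Sigma[i]}$-invariance gives that $p[i]$ satisfies $r_i$-compatibility. Concretely, the rank-one radial parts are $r_i$-symmetric and conjugated by $p[i]$, so the rank-one leading coefficients $p[i]/p[i](\cdot+\nu)$ transform under $r_i$ into the coefficients at $r_i\nu$.

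I would then use the factorization $b=b[i]w$ from Proposition~\ref{prop-existence-w}. Each weight component $w_\mu v_\lambda$ generates a copy of $\overline M[i](\lambda+\mu)$ by Lemma~\ref{lem-rank1-iso}. Along the $\tilde\alpha_i$-string, $\widehat\Upsilon(v_\lambda b^*\otimes bv_\lambda)$ decomposes as $\sum_\mu e^{\lambda+\mu}p'[i]\cdot(\text{coefficient})$, and $k'[i]$ collects the highest weights $\lambda+\mu$ of rank-one Verma submodules. The symmetry of the rank-one spherical vectors under $r_i$, coming from the rank-one $W_{\Sigma[i]}$-invariance of the restrictions of zonal spherical functions, should force the generating series of these highest weights to be $r_i$-invariant after the $\rho$-shift.

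Putting this together, I would write $p=p[i]k[i]$ in the identity
\[
r_i\bigl(p/p(\cdot+\mu)\bigr)=\text{coefficient of }T_{r_i\mu}\text{ in }\Rad(x),
\]
and compare with the rank-one expression. This yields $r_ik[i]/r_ik[i](\cdot+r_i\mu)=k[i]/k[i](\cdot+r_i\mu)$. The quotient $r_ik[i]/k[i]$ is therefore $T_{r_i\mu}$-periodic for all admissible $\mu$, hence constant. Since both series have constant term $1$, it equals $1$, exactly as at the end of the proof of Theorem~\ref{thm-initial-term-given-by-p}.
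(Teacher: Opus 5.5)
Your factorisation $p=p[i]k[i]$, your observation that $r_i$ preserves $k[[e^\beta\mid\beta\in\Sigma^-\setminus\Q\Sigma[i]]]$, and your closing periodicity argument are all fine. The central step, however, is missing.

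You choose $\mu$ with $\langle\tilde\alpha_i^\vee,\mu\rangle\neq0$, so $r_i$ moves $\mu$ to the lower weight $r_i\mu$. $W_\Sigma$-invariance of $\Rad(x)$ then only says that the $T_{r_i\mu}$-coefficient equals $r_i\bigl(p/p(\cdot+\mu)\bigr)$. This is a tautology unless you can compute that lower-order coefficient by other means, and nothing in the paper controls lower-order coefficients of radial parts. Theorem~\ref{thm-initial-term-given-by-p} is proved in the associated graded ring, where those terms are discarded and where $W_\Sigma$-invariance is not available.

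Your proposed independent computation does not close the gap. Using $b=b[i]w$, you assert that the $r_i$-symmetry of rank-one spherical vectors should force the generating series $k'[i]$ of highest weights of rank-one Verma submodules to be $r_i$-invariant after the $\rho$-shift. That assertion is precisely the lemma to be proved. Also, elements of $Z_{\widetilde{\uq}[i]}(\widetilde{\uqb}[i])$ do not centralise $\widetilde{\uqb}$, so they have no radial part in the ambient setting and cannot be combined with $x$ in the way you suggest.

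The missing idea is to choose the weight the other way round. Take $\nu\in X^+$ with $\tilde\alpha_j\cdot\nu=0$ for $j\in I[i]$ and $\tilde\alpha_j\cdot\nu\neq0$ for the remaining $j\in I_\circ$, and set $x=ac_\nu$.
\begin{enumerate}
\item Since $r_i\tilde\nu=\tilde\nu$, $W_\Sigma$-invariance acts on the leading coefficient itself, which Theorem~\ref{thm-initial-term-given-by-p} already determines.
\item Since $p[i]$ is a series in $e^{-\tilde\alpha_i}$ and $\tilde\nu\perp\tilde\alpha_i$, $p[i]$ commutes with $T_{2\tilde\nu}$. Hence $pT_{2\tilde\nu}p^{-1}=k[i]T_{2\tilde\nu}k[i]^{-1}$.
\item Invariance gives $r_i\bigl(k[i]T_{2\tilde\nu}k[i]^{-1}\bigr)=k[i]T_{2\tilde\nu}k[i]^{-1}$, so $r_ik[i]/k[i]$ is $T_{2\tilde\nu}$-periodic.
\item Every nonconstant monomial in $k[[e^\beta\mid\beta\in\Sigma^-\setminus\Q\Sigma[i]]]$ has strictly negative pairing with $\tilde\nu$. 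So the quotient is constant, and comparing constant terms shows it equals $1$.
\end{enumerate}
This route needs no lower-order coefficient and no further rank-one input.
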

\begin{proof}
    Pick $\nu\in X^+$ such that
    \[
        \tilde{\alpha}_j\cdot\nu \in\begin{cases}
            \set{0} & j\in I[i]\\
            \Q^\times & j\not\in I[i]
        \end{cases}
    \]
    for all $j\in I_\circ$, e.g.
    by taking an appropriate sum of fundamental weights.
    Then $T_{2\nu}$ commutes with $p[i]$, and the only elements of
    $k[[e^\beta\mid\beta\in\Sigma^-\setminus\Q\Sigma[i]]]$ it commutes
    with are constant functions.
    Let $x=a c_\nu$, then by Theorem~\ref{thm-initial-term-given-by-p},
    we have
    \[
        \Rad(x) = \sum_{w\in W_\Theta} w(p T_{2\tilde{\nu}}p^{-1})
        + \sum_{W_\Theta\tilde{\kappa}<\tilde{\nu}}
        k(Q)T_{2h_{\tilde{\kappa}}}.
    \]
    Note that $T_{2h_{\tilde{\nu}}}$ commutes with $p[i]$, so we have
    \[
        p T_{2h_{\tilde{\nu}}} p^{-1}
        = k[i] T_{2h_{\tilde{\nu}}} k[i]^{-1}.
    \]
    Note that $r_i\tilde{\nu}=\tilde{\nu}$, so we conclude
    $r_i (k[i]T_{2h_{\tilde{\nu}}} k[i]^{-1})
    = k[i]T_{2h_{\tilde{\nu}}} k[i]^{-1}$.
    This implies that $\frac{r_i k[i]}{k[i]}$ is a power series in
    $k[[e^\beta\mid \beta\in\Sigma^-\setminus\Q\Sigma[i]]]$ that
    commutes with $T_{2\tilde{\nu}}$, and is hence a constant.
    Since both $k[i]$ and $r_ik[i]$ have constant term 1, that constant is
    one.
\end{proof}

\begin{definition}
    Write $\Sigma^{\textrm{ind}}$ for the indivisible subsystem of
    $\Sigma$, i.e. the union of Weyl orbits of all $\tilde{\alpha}_i$
    for $i\in I_\circ$.
    
    For $\alpha\in\Sigma^{\mathrm{ind}}$, say $\alpha=w\tilde{\alpha}_i$
    (for $w\in W_\Theta$ and $i\in I_\circ$) write $p_{\alpha}:= wp[i]$.
\end{definition}

Note that $p_\alpha$ does not depend on the choice of $w$ as $p[i]\in k[[e^{-\tilde{\alpha}_i}]]$, and hence $p[i]$ is stabilised by the stabiliser of
$\tilde{\alpha}_i$ in $W_\Theta$.
It also does not depend on the choice of $i$ as $I[i]$ and $I[j]$ are
isomorphic whenever $\tilde{\alpha}_i\in W_\Theta\tilde{\alpha}_j$,
and since any possible parameter dependence is through parameters
$\bm{c}_i,\bm{c}_{\tau(i)}$ or $\bm{s}_i$ for $i\in I_\circ$ for which
$\tilde{\alpha}_j\in W_\Theta\tilde{\alpha}_i$ implies that $i=j$ or
$i=\tau(j)$.

\begin{proposition}\label{prop-p-function}
    $p$ as in Theorem~\ref{thm-initial-term-given-by-p} is given by
    \[
        p = \prod_{\alpha\in\Sigma^{\mathrm{ind},+}} p_\alpha.
    \]
\end{proposition}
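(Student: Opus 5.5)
We argue by comparing $p$ with the product $\prod_{\alpha\in\Sigma^{\mathrm{ind},+}} p_\alpha$ through the factorisations $p = p[i]\,k[i]$, one for each $i\in I_\circ$. The shift by $\rho$ turns Lemma~\ref{lem-existence-ki} into $p=p[i]k[i]$. We also have $k[i]\in k[[e^\beta\mid\beta\in\Sigma^-\setminus\Q\Sigma[i]]]$, $k[i]=1+\lot$, and $r_ik[i]=k[i]$ by the preceding lemma.

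Set $q:=p\big/\prod_{\alpha\in\Sigma^{\mathrm{ind},+}}p_\alpha$. We will show that $q$ is $W_\Sigma$-invariant and supported on $Q(\Sigma)^-$ with constant term $1$. Any such power series is the constant $1$: its support is a $W_\Sigma$-stable subset of $Q(\Sigma)^-$, and the orbit of a nonzero element of $Q(\Sigma)^-$ always contains an element outside $Q(\Sigma)^-$ (for instance its image under $w_0$, which lies in $Q(\Sigma)^+$).

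\textbf{Step 1 (behaviour of the product under $r_i$).} Fix $i\in I_\circ$ and write $\beta_i:=\tilde\alpha_i$ for the simple restricted root. The reflection $r_i$ permutes $\Sigma^{\mathrm{ind},+}\setminus\{\beta_i\}$. Since $p_{w\alpha}=wp_\alpha$ by definition, we get
\[
r_i\prod_{\alpha\in\Sigma^{\mathrm{ind},+}}p_\alpha = (r_ip_{\beta_i})\prod_{\alpha\ne\beta_i}p_\alpha .
\]
Consequently
\[
\frac{r_i q}{q}=\frac{r_i p}{p}\cdot\frac{p_{\beta_i}}{r_ip_{\beta_i}}
=\frac{r_i p}{p}\cdot\frac{p[i]}{r_ip[i]} .
\]
We have $p_{\beta_i}=p[i]$; this needs the remark that $p[i]$ only involves $e^{-\beta_i}$, and that $p[i]$ computed inside $\uq[i]$ has the same $\rho$-shift convention. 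The $\rho$ versus $\rho[i]$ discrepancy is the delicate point here and must be checked against the definition $p:=p'(\cdot-\rho)$. If the shifts differ, then $\rho-\rho[i]$ is orthogonal to $\beta_i$, so the shift acts trivially on $k[[e^{-\beta_i}]]$ up to a constant and causes no harm.

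\textbf{Step 2 ($r_i$-quotient of $p$).} From $p=p[i]k[i]$ and $r_ik[i]=k[i]$ we get $r_ip/p=r_ip[i]/p[i]$. Substituting into Step 1 gives $r_iq=q$ for every $i\in I_\circ$, so $q$ is $W_\Sigma$-invariant.

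One subtlety is that $r_ip[i]$ is a series in $e^{+\beta_i}$, so these identities live in a ring of formal expressions in which both $e^{\beta}$ and $e^{-\beta}$ appear. We will interpret everything as rational functions, or argue coefficientwise. The cleanest route is to note that $r_ip/p=r_ip[i]/p[i]$ already holds after cancelling $k[i]$, so no convergence problem arises: all quotients are ratios of formal series in a ring where each factor is invertible. I expect making this rigorous to be the main obstacle, because a $W_\Sigma$-invariant formal power series in the negative cone is not a priori meaningful. The workaround is to show that $q$ lies in $k[[e^{-\beta}\mid\beta\in\Sigma^+]]$, which is immediate since every $p_\alpha$ with $\alpha>0$ is a series in $e^{-\alpha}$, and that $r_iq=q$ holds as an identity of formal series after multiplying out the rational factors. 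Such an identity forces the support of $q$ to be $r_i$-stable. Since $r_i$ maps $Q(\Sigma)^-\setminus\{0\}$ outside $Q(\Sigma)^-$ except along the $\beta_i$-direction, we iterate over $i$ and use the fact that the constant term is $1$. This gives $q=1$, and hence the claim.
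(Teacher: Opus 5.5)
Your strategy matches the paper's. You combine $p=p[i]k[i]$, $r_ik[i]=k[i]$, and the fact that $r_i$ permutes $\Sigma^{\mathrm{ind},+}\setminus\{\tilde\alpha_i\}$ to get $W_\Sigma$-invariance of the quotient, and then force the quotient to be $1$. Your $w_0$ argument for that last step is correct. The $\rho$ versus $\rho[i]$ worry is harmless: $\tilde\alpha_i$ lies in the span of the $\alpha_j$ with $j\in I[i]$, so $\rho\cdot\tilde\alpha_i=\rho[i]\cdot\tilde\alpha_i$, and the two shifts agree exactly on $k[[e^{-\tilde\alpha_i}]]$, not just up to a constant.

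The gap is in how you obtain $r_iq=q$. The quantities $r_ip/p$ and $p[i]/r_ip[i]$ in Steps~1--2 are not defined. Here $p[i]$ is an infinite series in $e^{-\beta_i}$ and $r_ip[i]$ is one in $e^{+\beta_i}$, so already the coefficient of $e^{0}$ in $p[i]\,(r_ip[i])^{-1}$ is in general an infinite sum. The series $p$ and $r_ip$ do not live in a common ring of formal series over a pointed cone. Since $p$ is not rational, \enquote{interpreting everything as rational functions} is not available either. Your proposed workaround does not close this. Its key claim about supports is also false: $r_i$ permutes $\Sigma^-\setminus\{-\beta_i\}$, and for instance $r_i(-\tilde\alpha_j)\in Q(\Sigma)^-$ for every simple restricted root $\tilde\alpha_j\neq\beta_i$. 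So $r_i$ does not push everything off the $\beta_i$-line out of $Q(\Sigma)^-$.

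The repair is to cancel $p[i]$ before applying $r_i$, which is exactly what the paper does. Put $K_i:=\prod_{\alpha\in\Sigma^{\mathrm{ind},+}\setminus\{\beta_i\}}p_\alpha$. Then in $k[[Q(\Sigma)^-]]$ you have $q=p[i]k[i]/(p[i]K_i)=k[i]/K_i$; the paper writes this as $K_i=k[i]P$ with $P=q^{-1}$.

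Both $k[i]$ and $K_i$ lie in $k[[M_i]]$, where $M_i$ is the monoid generated by $\Sigma^-\setminus\Q\beta_i$. This monoid is pointed and $r_i$-stable, so $r_i$ acts on $k[[M_i]]$ by ring automorphisms. Both series are $r_i$-fixed with constant term $1$:
$K_i$ because $r_ip_\alpha=p_{r_i\alpha}$ and $r_i$ permutes the index set, and $k[i]$ by the preceding lemma. Hence $r_iq=q$ holds as an honest identity of formal series for every $i\in I_\circ$. Your final step then finishes the proof.
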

\begin{proof}
    Write $pP = \prod_{\alpha\in\Sigma_1^+}p_\alpha$ for a power series
    $P\in k[[e^{-\tilde{\alpha}_i}\mid i\in I_\circ]]$, which then
    also satisfies $p'(0)=1$.

    Define
    \[
        K_i := \prod_{\alpha\in\Sigma^{\mathrm{ind},+}\setminus\set{\tilde{\alpha}_i}} p_\alpha,
    \]
    then $K_i p[i] = pP = k[i] p[i] P$, which shows that
    $K_i = k[i] P$.
    Since both $K_i$ and $k[i]$ are invariant under $r_i$, the same
    must be true for $P$.

    As a consequence, $P$ is invariant under $W^\Sigma$, which implies
    that it must be a constant, namely 1.
\end{proof}

We have thus successfully shown that $p$, and hence the leading
term of the radial part of any element of $Z_{\widetilde{\uq}}(\widetilde{\uqb})$, can be recovered easily from the
functions $p[i]$ for $I_\circ$, i.e. by knowing $p$ for 
all rank-1 subdiagrams of $I$.

\section{Computation in Rank 1}\label{sec-rk1-computation}
We now compute $p$ for the Satake diagrams of rank 1.
Note that $p\in k[[\Sigma^-]]$ is independent of the specific lattices $X,Y$ used to define
$\uq,\uqb,\uqb'$.
As such, any $p[i]$ from Section~\ref{sec-rk1-reduction} can
be computed by considering a QSP of type $I[i]$
in isolation, which we are now going to do.

Assume that
$(\uq,\uqb,\uqb')$ is of a rank-1 type, i.e. one
of
\begin{align*}
    \diag(\mathsf{A}_1), \quad \mathsf{AI}_1, \quad \mathsf{AII}_3,\quad \mathsf{AIV}_n\: (n\ge2),\\\mathsf{BII}_n\: (n\ge2),\quad \mathsf{CII}_{n,1}\:(n\ge3),\quad \mathsf{DII}_n\:(n\ge4),\quad \mathsf{FII}.
\end{align*}

\subsection{Letzter Cases}
We consider the case where $(\uq,\uqb,\uqb')$ is of type
$\diag(\mathsf{A}_1)$, $\mathsf{AII}_3$, $\mathsf{BII}$, or
$\mathsf{DII}$.
In these cases there exist a minuscule weight $\mu\in P$ such that
$2\widetilde{\mu} = \widetilde{\alpha}_i$ and
$2\widetilde{w_0\mu} = -\widetilde{\alpha}_i$, where $i\in I_\circ$ is the
unique white node: for $\diag(\mathsf{A}_1)$ choose
$\omega_1$, for $\mathsf{AII}_3$ choose $\omega_1$, for
$\mathsf{BII}_n$ choose $\omega_n$, for $\mathsf{DII}_n$ choose
$\omega_{n-1}$ or $\omega_n$.

\begin{lemma}[{\cite[Theorem~4.7]{Le04}}]\label{lem-letzter-radial-part}
    Let $\mu\in X^+$ correspond to one of the weights from above
    (if it exists), then for an appropriate constant $a\in k^\times$ we
    have
    \[
        \Rad(ac_\mu) =
        \frac{q^{\rho\cdot \tilde{\alpha}_i}e^{\tilde{\alpha}_i}
        - q^{-\rho\cdot\tilde{\alpha}_i}e^{-\tilde{\alpha}_i}}{e^{\tilde{\alpha}_i}-e^{-\tilde{\alpha}_i}}T_{2\tilde{\mu}}
        + \frac{q^{-\rho\cdot\tilde{\alpha}_i}e^{\tilde{\alpha}_i}
        - q^{\rho\cdot\tilde{\alpha}_i}e^{-\tilde{\alpha}_i}}{e^{\tilde{\alpha}_i}-e^{-\tilde{\alpha}_i}}T_{2\widetilde{w_0\mu}}
    \]
    provided that
    \[
        \gamma(ac_\mu)
        = q^{\rho\cdot\tilde{\alpha}_i} K_{2\tilde{\mu}}
        + q^{-\rho\cdot\tilde{\alpha}_i}K_{2\widetilde{w_0\mu}}.
    \]
\end{lemma}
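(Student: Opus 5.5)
Since $\mu$ is minuscule, I would first pin down the shape of $\Rad(ac_\mu)$ from weight data alone, and then fix its two coefficients using symmetry and the leading-term results.

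\textbf{Support.} The weights of $L(\mu)$ form the single orbit $W\mu$. For each weight $\nu$, the element $\tilde{\nu}$ lies on the segment between $\tilde{\mu}$ and $\widetilde{w_0\mu}$, so
\[
  2\tilde{\nu}\in\{\tilde{\alpha}_i,\,0,\,-\tilde{\alpha}_i\}.
\]
Combining \cite[Lemma~5.8]{Le08} with Lemma~\ref{lem-can-choose-good-operator}, this forces
\[
  \Rad(ac_\mu)=a_+T_{2\tilde{\mu}}+a_0+a_-T_{2\widetilde{w_0\mu}}
\]
for rational functions $a_+,a_0,a_-\in k(Q(\Sigma))$ in the single variable $e^{\tilde{\alpha}_i}$.

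\textbf{Leading coefficient.} Theorem~\ref{thm-initial-term-given-by-p}, with the normalisation of $\gamma(ac_\mu)$ from the statement, gives
\[
  a_+=q^{\rho\cdot\tilde{\alpha}_i}\,\frac{p}{p(\cdot+\tilde{\alpha}_i)}.
\]
This alone does not determine $a_+$, since $p=p[i]$ is exactly the unknown of this section. So I would extract more from the centraliser instead, exactly as in \cite[Theorem~4.7]{Le04}. By Lemma~\ref{lem-action-rad}, $\Rad(ac_\mu)$ is $W_\Sigma=\{1,r_i\}$-invariant, hence $a_-=r_ia_+$. By Lemma~\ref{lem-gamma-surjective-morphism}, the restrictions of zonal spherical functions are eigenfunctions with eigenvalue $\gamma(ac_\mu)(\lambda)$. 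In particular, the constant function $1=\Res(\epsilon)$ has eigenvalue
\[
  \gamma(ac_\mu)(0)=q^{\rho\cdot\tilde{\alpha}_i}+q^{-\rho\cdot\tilde{\alpha}_i},
\]
so $a_++a_0+a_-$ equals this constant.

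\textbf{Pinning down $a_+$.} I expect this to be the main step. The plan is to compute $a_+$ directly, rather than through $p$, by finding $K_hc_\mu$ modulo $\widetilde{\uqb}'_+\widetilde{\uq}+\widetilde{\uq}\widetilde{\uqb}_+$ in the top degree. Here I would use the concrete form
\[
  c_\mu=\sum_{v}\,(\text{dual basis element})\otimes(\text{basis element of }L(\mu))
\]
together with the fact that $B_i$ and $B_{\tau(i)}$ relate $F_i$ to $T_{w_\bullet}(E_{\tau(i)})K_{-\alpha_i}$. A single rewrite $K_hF\,T_{w_\bullet}(E)\equiv(\ldots)$ then produces the denominator $e^{\tilde{\alpha}_i}-e^{-\tilde{\alpha}_i}$ coming from $[K_{\alpha_i}]$, and the $q^{\pm\rho\cdot\tilde{\alpha}_i}$ arise from the $\rho$-shift in Definition~\ref{def-radial-part}. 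Minusculity should ensure that only this one rewrite contributes. Alternatively, once $a_+$ is known to have simple poles only at $e^{2\tilde{\alpha}_i}=1$ (after the shift), regularity of $\Rad$ on $W_\Sigma$-invariant polynomials forces $a_+(1-r_i)$ to preserve them. That forces $a_+=(\alpha e^{\tilde{\alpha}_i}+\beta e^{-\tilde{\alpha}_i})/(e^{\tilde{\alpha}_i}-e^{-\tilde{\alpha}_i})$, with $\alpha,\beta$ fixed by the leading constant $q^{\rho\cdot\tilde{\alpha}_i}$ and the constraint $a_0\in k$. With $a_+$ in this form, the constant-term identity then gives $a_0=0$ (the two explicit fractions sum to $q^{\rho\cdot\tilde{\alpha}_i}+q^{-\rho\cdot\tilde{\alpha}_i}$), completing the formula.
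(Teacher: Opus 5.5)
\textbf{There is a genuine gap: neither of your routes actually determines $a_+$, which is the whole content of the lemma.}

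Your preliminary reductions are fine:
the shape $a_+T_{2\tilde\mu}+a_0+a_-T_{2\widetilde{w_0\mu}}$, the relation $a_-=r_ia_+$, and the constant-term identity are all correct. So is your remark that Theorem~\ref{thm-initial-term-given-by-p} is circular here, since the paper uses this very lemma to compute $p$. The paper does not derive $a_+$ either; it imports the explicit computation from \cite[Theorem~4.7]{Le04}. Your route (a) describes such a computation rather than carrying it out. ``A single rewrite'' and ``minusculity should ensure'' are unsupported. For $\mathsf{BII}_n$ and $\mathsf{DII}_n$ the module $L(\mu)$ is a (half-)spin representation with $2^{n}$ resp.\ $2^{n-1}$ weights. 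Reducing $K_hc_\mu$ modulo $\widetilde{\uqb}'_+\widetilde{\uq}+\widetilde{\uq}\widetilde{\uqb}_+$ then requires repeated use of $\uq_\bullet$ and of $B_i,B_{\tau(i)}$, which is exactly the work done in Letzter's proof.

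Route (b) has two gaps. First, the claim that $a_+$ has only simple poles at $e^{2\tilde\alpha_i}=1$ is assumed without proof, and it is most of the conclusion. Second, even granting it, your constraints do not determine $a_+$. Put $z=e^{\tilde\alpha_i}$. Every $a_+=(\gamma_0z^2+\gamma_1z+\gamma_2)/(z^2-1)$ gives an operator $a_+T_{\tilde\alpha_i}+a_0+(r_ia_+)T_{-\tilde\alpha_i}$ that preserves symmetric Laurent polynomials and their degree filtration. Moreover $a_++r_ia_+=\gamma_0-\gamma_2$ is constant for every choice. Consequently:
\begin{itemize}
\item ``$a_0\in k$'' imposes nothing.
\item The constant-term identity only yields $a_0=q^{\rho\cdot\tilde\alpha_i}+q^{-\rho\cdot\tilde\alpha_i}-\gamma_0+\gamma_2$.
\item Deducing $a_0=0$ from it presupposes $\gamma_2=-q^{-\rho\cdot\tilde\alpha_i}$, which is circular.
\end{itemize}
You could fix $\gamma_2$ honestly. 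Match the leading coefficient $\gamma_0q^{m\abs{\tilde\alpha_i}^2}+a_0-\gamma_2q^{-m\abs{\tilde\alpha_i}^2}$ of $\Rad(ac_\mu)$ acting on $\Res(\phi)$, for $\phi\in E^\epsilon(m\tilde\alpha_i)$, against $\gamma(ac_\mu)(m\tilde\alpha_i)$ for $m\ge1$. However, $\gamma_1$ does not appear in any of these eigenvalues. More generally, without the pole assumption, regularity plus the eigenvalues still allow a three-parameter family of Askey--Wilson operators with $abcd$ fixed. So you would also need a parity argument showing that $a_+$ is a series in $e^{-2\tilde\alpha_i}$. That is not automatic: for $\mathsf{BII}_n$, $\tilde\alpha_1$ is itself a short root and hence lies in $Q$.
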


\begin{lemma}\label{lem-letzter-p-function}
    We have
    \[
        p = \frac{\qty(q^{-2\rho\cdot\tilde{\alpha}}e^{-2\tilde{\alpha}_i};q_{2\tilde{\alpha}_i}^{-1})_\infty}{\qty(e^{-2\tilde{\alpha}_i};q_{2\tilde{\alpha}_i}^{-1})_\infty}
    \]
    where $q_\alpha:= q^{\abs{\alpha}^2/2}$ for $\alpha\in X\otimes\Q$.
\end{lemma}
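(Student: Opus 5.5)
The plan is to compare two descriptions of the leading term of one radial part. Lemma~\ref{lem-letzter-radial-part} gives $\Rad(ac_\mu)$ explicitly. Theorem~\ref{thm-initial-term-given-by-p} says the leading term is $pT_{2\tilde\mu}p^{-1}$. Equating the two yields a $q$-difference equation for $p$, whose unique solution with constant term $1$ is the stated quotient of infinite $q$-Pochhammer symbols.

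\textbf{Step 1: match the leading terms.} Take $\mu$ as in Lemma~\ref{lem-letzter-radial-part}, so $2\tilde\mu=\tilde\alpha_i$ and $2\widetilde{w_0\mu}=-\tilde\alpha_i$. Normalise $x:=q^{-\rho\cdot\tilde\alpha_i}ac_\mu$, so that $\gamma(x)=K_{2\tilde\mu}+\lot$. Theorem~\ref{thm-initial-term-given-by-p} then gives $\Rad(x)=pT_{\tilde\alpha_i}p^{-1}+\lot$. Since $T_{\tilde\alpha_i}$ acts on $k[[\Sigma^-]]$ by $T_{\tilde\alpha_i}e^{\beta}=q^{\tilde\alpha_i\cdot\beta}e^\beta$, we have $pT_{\tilde\alpha_i}p^{-1}=\frac{p}{T_{\tilde\alpha_i}(p)}T_{\tilde\alpha_i}$. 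Comparing with the $T_{2\tilde\mu}$-coefficient of Lemma~\ref{lem-letzter-radial-part}, and expanding as power series in $e^{-\tilde\alpha_i}$, gives
\[
\frac{p}{T_{\tilde\alpha_i}(p)}=q^{-\rho\cdot\tilde\alpha_i}\,\frac{q^{\rho\cdot\tilde\alpha_i}e^{\tilde\alpha_i}-q^{-\rho\cdot\tilde\alpha_i}e^{-\tilde\alpha_i}}{e^{\tilde\alpha_i}-e^{-\tilde\alpha_i}}
=\frac{1-q^{-2\rho\cdot\tilde\alpha_i}e^{-2\tilde\alpha_i}}{1-e^{-2\tilde\alpha_i}}.
\]
Here we use that the rank is $1$, so $\lot$ consists only of $T_{-\tilde\alpha_i}$ and $T_0$ terms and cannot disturb this coefficient.

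\textbf{Step 2: rewrite as a $q$-difference equation.} Set $z:=e^{-2\tilde\alpha_i}$. Then $T_{\tilde\alpha_i}z=q^{-2\tilde\alpha_i\cdot\tilde\alpha_i}z=q_{2\tilde\alpha_i}^{-1}z$, because $|2\tilde\alpha_i|^2/2=2\tilde\alpha_i\cdot\tilde\alpha_i$. With $t:=q_{2\tilde\alpha_i}^{-1}$ and $c:=q^{-2\rho\cdot\tilde\alpha_i}$, the equation becomes
\[
p(z)\,(1-z)=p(tz)\,(1-cz),\qquad p(0)=1.
\]
If $p$ involves odd powers of $e^{-\tilde\alpha_i}$, the same argument applies coefficientwise. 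The odd part $p_{\mathrm{odd}}(y)$, with $y:=e^{-\tilde\alpha_i}$, satisfies $p_{\mathrm{odd}}(y)(1-y^2)=p_{\mathrm{odd}}(t^{1/2}y)(1-cy^2)$ with zero constant term. Since $t$ is not a root of unity, this forces every coefficient to vanish.

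\textbf{Step 3: solve.} Write $p=\sum_n a_nz^n$. Comparing coefficients of $z^n$ gives $a_n(1-t^n)=a_{n-1}(1-ct^{n-1})$. Since $t$ is not a root of unity, this recursion determines $p$ uniquely from $a_0=1$. The function $(cz;t)_\infty/(z;t)_\infty$ has constant term $1$ and satisfies the equation, because its ratio with its $t$-shift is exactly $(1-cz)/(1-z)$. Hence it equals $p$.

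The main obstacle is the bookkeeping in Step 1. One has to reconcile the $\rho$-shift in the definition of $\Rad$, the normalisation constant $q^{\rho\cdot\tilde\alpha_i}$ in $\gamma(ac_\mu)$, and the sign conventions of $T_\lambda$ acting on $e^{-\tilde\alpha_i}$. A slip in any of these would swap $c$ with $c^{-1}$ or $t$ with $t^{-1}$. The check is that the resulting $p$ is a genuine power series in $e^{-\tilde\alpha_i}$ with constant term $1$, and such a check would detect these errors.
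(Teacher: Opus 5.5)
Your proposal is correct and follows the paper's proof. You equate the $T_{2\tilde\mu}$-coefficient from Theorem~\ref{thm-initial-term-given-by-p} with that of Lemma~\ref{lem-letzter-radial-part}, after normalising by $q^{-\rho\cdot\tilde\alpha_i}$, to get $p/T_{\tilde\alpha_i}(p)=(1-q^{-2\rho\cdot\tilde\alpha_i}e^{-2\tilde\alpha_i})/(1-e^{-2\tilde\alpha_i})$, and then solve this first-order $q$-difference equation with base $q_{2\tilde\alpha_i}^{-1}$. Your extra steps check out and fill in what the paper leaves implicit: the explicit normalisation, the vanishing of the odd powers of $e^{-\tilde\alpha_i}$, and uniqueness via the coefficient recursion.
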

\begin{proof}
    By Theorem~\ref{thm-initial-term-given-by-p} and Lemma~\ref{lem-letzter-radial-part}
    we have
    \[
        \frac{p}{p(\cdot+2\tilde{\mu})}
        = \frac{1-q^{-2\rho\cdot\tilde{\alpha}_i}e^{-2\tilde{\alpha}_i}}{1-e^{-2\tilde{\alpha}_i}}.
    \]
    Note that $2\tilde{\mu}\cdot 2\tilde{\alpha}_i = 2\abs{\tilde{\alpha}}^2$,
    so that
    \[
        p = \frac{\qty(q^{-2\rho\cdot\tilde{\alpha}}e^{-2\tilde{\alpha}_i};q_{2\tilde{\alpha}_i}^{-1})_\infty}{\qty(e^{-2\tilde{\alpha}_i};q_{2\tilde{\alpha}_i}^{-1})_\infty}.\qedhere
    \]
\end{proof}

\subsection{Non-Letzter Cases}
There remain four cases to treat:
$\mathsf{AI}_1$, which accounts for all non-standard
parameters, $\mathsf{AIV}_n$, which accounts for all
standard parameters, $\mathsf{CII}_{n,2}$, and
$\mathsf{FII}$.
In all cases except $\mathsf{AI}_1$ we have
$\Sigma = \set{\pm\tilde{\alpha}_i,\pm2\tilde{\alpha}_i}$.
We also define $\Sigma^*:=\Sigma$ then.
For $\mathsf{AI}_1$ we have $\Sigma=\set{\pm\tilde{\alpha}_i}$, and we define
$\Sigma^* = \frac{1}{2}\Sigma\cup\Sigma$.

\begin{lemma}\label{lem-rk1-nonletzter-macdonald}
    Assume that $(\uq,\uqb,\uqb')$ is not of type $\mathsf{FII}$.
    Write $\beta\in\Sigma^*$ for the long root
    of $\Sigma^*$ (i.e. $2\tilde{\alpha}_i$ or $\tilde{\alpha}_i$),
    which is also the fundamental weight of $\Sigma$ and of $\Sigma^*$.

    Let $\mu\in X^+(0)$, say
    \[
        \langle (2\beta)^\vee, \mu\rangle = \frac{\beta\cdot\mu}{\abs{\beta}^2} = m\in\N_0.
    \]    
    Let $\phi\in E^\gamma_{\uqb',\uqb}(\mu)$,
    then there is $C\in k$ such that $\Res(\phi) = Ce^{\mu-m\beta}P_m$,
    where $P_m$ is the $m$-th symmetric
    Askey--Wilson polynomial in $\frac{e^{\beta} + e^{-\beta}}{2}$ with base
    $q_\beta^2 := q^{\abs{\beta}^2}$ and parameters
    \[
        (a,b,c,d) = \begin{cases}
            (u_1u_2^{-1} q_\beta,\:-u_1u_2q_\beta,\:u_1^{-1}u_2 q_\beta,\:
            -u_1^{-1}u_2^{-1}q_\beta) & \mathsf{AI}_1\\
            \qty(\frac{\bm{c}_i}{\bm{d}_i}q_\beta^{n+1},
            (-1)^{n-1}\frac{\bm{c}_i}{\bm{d}_{\tau(i)}}q_\beta, \frac{\bm{c}_{\tau(i)}}{\bm{d}_{\tau(i)}}q_\beta^{n+1}, (-1)^{n-1}\frac{\bm{c}_{\tau(i)}}{\bm{d}_i}q_\beta)
            &
            \mathsf{AIV}_n\\
            (q_\beta^{2n-1},-q_\beta^3,q_\beta^{2n-3},-q_\beta) & \mathsf{CII}_{n,2}.
        \end{cases}
    \]
    where
    \[
        t_i=[u_1]_{q_\beta},\: s_i = [u_2]_{q_\beta}
    \]
    in the first case.
\end{lemma}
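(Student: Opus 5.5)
Since the statement concerns the three rank-one types $\mathsf{AI}_1$, $\mathsf{AIV}_n$ and $\mathsf{CII}_{n,2}$, I would not derive it from the machinery of Sections~\ref{sec-centraliser}--\ref{sec-rk1-reduction}. Instead I would import the known rank-one results from the literature and translate them into the present conventions. In rank one, $E^{\epsilon}$ is identified via $\Res'$ with $k[2L]^{W_\Sigma}$, which is a polynomial ring in the single variable $\frac{e^{\beta}+e^{-\beta}}{2}$ (suitably shifted). By the propositions in Section~\ref{sec-general}, the elementary spherical function for $L(\mu)$ is unique up to scalar. Hence $\Res(\phi)$ equals $e^{\mu-m\beta}$ times a symmetric Laurent polynomial in $e^{\beta}$ of degree $m$. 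The remaining task is to show that this polynomial is the Askey--Wilson polynomial with the listed parameters. It suffices to exhibit one second-order symmetric $q$-difference operator of which $\Res(\phi)$ is an eigenfunction for every $m$, with pairwise distinct eigenvalues. That operator should be the Askey--Wilson operator with parameters $(a,b,c,d)$.

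The key steps run as follows. First, I take a generator $x$ of $Z_{\widetilde{\uq}}(\widetilde{\uqb})$ whose Harish-Chandra image $\gamma(x)$ has leading term $K_{2\beta}$ (Lemma~\ref{lem-can-choose-good-operator}). In rank one $2\beta$ is the smallest nonzero element of $2L^+$, so $\Rad(x)$ is supported on $T_{\pm 2\beta}$ and $T_0$ (in the $\mathsf{AI}_1$ case with the doubled system $\Sigma^*$, one adapts $\beta$ accordingly). By the $W_\Sigma$-invariance in Lemma~\ref{lem-action-rad}, $\Rad(x)=A(e)T_{2\beta}+A(e^{-1})T_{-2\beta}+C_0$. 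By Lemma~\ref{lem-action-rad} and Lemma~\ref{lem-gamma-surjective-morphism}, $\Res(\phi)$ is an eigenfunction of this operator.

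Second, I determine $A$. For this I rely on the explicit rank-one computations in the literature rather than on $p$, since this lemma is what will later compute $p$. For $\mathsf{AI}_1$ with non-standard parameters, I use the Koornwinder/Noumi--Dijkhuizen results identifying zonal spherical functions of the $q$-Onsager-type coideal with Askey--Wilson polynomials. For $\mathsf{AIV}_n$, I use Noumi--Dijkhuizen--Sugitani for $\mathrm{U}(n+1)/\mathrm{U}(n)$ together with \cite{Mee25}, and for $\mathsf{CII}_{n,2}$ the Noumi--Sugitani/Dijkhuizen computations. Concretely, I would compute the action of the Casimir-type element on the lowest nontrivial spherical function $m=1$ and its vector $v_\mu$ directly. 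This determines $A$ up to the shape $\frac{(1-ae^{-\beta})(1-be^{-\beta})(1-ce^{-\beta})(1-de^{-\beta})}{(1-e^{-2\beta})(1-qe^{-2\beta})}$ forced by the known operator.

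Third, I translate conventions: the shift by $\rho$ in $\Res$, the right coideal conventions, and the parameters $\bm{c},\bm{d},\bm{s},\bm{t}$ from the literature's normalisation (via the action of $\tilde{H}$ and the formula $t_i=[u_1]_{q_\beta}$) into ours. This yields the stated $(a,b,c,d)$. Finally, since the Askey--Wilson eigenvalues $q^{-m}+abcd\,q^{m-1}$ are distinct for generic parameters, the eigenfunction of degree $m$ is unique up to scalar, which gives the constant $C$.

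The main obstacle is the convention translation in the third step, especially for $\mathsf{AIV}_n$ with distinct left and right parameters and for the non-standard $\mathsf{AI}_1$ case. The signs $(-1)^{n-1}$, the powers $q_\beta^{n+1}$ and the matching $t_i=[u_1]$ must be tracked through the bar-involution normalisation (iii) and the $\rho$-shift. If the literature does not cover unequal left/right parameters, I would instead compute $\Rad(x)$ directly in rank one. For this I would expand $K_h x$ modulo $\widetilde{\uqb}'_+\widetilde{\uq}+\widetilde{\uq}\widetilde{\uqb}_+$ for the quadratic Casimir, using the explicit generators $B_i$, and read off the $T_{\pm2\beta}$ coefficients.
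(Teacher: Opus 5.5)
Your essential step, importing the rank-one results of Koornwinder and Noumi--Dijkhuizen--Sugitani and translating conventions (which you rightly flag as the real work), is exactly the paper's proof: the paper simply cites \cite[Proposition~5.4]{qmsf}, where that translation is already carried out for these parameters. The radial-part scaffolding around it is therefore superfluous, and the paper in fact runs that logic the other way, deducing the radial part from this lemma in Lemma~\ref{lem-nonletzter-shape-res}. If you keep the detour anyway, fix a factor of two. $\beta$ itself lies in $2L$ and is pseudominuscule for $2\Sigma^*$, so you want $\gamma(x)=K_\beta+\lot$. With $K_{2\beta}$ the saturated hull also contains $\pm\beta$, so $\Rad(x)$ need not be supported on $T_{\pm2\beta},T_0$, and it is not the second-order Askey--Wilson operator in base $q_\beta^2$.
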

\begin{proof}
    This follows from \cite[Proposition~5.4]{qmsf}, which reformulates (for us relevant) results by
    Tom Koornwinder, Masatoshi Noumi, Mathijs Dijkhuizen, and Tetsuya Sugitani.
\end{proof}

\begin{conjecture}\label{conj-FII}
    Lemma~\ref{lem-rk1-nonletzter-macdonald} also holds for $\mathsf{FII}$, for parameters
    \[
        (a,b,c,d) = (q_\beta^{11},-q_\beta^7,q_\beta^5,-q_\beta).
    \]
\end{conjecture}

\begin{remark}
    We shall now explain the choice of parameters in Conjecture~\ref{conj-FII}.

    First, we note that $\mathsf{CII}_{n,2}$ and $\mathsf{FII}$ are odd cases:
    for all other rank-1 diagrams we only have as
    many parameters as degrees of freedom, subject to multiplicity constraints.
    In other words, the $q\to1$ limit imposes constraints on the parameters.
    For the four Letzter cases, this fixes the single parameter.
    For $\mathsf{AI}_1$ and $\mathsf{AIV}_n$, this leaves two degrees of freedom,
    which are determined by the parameters of the coideal subalgebras.

    For $\mathsf{CII}_{n,2}$ and $\mathsf{FII}$, however, we do not have any such
    extra parameters, as neither case admits non-standard parameters, and all
    sets of standard parameters are equivalent.
    We therefore need to look for two other natural constraints to put on the
    parameters of $\mathsf{FII}$.

    In the parlance of \cite{Sto}, we write
    \[
        (a,b,c,d) = (\kappa_{a_1}\kappa_{2a_1},
        -\kappa_{a_1}\kappa_{2a_1}^{-1},
        \kappa_{a_0}\kappa_{2a_0}q_\beta,-\kappa_{a_0}\kappa_{2a_0}^{-1}q_\beta).
    \]
    For $\mathsf{CII}_{n,1}$, we obtain the parameters
    \[
        \kappa=
        (\kappa_{a_1}, \kappa_{2a_1},\kappa_{a_0},\kappa_{2a_0})=(q_\beta^{n+1},q_\beta^{n-2},q_\beta^{n-2},q_\beta^{n-2}).
    \]
    Note that $\kappa_{2a_0}=\kappa_{a_0}$, suggesting that these parameters really belong to an affine root
    system smaller than
    $(C_1^\vee,C_1)$, namely not containing the orbit of
    $2a_0$.
    Furthermore, $\kappa_{2a_1}=\kappa_{a_0}$ implies
    that $\kappa^{\mathrm{d}}=\kappa$, i.e. the Macdonald data is self-dual.

    If we impose the same constraints on $\mathsf{FII}$, we obtain the four parameters
    \[
        \kappa = (q_\beta^9, q_\beta^2, q_\beta^2, q_\beta^2).
    \]
\end{remark}

\begin{lemma}\label{lem-nonletzter-shape-res}
    There exists $x\in Z_{\widetilde{\uq}}(\widetilde{\uqb})$ such that
    \[
        \gamma(x) = K_{\beta} + \lot
    \]
    and
    \[
        \Res(x) = \frac{(1-a^{-1}e^{-\beta})(1-b^{-1}e^{-\beta})(1-c^{-1}e^{-\beta})(1-d^{-1}e^{-\beta})}{(1-e^{-2\beta})(1-q_\beta^{-2}e^{-2\beta})}
        T_{\beta} + \lot
    \]
    with $a,b,c,d,q_\beta$ as in Lemma~\ref{lem-rk1-nonletzter-macdonald} and Conjecture~\ref{conj-FII}.
\end{lemma}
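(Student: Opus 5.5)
Pick $x\in Z_{\widetilde{\uq}}(\widetilde{\uqb})$ via Lemma~\ref{lem-can-choose-good-operator} with $\gamma(x)=K_\beta+\lot$, for the smallest nonzero element of $2L^+$, i.e.\ the fundamental weight $\beta$ of $\Sigma$ (and of $\Sigma^*$). In rank one its saturated hull is $\set{\beta,0,-\beta}$, so $\Rad(x)$ is a three-term operator
\[
    \Rad(x) = A\,T_\beta + B + C\,T_{-\beta},
\]
with $A,B,C\in k(Q(\Sigma))$. By Lemma~\ref{lem-action-rad}, $\Rad(x)$ is $W_\Sigma$-invariant, so $C=r(A)$, where $r$ is the reflection $e^\beta\mapsto e^{-\beta}$. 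Moreover $\Rad(x)$ acts diagonally on every $\Res(\phi)$ with $\phi\in E^\epsilon(\mu)$, with eigenvalue $\gamma(x)(\mu)$ (Lemmas~\ref{lem-action-rad} and \ref{lem-gamma-surjective-morphism}).

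Next bring in Lemma~\ref{lem-rk1-nonletzter-macdonald}, or Conjecture~\ref{conj-FII} for $\mathsf{FII}$. It says that, after the harmless monomial twist $e^{\mu-m\beta}$ (which disappears once we restrict to the $\Theta$-anti-invariant part), the functions $\Res(\phi)$ for $\mu\in X^+(0)$ are the symmetric Askey--Wilson polynomials $P_m$ in base $q_\beta^2$ with parameters $(a,b,c,d)$, for every $m\in\N_0$. The Askey--Wilson operator
\[
    D = \Phi\,(T_\beta-1)+r(\Phi)\,(T_{-\beta}-1),\qquad
    \Phi=\frac{(1-a^{-1}e^{-\beta})(1-b^{-1}e^{-\beta})(1-c^{-1}e^{-\beta})(1-d^{-1}e^{-\beta})}{(1-e^{-2\beta})(1-q_\beta^{-2}e^{-2\beta})},
\]
suitably normalised in the $e^{\pm\beta}$ variables, has these polynomials as eigenfunctions. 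Its eigenvalues are of the form $\mathrm{const}\cdot(q^{m\ldots}+q^{-m\ldots})$ and are pairwise distinct for generic parameters.

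The key step is to show that $\Rad(x)$ is an affine function of $D$. Both operators are symmetric three-term operators with leading shift $T_\beta$, and both are diagonal on the basis $\set{P_m}$ of $k[e^{\pm\beta}]^{W_\Sigma}$. The eigenvalues of $\Rad(x)$ are $\gamma(x)(\mu)=q^{\beta\cdot\mu}+\lot$, so they have the same shape as those of $D$. Hence there are constants $c_1,c_2$ with $\Rad(x)-c_1D-c_2$ vanishing on all of $k[e^{\pm\beta}]^{W_\Sigma}$.

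A symmetric operator $\alpha T_\beta+\delta+r(\alpha)T_{-\beta}$ that kills all symmetric Laurent polynomials must be zero. Indeed, applying it to $1$ gives $\alpha+\delta+r(\alpha)=0$, and applying it to $e^{\beta}+e^{-\beta}$ then to $e^{2\beta}+e^{-2\beta}$ forces $\alpha=0$, by a Vandermonde-type argument in the $q$-shifts. Therefore $\Rad(x)=c_1D+c_2$. Comparing the leading coefficient of $T_\beta$ with Theorem~\ref{thm-initial-term-given-by-p}, which says the leading coefficient $p/p(\cdot+\beta)$ has constant term $1$, gives $c_1=1$, since $\Phi$ also has constant term $1$. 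So the coefficient of $T_\beta$ in $\Rad(x)$ is exactly $\Phi$, which is the claim.

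The main obstacle is matching conventions. The $T_\beta$ of $\Rad$ shifts by $q^{\beta\cdot\lambda}$, while the Askey--Wilson variable is $e^\beta$ with base $q_\beta^2=q^{\abs{\beta}^2}$. One has to check that $T_\beta$ acts as $e^\beta\mapsto q_\beta^2e^\beta$ and that the $\rho$-shift in $\Res$ is the one making the parameters come out as listed, rather than inverted. I would settle this on the lowest case $m=1$, checking that $D$ and $\Rad(x)$ agree on $P_1$.
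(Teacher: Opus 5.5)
Your proposal is essentially the paper's proof. The paper likewise treats $\Rad(x)$ as a $W_\Sigma$-symmetric difference operator diagonalised by the (twisted) Askey--Wilson polynomials, uses the \enquote{shape of the eigenvalues} to identify it with $(q^{-\rho\cdot\beta}(Y+Y^{-1}))_0$ up to an additive constant, and reads off the $T_\beta$-coefficient from Macdonald's explicit formula; you instead write the Askey--Wilson operator $D$ out directly and prove the vanishing lemma by hand.

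The one inference that needs a line more, in both versions, is \enquote{same shape $\Rightarrow$ affine relation}: it also requires the coefficients of $q_\beta^{-2m}$ in the two eigenvalue sequences to agree, i.e.\ the $\rho$-shift in $\gamma$ must match the Askey--Wilson parameters. You can avoid this altogether. Since $\Rad(x)$ and $D$ commute on $k[e^{\pm\beta}]^{W_\Sigma}$, the analogous vanishing argument shows they commute as operators. Then the $T_{2\beta}$-coefficient $A\,\Phi(\cdot+\beta)-\Phi\,A(\cdot+\beta)$ of the commutator vanishes, so $A/\Phi$ is $T_\beta$-periodic and hence equal to its constant term $1$.
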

\begin{proof}
    Note that $\beta\in 2L$ is a pseudominuscule
    weight of $2\Sigma^*$.
    The existence of $x$ follows by Lemma~\ref{lem-gamma-surjective-morphism}.

    We have $\Rad(x)\Res(\phi)=\gamma(x)(\mu)\Res(\phi)$ for all
    $\phi\in E^{\epsilon}(\mu)$ and $\mu\in 2L$.
    Consequently, by Lemma~\ref{lem-rk1-nonletzter-macdonald}, $\Res(x)$ is a symmetric difference
    operator that has the functions
    \[
        e^{\mu-((2\beta)^\vee\cdot\mu)\beta}
        P_{(2\beta)^\vee\cdot\mu}((e^\beta+e^{-\beta})/2;a,b,c,d;q_\beta^2)
    \]
    for all $\mu\in 2L$ as eigenfunctions (note that $(2\beta)^\vee\cdot\mu=\frac{\beta\cdot\mu}{\abs{\beta}^2}$).
    By \cite[Lemma~5.8]{Le08}, $\Res(x)$ is supported
    in the root lattice of $\Sigma^*$.
    Consequently, it commutes with $e^{\mu-((2\beta)^\vee\cdot\mu)\beta}$ for
    $\mu\in 2L$, so that $\Res(x)$ has the Macdonald
    polynomials as eigenfunctions.

    Moreover, $\Res(x)$ is a symmetric difference operator,
    so it can be expressed in terms of
    Macdonald $Y$-operators.
    Due to the shape of the eigenvalues, we conclude
    \[
        \Res(x) = (q^{-\rho\cdot\beta} (Y+Y^{-1}) + \lot)_0,
    \]
    which amounts to the claimed operator
    by \cite[\S6.5.3]{Mac03}.
\end{proof}

\begin{lemma}\label{lem-nonletzter-p-function}
    We have
    \[
        p = \frac{\qty(a^{-1}e^{-\beta}, b^{-1}e^{-\beta},
        c^{-1}e^{-\beta},d^{-1}e^{-\beta};q_\beta^{-2})_\infty}{\qty(e^{-2\beta};q_\beta^{-2})_\infty}.
    \]
\end{lemma}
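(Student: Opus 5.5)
This follows the proof of Lemma~\ref{lem-letzter-p-function}: compare the leading term of a radial part computed in two ways. Take $x\in Z_{\widetilde{\uq}}(\widetilde{\uqb})$ as in Lemma~\ref{lem-nonletzter-shape-res}, so that $\gamma(x)=K_\beta+\lot$. (In the rank-one setting the shifted radial part $\Rad(x)$ is what that lemma denotes by $\Res(x)$.) Theorem~\ref{thm-initial-term-given-by-p} gives the leading term of $\Rad(x)$ as $pT_\beta p^{-1}$, that is, as $\frac{p}{p(\cdot+\beta)}T_\beta$. Comparing with Lemma~\ref{lem-nonletzter-shape-res} produces the functional equation
\[
    \frac{p}{p(\cdot+\beta)} = \frac{(1-a^{-1}e^{-\beta})(1-b^{-1}e^{-\beta})(1-c^{-1}e^{-\beta})(1-d^{-1}e^{-\beta})}{(1-e^{-2\beta})(1-q_\beta^{-2}e^{-2\beta})}.
\]
Here $p(\cdot+\beta)$ means the shift $e^{\nu}\mapsto q^{\beta\cdot\nu}e^\nu$. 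Under this shift $e^{-\beta}\mapsto q^{-\abs{\beta}^2}e^{-\beta}=q_\beta^{-2}e^{-\beta}$ and $e^{-2\beta}\mapsto q_\beta^{-4}e^{-2\beta}$.

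Next I check that the claimed $p$ solves this equation. In the numerator, each factor $(a^{-1}e^{-\beta};q_\beta^{-2})_\infty$ divided by its shift is $1-a^{-1}e^{-\beta}$, since the shift sends the product to $(a^{-1}q_\beta^{-2}e^{-\beta};q_\beta^{-2})_\infty$. In the denominator, $(e^{-2\beta};q_\beta^{-2})_\infty$ shifts to $(q_\beta^{-4}e^{-2\beta};q_\beta^{-2})_\infty$. The ratio of these two is $(1-e^{-2\beta})(1-q_\beta^{-2}e^{-2\beta})$, which accounts for the two denominator factors. So the claimed $p$ satisfies the equation.

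Finally I need uniqueness. Theorem~\ref{thm-initial-term-given-by-p} gives $p\in k[[e^{-\tilde\alpha_i}]]$ with constant term $1$, and the candidate lies in the same ring with constant term $1$. Suppose two such series both solve the equation. Their quotient $r$ then satisfies $r=r(\cdot+\beta)$. Comparing coefficients, the coefficient of $e^{-n\tilde\alpha_i}$ gets multiplied by $q^{-n\beta\cdot\tilde\alpha_i}$, and this factor is not $1$ for $n>0$ since $q$ is transcendental. Hence $r=1$, which is the same periodicity argument used in the proof of Theorem~\ref{thm-initial-term-given-by-p}.

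The main thing to be careful about is the bookkeeping of the shifts and of $\rho$. The quantity $p$ is the $\rho$-shifted $p'$. Lemma~\ref{lem-nonletzter-shape-res} is stated for the shifted operator, matching the shifted statement of the theorem, so no extra $\rho$-factors should appear. I also need to confirm that $\abs{\beta}^2$ produces exactly $q_\beta^2$ as the base, in each case of $\beta$ (namely $2\tilde\alpha_i$, or $\tilde\alpha_i$ for $\mathsf{AI}_1$).
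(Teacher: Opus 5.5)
Your proposal is correct and follows the paper's own route: the paper deduces the lemma directly from Theorem~\ref{thm-initial-term-given-by-p} and Lemma~\ref{lem-nonletzter-shape-res}, exactly as in Lemma~\ref{lem-letzter-p-function}. You have written out the steps the paper leaves implicit, namely the functional equation for $p/p(\cdot+\beta)$, the Pochhammer verification, and uniqueness via the periodicity argument. Your reading of the operator written $\mathrm{Res}(x)$ in Lemma~\ref{lem-nonletzter-shape-res} as the shifted radial part $\Rad(x)$ is also the intended one.
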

\begin{proof}
    This follows by Theorem~\ref{thm-initial-term-given-by-p} and
    Lemma~\ref{lem-nonletzter-shape-res}.
\end{proof}

\begin{remark}
    Note that if $a=-b=q^{\rho\cdot\beta}$ and $c=-d=q_\beta$, we obtain
    \[
        p = \frac{\qty(q^{-2\rho\cdot\beta}e^{-2\beta};q_{2\beta}^{-1})_\infty}{\qty(e^{-2\beta};q_{2\beta}^{-1})_\infty},
    \]
    which has the same shape as in Lemma~\ref{lem-letzter-p-function}.
    This is the case for standard cases (i.e. $\bm{s}=\bm{t}=0$) of type $\mathsf{AI}_1$, which agrees
    with \cite[Lemma~6.6]{Le03}.
\end{remark}

\section{Link with Macdonald}\label{sec-link-macdonald}
We now have enough information to compute $p$ for all cases except
$\mathsf{FII}$.
In this section we use this information to identify the zonal
spherical functions.

In arbitrary rank, define
\[
    \Sigma^* = \begin{cases}
        \Sigma & \mathcal{S}=\emptyset\\
        \Sigma\cup\frac{1}{2}W_\Theta\tilde{\alpha}_i & \mathcal{S}=\set{i}.
    \end{cases}
\]
This is again a root system.

From Lemmas~\ref{lem-letzter-p-function} and
\ref{lem-nonletzter-p-function}, we can conclude that
in the rank 1 case we have
\[
    p =  \frac{\qty(a^{-1}e^{-\beta}, b^{-1}e^{-\beta},
        c^{-1}e^{-\beta},d^{-1}e^{-\beta};q_\beta^{-2})_\infty}{\qty(e^{-2\beta};q_\beta^{-2})_\infty}
\]
where $\beta$ is the highest root of $\Sigma^*$
and where
\begin{equation}\label{eq-abcd-list}
    (a,b,c,d) = \begin{cases}
        \qty(q^{\rho\cdot\beta}, -q^{\rho\cdot\beta},q_\beta,-q_\beta) &
        \Sigma^*\text{ reduced}\\
        \qty(u_1u_2^{-1} q_\beta,\:-u_1u_2q_\beta,\:u_1^{-1}u_2 q_\beta,\:
        -u_1^{-1}u_2^{-1}q_\beta) & \mathsf{AI}_1\\
        \qty(\frac{\bm{c}_i}{\bm{d}_i}q_\beta^{n+1},
            (-1)^{n-1}\frac{\bm{c}_i}{\bm{d}_{\tau(i)}}q_\beta, \frac{\bm{c}_{\tau(i)}}{\bm{d}_{\tau(i)}}q_\beta^{n+1}, (-1)^{n-1}\frac{\bm{c}_{\tau(i)}}{\bm{d}_i}q_\beta)
        &
        \mathsf{AIV}_n\\
        \qty(q_\beta^{2n-1},-q_\beta^3,q_\beta^{2n-3},-q_\beta) & \mathsf{CII}_{n,1}\\
        \qty(q_\beta^{11},-q_\beta^7,q_\beta^5,-q_\beta) & \mathsf{FII}
    \end{cases}
\end{equation}
where $\bm{t}_i=[u_1]_{q_i}$ and $\bm{s}_i = [u_2]_{q_i}$
(in particular, $\mathsf{AI}_1$ has the same shape as the reduced case
for $\bm{t}_i=\bm{s}_i=0$).
By Proposition~\ref{prop-p-function} we can therefore conclude that
\[
    p = \prod_{\beta\in\Sigma}
    \frac{\qty(k_\beta e^{-2\beta};q_{2\beta}^{-1})_\infty}{\qty(e^{-2\beta};q_{2\beta}^{-1})_\infty}
\]
(where $k_{w\tilde{\alpha}_i} = q^{2\rho\cdot\tilde{\alpha}_i}$)
if $\mathcal{S}=\emptyset$ and $I$ contains no rank-1 
subdiagrams of type $\mathsf{AIV}$, $\mathsf{CII}$, or 
$\mathsf{FII}$.

For all other cases, $\Sigma^*$ is of type
$\mathsf{BC}_r$.
We adopt the usual convention of calling roots
long, medium, and short (such that in $\mathsf{BC}_1$ there
are no medium roots).
We then have
\begin{align*}
    p = &\prod_{\beta\in \Sigma^{*,+,\mathrm{l}}}
    \frac{\qty(a^{-1}e^{-\beta}, b^{-1}e^{-\beta},
        c^{-1}e^{-\beta},d^{-1}e^{-\beta};q_\beta^{-2})_\infty}{\qty(e^{-2\beta};q_\beta^{-2})_\infty}\cdot\\
        &\prod_{\beta\in\Sigma^{*,+,\mathrm{m}}}
        \frac{\qty(k_\beta^{-1}e^{-2\beta};q_{2\beta}^{-1})_\infty}{\qty(e^{-2\beta};q_{2\beta}^{-1})_\infty}.
\end{align*}
Note that $\rho\cdot\tilde{\alpha}_i=\rho[i]\cdot\tilde{\alpha}_i$ and that
if $\alpha\in\Sigma^{*,\mathrm{m}}$ and
$\beta\in\Sigma^{*,\mathrm{l}}$ we have
$q_\beta^2 = q_{2\alpha}$, so that all $q$-Pochhammer symbols have the same base.

The parameters $a,b,c,d$ are listed in \eqref{eq-abcd-list} and
depend on the parameters
$\bm{c},\bm{s},\bm{d},\bm{t}$ as well as the properties
of the special rank-1 subdiagram (i.e. for $i\in\mathcal{S}$ or for which $\Sigma[i]$ is of type
$\mathsf{BC}_1$).

We now show how $p$ determines the entire difference
operator in many cases.
Recall the notations $\Res$ from Definition~\ref{def-res}, $\Rad$ from Definition~\ref{def-radial-part},
and $\gamma$ from Definition~\ref{def-hc}.

\begin{lemma}\label{lem-p-determines-DO}
    Let $\omega\in 2L$ correspond to a (pseudo)minuscule
    weight of $2\Sigma^*$ and let
    $x\in Z_{\widetilde{\uq}}(\widetilde{\uqb})$ such that
    $\gamma(x) = K_{\omega}+\lot$.
    Then
    \[
        \Rad(x) =
        \sum_{w\in W_\Theta/W_{\Theta,\omega}}
        w (p T_{\omega}p^{-1}(1-T_{\xi-\omega}))
        + \gamma(x)(0) T_\xi
    \]
    where $\xi\in\omega-Q(\Sigma^*)^+$ is the unique element
    satisfying $\beta\cdot\xi=0$ for $\beta\in\Sigma^*$.
\end{lemma}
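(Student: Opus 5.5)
Since $\omega$ is (pseudo)minuscule for $2\Sigma^*$, the support of $\Rad(x)$ is tightly constrained. By Lemma~\ref{lem-can-choose-good-operator} and \cite[Lemma~5.8]{Le08}, $\Rad(x)$ is a $W_\Sigma$-invariant $q$-difference operator (Lemma~\ref{lem-action-rad}) supported on the saturated hull of $\omega$. For a minuscule weight this hull is just the orbit $W_\Theta\omega$. For a pseudominuscule weight it is the orbit $W_\Theta\omega$ together with the single dominant element $\xi$ below it. So I would write
\[
    \Rad(x) = \sum_{w\in W_\Theta/W_{\Theta,\omega}} a_{w\omega} T_{w\omega} + a_\xi T_\xi .
\]
Because $\xi$ is $W_\Theta$-invariant, $T_\xi$ commutes with all of $W_\Theta$. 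In the minuscule case $\xi=\omega$ if $\omega$ is orthogonal to $\Sigma^*$, and otherwise the $T_\xi$-term is absent, so the formula has to be read with this degenerate convention. I would treat the minuscule case as the special case in which the correction term $1-T_{\xi-\omega}$ absorbs everything.

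\textbf{Step 1: the top coefficients.} By Theorem~\ref{thm-initial-term-given-by-p}, $a_\omega T_\omega = pT_\omega p^{-1}$, i.e.\ $a_\omega = p/p(\cdot+\omega)$. Applying $W_\Sigma$-invariance of $\Rad(x)$ gives $a_{w\omega}T_{w\omega} = w(pT_\omega p^{-1})$, which determines every orbit coefficient. Since $p$ is a product over positive roots of $\Sigma^*$, the quotient $p/p(\cdot+\omega)$ is a rational function. For pseudominuscule $\omega$ this quotient is a finite product of factors of the shape $(1-ae^{-\beta})/(1-e^{-2\beta})$, as in the rank-1 computations of Lemmas~\ref{lem-letzter-p-function} and \ref{lem-nonletzter-p-function}. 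So the orbit part is a genuine Macdonald--Koornwinder type sum.

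\textbf{Step 2: the coefficient of $T_\xi$.} This is the main obstacle. The coefficient $a_\xi$ is not seen by $p$, and I would pin it down by evaluating on the constant function. The function $1=\Res(\phi_0)$ with $\phi_0\in E^\epsilon(0)$ the trivial ZSF has eigenvalue $\gamma(x)(0)$ by Lemma~\ref{lem-gamma-surjective-morphism}, after the $\rho$-shift built into $\Res$ and $\Rad$ (Definition~\ref{def-radial-part}). Applying $\Rad(x)$ to $1$ gives
\[
    \sum_{w} w(p/p(\cdot+\omega)) + a_\xi = \gamma(x)(0).
\]
Hence
\[
    a_\xi = \gamma(x)(0) - \sum_w w(p/p(\cdot+\omega)).
\]
One then has to check that this equals $\gamma(x)(0) - \sum_w w(pT_\omega p^{-1}T_{\xi-\omega}\cdot 1)$, which is exactly the expression in the statement after moving terms. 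This is a formal rewriting, using that $T_{\xi-\omega}$ composed with $T_\omega$ gives $T_\xi$. One must also verify that $a_\xi$ is a constant rather than a rational function. For this I would use the shape of $p$: by a standard Macdonald identity (\cite[\S6.5]{Mac03}, or a direct residue argument as in the rank-1 Askey--Wilson case), the sum $\sum_w w(p/p(\cdot+\omega))$ is a constant for pseudominuscule $\omega$. Consistency with the eigenvalue equation on all $\Res(\phi)$ then forces $a_\xi$ to be the constant term.

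Finally, I would note that $a_\xi$ is independent of the evaluation point by this constancy. I would also check that applying the resulting operator to $\Res(\phi)$ for $\phi\in E^\epsilon(\lambda)$ reproduces the eigenvalue $\gamma(x)(\lambda)$ at the leading term, which confirms that no further lower-order support was missed.
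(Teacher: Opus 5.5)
Your Step 1 and the first half of Step 2 are exactly the paper's argument, and they already complete the proof:
\begin{itemize}
\item the leading coefficient $a_\omega=p/p(\cdot+\omega)$ comes from $p$;
\item the other orbit coefficients follow from $W_\Theta$-invariance;
\item the one remaining coefficient is found by applying $\Rad(x)$ to $\Res(\epsilon)=1$, which gives $a_\xi=\gamma(x)(0)-\sum_w w\bigl(p/p(\cdot+\omega)\bigr)$;
\item rewriting $a_\omega T_\xi=pT_\omega p^{-1}T_{\xi-\omega}$ then produces the stated formula.
\end{itemize}

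The extra paragraph asserting that $a_\xi$ must be a constant is wrong and should be deleted. Nothing requires constancy. The relation $\Rad(x)1=\gamma(x)(0)$ is an identity of rational functions, and solving it for $a_\xi$ is all that is needed. The lemma itself exhibits the $T_\xi$-coefficient as the rational function $\gamma(x)(0)-\sum_w w\bigl(p/p(\cdot+\omega)\bigr)$.

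For pseudominuscule $\omega$ the sum $\sum_w w\bigl(p/p(\cdot+\omega)\bigr)$ is generally not constant. This is precisely why the correction factor $1-T_{\xi-\omega}$ appears. For example, take rank one with $\omega=\beta$ and $z=e^{-\beta}$. Lemma~\ref{lem-nonletzter-shape-res} gives the $T_\beta$-coefficient $A(z)=\frac{(1-a^{-1}z)(1-b^{-1}z)(1-c^{-1}z)(1-d^{-1}z)}{(1-z^2)(1-q_\beta^{-2}z^2)}$. The sum $A(z)+A(z^{-1})$ has a pole at $z=q_\beta$ that $A(z^{-1})$ cannot cancel, unless one of $a,b,c,d$ equals $q_\beta$; so the pole is present for generic $\mathsf{AI}_1$ parameters. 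This is the familiar non-constant identity coefficient of the Askey--Wilson operator.

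The Macdonald constancy identity you invoke holds only for minuscule $\omega$. That is exactly the case with no $T_\xi$-term, where $\Rad(x)1=\gamma(x)(0)$ reduces to that identity. So drop the claims that consistency forces $a_\xi$ to be the constant term and that $a_\xi$ is independent of the evaluation point.
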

\begin{proof}
    Write $\Res(x)=\sum_{\mu\in\omega-Q(\Sigma^*)^-}
    a_\mu T_\mu$.
    By Proposition~\ref{prop-p-function}, we have
    \[
        a_\omega = \frac{p}{p(\cdot+\omega)},
    \]
    so that $a_\omega T_\omega = p T_\omega p^{-1}$.
    Moreover, $\Res(x)$ is $W_\Theta$-invariant, so
    $a_{w\omega}$ is also already determined by this.

    Since $\omega$ is (pseudo)minuscule, this leaves
    at most one coefficient undetermined:
    the one of $\xi$.
    Note, however, that the counit $\epsilon$ is the
    elementary ZSF of the trivial representation,
    and its restriction is 1,
    so we have
    \[
        \Res(x)1 = \gamma(x)(0) = \sum_{\mu\in\omega-Q(\Sigma^*)^-}
    a_\mu
    = a_{\xi} + \sum_{w\in W_\Theta/W_{\Theta,\omega}}
    wa_\omega,
    \]
    which we can solve for $a_\xi$, so that
    \begin{align*}
        \Res(x) &= \sum_{w\in W_{\Theta}/W_{\Theta,\omega}}
        w (pT_\omega p^{-1})
        + \gamma(x)(0)T_\xi - \sum_{w\in W_\Theta/W_{\Theta,\omega}} (wa_\omega)T_\xi\\
        &= \sum_{w\in W_{\Theta}/W_{\Theta,\omega}}
        w (pT_\omega p^{-1}(1-T_{\xi-\omega}))
        + \gamma(x)(0)T_\xi.\qedhere
    \end{align*}
\end{proof}

\begin{theorem}\label{thm-zsf}
    Let $R_0=\Sigma^{*,\mathrm{ind}}$, let
    $\Delta_0$ be a set of simple roots of $R_0$ made up
    of multiples of $\tilde{\alpha}_i$ for $i\in I_\circ$
    and consider the initial data
    $(2R_0,2\Delta_0,\mathrm{t},2L, 2L)$ (cf.
    \cite[Def.~9.2.5]{Sto}).
    Let $\theta\in \Sigma^*$ be the highest short root and let
    $i\in I_\circ$ be such that $\theta\in W_\Theta\tilde{\alpha}_i$
    or $2\theta\in W_{\Theta}\tilde{\alpha}_i$.
    
    Let $\kappa$ be a multiplicity function 
    satisfying
    $\kappa_{2\tilde{\alpha}_j+mc} = \kappa_{2\tilde{\alpha}_i}=q^{2\rho\cdot\tilde{\alpha}_i}$ for $\Sigma^*[j]$ reduced
    or for $\Sigma[j]$ of type $\mathsf{AI}_1$ with $j\not\in\mathcal{S}$,
    and further (i.e. if $\Sigma^*$ is not reduced)
    \[
        (a,b,c,d)=(\kappa_{\theta}\kappa_{2\theta},
        -\kappa_{\theta}\kappa_{2\theta}^{-1},
        q_\beta \kappa_{a_0}\kappa_{2a_0},
        -q_\beta\kappa_{a_0}\kappa_{2a_0}^{-1})
    \]
    for $a_0 = \frac{\abs{\theta}^2}{2}c-\theta$ the non-linear simple
    root of the affine root system $R$ (as in \cite[(9.2.6)]{Sto}),
    $\beta\in\Sigma^*[i]$ the highest root (and the highest short root
    of $2\Sigma^*[i]$),
    and $(a,b,c,d)$ the parameters from \eqref{eq-abcd-list} for
    $(\uq[i],\uqb'[i],\uqb[i])$.

    Let $\phi\in E^{\epsilon,\epsilon}_{\uqb',\uqb}(\mu)$ for $\mu\in 2L$, then
    there exists a constant $C\in k$ such that
    \[
        \Res(\phi) = C P_\mu,
    \]
    the $\mu$-th symmetric Koornwinder--Macdonald polynomial
    for initial data $(2R_0,2\Delta_0,\mathrm{t},2L,2L)$ and
    multiplicity function $\kappa$.
\end{theorem}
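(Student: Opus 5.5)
The plan is to identify $\Res(\phi)$ as a joint eigenfunction of a Macdonald--Koornwinder difference operator with a spectrum simple enough to single out $P_\mu$. First I will choose a weight $\omega\in 2L$ that is (pseudo)minuscule for $2\Sigma^*$. When $\Sigma^*$ is non-reduced this is the weight whose radial part should match Koornwinder's operator; in the reduced case it is a minuscule weight of $2R_0$, or the pseudominuscule short-root weight. Lemma~\ref{lem-can-choose-good-operator} together with the surjectivity in Lemma~\ref{lem-gamma-surjective-morphism} gives some $x\in Z_{\widetilde{\uq}}(\widetilde{\uqb})$ with $\gamma(x)=K_\omega+\lot$ and $\Rad(x)$ supported in the saturated hull of $T_\omega$. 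Lemma~\ref{lem-p-determines-DO} then gives $\Rad(x)$ explicitly in terms of $p$ and the single constant $\gamma(x)(0)$.

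Next I will substitute the explicit product formula for $p$ from Proposition~\ref{prop-p-function}, together with the rank-1 values \eqref{eq-abcd-list} from Lemmas~\ref{lem-letzter-p-function} and \ref{lem-nonletzter-p-function}. This yields
\[
    pT_\omega p^{-1} = \frac{p}{p(\cdot+\omega)}T_\omega.
\]
In this ratio the infinite $q$-Pochhammer quotients telescope to finite products. For a long root $\beta$ of $\Sigma^*$ with $\omega\cdot\beta\neq0$ the factor is $\frac{(1-a^{-1}e^{-\beta})\cdots(1-d^{-1}e^{-\beta})}{(1-e^{-2\beta})(1-q_\beta^{-2}e^{-2\beta})}$. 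For a medium root $\alpha$ with $\omega\cdot\alpha\neq0$ it is $\frac{1-k_\alpha^{-1}e^{-2\alpha}}{1-e^{-2\alpha}}$ (or its analogue). I will then compare $\sum_{w} w\bigl(pT_\omega p^{-1}(1-T_{\xi-\omega})\bigr)$ with the Macdonald--Koornwinder operator $D_\omega$ for the initial data $(2R_0,2\Delta_0,\mathrm{t},2L,2L)$ in \cite{Sto}, or \cite[\S6.5]{Mac03}. That operator has the same shape, namely the Weyl-orbit sum of $c$-function ratios times translations, minus a constant. I will match the parameters by writing the multiplicity function $\kappa$ so that the four long-root factors become $\kappa_\theta\kappa_{2\theta}$, $-\kappa_\theta\kappa_{2\theta}^{-1}$, $q_\beta\kappa_{a_0}\kappa_{2a_0}$ and $-q_\beta\kappa_{a_0}\kappa_{2a_0}^{-1}$, and the medium factors become $\kappa_{2\tilde\alpha_j}=q^{2\rho\cdot\tilde\alpha_j}$. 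This is exactly the prescription in the theorem. The additive constant matters little: both $\Rad(x)$ and $D_\omega$ annihilate or fix constants appropriately, and any remaining difference is a scalar multiple of the identity.

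By Lemma~\ref{lem-action-rad} and Lemma~\ref{lem-gamma-surjective-morphism}, $\Res(\phi)$ is an eigenfunction of $\Rad(x)$, hence of $D_\omega$ up to a constant. It is $W_\Sigma$-invariant and lies in $k[2L]^{W_\Sigma}$ with leading term $e^\mu$, because $\Res'$ maps $E^\epsilon$ onto $k[2L]^{W_\Sigma}$ and the shift only rescales monomials. The symmetric Koornwinder polynomial $P_\mu$ is characterized as the $W$-invariant eigenfunction of $D_\omega$ of the form $m_\mu+\lot$, provided the eigenvalues $\sum_{w}q^{w\omega\cdot(\nu+\rho_\kappa)}$ separate $\mu$ from all lower $\nu<\mu$ in $2L^+$. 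Over $k=\overline{\C(q)}$ with generic $\kappa$ this separation holds. Triangularity then gives $\Res(\phi)=CP_\mu$.

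The main obstacle is the separation of eigenvalues, especially when $\Sigma^*$ has no minuscule weight, so that $\omega$ is only pseudominuscule and the $\kappa$ values are specific powers of $q$ rather than generic. I would first try to avoid this by arguing directly that $\Rad(x)$ and $D_\omega$ coincide. The algebra they generate contains operators for every dominant weight, through the commuting family $\Rad(Z_{\widetilde{\uq}}(\widetilde{\uqb}))$ whose eigenvalues $\gamma(\cdot)(\mu)$ separate all of $2L^+$ by surjectivity of $\gamma$. Triangular joint eigenfunctions are then unique, and since $P_\mu$ is a joint eigenfunction of the commutant of $D_\omega$, the identification follows.
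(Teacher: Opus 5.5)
Your route is the paper's. You realize the Macdonald--Koornwinder operators of (pseudo)minuscule weights as shifted radial parts via Lemma~\ref{lem-p-determines-DO} and the product formula for $p$, by checking that the $c$-function product $c_{\tau(\lambda)}$ in \cite[Cor.~9.3.18]{Sto} is a constant multiple of $p/p(\cdot+\lambda)$. You then use that $\operatorname{Res}(\phi)$ is an eigenfunction. The paper does the matching explicitly over the affine roots $\alpha+\mu^{\mathrm{t}}_\alpha mc$, $0\le m<\lambda\cdot\alpha^\vee$. There, the alternation between the orbits of $\theta$ and $a_0$ along short roots produces $a,b,c,d$, so that is where the prescription for $\kappa$ is actually verified.

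The gap is the final identification. As you note, $\kappa$ is pinned to specific values, so separation for generic $\kappa$ does not apply. Your fallback is circular. That $P_\mu$ is an eigenfunction of every operator commuting with $D_\omega$ is only known when $D_\omega$ has simple spectrum on $\operatorname{span}\{m_\nu:\nu\le\mu\}$, which is exactly the separation you wanted to bypass. The simple joint spectrum of $\Rad(Z_{\widetilde{\uq}}(\widetilde{\uqb}))$ characterizes the functions $\operatorname{Res}(\phi_\nu)$. It says nothing about $P_\mu$ unless you know this whole family acts diagonally on the $P_\nu$ (e.g.\ lies in the algebra of Macdonald's operators $f(Y)$), and you have not shown that. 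If $D_\omega$ had equal eigenvalues at $\mu$ and at some $\nu<\mu$, then $\operatorname{Res}(\phi)$ and $P_\mu$ could differ by a multiple of $P_\nu$.

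There are two ways to close this.
\begin{enumerate}
\item Do what the paper does: use $D_{-\lambda}$ for all minuscule and pseudominuscule $\lambda\in 2L$ and invoke the characterization \cite[Remark~9.3.23]{Sto}.
\item Prove separation at the actual parameters. Everything entering the eigenvalues is a positive power of $q$:
\begin{itemize}
\item the medium multiplicities are $q^{2\rho\cdot\tilde{\alpha}_j}$ with $\rho\cdot\tilde{\alpha}_j>0$;
\item in the non-reduced case the eigenvalues see $a,b,c,d$ only through $abcd$, which is $q_\beta^4$ for $\mathsf{AI}_1$ (independently of $u_1,u_2$) and $q_\beta^{2n+4}$ for $\mathsf{AIV}_n$ (by condition (iii) on the parameters).
\end{itemize}
After specializing $q$ to a real number $q>1$, $\rho_\kappa$ is dominant. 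So for dominant $\nu<\mu$ the point $\nu+\rho_\kappa$ lies in the convex hull of $W_\Sigma(\mu+\rho_\kappa)$ but not in that orbit. Strict convexity of the $W_\Sigma$-invariant function $y\mapsto\sum_{w\in W_\Sigma/W_{\Sigma,\omega}}q^{(w\omega)\cdot y}$ then gives distinct eigenvalues.
\end{enumerate}
The second route needs one (pseudo)minuscule $\omega$ per irreducible component of $\Sigma$. Since $A$ is not assumed indecomposable, $\Sigma$ may be reducible, and then a single $\omega$ does not separate.
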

\begin{proof}
    By \cite[Corollary~9.3.18]{Sto}, applied to $-\lambda$ for any
    minuscule or pseudominuscule weight $\lambda\in 2L$ we obtain
    \[
        D_{-\lambda} = \kappa^{-1}_{\tau(\lambda)}
        \sum_{w\in W_\Theta/W_{\Theta, \lambda}}
        w (c_{\tau(\lambda)}) (\tau(-w\lambda)_q -1) + C
    \]
    (with notation as in \cite{Sto}).
    Note that $\tau(-w\lambda)_q=T_{w\lambda}$ in our notation.
    If we can show that $c_{\tau(\lambda)}$ is proportional
    to $\frac{p}{p(\cdot+\lambda)}$, there is
    $x\in Z_{\widetilde{\uq}}(\widetilde{\uqb})$ such that
    $\Rad(x) = D_{-\lambda}$.
    Since $\Res(\phi)$ is diagonalised by all such $D_{-\lambda}$,
    it must be a symmetric Koornwinder--Macdonald polynomial
    by \cite[Remark~9.3.23]{Sto}.

    To show the claim, note that by definition
    \[
        c_{\tau(\lambda)} = \prod_{a\in R^{\mathrm{t},+}\cap \tau(-\lambda)(R^{\mathrm{t},-})} c_a
        = \prod_{a\in R^{\mathrm{t},+}\setminus \tau(-\lambda)(R^{\mathrm{t},+})} c_a.
    \]
    We also have
    \[
        R^{\mathrm{t},+} = \set{\alpha + \mu_\alpha^{\mathrm{t}}mc\where
        \alpha\in R_0, m\in\N}
        \cup R_0^+
    \]
    and
    \[
        \tau(-\lambda)(\alpha+\mu_\alpha^{\mathrm{t}} mc)
        = \alpha + \qty(\mu_\alpha^{\mathrm{t}}m + \lambda\cdot\alpha)c.
    \]
    We obtain
    \[
        R^{\mathrm{t},+}\setminus
        \tau(-\lambda)(R^{\mathrm{t},+})
        = \set{\alpha + \mu_\alpha^{\mathrm{t}}mc\where
        \alpha\in R_0^+, 0\le m<\lambda\cdot\alpha^\vee}
    \]
    Moreover, for $a=\alpha+\mu_\alpha^{\mathrm{t}}mc$ we have
    \[
        c_a =
        \kappa_a^2
        \frac{\qty(1-\kappa_a^{-1}\kappa_{2a}^{-1}e^{-\alpha}q_\alpha^{-m})\qty(1+\kappa_a^{-1}\kappa_{2a}e^{-\alpha}q_\alpha^{-m})}{1-e^{-2\alpha}q^{-2m}}.
    \]
    If $R$ is reduced, we obtain $\kappa_a=\kappa_{2a}$ for all $a\in R$
    and $\kappa_{\alpha+\mu_\alpha^{\mathrm{t}}mc}=\kappa_{\alpha}$,
    so that
    \begin{align*}
        c_{\tau(\lambda)} &= \kappa_{\tau(\lambda)}^2
        \prod_{\alpha\in R_0^+} \prod_{m=0}^{\lambda\cdot\alpha^\vee-1}
        \frac{1-\kappa_\alpha^{-2}e^{-\alpha}q_\alpha^{-m}}{1-e^{-\alpha}q_\alpha^{-m}}\\
        &= \kappa_{\tau(\lambda)}^2
        \prod_{\alpha\in R_0^+} \frac{\qty(\kappa_\alpha^{-2}e^{-\alpha};q_\alpha^{-1})_{\lambda\cdot\alpha^\vee}}{\qty(e^{-\alpha};q_\alpha^{-1})_{\lambda\cdot\alpha^\vee}}\\
        &= \kappa_{\tau(\lambda)}^2 \frac{p}{p(\cdot+\lambda)}.
    \end{align*}
    Meanwhile, for $R$ non-reduced, we have
    $c_{\tau(\lambda)}=\kappa_{\tau(\lambda)}^2\prod_{\alpha\in R_0^+} \tilde{p}_\alpha$
    for
    \[
        \tilde{p}_\alpha = 
        \prod_{m=0}^{\lambda\cdot\alpha^\vee-1}
        \frac{\qty(1-\kappa_{\alpha+\mu_\alpha^{\mathrm{t}}mc}^{-1}
        \kappa_{2\alpha+2\mu_\alpha^{\mathrm{t}}mc}^{-1}e^{-\alpha}q_\alpha^{-m})
        \qty(1+\kappa_{\alpha+\mu_\alpha^{\mathrm{t}}mc}^{-1}\kappa_{2\alpha+2\mu_{\alpha}^{\mathrm{t}}mc}e^{-\alpha}q_\alpha^{-m})}{1-e^{-2\alpha}q_\alpha^{-2m}}.
    \]
    If $\alpha\in R_0^+$ is long (so that $\alpha\in R$ is medium),
    $\tilde{p}_\alpha$ reduces to
    \[
        \tilde{p}_\alpha = 
        \frac{\qty(\kappa_{\alpha}^{-2}e^{-\alpha};q_\alpha)_{\lambda\cdot\alpha^\vee}}{\qty(e^{-\alpha};q_\alpha)_{\lambda\cdot\alpha^\vee}}.
    \]
    Otherwise, if $\alpha\in R_0^+$ is short,
    we have that $\mu\cdot\alpha^\vee$ is even, and we have
    \[
        \kappa_{\alpha+\mu_\alpha^{\mathrm{t}}mc} = \begin{cases}
            \kappa_\theta & m\in 2\Z\\
            \kappa_{a_0} & m\in 2\Z+1
        \end{cases}
    \]
    and similarly for $\kappa_{2\alpha+2\mu_\alpha^{\mathrm{t}}mc}$.
    Thus, we obtain
    \begin{align*}
        \tilde{p}_\alpha &= \prod_{m=0}^{\lambda\cdot (2\alpha)^\vee-1}
        \frac{\qty(1-\kappa_\theta^{-1}\kappa_{2\theta}^{-1}e^{-\alpha}q_\alpha^{-2m})\qty(1+\kappa_\theta^{-1}\kappa_{2\theta}e^{-\alpha}q_\alpha^{-2m})}{\qty(1-e^{-2\alpha}q_\alpha^{-4m})}\\
        &\qquad\qquad  \cdot \frac{\qty(1-\kappa_{a_0}^{-1}\kappa_{2a_0}^{-1}e^{-\alpha}q^{-2m-1})\qty(1+\kappa_{a_0}^{-1}\kappa_{2a_0}e^{-\alpha}q_\alpha^{-2m-1})}{\qty(1-e^{-2\alpha}q_\alpha^{-4m-2})}\\
        &= \frac{\qty(a^{-1}e^{-\alpha},b^{-1}e^{-\alpha},c^{-1}e^{-\alpha}, d^{-1}e^{-\alpha}; q_\alpha^{-2})_{\lambda\cdot (2\alpha)^\vee}}{\qty(e^{-2\alpha};q_\alpha^{-2})_{\lambda\cdot\alpha^\vee}}.
    \end{align*}
    Consequently, we obtain
    \begin{align*}
        c_{\tau(\lambda)} &= \kappa_{\tau(\lambda)}^2
        \prod_{\alpha\in R_0^{+,\mathrm{s}}}
        \frac{\qty(a^{-1}e^{-\alpha},b^{-1}e^{-\alpha},c^{-1}e^{-\alpha}, d^{-1}e^{-\alpha}; q_\alpha^{-2})_{\frac{1}{2}\lambda\cdot\alpha^\vee}}{\qty(e^{-2\alpha};q_\alpha^{-2})_{\lambda\cdot\alpha^\vee}}\\
        &\qquad \cdot
        \prod_{\alpha\in R^{+,\mathrm{l}}}
        \frac{\qty(\kappa_{\alpha}^{-2}e^{-\alpha};q_\alpha)_{\lambda\cdot\alpha^\vee}}{\qty(e^{-\alpha};q_\alpha)_{\lambda\cdot\alpha^\vee}}\\
        &= \kappa_{\tau(\lambda)}^2 
        \frac{p}{p(\cdot+\lambda)}.
    \end{align*}
    We have thus shown that $c_{\tau(\lambda)}$ is proportional to
    $\frac{p}{p(\cdot+\lambda)}$ in every case.
    And hence by the argument above, the claim follows.
\end{proof}

\begin{remark}
    For the cases listed in \cite[Remark~5.5]{qmsf}, that were not
    covered in previous literature, we obtain Koornwinder polynomials
    of rank $r$ and base $q_i^2$ (where $\mathcal{S}=\set{i}$) with the following parameters:
    \begin{align*}
        \mathsf{BI}_{n,2}: r=2, &\qty(q_i^{\sigma-\tau+1}, -q_i^{\sigma+\tau+1},
        q_i^{-\sigma+\tau+1}, -q_i^{-\sigma-\tau+1}, q_i^{2n-3})\\
        \mathsf{CI}_n: r=n, &\qty(q_i^{\sigma-\tau+1},-q_i^{\sigma+\tau+1}, q_i^{-\sigma+\tau+1},
        -q^{-\sigma-\tau+1}, q_i)\\
        \mathsf{DI}_{n,2}: r=2, &\qty(q_i^{\sigma-\tau+1}, -q_i^{\sigma+\tau+1},
        q_i^{-\sigma+\tau+1}, -q_i^{-\sigma-\tau+1}, q_i^{2n-4})\\
        \mathsf{DIII}_{2p}: r=p, &\qty(q_i^{\sigma-\tau+1},-q_i^{\sigma+\tau+1}, q_i^{-\sigma+\tau+1},
        -q^{-\sigma-\tau+1}, q_i^4)\\
        \mathsf{DIII}_{2p+1}: r=p, &\qty(q_i^{\sigma-\tau+4}, -q_i^{\sigma+\tau+1}, q_i^{-\sigma+\tau+4}, q_i^{-\sigma-\tau+1}, q_i^4)\\
        \mathsf{EIII}: r=2, &\qty(q_i^{\sigma-\tau+6}, -q_i^{\sigma+\tau+1}, q_i^{-\sigma+\tau+6}, q_i^{-\sigma-\tau+1}, q_i^6)\\
        \mathsf{EVII}: r=3, &\qty(q_i^{\sigma-\tau+1},-q_i^{\sigma+\tau+1},q_i^{-\sigma+\tau+1}, -q_i^{-\sigma-\tau+1},q_i^8)\\
        \mathsf{FII}: r=1,&\qty(q_i^{11}, -q_i^7, q_i^5, -q_i)
    \end{align*}
    where for the non-standard cases we have
    \[
        \bm{t}_i = [\sigma]_{q_i},\qquad
        \bm{s}_i = [\tau]_{q_i}
    \]
    and for the standard cases we have
    \[
        \bm{c}_i\bm{c}_{\tau(i)}=\bm{d}_i\bm{d}_{\tau(i)}=-1,\qquad
        \bm{c}_i=-q_i^\sigma,\qquad \bm{d}_i = -q_i^\tau.
    \]
    More generally, in the non-standard case,
    if $u_1,u_2\in k$ such that $[u_1]_{q_i}=\bm{t}_i$
    and $[u_2]_{q_i}=\bm{s}_i$, then the parameters $\set{a,b,c,d}$ are given
    by
    \[
        \set{-u_1u_2q_i, -u_1^{-1}u_2^{-1}q_i, u_1u_2^{-1}q_i, u_1^{-1}u_2q_i}.
    \]
    This is well-defined: $u_1$ (and independently, $u_2$) can be replaced by $-u_1^{-1}$ (resp. $-u_2^{-1}$), but this just permutes
    $a,b,c,d$, which doesn't affect the polynomials.

    And in the standard cases containing a subdiagram of type $\mathsf{AIV}_m$
    (here, $m=3$ and $5$), the parameters are given by
    \[
        \set{a,b,c,d}=\set{\frac{\bm{c}_i}{\bm{d}_{\tau(i)}}q_i, \frac{\bm{c}_{\tau(i)}}{\bm{d}_i}q_i,\frac{\bm{c}_i}{\bm{d_i}}q_i^{m+1}, \frac{\bm{c}_{\tau(i)}}{\bm{d}_{\tau(i)}}q^{m+1}}.
    \]
\end{remark}

\section{Characters}\label{sec-characters}
With this result in hand, we can proceed to complete the results
from \cite{Mee25}, i.e. the computation of character spherical
functions.

In particular, we shall write
$\Phi^{\mu}_{\chi',\chi}$ for the elementary $(\chi',\chi)$-character spherical
function for the module $L(\mu)$ that is normalised in such a way that
$\Res(\Phi^{\mu}_{\chi',\chi}) = e^\mu + \lot$.

\subsection{Rank 1}\label{sec-char-rank1}
In order to complete Meereboer's computation, we need to consider
the two Hermitean symmetric pairs of rank 1: $\mathsf{AI}_1$ and
$\mathsf{AIV}_n$, and compute their character spherical functions
in full generality.

For $\mathsf{AI}_1$, we simplify notations by considering the case
\[
    \bm{c}_1=\bm{d}_1=q^{-1},\qquad
    \bm{s}_1=[\sigma]_q,\qquad
    \bm{t}_1=[\tau]_q
\]
for $\sigma,\tau\in\Z$, and then generalise using a Zariski argument.
From \cite[Lemma~4.5]{Mee25sym} we see that the integrable characters
of $\uqb$ (resp. $\uqb'$) map $B_1$ to $[\sigma+l]_q$ (resp.
$[\tau+l]_q$) for $l\in\Z$.
Write $\chi_{\sigma,l}: \uqb_{q^{-1},[\sigma]_q}\to k$
for the corresponding character.
Moreover, the bottom element of $X^+(\chi_{\sigma,l})$ is
$\abs{l}\omega_1$.

For $\mathsf{AIV}_n$, the integrable characters of $\uqb$ and $\uqb'$ are given by
\[
    \chi_\ell: \begin{cases}
    K_1K_n^{-1}&\mapsto q^{-\ell},\\
    B_1,B_n&\mapsto 0\\
    x\in \uq_\bullet&\mapsto \epsilon(x)
    \end{cases}
\]
for $\ell\in\Z$.
The bottom element of $X^+(\chi_\ell)$ is $-\ell\omega_1$ or
$\ell\omega_n$ according as $\ell<0$ or $\ell\ge0$.

We start with the easiest non-trivial cases,
i.e. $\chi_{\pm1}$.

\begin{lemma}[$\mathsf{AI}_1$]\label{lem-csf-ai1}
    Let $\epsilon,\delta\in\set{\pm1}$, then
    \[
        \Res(\Phi^{\omega_1}_{\chi_{\tau,\delta},\chi_{\sigma,\epsilon}}) = e^{\omega_1} + \epsilon\delta
        q^{\epsilon\sigma + \delta\tau+1} e^{-\omega_1}.
    \]
\end{lemma}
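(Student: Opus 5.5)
The plan is a direct computation in the two-dimensional module $L(\omega_1)$ of $\uq(\mathfrak{sl}_2)$. Here $\Phi^{\omega_1}_{\chi_{\tau,\delta},\chi_{\sigma,\epsilon}}$ is a matrix element $c^{L(\omega_1)}_{f,v}$. By the defining property $\phi(b'xb)=\chi'(b')\chi(b)\phi(x)$, the vector $v$ must be an eigenvector of $B_1\in\uqb$ with eigenvalue $\chi_{\sigma,\epsilon}(B_1)=[\sigma+\epsilon]_q$. Likewise $f\in L(\omega_1)^*$ must satisfy $f\circ B'_1=[\tau+\delta]_q f$, where $B'_1\in\uqb'$ is the generator with parameter $\bm{t}_1=[\tau]_q$. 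In rank one there are no further generators besides $K_h$ for $h\in Y^\Theta$, which is trivial here, so these conditions suffice.

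First I fix the basis $v_+,v_-$ with $K v_\pm=q^{\pm1}v_\pm$, $Ev_-=v_+$, $Fv_+=v_-$, and the remaining actions zero. In the case at hand $T_{w_\bullet}=\id$, $\tau=\id$ and $\bm{c}_1=q^{-1}$. Then $B_1=F+q^{-1}EK^{-1}+[\sigma]_qK^{-1}$ acts by
\[
B_1v_+=[\sigma]_qq^{-1}v_++v_-,\qquad B_1v_-=v_++[\sigma]_qq\,v_-,
\]
that is, by the symmetric matrix $\begin{pmatrix}[\sigma]_qq^{-1}&1\\1&[\sigma]_qq\end{pmatrix}$.

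The characteristic equation is $(\lambda-q^{-1}[\sigma]_q)(\lambda-q[\sigma]_q)=1$. The identities $[\sigma+1]_q-q^{-1}[\sigma]_q=q^{\sigma}$ and $[\sigma+1]_q-q[\sigma]_q=q^{-\sigma}$, together with their analogues for $[\sigma-1]_q$, show that the eigenvalues are exactly $[\sigma\pm1]_q$. This is consistent with the cited classification of characters. The corresponding eigenvector is
\[
v=v_++\bigl([\sigma+\epsilon]_q-q^{-1}[\sigma]_q\bigr)v_-=v_++\epsilon q^{\epsilon\sigma}v_-.
\]
Since the matrix of $B'_1$ (with $[\tau]_q$ in place of $[\sigma]_q$) is also symmetric, the left eigenvector is $f=v_+^*+\delta q^{\delta\tau}v_-^*$.

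Definition~\ref{def-res} then gives
\[
\Res'(c_{f,v})=f(v_+)e^{\omega_1}+f(\epsilon q^{\epsilon\sigma}v_-)e^{-\omega_1}=e^{\omega_1}+\epsilon\delta q^{\epsilon\sigma+\delta\tau}e^{-\omega_1}.
\]
The shift $e^\mu\mapsto q^{-\rho\cdot\mu}e^\mu$ uses $\rho=\omega_1$ and $\rho\cdot\omega_1=\tfrac12$. It multiplies the two terms by $q^{-1/2}$ and $q^{1/2}$ respectively, so after normalising the leading coefficient to $1$ an extra factor $q$ appears in the second term. This yields the claimed formula.

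The only delicate points are conventions. One must check that the action of $\uqb$ from the right on the argument of $\phi$ corresponds to $v$ being a $\chi$-eigenvector and $\uqb'$ from the left to $f$ being a left eigenvector. One must also confirm the labelling $\epsilon=\pm1$ against the paper's $\chi_{\sigma,l}$ with $B_1\mapsto[\sigma+l]_q$, and the sign and power in the $K^{-1}$ term of $B_1$. I expect these sign and convention checks, rather than any conceptual step, to be the main obstacle.
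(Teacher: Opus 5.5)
Your proposal is correct and follows essentially the same route as the paper. Both compute the same symmetric $2\times2$ matrix of $B_1$ on $L(\omega_1)$, take the eigenvectors $v_{\omega_1}\pm q^{\pm\sigma}v_{-\omega_1}$ and the analogous eigenforms for $\uqb'$, evaluate $\Res$, and normalise away the $q^{\mp1/2}$ factors coming from the $\rho$-shift. Your pairing of $\epsilon$ with the vector and $\delta$ with the form is the consistent one: the paper's intermediate line $f_\epsilon(K_hv_\delta)$ swaps these labels, but its final formula agrees with yours.
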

\begin{proof}
    We start by constructing spherical vectors.
    With respect to the basis $v_{\omega_1},v_{-\omega_1}$, the
    operator $B_1 = F_1 + q^{-1}E_1K_1^{-1} + [\sigma]_qK_1^{-1}$
    has the following symmetric matrix:
    \[
        \mqty([\sigma]_qq^{-1} & 1\\1 & [\sigma]_qq),
    \]
    which has eigenvalues $[\sigma\pm1]_q$ with eigenvector
    $v_{\pm} = v_{\omega_1} \pm q^{\pm\sigma} v_{-\omega_1}$.

    Similarly, replacing $\sigma$ by $\tau$, we obtain the following
    eigenform for the eigenvalue $[\tau\pm1]_q$:
    \[
        f_\pm = v_{\omega_1}^* \pm q^{\pm\tau} v_{-\omega_1}^*.
    \]
    We therefore obtain
    \[
        f_\epsilon(K_h v_\delta) = q^{\langle h,\omega_1\rangle}
        + \epsilon\delta q^{\epsilon\sigma + \delta\tau - \langle h,\omega_1\rangle},
    \]
    so that $\Res(f_\epsilon(\cdot v_\delta)) = q^{-1/2}e^{\omega_1} + \epsilon\delta q^{\epsilon\sigma + \delta\tau+1/2} e^{-\omega_1}$.
    We conclude that a properly normalised character spherical function
    reads
    \[
        e^{\omega_1} + \epsilon\delta q^{\epsilon\sigma+\delta\tau+1}
        e^{-\omega_1}.\qedhere
    \]
\end{proof}

\begin{lemma}[$\mathsf{AIV}_n$]\label{lem-csf-aiv}
    We have
    \[
        \Res(\Phi^{\omega_1}_{\chi_{-1},\chi_{-1}}) = e^{\omega_1} + (-1)^nq\frac{\bm{d}_n}{\bm{c}_1} e^{-\omega_n}.
    \]
    Similarly,
    \[
        \Res(\Phi^{\omega_n}_{\chi_1,\chi_1}) = e^{\omega_n} + (-1)^nq\frac{\bm{d}_1}{\bm{c}_n} e^{-\omega_1},
    \]
\end{lemma}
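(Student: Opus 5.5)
We imitate the proof of Lemma~\ref{lem-csf-ai1}: build explicitly a vector in $L(\omega_1)$ on which $\uqb$ acts by $\chi_{-1}$, and a linear form on $L(\omega_1)$ on which $\uqb'$ acts by $\chi_{-1}$. Their matrix coefficient is the elementary character spherical function, and we then compute its shifted restriction. For $\mathsf{AIV}_n$ we have $I_\bullet=\set{2,\dots,n-1}$ and $\tau(1)=n$. In $L(\omega_1)$, with standard basis $v_1,\dots,v_{n+1}$ of weights $\epsilon_1,\dots,\epsilon_{n+1}$, the $\uq_\bullet$-invariants are spanned by $v_1$ and $v_{n+1}$. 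The element $K_1K_n^{-1}$ acts on $v_1$ by $q$ and on $v_{n+1}$ by $q^{-1}$. Hence neither vector alone can carry $\chi_{-1}$ (which requires eigenvalue $q$), and we must use a combination $v=v_1+x v_{n+1}$. The constraint will come from $B_1v=B_nv=0$.

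First we compute $T_{w_\bullet}(E_n)$ and $T_{w_\bullet}(E_1)$ on $L(\omega_1)$. These are, up to signs and powers of $q$, the root vectors of $\alpha_2+\dots+\alpha_n$ and $\alpha_1+\dots+\alpha_{n-1}$. They therefore map $v_{n+1}\mapsto \pm q^{?}v_2$ and $v_n\mapsto\pm q^{?} v_1$ respectively. We determine the sign $(-1)^{n-2}$ and the $q$-power from Lusztig's braid formulas, applying $T_{w_\bullet}=T_{i_1}\cdots T_{i_N}$ step by step along the chain $v_{n+1}\to v_n\to\dots\to v_2$. Then
\[
    B_1 v = F_1v_1 + \bm{c}_1 T_{w_\bullet}(E_n)K_1^{-1}x v_{n+1} = (1 + \bm{c}_1 x\,\kappa)v_2 ,
\]
with $\kappa$ the constant just computed. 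This fixes $x=-(\bm{c}_1\kappa)^{-1}$, and $B_nv=0$ holds automatically by weight considerations (the weights reached are $\epsilon_n$-type and vanish). We also check that $K_1K_n^{-1}$ does not act by a scalar on $v$ unless we restrict to $\uqb$'s Cartan part $k[Y^\Theta]$; if it does not, then $v$ is a genuine $\chi_{-1}$-vector only after the appropriate choice of lattice. If necessary, we fall back on \cite[Lemma~2.7]{Mee25} to guarantee existence and use the computation purely to identify the vector.

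Dually, we find the form $f=v_1^*+y v_{n+1}^*$ annihilated under the right action of $B_1',B_n'$ (with parameters $\bm{d}$). Here the roles swap: $f B_n'=0$ involves $F_n$ and $\bm{d}_n T_{w_\bullet}(E_1)K_n^{-1}$, and gives $y$ proportional to $\bm{d}_n$. Then
\[
    f(K_h v)=q^{\langle h,\epsilon_1\rangle}+xy\,q^{\langle h,\epsilon_{n+1}\rangle}.
\]
Since $\widetilde{\epsilon_{n+1}}=-\widetilde{\epsilon_1}$ corresponds to $-\omega_n$ in the notation of the statement, applying $\Res$ introduces the $\rho$-shift factors $q^{-\rho\cdot\omega_1}$ and $q^{\rho\cdot\omega_n}$. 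After normalising the leading coefficient to $1$, we obtain $e^{\omega_1}+q^{2\rho\cdot\widetilde{\omega_1}}xy\,e^{-\omega_n}$, and we check that $q^{2\rho\cdot\tilde\omega_1}xy=(-1)^nq\,\bm{d}_n/\bm{c}_1$. The second formula follows by applying the diagram automorphism $\tau$, which swaps $1\leftrightarrow n$, $\chi_{-1}\leftrightarrow\chi_1$, and $\bm{c}_1\leftrightarrow\bm{c}_n$.

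The main obstacle is the bookkeeping of signs and $q$-powers in $T_{w_\bullet}(E_{\tau(i)})$ acting on $L(\omega_1)$, together with the $\rho$-shift. The sign $(-1)^n$ must come out of the $n-2$ braid steps and the relation $\bm{c}_1\bm{c}_n=(-1)^{n-2}q^{\dots}$ from condition~(iii). We will cross-check the final constant against the rank-one case $n=2$ (type $\mathsf{AIII}$ with empty $I_\bullet$), where everything can be computed by hand.
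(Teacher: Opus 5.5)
Your strategy matches the paper's. You take a vector $v=v_1+x\,v_{n+1}$ killed by $B_1,B_n$, and a form $f=v_1^*+y\,v_{n+1}^*$ killed on the right by the $B_1,B_n$ of $\uqb'$. You then apply the $\rho$-shifted restriction to $f(\cdot\,v)$ and normalise, which produces the factor $q^{\rho\cdot(\omega_1+\omega_n)}=q^n$. Deriving the second identity from the first via the diagram automorphism $\tau$ is a valid shortcut where the paper simply redoes the computation: $\tau$ commutes with $T_{w_\bullet}$, maps $\uqb_{\bm c}$ onto $\uqb_{\tau\bm c}$, $\chi_{-1}$ to $\chi_1$ and $L(\omega_1)$ to $L(\omega_n)$, and fixes $\rho$.

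There are two real problems, however. First, $K_1K_n^{-1}$ does not act on $v_{n+1}$ by $q^{-1}$. Since $\langle h_1,\epsilon_{n+1}\rangle=0$ and $\langle h_n,\epsilon_{n+1}\rangle=-1$, it acts by $q$, as on $v_1$. This is not cosmetic. We have $h_1-h_n\in Y^\Theta$, so $K_1K_n^{-1}\in\uqb$. If the eigenvalues really differed, no $v_1+xv_{n+1}$ with $x\neq0$ could transform by $\chi_{-1}$. Together with $B_1v_1=v_2\neq0$, your premises would then force $E^{\chi_{-1},\chi_{-1}}(\omega_1)=0$. Invoking a choice of lattice or \cite[Lemma~2.7]{Mee25} does not repair this.

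The paper states that $K_1K_n^{-1}$ acts on both basis vectors by $q$. More conceptually, $\Theta$ sends the weight $\omega_1$ of $v_1$ to the weight $-\omega_n$ of $v_{n+1}$. Hence every $K_h$ with $h\in Y^\Theta$ acts on both vectors by the same scalar. The toral part of $\uqb$ therefore imposes nothing on this span, and the only constraints are $B_1v=B_nv=0$. A combination is needed because $B_1$ kills neither $v_1$ nor $v_{n+1}$, not because of eigenvalues.

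Second, the whole content of the lemma is the constant $(-1)^nq\,\bm d_n/\bm c_1$, and you leave it at \enquote{we check}. In your notation, take $v_{j+1}=F_jv_j$, $T_{w_\bullet}(E_n)v_{n+1}=\kappa v_2$ and $T_{w_\bullet}(E_1)v_n=\kappa' v_1$. Your equations then give $x=-(\bm c_1\kappa)^{-1}$ and $y=-q^{-1}\bm d_n\kappa'$, where the $q^{-1}$ comes from $K_n^{-1}v_n$. So what must be proved is $\kappa'/\kappa=(-1)^nq^{2-n}=(-q^{-1})^{n-2}$.

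Only this ratio enters, so an overall rescaling of $T_{w_\bullet}$ is harmless. The precise braid operator does matter, though. For $n=3$, Lusztig's $T''_{2,1}$ gives $\kappa=-q$ and $\kappa'=1$, whereas $T'_{2,1}$ gives $\kappa=1$ and $\kappa'=-q$, which leads to a different answer. This is what the paper's proof supplies: $B_1v_{-\omega_n}=\bm c_1v_{\omega_2-\omega_1}$ and $v_{\omega_1}^*B_n=(-1)^nq^{1-n}\bm d_n v^*_{\omega_n-\omega_{n-1}}$. Hence $v=v_{\omega_1}-\bm c_1^{-1}v_{-\omega_n}$ and $f=v^*_{\omega_1}-(-1)^nq^{1-n}\bm d_nv^*_{-\omega_n}$.

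Finally, condition (iii) plays no role here. The parameters $\bm c_n$ and $\bm d_1$ never appear, because $B_nv=0$ and $fB_1=0$ hold identically for weight reasons. So the sign cannot come from the relation for $\bm c_1\bm c_n$.
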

\begin{proof}
    Let $v\in L(\omega_1)$ transform according to $\chi_{-1}$ of $\uqb$.
    We have $K_{h_i}v=v$ for
    $i=2,\dots,n-1$, so that $v$ lies in the span of
    $v_{\omega_1}$ and $v_{-\omega_n}$,
    say $v=av_{\omega_1}+bv_{-\omega_n}$.
    Note that such a vector $v$ is $\chi_{-1}$-spherical if and only if
    $B_1v=B_nv=0$ ($K_1K_n^{-1}$ acts on both basis vectors as the
    scalar $q$).
    We have
    \begin{align*}
        B_1 v_{\omega_1} &= v_{\omega_2-\omega_1}
        + \bm{c}_1 \ad(E_2\cdots E_{n-1})(E_n)K_1^{-1}v_{\omega_1}
        = v_{\omega_2-\omega_1}\\
        B_n v_{\omega_1} &= 0\\
        B_1 v_{-\omega_n} &= \bm{c}_1 \ad(E_2\cdots E_{n-1})(E_n)K_1^{-1}
        v_{-\omega_n}
        = \bm{c}_1 v_{\omega_2-\omega_1}\\
        B_n v_{-\omega_n} &= \bm{c}_n \ad(E_{n-1}\cdots E_2)(E_1)K_n^{-1}
        v_{-\omega_n} = 0.
    \end{align*}
    We conclude that $v=v_{\omega_1} - \bm{c}_1^{-1}v_{-\omega_n}$ is
    $\chi_{-1}$-spherical.
    Similarly, we have
    \begin{align*}
        v_{\omega_1}^* B_1 &= 0\\
        v_{\omega_1}^* B_n &= \bm{d}_n v^*_{\omega_1}
        \ad(E_{n-1}\cdots E_2)(E_1)K_n^{-1}\\
        &= (-1)^n q^{1-n} \bm{d}_n  v_{\omega_n-\omega_{n-1}}^*\\
        v^*_{-\omega_n}B_1 &= 0\\
        v^*_{-\omega_n}B_n &= v_{\omega_n-\omega_{n-1}}^*.
    \end{align*}
    Consequently, $f=v^*_{\omega_1}- (-1)^n q^{1-n}\bm{d}_n v^*_{-\omega_n}$
    is $\chi_{-1}$-spherical for $\uqb'$.
    Then
    \[
        \Res(f(\cdot v)) = q^{-n/2}e^{\omega_1} + (-1)^n q^{1-n/2}\bm{d}_n\bm{c}_1^{-1}e^{-\omega_n},
    \]
    which implies the first claim.
    We now consider the same for $L(\omega_n)$:
    \begin{align*}
        B_1 v_{\omega_n} &=0\\
        B_n v_{\omega_n} &= v_{\omega_{n-1}-\omega_n}\\
        B_1 v_{-\omega_1} &= 0\\
        B_n v_{-\omega_1} &= \bm{c}_n
        v_{\omega_{n-1}-\omega_n},
    \end{align*}
    which shows that
    $v=v_{\omega_n} - \bm{c}_n^{-1} v_{-\omega_1}$ is
    $\chi_1$-spherical.
    Similarly,
    \begin{align*}
        v_{\omega_n}^*B_1 &= (-1)^n q^{1-n}\bm{d}_1 v^*_{\omega_1-\omega_2}\\
        v_{\omega_n}^*B_n &= 0\\
        v_{-\omega_1}^*B_1 &= v^*_{\omega_1-\omega_2}\\
        v_{-\omega_1}^* B_n &= 0.
    \end{align*}
    Consequently, $f = v^*_{\omega_n}-(-1)^nq^{1-n}\bm{d}_1 v^*_{-\omega_1}$ is $\chi_1$-spherical for $\uqb'$.
    Thus, we obtain
    \[
        \Res(f(\cdot v)) = q^{-n/2}e^{\omega_n}
        + (-1)^n q^{1-n/2}\bm{d}_1\bm{c}_n^{-1} e^{-\omega_1}
    \]
\end{proof}

\begin{lemma}\label{lem-product-csf}
    For $\mathsf{AI}_1$ we have
    \[
        E^{\chi_{\tau,l},\chi_{\sigma,m}} \cdot
        E^{\chi_{\tau+l,l'},\chi_{\sigma+m,m'}}
        \subset E^{\chi_{\tau, l+l'},\chi_{\sigma,m+m'}}
    \]
    and for $\mathsf{AIV}_n$ we have
    \[
        E^{\chi_l,\chi_m}_{\uqb_{\bm{d}},\uqb_{\bm{c}}} \cdot
        E^{\chi_{l'},\chi_{m'}}_{\uqb_{(q^l\bm{d}_1, q^{-l} \bm{d}_n)},
        \uqb_{(q^m\bm{c}_1,q^{-m}\bm{c}_n)}}
        \subset E^{\chi_{l+l'}, \chi_{m+m'}}_{\uqb_{\bm{d}},\uqb_{\bm{c}}}.
    \]
\end{lemma}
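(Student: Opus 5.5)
The plan is to check the defining invariance property directly for a product of two matrix elements, using the coproduct. For $f\in E^{\chi',\chi}_{\uqb',\uqb}$ and $g\in E^{\psi',\psi}_{\uqb_2',\uqb_2}$, the product in $\mathcal{A}$ is $(fg)(x)=f(x_{(1)})g(x_{(2)})$. Hence
\[
    (fg)(b'xb)=f(b'_{(1)}x_{(1)}b_{(1)})\,g(b'_{(2)}x_{(2)}b_{(2)}).
\]
The whole question is therefore how $\Delta$ acts on the generators of $\uqb$ (resp. $\uqb'$). By the defining property, it suffices to verify invariance on the generators of $\uqb$ and $\uqb'$. For $b\in\uq_\bullet$ and for $K_h$ with $h\in Y^\Theta$ this is immediate: $\Delta$ is group-like or lands in $\uq_\bullet\otimes\uq_\bullet$, and the characters in question restrict to $\epsilon$ on $\uq_\bullet$ and are multiplicative on $K_h$. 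The exponents $l,m$ then add in the $\mathsf{AIV}_n$ case, since $\chi_m(K_1K_n^{-1})=q^{-m}$.

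The key step is the generator $B_i$. With the paper's conventions one has
\[
    \Delta(B_i)=B_i\otimes K_{-\alpha_i}+1\otimes F_i+\bm{c}_iT_{w_\bullet}(E_{\tau(i)})K_{-\alpha_i}\otimes K_{-\alpha_i}+\dots
\]
More usefully, I will invoke the standard coideal-type formula (\cite[Prop.~5.2]{Kol14}, transported by $S$ to the right coideal setting). This formula has the shape $\Delta(B_i)\in B_i'\otimes K_{-\alpha_i}+ 1\otimes B_i+\uq\otimes\uqb_{\bullet}$-terms, where the second-factor copy carries shifted parameters. The expected shifts are $\bm{s}_i\mapsto$ the value of the first character, giving $[\sigma+m]$ in $\mathsf{AI}_1$, and $\bm{c}_i\mapsto q^{\pm m}\bm{c}_i$ in $\mathsf{AIV}_n$. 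Concretely, in $\mathsf{AI}_1$ I compute
\[
    \Delta(B_1)=B_1\otimes K_1^{-1}+1\otimes F_1+q^{-1}E_1K_1^{-1}\otimes K_1^{-1}+\dots
\]
Evaluating the first tensor factor against $f$ turns $B_1$ into $\chi_{\sigma,m}(B_1)=[\sigma+m]_q$. The second factor then becomes
\[
    F_1+q^{-1}E_1K_1^{-1}+[\sigma+m]_qK_1^{-1},
\]
which is exactly the generator of $\uqb_{q^{-1},[\sigma+m]_q}$. This explains the shifted parameter in the second factor, and $g$ contributes $[\sigma+m+m']_q=\chi_{\sigma,m+m'}(B_1)$. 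I would do this on the right (for $b$) and on the left (for $b'$) symmetrically. The cross terms like $F_1\otimes K$-parts must be shown to be killed because $f$ is a function of $x_{(1)}b_{(1)}$ with the leftover factor absorbed into $g$'s argument. In practice I will rewrite $(fg)(xB_1)$ as
\[
    f(x_{(1)}B_1)g(x_{(2)}K_1^{-1})+f(x_{(1)})g(x_{(2)}\,\cdot\,\text{rest}),
\]
and use $f(yB_1)=\chi(B_1)f(y)$ on the first term.

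For $\mathsf{AIV}_n$ the same computation is done with $B_1,B_n$, using that $\chi_m(B_i)=0$. The term involving $T_{w_\bullet}(E_n)K_1^{-1}$ passes through $K_1^{-1}\otimes K_1^{-1}$-type group-likes, and commuting the $K$'s of the first factor past $g$'s argument rescales $\bm{c}_1,\bm{c}_n$ by $q^{\pm m}$. This rescaling comes from $f(yK_1K_n^{-1})=q^{-m}f(y)$ combined with $K_{\alpha_1}$ versus $K_{\alpha_n}$ components.

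The main obstacle is bookkeeping the exact coproduct of $T_{w_\bullet}(E_{\tau(i)})K_{-\alpha_i}$ and checking that the extra terms lie in $\uq\otimes\uqb$-ideal pieces annihilated by the characters ($\uq_\bullet$-terms go to $\epsilon$). My first attempt will be to verify on $L(\omega_1)$ numerically via Lemmas~\ref{lem-csf-ai1}, \ref{lem-csf-aiv} that the signs of the $q^{\pm m}$ shifts come out as stated, and then to fix conventions accordingly.
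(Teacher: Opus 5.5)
Your argument is essentially the paper's. The paper cites \cite[Lemma~4.1]{Mee25}, which is precisely your computation $(fg)(b'xb)=f(x_{(1)})\,g\bigl(\rho_{\chi'}(b')\,x_{(2)}\,\rho_\chi(b)\bigr)$ with $\rho_\chi(b)=\chi(b_{(1)})b_{(2)}$; this holds because $\uqb,\uqb'$ are right coideals, so $b_{(1)}\in\uqb$ and $b'_{(1)}\in\uqb'$. It then only checks that $\chi_{\sigma+m,m'}\circ\rho_{\chi_{\sigma,m}}=\chi_{\sigma,m+m'}$ and $\chi_{m'}\circ\rho_{\chi_m}=\chi_{m+m'}$.

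Your displayed coproducts need repair, though. For $\mathsf{AI}_1$ one has exactly $\Delta(B_1)=B_1\otimes K_1^{-1}+1\otimes(F_1+q^{-1}E_1K_1^{-1})$. The additional terms have their \emph{first} tensor factor in $\uqb$, not their second, and that is what lets $f$ evaluate them.

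For $\mathsf{AIV}_n$, the rescaling is not a commutation effect. It comes from the term $\bm{c}_1K_{-\alpha_1-\Theta(\alpha_1)}\otimes T_{w_\bullet}(E_n)K_1^{-1}$ of $\Delta(B_1)$, whose first factor $f$ evaluates to $\chi_m(K_1^{-1}K_n)=q^{m}$; symmetrically one gets $q^{-m}$ for $B_n$. This fixes the signs in the statement without any numerical check.
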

\begin{proof}
    This follows from \cite[Lemma~4.1]{Mee25} where we use the following:
    We have
    \[
        \chi_{\sigma+m,m'}(\rho_{\chi_{\sigma,m}}(B_1))
        = \chi_{\sigma+m,m'}(B_1)
        = [\sigma+m+m']_q
        = \chi_{\sigma, m+m'}(B_1),
    \]
    so that we see that $\chi_{\sigma+m,m'}\circ\rho_{\chi_{\sigma,m}}=
    \chi_{\sigma,m+m'}$.
    Furthermore,
    \begin{align*}
        \chi_{m'}(\rho_{\chi_m}(K_1K_n^{-1}))
        &= \chi_{m'}(q^{-m} K_1K_n^{-1})
        = q^{-m-m'} K_1K_n^{-1}\\
        &= \chi_{m+m'}(K_1K_n^{-1}),
    \end{align*}
    which shows that $\chi_{m'}\circ\rho_{\chi_m}=\chi_{m+m'}$.
\end{proof}

\begin{proposition}\label{prop-radial-parts-rk1-csf}
    For $\mathsf{AI}_1$,
    let $l,m\in\Z$ have the same parity.
    Define $a_\pm := \frac{l\pm m}{2}\in\Z$,
    then
    \begin{align*}
        \Res(\Phi^{(\abs{a_+}+\abs{a_-})\omega_1}_{\chi_{\tau,l},\chi_{\sigma,m}}) =
        e^{(\abs{a_+}+\abs{a_-})\omega_1}
        &\qty(-q^{\sigma+\tau+1}e^{-\alpha_1};q^2)_{(a_+)^+}\\
        \cdot&\qty(-q^{-\sigma-\tau+1}e^{-\alpha_1};q^2)_{(-a_+)^+}\\
        \cdot &\qty(q^{\tau-\sigma+1}e^{-\alpha_1};q^2)_{(a_-)^+}\\
        \cdot&\qty(q^{\sigma-\tau+1}e^{-\alpha_1};q^2)_{(-a_-)^+}.
    \end{align*}
    For $\mathsf{AIV}_n$ and $l>0$ we have
    \begin{align*}
        \Res(\Phi^{l\omega_1}_{\chi_{-l},\chi_{-l}}) &= e^{l\omega_1}
        \qty((-1)^{n-1}q\frac{\bm{c}_n}{\bm{d}_1}e^{-2\tilde{\alpha}_1};q^2)_l\\
        \Res(\Phi^{l\omega_n}_{\chi_l,\chi_l}) &= e^{l\omega_n}
        \qty((-1)^{n-1}q\frac{\bm{c}_1}{\bm{d}_n}e^{-2\tilde{\alpha}_1};q^2)_l.
    \end{align*}
\end{proposition}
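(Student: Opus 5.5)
The plan is to build the character spherical functions as products of the elementary ones from Lemmas~\ref{lem-csf-ai1} and~\ref{lem-csf-aiv}, using the multiplicativity of Lemma~\ref{lem-product-csf} together with the fact that $\Res$ is a ring homomorphism. By the dimension-one statement (the proposition identifying $X^+(\chi,\chi')=b+2L^+$ with one-dimensional $E^{\chi',\chi}(\mu)$), a nonzero product landing in the right module is determined up to a scalar. The normalisation $e^\mu+\lot$ then fixes that scalar.

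For $\mathsf{AI}_1$ I would write the pair $(l,m)$ as a sum of $\abs{a_+}+\abs{a_-}$ unit steps $(\delta,\epsilon)\in\set{\pm1}^2$. There are $\abs{a_+}$ steps with $\delta=\epsilon=\operatorname{sgn}(a_+)$ and $\abs{a_-}$ steps with $\delta=-\epsilon=\operatorname{sgn}(a_-)$; the sums are $l=a_++a_-$ and $m=a_+-a_-$. Order the steps as all $a_+$-steps first, then the $a_-$-steps. Lemma~\ref{lem-product-csf} says the $k$-th factor is the character spherical function for the shifted parameters $\sigma+m_{<k}$, $\tau+l_{<k}$, where $m_{<k}$, $l_{<k}$ are the partial sums of the earlier steps. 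The product therefore lies in $E^{\chi_{\tau,l},\chi_{\sigma,m}}$ and is a matrix coefficient of $L(\omega_1)^{\otimes N}$ with $N=\abs{a_+}+\abs{a_-}$. Its restriction has leading term $e^{N\omega_1}$, so it is nonzero.

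Its top weight is $N\omega_1$, and I need to argue that this is the bottom element of $X^+(\chi_{\tau,l},\chi_{\sigma,m})$. The bottom elements $\abs{l}\omega_1$ and $\abs{m}\omega_1$ have the same parity, and $N=\max(\abs{l},\abs{m})$. Hence the product equals $\Phi^{N\omega_1}$ up to lower-weight contributions from elementary pieces with smaller highest weight. Those contributions must vanish, because no character spherical function for this pair exists below weight $N\omega_1$.

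Applying Lemma~\ref{lem-csf-ai1} to each factor gives $e^{\omega_1}\bigl(1+\epsilon\delta q^{\epsilon\sigma'+\delta\tau'+1}e^{-\alpha_1}\bigr)$, since $2\omega_1=\alpha_1$.
\begin{itemize}
\item For the $a_+$-steps with sign $+$, the shifted parameters are $\sigma+j$, $\tau+j$, so each factor is $1+q^{\sigma+\tau+2j+1}e^{-\alpha_1}$. Up to the sign conventions this gives $(-q^{\sigma+\tau+1}e^{-\alpha_1};q^2)_{a_+}$.
\item For the $a_+$-steps with sign $-$, the factors similarly produce $(-q^{-\sigma-\tau+1}e^{-\alpha_1};q^2)_{-a_+}$.
\item The $a_-$-steps have $\epsilon\delta=-1$ and exponent $\pm(\tau-\sigma)$. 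After the shift by the accumulated $a_+$-steps, which cancels in $\tau-\sigma$, they give $(q^{\tau-\sigma+1}e^{-\alpha_1};q^2)_{a_-}$ or its mirror.
\end{itemize}
I must check carefully that the signs of the $e^{-\alpha_1}$ coefficients match the statement's $-q^{\sigma+\tau+1}$ after inserting $\epsilon\delta=1$. This may require re-examining the sign convention in Lemma~\ref{lem-csf-ai1}, and it is where I expect discrepancies.

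For $\mathsf{AIV}_n$ I do the same with $l$ factors of $\Phi^{\omega_1}_{\chi_{-1},\chi_{-1}}$. The $k$-th factor is taken for parameters twisted as in Lemma~\ref{lem-product-csf}: $\bm c_1\mapsto q^{-(k-1)}\bm c_1$ and $\bm d_n\mapsto q^{(k-1)}\bm d_n$ (shift by $m=l=-1$ each time). The $k$-th coefficient is then $(-1)^nq\,q^{2(k-1)}\bm d_n/\bm c_1$. I then rewrite it via condition~(iii), $\bm c_1\bm c_n=\bm d_1\bm d_n$ up to the same constant, as $\bm d_n/\bm c_1=\bm c_n/\bm d_1$. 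Using $e^{\omega_1}e^{-\omega_n}$ and $2\tilde\alpha_1=\omega_1+\omega_n$, this gives $(-1)^{n-1}q\frac{\bm c_n}{\bm d_1}e^{-2\tilde\alpha_1}$ with the sign adjusted as needed, and the product $e^{l\omega_1}\bigl((-1)^{n-1}q\frac{\bm c_n}{\bm d_1}e^{-2\tilde\alpha_1};q^2\bigr)_l$. The $\omega_n$ case is symmetric. The main obstacle is bookkeeping: tracking exactly how the parameters shift under Lemma~\ref{lem-product-csf}, and the resulting signs and powers of $q$.
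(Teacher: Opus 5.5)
Your proposal is correct and follows the paper's proof essentially step for step. You decompose $(l,m)$ into unit steps in $\set{\pm1}^2$, multiply the rank-one functions of Lemmas~\ref{lem-csf-ai1} and~\ref{lem-csf-aiv} along the shifted parameters via Lemma~\ref{lem-product-csf} and the multiplicativity of $\Res$, and identify the product with the elementary function at the bottom weight $\max(\abs{l},\abs{m})\omega_1$, which is the only constituent of $L(\omega_1)^{\otimes N}$ admitting such functions.

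The sign discrepancies you anticipate do not occur. With $(x;q^2)_n=\prod_{j<n}(1-xq^{2j})$, the factors $1+\epsilon\delta q^{\epsilon\sigma+\delta\tau+2j+1}e^{-\alpha_1}$ give exactly the stated Pochhammer symbols. In the $\mathsf{AIV}_n$ case, condition~(iii) gives $\bm{c}_1\bm{c}_n=\bm{d}_1\bm{d}_n$ exactly, so $\bm{d}_n/\bm{c}_1=\bm{c}_n/\bm{d}_1$ with no extra sign.
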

\begin{proof}
    We start with $\mathsf{AI}_1$.
    We can write $(l,m)$ as follows as a sum of minimal length
    of our generators $\set{\pm1}^2$:
    \[
        (l,m) = (a_+)^+(1,1)
        + (-a_+)^+(-1,-1)
        + (a_-)^+(1,-1)
        + (-a_-)^+(-1,1).
    \]
    Thus, Lemma~\ref{lem-product-csf} implies that
    \begin{align}
        &E^{\chi_{\tau, (a_+)^+},\chi_{\sigma, (a_+)^+}}
        \cdot E^{\chi_{\tau+(a_+)^+, -(-a_+)^+},
        \chi_{\sigma+(a_+)^+,-(-a_+)^+}}\nonumber\\
        &\cdot
        E^{\chi_{\tau + a_+, (a_-)^+},
        \chi_{\sigma + a_-, -(a_-)^+}}
        \cdot
        E^{\chi_{\tau + a_+ + (a_-)^+, -(-a_-)^+},
        \chi_{\sigma + a_+ - (a_-)^+, (a_-)^+}}\nonumber\\
        \subset &E^{\chi_{\tau, l}, \chi_{\sigma,m}},\label{eq-product-of-csf-ai1}
    \end{align}
    so in order to construct an element of $E^{\chi_{\tau,l},\chi_{\sigma,m}}$, we shall first construct character spherical
    functions for which $\abs{l}=\abs{m}$.

    Let $\epsilon,\delta\in\set{\pm1}$ and $l>0$, then
    \[
        \prod_{i=0}^{l-1} E^{\chi_{\tau+\delta i, \delta},\chi_{\sigma+\epsilon i,\epsilon}} \subset
        E^{\chi_{\tau,\delta l},\chi_{\sigma,\epsilon l}},
    \]
    so that there is an element $\Phi\in E^{\chi_{\tau,\delta l},\chi_{\sigma,\epsilon l}}$ with
    \[
        \Res(\Phi) = \prod_{i=0}^{l-1} \qty(e^{\omega_1} + \epsilon\delta
        q^{1 + 2i + \epsilon\sigma + \delta\tau} e^{-\omega_1})
        = e^{l\omega_1} \qty(-\epsilon\delta q^{\epsilon\sigma+\delta\tau + 1}e^{-\alpha_1};q^2)_l.
    \]
    Inserting this into \eqref{eq-product-of-csf-ai1} we obtain that there is $\Phi\in E^{\chi_{\tau,l},\chi_{\sigma,m}}$
    satisfying
    \begin{align*}
        \Res(\Phi) = e^{(\abs{a_+}+\abs{a_-})\omega_1}
        &\qty(-q^{\sigma+\tau+1}e^{-\alpha_1};q^2)_{(a_+)^+}\\
        \cdot&\qty(-q^{-(\sigma+(a_+)^+)-(\tau-(a_+)^+)+1}e^{-\alpha_1};q^2)_{(-a_+)^+}\\
        \cdot &\qty(q^{\tau+a_+-(\sigma+a_+)+1}e^{-\alpha_1};q^2)_{(a_-)^+}\\
        \cdot&\qty(q^{\sigma + a_+-(a_-)^+-\tau-a_+-(a_-)^++1}e^{-\alpha_1};q^2)_{(-a_-)^+}\\
        = e^{(\abs{a_+}+\abs{a_-})\omega_1}
        &\qty(-q^{\sigma+\tau+1}e^{-\alpha_1};q^2)_{(a_+)^+}
        \qty(-q^{-\sigma-\tau+1}e^{-\alpha_1};q^2)_{(-a_+)^+}\\
        \cdot &\qty(q^{\tau-\sigma+1}e^{-\alpha_1};q^2)_{(a_-)^+}
        \qty(q^{\sigma-\tau+1}e^{-\alpha_1};q^2)_{(-a_-)^+}
    \end{align*}
    since only one of $(\pm a_+)^+$ can be nonzero at a time.

    By construction, this spherical function contains matrix elements
    of $L(\omega_1)^{\otimes (\abs{a_+}+\abs{a_-})}$,
    in whose decomposition there is exactly one module (with multiplicity one)
    that also lies in $X^+(\chi_{\sigma,m})\cap X^+(\chi_{\tau,m})$,
    namely $(\abs{a_+}+\abs{a_-})\omega_1=\max(\abs{l},\abs{m})\omega_1$,
    which is the bottom element.
    Consequently, $\Phi\in E^{\chi_{\tau,l},\chi_{\sigma,m}}((\abs{a_+}+\abs{a_-})\omega_1)$, and more specifically
    $\Phi = \Phi^{(\abs{a_+}+\abs{a_-})\omega_1}_{\chi_{\tau,l},\chi_{\sigma,m}}$.

    Similar arguments and Lemma~\ref{lem-csf-aiv} show that for $\mathsf{AIV}_n$ we obtain
    \begin{align*}
        \Res(\Phi^{l\omega_1}_{\chi_{-l},\chi_{-l}})&= \prod_{i=0}^{l-1}
        \qty(e^{\omega_1} + (-1)^nq^{1+2i}\frac{\bm{d}_n}{\bm{c}_1} e^{-\omega_n})\\
        &= e^{l\omega_1}
        \qty((-1)^{n-1}q\frac{\bm{c}_n}{\bm{d}_1}e^{-2\tilde{\alpha}_1};q^2)_l\\
        \Res(\Phi^{l\omega_n}_{\chi_l,\chi_l}) &= \prod_{i=0}^{l-1}
        \qty(e^{\omega_n} + (-1)^nq^{1+2i}\frac{\bm{d}_1}{\bm{c}_n} e^{-\omega_1})\\
        &= e^{l\omega_n}\qty((-1)^{n-1}q\frac{\bm{c}_1}{\bm{d}_n}
        e^{-2\tilde{\alpha}_1};q^2)_l,
    \end{align*}
    using that $\omega_1+\omega_n = 2\tilde{\alpha}_1$ and that
    $\bm{c}_1\bm{c}_n=\bm{d}_1\bm{d}_n$.
\end{proof}

As before we write $\Sigma^*$ for the root system of type
$\mathsf{BC}_1$ containing $\Sigma$ (as long roots),
i.e.
\[
    \Sigma^* = \begin{cases}
        \set{\pm\frac{1}{2}\alpha_1,\pm\alpha_1} & \mathsf{AI}_1\\
        \set{\pm\tilde{\alpha}_1, \pm2\tilde{\alpha}_1} & \mathsf{AIV}_n
    \end{cases}
\]
and write $\Sigma^*_\mathrm{l}$ for the set of long roots.

\begin{corollary}
    Write $\chi_l$ for the $l$-th character of $\uqb'$ (also in case $\bm{d}_1$ is not a $q$-number) and
    $\chi_m$ for the $m$-th character of $\uq$, and
    let $\mathfrak{B}^+(\chi_l,\chi_m)=\set{\mu}$.
    Then
    \[
        \Res(\Phi^{\mu}_{\chi_l,\chi_m})\overline{\Res(\Phi^\mu_{\chi_l,\chi_m})}
        = \prod_{\alpha\in \Sigma^*_\mathrm{l}}
        \frac{f_\alpha(a,b,c,d)}{f_\alpha\qty(aq_\alpha^{(l-m)^+},bq_\alpha^{(l+m)^+}, cq_\alpha^{(m-l)^+},dq_\alpha^{(-l-m)^+})}
    \]
    where
    \[
        f_\alpha(a,b,c,d) = \qty(ae^\alpha,be^\alpha,ce^\alpha,de^\alpha;q_\alpha^2)_\infty
    \]
    and where
    $(a,b,c,d)$ are the Askey--Wilson parameters from Lemma~\ref{lem-rk1-nonletzter-macdonald} (and where $l=m$ for $\mathsf{AIV}_n$).
\end{corollary}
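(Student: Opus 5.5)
The approach is to use the explicit product formulas of Proposition~\ref{prop-radial-parts-rk1-csf}, compute the product of $\Res(\Phi^\mu_{\chi_l,\chi_m})$ with its bar-conjugate (bar acting by $e^\lambda\mapsto e^{-\lambda}$ and $q\mapsto q^{-1}$ on coefficients, with $\overline{\bm{c}_i}=\bm{c}_i^{-1}$, $\overline{u_j}$ handled by $[u]_q$ being bar-invariant), and then recognise the result as a telescoping ratio of infinite $q$-Pochhammer symbols. The underlying identity is $(x;q^2)_N=(x;q^2)_\infty/(xq^{2N};q^2)_\infty$, so every finite Pochhammer factor becomes a quotient $f(\cdot)/f(\text{shifted})$ of the required form. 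Since $\Sigma^*_{\mathrm l}=\set{\pm\beta}$ with $\beta=\alpha_1$ for $\mathsf{AI}_1$ and $\beta=2\tilde\alpha_1$ for $\mathsf{AIV}_n$, the product on the right consists of one $f_\beta$ factor and one $f_{-\beta}$ factor. The $f_{-\beta}$ factors should come from $\Res(\Phi)$, and the $f_\beta$ factors from its conjugate.

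For $\mathsf{AI}_1$ I would first match the four Pochhammer factors in Proposition~\ref{prop-radial-parts-rk1-csf} with the Askey--Wilson parameters $(a,b,c,d)=(u_1u_2^{-1}q,-u_1u_2q,u_1^{-1}u_2q,-u_1^{-1}u_2^{-1}q)$, where $u_1=q^\tau$ and $u_2=q^\sigma$ (the characters have $\bm s=[\sigma]$, $\bm t=[\tau]$). Then $-q^{\sigma+\tau+1}=b$, $-q^{-\sigma-\tau+1}=d$, $q^{\tau-\sigma+1}=a$ and $q^{\sigma-\tau+1}=c$. The exponents are $(a_+)^+=((l+m)/2)^+$ on $b$, $(a_-)^+=((l-m)/2)^+$ on $a$, $(-a_-)^+$ on $c$ and $(-a_+)^+$ on $d$. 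With $q_\alpha^2=q^2$ for $\alpha=\pm\alpha_1$, the identity $(be^{-\alpha_1};q^2)_N=f(\dots)/f(\dots bq^{2N}\dots)$ produces a shift of $b$ by $q_\alpha^{2N}$. This gives a shift of $q_\alpha^{(l+m)^+}$ exactly when $q_\alpha$ is normalised with $q_{\alpha}^2=q^{|\alpha|^2}$ and $\alpha$ is the long root of $\Sigma^*$. I will need to check this convention, together with the factor $e^{(|a_+|+|a_-|)\omega_1}$, which cancels against its conjugate.

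Next I apply the bar involution. It sends $q^{x}\mapsto q^{-x}$ and $e^{-\alpha_1}\mapsto e^{\alpha_1}$, so $b=-q^{\sigma+\tau+1}\mapsto -q^{-\sigma-\tau-1}=b^{-1}\cdots$, and I will have to track the $q^{\pm1}$ carefully. The conjugate Pochhammer $(\bar b e^{\alpha_1};q^{-2})_N$ should be rewritten, via $(x;q^{-2})_N=(xq^{-2N+2};q^2)_N$, as a $q^2$-Pochhammer in $e^{\alpha_1}$ with a parameter in $\set{a,b,c,d}$. This works because $\set{a,b,c,d}$ is closed under $x\mapsto q^2\bar x$ up to the pairing $a\leftrightarrow c$, $b\leftrightarrow d$. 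The most delicate bookkeeping is the claim that the shifted parameters in the denominator come out exactly as stated for both roots $\pm\beta$. If a mismatch appears, I would first suspect the $q$ versus $q^{-1}$ convention in $f_\alpha$ or the sign of $e^{\pm\alpha}$, and adjust by reindexing.

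For $\mathsf{AIV}_n$, with $l=m$, only one Pochhammer appears: $((-1)^{n-1}q\,\bm c_n\bm d_1^{-1}e^{-2\tilde\alpha_1};q^2)_l$ for $l>0$, and the analogous one for $l<0$. By Lemma~\ref{lem-rk1-nonletzter-macdonald}, $(-1)^{n-1}q\bm c_n/\bm d_1$ equals the parameter $d$ (or $b$) with $q_\beta$ in place of $q$. Conversely $(l-m)^+=0$, and only $(l+m)^+=2l$ or $(-l-m)^+$ is nonzero. Hence exactly one of $b,d$ is shifted, by $q_\beta^{2l}$, consistent with a single finite Pochhammer of length $l$ in base $q_\beta^2$. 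Conjugation uses $\overline{\bm c_i}=\bm c_i^{-1}$, and I expect it to turn the parameter into its partner among $\set{a,b,c,d}$ evaluated at $e^{\beta}$, as in the $\mathsf{AI}_1$ case. The main obstacle throughout is the exponent and normalisation bookkeeping, not any conceptual step. I would carry it out first for $\mathsf{AI}_1$ with $l=m=1$, using Lemma~\ref{lem-csf-ai1}, as a sanity check.
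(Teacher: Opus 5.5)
Your overall plan is the paper's own route. You read off the finite Pochhammer products in Proposition~\ref{prop-radial-parts-rk1-csf}, telescope each one via $(x;q^2)_N=(x;q^2)_\infty/(xq^{2N};q^2)_\infty$, and multiply by the conjugate. Your identification $a=q^{\tau-\sigma+1}$, $b=-q^{\sigma+\tau+1}$, $c=q^{\sigma-\tau+1}$, $d=-q^{-\sigma-\tau+1}$ is correct, with $b,a,c,d$ shifted by $q^{(l+m)^+},q^{(l-m)^+},q^{(m-l)^+},q^{(-l-m)^+}$. So is $d=(-1)^{n-1}q\,\bm c_n/\bm d_1$ for $\mathsf{AIV}_n$.

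\textbf{The bar convention breaks the key step.} You let $\overline{\phantom{x}}$ also invert $q$, $u_j$ and $\bm c_i$. Then, as you note, $\bar b=q^{-2}d$, so $\overline{(be^{-\beta};q^2)_N}=(dq^{-2N}e^{\beta};q^2)_N$. The $e^{\beta}$-half of the product therefore becomes $f_\beta(a,b,c,dq^{-2N})/f_\beta(a,b,c,d)$ instead of the required $f_\beta(a,b,c,d)/f_\beta(a,bq^{2N},c,d)$.

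This cannot be fixed by reindexing $f_\alpha$ or flipping $e^{\pm\alpha}$, because under your bar the identity is simply false. Take $l=m=1$ (Lemma~\ref{lem-csf-ai1}). Your convention gives $2+q^{\sigma+\tau+1}e^{-\alpha_1}+q^{-\sigma-\tau-1}e^{\alpha_1}$, whereas the right-hand side is $(1+q^{\sigma+\tau+1}e^{\alpha_1})(1+q^{\sigma+\tau+1}e^{-\alpha_1})$. Structurally, $\Res(\Phi)\overline{\Res(\Phi)}$ is invariant under any involutive bar, but the right-hand side is not invariant under yours unless $\sigma+\tau+1=0$.

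The bar here must be $e^\lambda\mapsto e^{-\lambda}$ with coefficients fixed. This is also what makes the product the $W_\Sigma$-invariant restriction of a zonal spherical function in Lemma~\ref{lem-shape-bottom}, and it matches the inner product used in Theorem~\ref{thm-chisf}. With this bar there is no partner swap $b\leftrightarrow d$. The conjugate of $(xe^{-\beta};q_\beta^2)_N$ is just $(xe^{\beta};q_\beta^2)_N$ with the same $x$, the monomial prefactors $e^{\pm N\omega_1}$ (resp.\ $e^{\pm l\omega_n}$) cancel, and the formula follows immediately for both roots $\pm\beta$.

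\textbf{The general-parameter case is missing.} Your computation only covers $\bm s_1=[\sigma]_q$ and $\bm t_1=[\tau]_q$ with $\sigma,\tau\in\Z$, because that is all Lemma~\ref{lem-csf-ai1} and Proposition~\ref{prop-radial-parts-rk1-csf} establish. The statement explicitly includes parameters that are not $q$-numbers. You need the extension step the paper uses. Both sides are rational functions of the parameters (of $u_1,u_2$, say), and they agree on the Zariski-dense set $\{(q^\tau,q^\sigma)\mid \sigma,\tau\in\Z\}$, so they agree identically.
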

\begin{proof}
    This follows from Proposition~\ref{prop-radial-parts-rk1-csf}.

    For $\mathsf{AI}_1$, we note that the LHS and the RHS are both
    rational functions in $\bm{s}_1,\bm{t}_1$, and loc.cit.
    establishes equality whenever $\bm{s}_1=[\sigma]_q$ and
    $\bm{t}_1=[\tau]_q$.
    (Note that $\alpha = \pm\tilde{\alpha}_1$ and we have
    $q_\alpha^2=q^{\abs{\tilde{\alpha}_1}^2}=q^{\abs{\alpha_1}^2}=q^2$.)

    For $\mathsf{AIV}_n$, we note that
    $b=(-1)^{n-1}\frac{\bm{c}_1}{\bm{d}_n}q_\alpha$ and that
    $d=(-1)^{n-1}\frac{\bm{c}_n}{\bm{d}_1}q_\alpha$.
\end{proof}

\subsection{Higher Rank}\label{sec-char-higher-rank}
We now return to the general case and adopt notation from
Section~\ref{sec-rk1-reduction}.

For $\beta\in\Sigma^*$ define
\[
    w_\alpha := \frac{f_\alpha(a,b,c,d)}{f_\alpha\qty(aq_\alpha^{(l-m)^+},bq_\alpha^{(l+m)^+}, cq_\alpha^{(m-l)^+},dq_\alpha^{(-l-m)^+})}
    \in k[e^\alpha]
\]
and define
\[
    w := \prod_{\alpha\in \Sigma^*_{\mathrm{l}}}w_\alpha,
\]
where $\Sigma^*_{\mathrm{l}}$ are the long roots.

\begin{lemma}\label{lem-shape-bottom}
    Let $i\in I_\circ$ be such that $\Q\tilde{\alpha}_i\cap\Sigma^*_{\mathrm{l}}=\set{\beta}$, and let
    $\chi'$ be a character of $\uqb'$ (and $\chi$ of $\uqb$) such
    that $\chi'_{\uqb'[i]} = \chi_l$ and $\chi|_{\uqb[i]} = \chi_m$,
    and let
    $\mu\in\mathfrak{B}^+(\chi',\chi)$,
    then
    \[
        \Res(\Phi^\mu_{\chi',\chi})\overline{\Res(\Phi^\mu_{\chi',\chi})}
        \propto w.
    \]
\end{lemma}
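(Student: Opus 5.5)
Since $\mu$ is the bottom element of $X^+(\chi,\chi')$, the plan is to show that $\Phi^\mu_{\chi',\chi}$ is completely determined by its behaviour along the special rank-one direction $\beta$, and then to reduce to the rank-one Corollary via the rank-1 reduction of Section~\ref{sec-rk1-reduction}.

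First, set $\phi:=\Phi^\mu_{\chi',\chi}$. For every $x\in Z_{\widetilde{\uq}}(\widetilde{\uqb})$, the function $x\triangleright\phi$ is again $(\chi',\chi)$-spherical in $\mathcal{A}(L(\mu))$. By multiplicity one it is therefore a multiple of $\phi$. Hence $\Res(\phi)$ is a joint eigenfunction of the operators obtained as the analogue of Lemma~\ref{lem-action-rad} for character spherical functions. Their leading terms are $p_{\chi',\chi}T_\nu p_{\chi',\chi}^{-1}$ for a power series $p_{\chi',\chi}$. This series would be constructed exactly as $p'$ in Lemma~\ref{lem-definition-p'}, replacing $b,b^*$ with the $\chi$- resp. $\chi'$-semi-invariant vectors. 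Those vectors exist uniquely by Lemma~\ref{lem-verma-adjunction} applied to $V=\chi'$, $W=\chi$.

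Second, repeat the factorisation of Proposition~\ref{prop-existence-w}, Lemma~\ref{lem-existence-ki} and Proposition~\ref{prop-p-function} verbatim. This gives $p_{\chi',\chi}=\prod_{\alpha\in\Sigma^{\mathrm{ind},+}}p_{\alpha,\chi',\chi}$, where each factor comes from the rank-one pair $(\uqb'[j],\uqb[j])$ with restricted characters. The characters are nontrivial only on the Hermitean rank-one subdiagrams, i.e. those $j$ with $\tilde\alpha_j\in W_\Theta\tilde\alpha_i$. All other factors coincide with the zonal ones, and their contributions are $\overline{\cdot}$-symmetric up to constants.

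Third, compare $\phi$ with a zonal function. By the module structure of $E^{\chi',\chi}$ over $E^\epsilon$, one has $\Res(\phi)\cdot\Res(E^\epsilon)=\Res(E^{\chi',\chi})$. The joint eigenfunctions of the character operators are therefore $\Res(\phi)\cdot P_\lambda$, with $P_\lambda$ the zonal Koornwinder polynomials from Theorem~\ref{thm-zsf}. Conjugating the character operators by $\Res(\phi)$ must thus yield operators with the zonal leading terms. Comparing leading coefficients gives
\[
\frac{p_{\chi',\chi}}{p_{\chi',\chi}(\cdot+\nu)}=\frac{\Res(\phi)(\cdot+\nu)}{\Res(\phi)}\cdot\frac{p}{p(\cdot+\nu)}
\]
for all $\nu\in 2L^+$. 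Hence $\Res(\phi)\propto p/p_{\chi',\chi}$ up to a $T_{2L}$-periodic factor, and this factor is constant by the constant-term argument of Theorem~\ref{thm-initial-term-given-by-p}. By the second step, $p/p_{\chi',\chi}$ is a product over the $W_\Theta$-orbit of $\beta$ of the corresponding rank-one quotients. Multiplying by the bar-conjugate and using the rank-one Corollary, each rank-one quotient times its conjugate is $w_\alpha$, where $\alpha$ runs over $\pm$ the positive long roots. This yields $\Res(\phi)\overline{\Res(\phi)}\propto w$.

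The main obstacle is the first and second steps. Adapting the graded Verma-module machinery to characters requires checking that $\chi,\chi'$ are compatible with the filtration and with $\gr$ (Lemma~\ref{lem-B-embedded}), and that the rank-one reduction of $b$ still works when the highest-weight conditions of Lemma~\ref{lem-recursive-step} involve $\chi$ instead of $\epsilon$. My first approach here is to twist by the algebra automorphisms $\rho_\chi$ used in Lemma~\ref{lem-product-csf}, which turn $\chi$-semi-invariance into $\epsilon$-invariance for a QSP with shifted parameters. With that twist the earlier lemmas apply unchanged.
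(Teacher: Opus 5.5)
\textbf{There is a genuine gap, in your third step rather than in Steps 1--2.}

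\emph{The false premise.} From $E^{\chi',\chi}=\Phi^\mu_{\chi',\chi}\cdot E^{\epsilon}$ you conclude that the joint eigenfunctions of the character radial parts are $\Res(\phi)P_\lambda$, with $P_\lambda$ the \emph{zonal} Koornwinder polynomials of Theorem~\ref{thm-zsf}. Freeness is only a statement about spaces. For $0\neq\psi\in E^\epsilon(\nu)$, the product $\phi\psi$ lies in $\mathcal{A}(L(\mu)\otimes L(\nu))$. It is in general a combination of several $\Phi^\lambda_{\chi',\chi}$, so it is not an eigenfunction of $x\triangleright\cdot$ for $x\in Z_{\widetilde{\uq}}(\widetilde{\uqb})$. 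The actual eigenfunctions are $\Res(\Phi^\lambda_{\chi',\chi})=\Res(\phi)Q_\lambda$. Theorem~\ref{thm-chisf} shows that $Q_\lambda$ is the Koornwinder polynomial for the \emph{shifted} parameters $(a',b',c',d',t)$, which differ from $(a,b,c,d,t)$ whenever $(l,m)\neq(0,0)$. That theorem is deduced from precisely the lemma you are proving, via $w\triangledown_{a,b,c,d,t}=\triangledown_{a',b',c',d',t}$. The factor $w$ encodes exactly this parameter shift, so assuming the $Q_\lambda$ are zonal assumes away what the lemma is about.

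\emph{What this does to your argument.} Without the premise, the leading-term comparison carries no information. At best, $\Res(\phi)^{-1}D\,\Res(\phi)$ has leading terms $\hat p\,T_\nu\hat p^{-1}$, where $\hat p$ is the $p$-function of the (at this stage unknown) shifted parameters. You then only get the tautology $\Res(\phi)\propto p_{\chi',\chi}/\hat p$. Even granting the premise, the factor in your display is inverted: $D(\Res(\phi)f)=\Res(\phi)D_0f$ gives $a_\nu=\frac{\Res(\phi)}{\Res(\phi)(\cdot+\nu)}a^0_\nu$. The obstacle you anticipated in Steps 1--2 is also real. Lemma~\ref{lem-verma-adjunction} uses $K_hv_\lambda=v_\lambda$ for $h\in T$, which excludes characters that are nontrivial on $k[T]$, such as $\chi_\ell$ for $\mathsf{AIV}_n$.

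\emph{How the paper argues instead.} It needs none of this machinery.
\begin{enumerate}
\item Set $w_1=\Res(\phi)\overline{\Res(\phi)}$ and use \cite[Lemma~5.6]{Mee25} to factor $w_1=w_\beta w_{-\beta}f$. Here $w_\beta w_{-\beta}$ is the rank-one datum of $(\uq[i],\uqb[i],\uqb'[i])$, and $f$ involves only monomials orthogonal to $\beta$.
\item Every long root of $\Sigma^*$ is $\pm\beta$ or orthogonal to $\beta$, so $w/w_1$ also involves only such monomials.
\item $w_1=\Res(\Xi(\phi,\phi))$ is the restriction of a zonal spherical function, hence $W_\Sigma$-invariant, as is $w$.
\item A $W_\Sigma$-invariant function of monomials orthogonal to $\beta$ is constant, so $w_1\propto w$.
\end{enumerate}
The two ideas your proposal lacks are this factorisation along $\beta$ and the $W_\Sigma$-invariance of $\Res(\phi)\overline{\Res(\phi)}$.
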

\begin{proof}
    Write $w_1:= \Res(\Phi^\mu_{\chi',\chi})\overline{\Res(\Phi^\mu_{\chi',\chi})}$,
    and write $w_1w_2=q$ for a rational function $w_2\in k(P(2\Sigma^*))$.
    By \cite[Lemma~5.6]{Mee25}, we can write $w_1=w_\beta w_{-\beta} k$ for $k\in k[e^\alpha\mid\alpha\in P(2\Sigma^*),\alpha\cdot\beta=0]$
    (as $w_\beta$ is the shifted character spherical function
    for $(\uq[i],\uqb[i], \uqb'[i])$).

    We thus obtain $w=w_1w_2=w_\beta w_{-\beta} k w_2$, which implies
    $\frac{w}{w_\beta w_{-\beta}} = kw_2$.
    Note that for $\alpha\in\Sigma^*_{\mathrm{l}}$ we have either
    $\alpha=\pm\beta$ or $\alpha\cdot\beta=0$, so that
    \[
        \frac{w}{w_\beta w_{-\beta}}
        = \prod_{\substack{\alpha\in\Sigma^*_{\mathrm{l}}\\\alpha\cdot\beta=0}} w_\alpha
        \in k[e^\alpha\mid \alpha\in P(2\Sigma^*), \alpha\cdot\beta=0].
    \]
    We conclude that $w_2$, too, can be written in terms of monomials
    orthogonal to $\beta$.
    
    Moreover, $w_1=\Res(\Xi(\Phi^\mu_{\chi',\chi},\Phi^\mu_{\chi',\chi}))$ is the shifted restriction of the zonal
    spherical function $\Xi(\Phi^\mu_{\chi',\chi},\Phi^\mu_{\chi',\chi})$ (\cite[Definition~2.16]{qmsf}
    or rather \cite[Lemma~4.5]{Mee25}).
    Consequently, $w_1$ is
    $W^\Sigma$-invariant, as is $w$, so that we conclude that
    $w_2$ must be $W^\Sigma$-invariant as well.
    But since it can be written in terms of monomials orthogonal
    to $\beta$, the invariance implies that it is constant.
\end{proof}

\begin{theorem}\label{thm-chisf}
    Let $(a,b,c,d,t)$ or $(a,b,c,d)$ be the parameters from 
    Theorem~\ref{thm-zsf}.
    Let $i\in I_\circ$ be such that $\Q\tilde{\alpha}_i\cap\Sigma^*_{\mathrm{l}}=\set{\beta}$, and let
    $\chi'$ be a character of $\uqb'$ (and $\chi$ of $\uqb$) such
    that $\chi'_{\uqb'[i]} = \chi_l$ and $\chi|_{\uqb[i]} = \chi_m$.
    Let $\mathfrak{B}(\chi',\chi)=\set{\mu}$ and
    $\lambda\in X^+(\chi',\chi)$, then
    \[
        \Res(\Phi^{\lambda}_{\chi',\chi})
        = \Res(\Phi^{\mu}_{\chi',\chi})
        P_{\lambda-\mu},
    \]
    where $P_{\lambda-\mu}$ is the symmetric Macdonald polynomial
    with parameters
    \[
        (a',b',c',d',t)=(aq_\beta^{(l-m)^+}, bq^{(l+m)^+}, cq^{(m-l)^+},
        dq^{(-l-m)^+},t)
    \]
    and all the other data as in Theorem~\ref{thm-zsf}.
\end{theorem}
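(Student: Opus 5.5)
The strategy is to reduce to the zonal case by dividing out the bottom character spherical function, and then to identify the quotient with a Macdonald polynomial for the shifted parameters via the difference-operator characterisation used in Theorem~\ref{thm-zsf}.

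\textbf{Step 1: factorisation.} By the earlier freeness result, $E^{\chi',\chi}$ is a free $E^\epsilon$-module with basis $\phi=\Phi^\mu_{\chi',\chi}$. Hence $\Phi^\lambda_{\chi',\chi}=\phi\cdot\psi$ for some $\psi\in E^\epsilon$. Since $\Res$ is a ring homomorphism, $\Res(\Phi^\lambda_{\chi',\chi})=\Res(\phi)\Res(\psi)$, and $\Res(\psi)\in k[2L]^{W_\Sigma}$ after the $\rho$-shift. Comparing leading terms gives $\Res(\psi)=e^{\lambda-\mu}+\lot$. It therefore remains to show that $\Res(\psi)$ equals $P_{\lambda-\mu}$ for the shifted parameters $(a',b',c',d',t)$.

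\textbf{Step 2: a twisted radial part.} Let $x\in Z_{\widetilde{\uq}}(\widetilde{\uqb})$. Then $x\triangleright\Phi^\lambda_{\chi',\chi}$ is again $(\chi',\chi)$-spherical in $\mathcal{A}(L(\lambda))$, so it equals $\gamma_\chi(x)(\lambda)\Phi^\lambda_{\chi',\chi}$ by multiplicity one. Repeating Lemma~\ref{lem-existence-radial-part} with $\epsilon$ replaced by $\chi,\chi'$ yields a character radial part $\Rad_{\chi',\chi}(x)$ satisfying $\Res(x\triangleright\Phi)=\Rad_{\chi',\chi}(x)\Res(\Phi)$. Conjugating by the bottom function,
\[
D_x:=\Res(\phi)^{-1}\Rad_{\chi',\chi}(x)\Res(\phi),
\]
gives a family of commuting $W_\Sigma$-invariant $q$-difference operators. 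These operators diagonalise every $\Res(\psi)$ as $\lambda$ ranges over $\mu+2L^+$.

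\textbf{Step 3: identifying the operators.} Rerunning Sections~\ref{sec-graded}--\ref{sec-rk1-reduction} with characters, the leading term of $D_x$ is $\tilde p T_\nu\tilde p^{-1}$ for a power series $\tilde p$ that factorises over rank-1 subdiagrams. The conjugation by $\Res(\phi)$ only contributes $\Res(\phi)/\Res(\phi)(\cdot+\nu)$. Lemma~\ref{lem-shape-bottom} shows that $\Res(\phi)\overline{\Res(\phi)}\propto w=\prod_{\alpha\in\Sigma^*_{\mathrm l}}w_\alpha$. Each $w_\alpha$ replaces the $f_\alpha(a,b,c,d)$ factors, that is, the $q$-Pochhammer factors of $p$ attached to long roots, by those with $(a',b',c',d')$. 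The medium-root factors carry the parameter $t$, which is untouched, consistent with the fact that characters are trivial on the rank-1 pieces whose $\Sigma^*$ is reduced. Consequently $\tilde p$ is the $p$-function of Theorem~\ref{thm-zsf} with $(a,b,c,d)$ replaced by $(a',b',c',d')$. For pseudominuscule $\omega$, Lemma~\ref{lem-p-determines-DO} then determines $D_x$ completely, with the constant term fixed by evaluating at the trivial eigenfunction $\Res(\psi)=1$ at $\lambda=\mu$. By the computation of $c_{\tau(\lambda)}$ in the proof of Theorem~\ref{thm-zsf}, this operator is Stokman's $D_{-\omega}$ for the parameters $(a',b',c',d',t)$. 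By \cite[Remark~9.3.23]{Sto}, $\Res(\psi)=P_{\lambda-\mu}$.

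\textbf{Main obstacle.} The hard part is Step 3: extracting the leading coefficient of $D_x$ from only the product $\Res(\phi)\overline{\Res(\phi)}$. I would try first to work directly in rank 1, where Proposition~\ref{prop-radial-parts-rk1-csf} gives $\Res(\phi)$ explicitly as a product of finite $q$-Pochhammer symbols. Multiplying $p$ by such finite Pochhammer ratios shifts the Askey--Wilson parameters exactly as in Lemma~\ref{lem-nonletzter-p-function}. I would then transport this to higher rank through the factorisation of Proposition~\ref{prop-p-function} together with \cite[Lemma~5.6]{Mee25}.
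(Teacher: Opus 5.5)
Your Step~1 is fine: the freeness statement gives the same divisibility $\Res(\Phi^\lambda_{\chi',\chi})=\Res(\phi)Q_\lambda$ that the paper takes from \cite[Lemma~6.5]{qmsf}. The gap is in Step~3, exactly where you locate the obstacle, and the way you propose around it does not work.

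If the character radial part has leading term $p^\chi T_\nu (p^\chi)^{-1}$, then the leading term of $D_x$ is governed by $p^\chi/\Res(\phi)$. So everything hinges on $p^\chi$, and nothing in the paper provides it. Rerunning Sections~\ref{sec-graded}--\ref{sec-rk1-reduction} would need a graded universal $(\chi',\chi)$-spherical vector. As rank-one input it would also need either a direct computation of rank-one character radial parts, or all rank-one character spherical functions, i.e.\ the rank-one case of the very statement. Proposition~\ref{prop-radial-parts-rk1-csf} only gives the bottom ones.

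Your shortcut of multiplying the zonal $p$ by Pochhammer ratios from $\Res(\phi)$ tacitly takes $p^\chi=p$, and that is false. Take $\mathsf{AI}_1$ with $l=m=n>0$, $\beta=\alpha_1$, $b=-u_1u_2q$. Then $\Res(\phi)=e^{n\omega_1}(be^{-\beta};q^2)_n$, so the $b$-dependent factor of $p/\Res(\phi)$ is $(b^{-1}e^{-\beta};q^{-2})_\infty/(be^{-\beta};q^2)_n$. The $p$-function for $b'=bq^{2n}$ instead contains $(b^{-1}e^{-\beta};q^{-2})_\infty/(b^{-1}e^{-\beta};q^{-2})_n$. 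For Step~3 to produce the shifted operator one would therefore need
\[
    p^\chi = p\,\frac{(be^{-\beta};q^2)_n}{(b^{-1}e^{-\beta};q^{-2})_n}\ne p .
\]
Lemma~\ref{lem-shape-bottom} cannot supply this. It only controls the product $\Res(\phi)\overline{\Res(\phi)}$, which determines neither $\Res(\phi)$ nor the effect of conjugating an operator by it. Separately, the $W_\Sigma$-invariance of $D_x$ in Step~2 needs an argument, since restrictions of character spherical functions are not symmetric.

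The missing idea is to use that product where a product actually occurs, namely in the orthogonality weight. The paper observes that $\Xi(\Phi^\lambda_{\chi',\chi},\Phi^{\lambda'}_{\chi',\chi})$ is a zonal spherical function. By Theorem~\ref{thm-zsf} its Haar integral is a constant term against $\triangledown_{a,b,c,d,t}$, and by Schur orthogonality
\[
    \ct\bigl(Q_\lambda\overline{Q_{\lambda'}}\,\Res(\phi)\overline{\Res(\phi)}\,\triangledown_{a,b,c,d,t}\bigr)=0
    \qquad(\lambda\ne\lambda').
\]
Lemma~\ref{lem-shape-bottom} gives $\Res(\phi)\overline{\Res(\phi)}\propto w$, and $w\triangledown_{a,b,c,d,t}=\triangledown_{a',b',c',d',t}$. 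So this says $\ct\bigl(Q_\lambda\overline{Q_{\lambda'}}\,\triangledown_{a',b',c',d',t}\bigr)=0$. The $Q_\lambda$ are triangular with leading term $e^{\lambda-\mu}$ and orthogonal for the shifted weight, which identifies them with $P_{\lambda-\mu}$. No character radial parts are needed at all.
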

\begin{proof}
    By \cite[Lemma~6.5]{qmsf}, we have that
    $\Res(\Phi^{\lambda}_{\chi',\chi})$ is divisible by
    $\Res(\Phi^{\mu}_{\chi',\chi})$.
    Furthermore, the quotients is a $W^\Sigma$-symmetric
    polynomial $Q_\lambda$ with leading term $e^{\lambda-\mu}$.

    Write $\triangledown_{a,b,c,d,t}$ for the Macdonald weight distribution for the parameters $(a,b,c,d,t)$.
    Then
    \[
        w\triangledown_{a,b,c,d,t} = \triangledown_{a',b',c',d',t},
    \]
    so that
    \begin{align*}
        h(\Xi(\Phi^\lambda_{\chi',\chi},\Phi^{\lambda'}_{\chi',\chi}))
        &= \ct(\Res(\Phi^{\lambda}_{\chi',\chi})
        \overline{\Res(\Phi^{\lambda'}_{\chi',\chi})
        }\triangledown_{a,b,c,d,t})\\
        &= \ct(Q_\lambda\overline{Q_\lambda}
        \Res(\Phi^\mu_{\chi',\chi})\overline{\Res(\Phi^\mu_{\chi',\chi})}\triangledown_{a,b,c,d,t})\\
        &= \ct(Q_\lambda\overline{Q_\lambda} w\triangledown_{a,b,c,d,t})\\
        &= \ct(Q_\lambda\overline{Q_\lambda}\triangledown_{a',b',c',d',t})\\
        &\in \delta_{\lambda,\lambda'}k.
    \end{align*}
    This shows that $Q_\lambda = P_{\lambda-\mu}$ is a Macdonald polynomial with parameters $(a',b',c',d',t)$.
\end{proof}

\subsection{\texorpdfstring{$\epsilon$}{Epsilon}-Symmetry}
We now consider a special case in which we can not only give
an interpretation for the fraction
$\frac{\Res(\Phi^\lambda_{\chi',\chi})}{\Res(\Phi^\mu_{\chi',\chi})}$
(for $\lambda\in X^+(\chi',\chi)$ and $\mu\in \mathfrak{B}^+(\chi',\chi)$),
but for the entire polynomial $\Res(\Phi^\lambda_{\chi',\chi})$.

\begin{lemma}\label{lem-bottom-in-the-wild}
    Let $(\uq,\uqb,\uqb')$ be a Hermitean symmetric pair with
    $\mathcal{S}=\set{i}$ (i.e. with one non-standard parameter).
    Fix $\bm{t}_i=[u_1]_{q_i}$ and $\bm{s}_i=[u_2]_{q_i}$.
    Let $\abs{l\pm m}=2$, set $\chi':= \chi_{u_1,l}$ and
    $\chi:= \chi_{u_2,m}$ and consider $\mu\in\mathfrak{B}^+(\chi',\chi)$.
    In this case we have
    \[
        \Res(\Phi^\mu_{\chi', \chi})
        = \prod_{\alpha\in\Sigma^{*,+}_{\mathrm{l}}}
        \qty(e^\alpha + xye^{-\alpha} - x - y)
    \]
    (note that $2\mu=\sum_{\alpha\in\Sigma^{*,+}_{\mathrm{l}}}\alpha$
    for
    \[
        \set{x,y} = \begin{cases}
            \set{a,b} & (l,m)=(0,2)\\
            \set{b,c} & (l,m)=(2,0)\\
            \set{c,d} & (l,m)=(0,-2)\\
            \set{a,d} & (l,m)=(-2,0)
        \end{cases}
    \]
    for $(a,b,c,d)$ as in Theorem~\ref{thm-zsf}.
\end{lemma}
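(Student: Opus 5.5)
The plan is to reduce to rank one via Lemma~\ref{lem-shape-bottom} and then identify the polynomial from its square norm. By Lemma~\ref{lem-shape-bottom}, $\Res(\Phi^\mu_{\chi',\chi})\overline{\Res(\Phi^\mu_{\chi',\chi})}$ is proportional to $w=\prod_{\alpha\in\Sigma^*_{\mathrm{l}}}w_\alpha$. We therefore first compute $w_\alpha$ explicitly for $\abs{l\pm m}=2$. In each of the four cases $(l,m)\in\set{(0,\pm2),(\pm2,0)}$, exactly two of the four shifts $(l-m)^+,(l+m)^+,(m-l)^+,(-l-m)^+$ equal $2$ and the other two vanish. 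This singles out the pair $\set{x,y}$ listed in the statement. For example, $(l,m)=(0,2)$ gives $(l-m)^+=0$, $(l+m)^+=2$, $(m-l)^+=2$, $(-l-m)^+=0$. We need to match this with the labelling of $(a,b,c,d)$ and pin down the convention (possibly the statement's correspondence reflects the conventions of Lemma~\ref{lem-rk1-nonletzter-macdonald}); this bookkeeping is checked directly.

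Since $q_\alpha^2$ is the base of $f_\alpha$, a shift by $q_\alpha^{2}$ in a parameter cancels one factor of the Pochhammer product. Hence $w_\alpha=(1-xe^\alpha)(1-ye^\alpha)$, up to the precise exponent convention. Multiplying $w_\alpha w_{-\alpha}$ then gives
\[
(1-xe^{\alpha})(1-ye^{\alpha})(1-xe^{-\alpha})(1-ye^{-\alpha})
\propto (e^\alpha+xye^{-\alpha}-x-y)\,\overline{(e^\alpha+xye^{-\alpha}-x-y)},
\]
using $\overline{x}=x^{-1}$ and $\overline{y}=y^{-1}$. These identities follow from the unitarity conditions on $\bm{c},\bm{d}$ and the reality of $\bm{s},\bm{t}$, via the parametrisation in \eqref{eq-abcd-list}. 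Concretely, $(e^\alpha-x)(1-ye^{-\alpha})=e^\alpha+xye^{-\alpha}-x-y$, and its bar-conjugate is $(e^{-\alpha}-x^{-1})(1-y^{-1}e^{\alpha})$, which is proportional to $(1-xe^{\alpha})(1-ye^{-\alpha})$ after rescaling by $-xy$-type constants. So $w$ factors as $F\overline{F}$ with $F=\prod_{\alpha\in\Sigma^{*,+}_{\mathrm{l}}}(e^\alpha+xye^{-\alpha}-x-y)$.

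The main obstacle is passing from $\Res(\Phi)\overline{\Res(\Phi)}\propto F\overline F$ to $\Res(\Phi)=F$, because factorisations $G\overline G$ are not unique. To handle this we use two constraints. First, $\Res(\Phi^\mu_{\chi',\chi})$ has leading term $e^\mu$ with $2\mu=\sum_{\alpha\in\Sigma^{*,+}_{\mathrm l}}\alpha$, so its support lies in $\mu-Q^+$. Second, by \cite[Lemma~5.6]{Mee25} (as used in the proof of Lemma~\ref{lem-shape-bottom}), $\Res(\Phi^\mu)$ is divisible by the rank-one bottom function attached to each long root $\beta$. Proposition~\ref{prop-radial-parts-rk1-csf} identifies that rank-one function as $e^{\beta}(x'e^{-\beta};q^2)$-type factors, namely exactly $e^\beta+xye^{-\beta}-x-y$. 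Since the long positive roots of $\Sigma^*$ are mutually orthogonal, conjugating by $W_\Sigma$ (with $\Res(\Phi)\overline{\Res(\Phi)}$ being $W_\Sigma$-invariant) gives divisibility by every factor of $F$. The pairwise coprimality of these factors then shows that $F$ divides $\Res(\Phi)$. Comparing degrees with the support bound $\mu-Q^+$ shows the quotient is a constant, and the leading-term normalisation $e^\mu$ forces that constant to be $1$.
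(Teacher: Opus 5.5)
\textbf{The gap: the factors at the non-simple long roots.} The problem is in your last step. You go from divisibility of $\Res(\Phi^\mu_{\chi',\chi})$ by $F_\beta:=e^\beta+xye^{-\beta}-x-y$, for the long root $\beta$ attached to the node $i$, to divisibility by $F_\alpha$ for every $\alpha\in\Sigma^{*,+}_{\mathrm l}$. Here \cite[Lemma~5.6]{Mee25} together with Proposition~\ref{prop-radial-parts-rk1-csf} only gives you the factor at $\beta$.

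The $W_\Sigma$-invariance of $\Res(\Phi)\overline{\Res(\Phi)}$ only transports divisibility statements about the \emph{product}. Those are already contained in $w=F\overline{F}$. They say nothing about how the factors $1-xe^{\pm\alpha}$, $1-ye^{\pm\alpha}$ are split between $\Res(\Phi)$ and its conjugate.

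Concretely, take rank two with $\Sigma^{*,+}_{\mathrm l}=\{\beta,\gamma\}$ and set
\[
G=F_\beta\, e^{\gamma}(1-xe^{-\gamma})(1-y^{-1}e^{-\gamma}).
\]
This $G$ has leading term $e^{\beta+\gamma}$ and the same Newton polytope as $F$. It is divisible by $F_\beta$ with $\beta$-orthogonal quotient, and it satisfies $G\overline{G}=y^{-2}F\overline{F}$. Yet $G\neq F$ unless $y^2=1$. So every constraint you use is also met by a wrong answer.

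What is missing is a symmetry of $\Res(\Phi^\mu_{\chi',\chi})$ \emph{itself}: invariance under the simple reflections $r_j$, $j\neq i$. These are the reflections with $\epsilon(r_j)=1$ in Proposition~\ref{prop-epsilon-koornwinder}, where the characters are trivial on the rank-one subpair. One way to get it is the rank-one reduction at $j$: the rank-one factor there is zonal, hence $r_j$-invariant, and the remaining factor lies in $\tilde{\alpha}_j$-orthogonal monomials.

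These reflections act transitively on $\Sigma^{*,+}_{\mathrm l}$ and fix $F$. Hence $\Res(\Phi^\mu_{\chi',\chi})/F$, which lies in $\beta$-orthogonal monomials, is invariant and therefore constant. This is presumably the argument ``analogous to Lemma~\ref{lem-shape-bottom}'' behind the paper's one-line proof, and with it your square-norm factorisation is not needed at all.

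\textbf{Smaller points.} First, your conjugation computation is wrong. If $\overline{x}=x^{-1}$ and $\overline{y}=y^{-1}$, then $\overline{F_\alpha}=(xy)^{-1}F_\alpha$, not a multiple of $(1-xe^{\alpha})(1-ye^{-\alpha})$. So $F\overline{F}\propto F^2\neq w$. The identity $F\overline{F}=w$ holds for the conjugation that inverts $e^\mu$ and fixes $x,y$. That is the convention under which the rank-one corollary is true; test it with $\bm{s}_1=[\sigma]_q$, $\bm{t}_1=[\tau]_q$.

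Second, the degree comparison needs $\mu=\sum_{\alpha\in\Sigma^{*,+}_{\mathrm l}}\alpha$. In rank one $\mu=\alpha_1$, so the factor $2$ in the parenthetical is off. It also needs a two-sided support bound: support in $\mu-Q^+$ alone still allows a quotient of the form $1+\lot$. Use instead that the weights of $L(\mu)$ lie in the convex hull of $W\mu$, which here is exactly the Newton polytope of $F$.

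Third, the bookkeeping you defer does not come out as stated. With $\chi'=\chi_{u_1,l}$, $\chi=\chi_{u_2,m}$ and \eqref{eq-abcd-list}, Proposition~\ref{prop-radial-parts-rk1-csf} gives $\{b,c\},\{a,b\},\{a,d\},\{c,d\}$ for $(l,m)=(0,2),(2,0),(0,-2),(-2,0)$. This agrees with the shifts in Theorem~\ref{thm-chisf}, and it is the stated table with $l$ and $m$ interchanged. You need to address this rather than assert it checks out.
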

\begin{proof}
    This follows from Proposition~\ref{prop-radial-parts-rk1-csf}
    and a proof analogous to that of Lemma~\ref{lem-shape-bottom}.
\end{proof}

\begin{proposition}\label{prop-epsilon-koornwinder}
    In the setting of Lemma~\ref{lem-bottom-in-the-wild} let
    $\epsilon: W_\Sigma\to k^\times$ map $r_i = s_{\tilde{\alpha}_i}$
    to $-1$ and all other generators to $1$.
    For every $\lambda\in X^+(\chi',\chi)$ we have
    \[
        \Res(\Phi^\lambda_{\chi',\chi})
        \propto P^{(\epsilon)}_\lambda
    \]
    (the $\epsilon$-symmetric Macdonald polynomial)
    for the initial data
    \[
        (2R_0,-2\Delta_0,\mathrm{t},2L,2L)
    \]
    (see Theorem~\ref{thm-zsf}) and multiplicity function 
    $\kappa$ satisfying
    \[
        (\kappa_\theta \kappa_{2\theta}, -\kappa_\theta \kappa_{2\theta}^{-1}, q_i\kappa_{a_0}\kappa_{2a_0},
        -q_i \kappa_{a_0}\kappa_{2a_0}^{-1})
        = \begin{cases}
            (a,b,c,d)  & (l,m) = (0,2)\\
            (c,b,a,d) & (l,m) = (2,0)\\
            (c,d,a,b) & (l,m) = (0,-2)\\
            (a,d,c,b) & (l,m) = (-2,0)
        \end{cases}
    \]
    and $\kappa_{a_j}^2=t$ for every simple medium root $a_j$
    for $(a,b,c,d,t)$ as in loc.cit.
\end{proposition}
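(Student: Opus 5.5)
We combine Theorem~\ref{thm-chisf} with Lemma~\ref{lem-bottom-in-the-wild} and compare with the standard description of $\epsilon$-symmetric Macdonald--Koornwinder polynomials as a symmetric polynomial times an $\epsilon$-antisymmetric ``bottom'' factor. By Theorem~\ref{thm-chisf}, for $\lambda\in X^+(\chi',\chi)$ we have $\Res(\Phi^\lambda_{\chi',\chi})=\Res(\Phi^\mu_{\chi',\chi})P_{\lambda-\mu}$, where $P_{\lambda-\mu}$ is the symmetric Koornwinder polynomial with shifted parameters $(a',b',c',d',t)$. In the four cases $\abs{l\pm m}=2$, exactly two of the four parameters are shifted by $q_\beta$ (e.g.\ for $(l,m)=(0,2)$: $(l+m)^+=2$ and $(m-l)^+=2$, so $b,c$ are multiplied by $q_\beta^2$). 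By Lemma~\ref{lem-bottom-in-the-wild}, the prefactor is $\prod_{\alpha\in\Sigma^{*,+}_{\mathrm{l}}}(e^\alpha+xye^{-\alpha}-x-y)$, and each factor equals $e^{-\alpha}(e^\alpha-x)(e^\alpha-y)$. After the $\rho$-shift this should be, up to a constant, the $\epsilon$-symmetric ``Weyl denominator'' $\Delta_\epsilon=\prod_{\alpha}e^{\alpha}(1-x^{-1}e^{-\alpha})(1-y^{-1}e^{-\alpha})$-type factor from \cite[\S9]{Sto}, which is $\epsilon$-invariant for the given character $\epsilon$.

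The key steps are as follows.
\begin{enumerate}
\item Recall from \cite{Sto} (or Macdonald's book, \S5.7/\S6) that for a character $\epsilon$ of $W_0$ nontrivial only on the reflections of the long roots $\Sigma^*_{\mathrm{l}}$, the $\epsilon$-symmetric polynomial satisfies $P^{(\epsilon)}_\lambda = \Delta_{\epsilon}\, P_{\lambda-\mu}(\kappa')$. Here $\Delta_\epsilon$ is the product over long positive roots of the rank-one antisymmetriser factor built from two of the four Askey--Wilson parameters, and $\kappa'$ is the multiplicity function obtained by shifting exactly those two parameters by $q$. This is the analogue of the classical $P^{(\epsilon)}_\lambda=\Delta\cdot P_{\lambda-\rho}(tq)$ identity. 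I would prove it in the same style as Theorem~\ref{thm-chisf}. First check that $\Delta_\epsilon$ times any symmetric polynomial is $\epsilon$-symmetric. Then check that $\Delta_\epsilon\overline{\Delta_\epsilon}\triangledown_\kappa\propto\triangledown_{\kappa'}$. Orthogonality together with triangularity then forces the quotient to be the $\kappa'$-Macdonald polynomial.
\item Identify the parameters. The identification must match $\{x,y\}$ with the two parameters appearing in $\Delta_\epsilon$ under the relabelling $(a,b,c,d)\mapsto$ the four listed permutations. The permutation is chosen precisely so that the two multiplied parameters become $b,c$ in the Koornwinder convention $(\kappa_\theta\kappa_{2\theta},-\kappa_\theta\kappa_{2\theta}^{-1},q_i\kappa_{a_0}\kappa_{2a_0},-q_i\kappa_{a_0}\kappa_{2a_0}^{-1})$ with the reversed basis $-2\Delta_0$. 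The reversal explains why $\epsilon$ is nontrivial on $r_i$.
\item Check that $t$ and the medium-root multiplicities are unchanged by Theorem~\ref{thm-chisf}, and conclude $\Res(\Phi^\lambda_{\chi',\chi})\propto P^{(\epsilon)}_\lambda$.
\end{enumerate}

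The main obstacle is step 2. It is bookkeeping of conventions: which of $a,b,c,d$ become which affine multiplicities, the sign conventions for $-2\Delta_0$, and the effect of the $\rho$-shift on $x,y$. This must be done case by case for the four values of $(l,m)$, checking that $\{x,y\}$ from Lemma~\ref{lem-bottom-in-the-wild} are exactly the parameters inverted in the $\epsilon$-antisymmetriser. I would first verify the rank-one case directly against Proposition~\ref{prop-radial-parts-rk1-csf}. Then I would extend to higher rank by the same Weyl-invariance-plus-orthogonality argument used in Lemma~\ref{lem-shape-bottom}.
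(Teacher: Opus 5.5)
Your overall route is the paper's. You use Theorem~\ref{thm-chisf} to split off $\Res(\Phi^\mu_{\chi',\chi})$, with two of $a,b,c,d$ multiplied by $q_i^2$. You then use Lemma~\ref{lem-bottom-in-the-wild} to recognise that bottom function as $\delta_\epsilon$, and finish with $P^{(\epsilon)}_\lambda\propto\delta_\epsilon P_{\lambda-\rho_\ell}(\kappa q_i^{2\ell})$, $\rho_\ell=\mu$. The paper quotes this last identity from \cite[\S5.8.9]{Mac03} rather than re-deriving it by orthogonality as you propose, and the re-derivation is harmless.

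Your Step~2, however, is wrong. In the convention $(\kappa_\theta\kappa_{2\theta},-\kappa_\theta\kappa_{2\theta}^{-1},q_i\kappa_{a_0}\kappa_{2a_0},-q_i\kappa_{a_0}\kappa_{2a_0}^{-1})$, the function attached to $\epsilon$ is $\ell=$ the indicator of the orbit of $\theta$. So $\kappa'=\kappa q_i^{2\ell}$ multiplies exactly the \emph{first two} entries by $q_i^2$, and $\delta_\epsilon$ is built from those same two entries. The shifted pair is forced to be the pair $\{x,y\}$ of the bottom function, since $\Res(\Phi^\mu_{\chi',\chi})\overline{\Res(\Phi^\mu_{\chi',\chi})}\propto w$ and $w\triangledown_{a,b,c,d,t}=\triangledown_{a',b',c',d',t}$. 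It must therefore sit in slots one and two, which is what the paper's proof does.

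Putting it in slots two and three instead would mean $\kappa_\theta\mapsto q_i\kappa_\theta$, $\kappa_{2\theta}\mapsto q_i^{-1}\kappa_{2\theta}$, $\kappa_{a_0}\mapsto q_i\kappa_{a_0}$, $\kappa_{2a_0}\mapsto q_i\kappa_{2a_0}$. That is not $\kappa q_i^{2\ell}$ for any $\ell$ coming from a character of $W_\Sigma$. Also, $\prod e^\alpha(1-be^{-\alpha})(1-ce^{-\alpha})$ is then not a $\delta_\epsilon$. So neither \cite[\S5.8.9]{Mac03} nor the first check in your own Step~1 (that $\Delta_\epsilon$ times a symmetric polynomial is $\epsilon$-symmetric) goes through.

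Passing to $-2\Delta_0$ cannot repair this. We have $-\theta\in W\theta$, and $s_\theta(a_0)=\frac{\abs{\theta}^2}{2}c+\theta$ lies in the orbit of $a_0$, so the orbits feeding each slot are unchanged. What the reversal actually does is replace $\delta_\epsilon$ by $\overline{\delta_\epsilon}$, matching the leading term $e^\mu$ of the bottom function. And $\epsilon(r_i)=-1$ is the definition, not a consequence of the reversal.

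Your slot-two-and-three reading comes from taking the shifted pair off the exponents in Theorem~\ref{thm-chisf}. Those give $\{b,c\},\{a,b\},\{a,d\},\{c,d\}$ for $(l,m)=(0,2),(2,0),(0,-2),(-2,0)$. The statement and the paper's proof instead use the pairs of Lemma~\ref{lem-bottom-in-the-wild}, namely $\{a,b\},\{b,c\},\{c,d\},\{a,d\}$, which are the former with $l$ and $m$ interchanged. With these, each listed permutation puts the pair first. You should resolve this labelling mismatch, either by transposing $(l,m)$ or by naming the pair via the bottom function. Moving the shift into the $a_0$-slots is not a valid resolution.
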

\begin{proof}
    By Theorem~\ref{thm-chisf} we have
    \[
        \Res(\Phi^\lambda_{\chi',\chi})
        = \Res(\Phi^\mu_{\chi',\chi})
        P_{\lambda-\mu}',
    \]
    where $P_{\lambda-\mu}'$ is the symmetric Macdonald polynomial
    with the parameters $\set{a',b',c',d'}$ from loc.cit.
    Note that these parameters arise from $(a,b,c,d)$ from
    Theorem~\ref{thm-zsf} by multiplying two of them with $q_i^2$.
    In the four cases from the statement, these are
    $(a,b)$, $(c,b)$, $(c, d)$, $(a,d)$ respectively.
    Note that the symmetric Macdonald polynomials are symmetric in
    arbitrary permutations of $a,b,c,d$, so we can always permute the
    parameters such that when passing from zonal to character
    spherical functions, it is the first two parameters that
    are multiplied by $q_i^2$.

    This yields the permutations from the claim.
    If we let $\kappa,\kappa'$ correspond to unprimed and primed
    parameters, then we see that $\kappa'_\theta=q_i^2\kappa_\theta$,
    and $\kappa'_a=\kappa_a$ for $a$ from any other orbit.
    
    Write $\ell$ for the function on the affine root system associated
    to our Macdonald initial data that maps $\theta\mapsto 1$ and
    all other orbits to 0, then $\kappa'=\kappa q_i^{2\ell}$
    and hence
    \[
        P_{\lambda-\mu}'=P_{\lambda-\mu, \kappa q_i^{2\ell}}.
    \]
    Note that for $a$ indivisible, we have $\ell(a)=1$ iff $a$ 
    lies in the same orbit as $\alpha\in2\Sigma^*$ with $\epsilon(r_\alpha)=-1$, so $\ell$ and $\epsilon$ are related
    as in \cite[\S5.8]{Mac03}.

    Moreover, note by Lemma~\ref{lem-bottom-in-the-wild} that
    $\rho_\ell=\mu$, and 
    $\Res(\Phi^\mu_{\chi',\chi})\propto\delta_\epsilon$ 
    (as in \cite[\S5.8.3]{Mac03}; note that choosing the opposing
    positive subsystem maps $\delta_\epsilon\mapsto\overline{\delta_\epsilon}$).
    By \cite[\S5.8.9]{Mac03} we conclude the claim.
\end{proof}

\begin{remark}
    Note that the symmetric Koornwinder polynomials depend on the
    multiplicity function $\kappa$ only through the five parameters
    $(a,b,c,d,t)$, the first four of which can be permuted freely.
    Therefore, when dealing with only those polynomials,
    the order of $a,b,c,d$ does not matter when deriving $\kappa$ from $a,b,c,d$ and we could have picked any order in
    Theorems~\ref{thm-zsf} and \ref{thm-chisf}.

    When also considering arbitrary $\epsilon$-symmetric Koornwinder
    polynomials, the symmetry group is smaller: it changes from
    $S_4$ to $S_2\times S_2$.
    Fixing a function $\delta_\epsilon$ (for the $\epsilon$ we consider in Proposition~\ref{prop-epsilon-koornwinder}) corresponds
    to making a choice of subset $\set{a,b}$ of $\set{a,b,c,d}$.

    This choice arises from the fact that picking one of the four choices for $(l,m)$ satisfying $\abs{l\pm m}=2$
    corresponds to picking two parameters that change when passing
    from the zonal to the $(\chi',\chi)$-spherical function.
    It is for this reason that we really need to consider specific
    permutations of $\set{a,b,c,d}$ to derive $\kappa$ from
    in Proposition~\ref{prop-epsilon-koornwinder}.
\end{remark}

\subsection{Spherical Pair}
Another special interpretation can be given for the character spherical functions
for those cases that have $\mathsf{AIV}$ as a subdiagram:
$\mathsf{AIII}_{n,p}$ ($n>2p-1$), $\mathsf{AIV}_n$, $\mathsf{DIII}_{2p+1}$,
and $\mathsf{EIII}$.

\begin{lemma}\label{lem-sph-lie-alg}
    Let $(\mathfrak{g},\mathfrak{h})$ be the corresponding symmetric pair
    of Lie algebras.
    Then, $(\mathfrak{g},[\mathfrak{h},\mathfrak{h}])$ is a spherical pair.
\end{lemma}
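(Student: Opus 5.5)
We want to show that $(\mathfrak{g},[\mathfrak{h},\mathfrak{h}])$ is spherical for the four Hermitean cases $\mathsf{AIII}_{n,p}$ ($n>2p-1$), $\mathsf{AIV}_n$, $\mathsf{DIII}_{2p+1}$ and $\mathsf{EIII}$. The plan is to use the criterion that a subalgebra $\mathfrak{k}\subset\mathfrak{g}$ is spherical iff some Borel subalgebra $\mathfrak{b}$ satisfies $\mathfrak{b}+\mathfrak{k}=\mathfrak{g}$. Equivalently, for reductive $\mathfrak{k}$, one can use the representation-theoretic criterion that every simple $\mathfrak{g}$-module $L(\lambda)$ has $\dim L(\lambda)^{\mathfrak{k}}\le1$.

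Since $\mathfrak{h}$ is a Hermitean symmetric subalgebra, we have $\mathfrak{h}=[\mathfrak{h},\mathfrak{h}]\oplus\mathfrak{z}$ with $\mathfrak{z}$ one-dimensional. The invariants therefore decompose as
\[
  L(\lambda)^{[\mathfrak{h},\mathfrak{h}]}=\bigoplus_{\ell\in\Z}L(\lambda)^{\mathfrak{h},\chi_\ell},
\]
where $\chi_\ell$ runs over the characters of $\mathfrak{h}$, i.e. the weights of $\mathfrak{z}$. Each summand is at most one-dimensional by the classical analogue of multiplicity-freeness for the triples $(\mathfrak{g},\mathfrak{h},\chi)$ (the $q=1$ version of \cite[Lemma~4.2]{Mee25sym}).

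So the key step is to show that for fixed $\lambda$ at most one $\ell$ contributes. Equivalently, the sets $X^+(\chi_\ell)=b_\ell+2L^+$ must be pairwise disjoint for different $\ell$. Here I would use the explicit bottom elements. In the rank-one case $\mathsf{AIV}_n$ these are $-\ell\omega_1$ (for $\ell<0$) and $\ell\omega_n$ (for $\ell\ge0$). For $\mathsf{AIII}_{n,p}$ with $n>2p-1$, $\mathsf{DIII}_{2p+1}$ and $\mathsf{EIII}$, one gets the bottom element from the $\mathsf{AIV}$ subdiagram $I[i]$ together with \cite[Lemma~2.7]{Mee25}.

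Disjointness can then be detected by a single linear functional. Pair with the coweight dual to $\mathfrak{z}$, i.e. the $\Theta$-invariant direction in $Y\otimes\Q$ that is not orthogonal to $\omega_1-\omega_n$ in the $\mathsf{AIV}$ subdiagram. This functional vanishes on $2L$, since $\langle Y^\Theta,2L\rangle=0$. On $b_\ell$ it takes the value $\ell$ times a nonzero constant. Hence $\lambda\in X^+(\chi_\ell)$ determines $\ell$, which gives $\dim L(\lambda)^{[\mathfrak{h},\mathfrak{h}]}\le1$.

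An alternative route is geometric and should give the same result. Take $\mathfrak{b}$ to be a Borel subalgebra containing a minimal $\Theta$-split parabolic $\mathfrak{p}=\mathfrak{m}\oplus\mathfrak{a}\oplus\mathfrak{n}$ in Iwasawa position, so that $\mathfrak{g}=\mathfrak{h}+\mathfrak{a}+\mathfrak{n}$. One then needs the centre $\mathfrak{z}$ to be absorbed by $\mathfrak{m}\cap\mathfrak{b}+\mathfrak{a}$ modulo $[\mathfrak{h},\mathfrak{h}]$. This is exactly the condition that $\mathfrak{z}\not\subset[\mathfrak{h},\mathfrak{h}]+\mathfrak{m}$, and it holds precisely when $\mathfrak{m}$ has a nontrivial centre meeting $\mathfrak{z}$. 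That is the case when there is an $\mathsf{AIV}$ subdiagram, i.e. $\tau$ acts nontrivially on white nodes.

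The main obstacle is verifying the nonvanishing of the functional on $b_\ell$ (equivalently, the centre condition) uniformly. I would do this case by case via the Satake diagrams: check that $\omega_1-\omega_n$ of the $\mathsf{AIV}$ piece has a nonzero $\Theta$-invariant component. In contrast, for tube-type cases such as $\mathsf{AIII}_{2p,p}$ and $\mathsf{DIII}_{2p}$ this component vanishes and the claim fails, which explains why those cases are excluded.
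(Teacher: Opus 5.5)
Your main argument is correct, and it takes a genuinely different route from the paper. The paper proves the lemma in one line, by reading it off Krämer's classification of spherical subgroups \cite[Tabelle~1]{Kr79}.

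You instead use the Vinberg--Kimelfeld criterion to reduce sphericity of $[\mathfrak{h},\mathfrak{h}]$ to two facts. First, each $L(\lambda)$ contains at most one character of $\mathfrak{h}$. Second, that character occurs at most once, which holds because $\mathfrak{h}$ itself is spherical. You then separate the characters by pairing highest weights with $Y^\Theta$, which kills $2L$.

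The step you call the main obstacle needs no case check. For $h\in Y^\Theta$ one has $\langle h,\omega_{\tau(i)}\rangle=\langle h,\Theta\omega_{\tau(i)}\rangle=-\langle h,\omega_i\rangle$. Hence $\langle h,b_\ell\rangle=\ell\langle h,\omega_{\tau(i)}\rangle$ for every $\ell$. Moreover $\omega_{\tau(i)}-\omega_i$ is a nonzero $\Theta$-invariant because $i\neq\tau(i)$. This is exactly the identity the paper uses in Lemma~\ref{lem-all-chars-equal-C}.

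What each route buys: the paper's costs one citation but gives no reason. Yours explains why precisely the non-tube cases work; classically the criterion is $\mathfrak{t}^\theta\not\subset[\mathfrak{h},\mathfrak{h}]$. It is also the $q=1$ shadow of Proposition~\ref{prop-C-zsf}.

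Its cost is that you lean on ``$q=1$ versions'' of quantum lemmas. For a Lie-algebra statement you should cite the classical facts instead: Helgason's theorem and its extension to one-dimensional $K$-types. Alternatively, you can avoid bottom elements entirely. From $\mathfrak{g}=\mathfrak{h}+\mathfrak{a}+\mathfrak{n}$, a highest weight vector of $L(\lambda)^*$ pairs nontrivially with $U(\mathfrak{a})v$ for any $(\mathfrak{h},\chi)$-eigenvector $v$. Since $\mathfrak{t}^\theta\subset\mathfrak{h}$ commutes with $\mathfrak{a}$, this forces $\chi|_{\mathfrak{t}^\theta}$ to be determined by $\lambda$.

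Your secondary remarks contain several slips.
\begin{enumerate}
\item In the geometric route, $\mathfrak{b}$ should be contained in the minimal parabolic, not contain it.
\item The condition needed there is $\mathfrak{m}\not\subset[\mathfrak{h},\mathfrak{h}]$, i.e.\ $\mathfrak{z}\subset[\mathfrak{h},\mathfrak{h}]+\mathfrak{m}$, not its negation. Note also that $\mathfrak{z}\cap\mathfrak{m}=0$, since $\operatorname{ad}(\mathfrak{z})$ is invertible on $\mathfrak{p}$. So the centre of $\mathfrak{m}$ never literally meets $\mathfrak{z}$; what matters is its projection to $\mathfrak{z}$.
\item Nontriviality of $\tau$ on white nodes does not characterise the $\mathsf{AIV}$ cases. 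The tube-type $\mathsf{AIII}_{2p-1,p}$ has nontrivial $\tau$.
\item In the paper's indexing the excluded tube case is $\mathsf{AIII}_{2p-1,p}$. By contrast, $\mathsf{AIII}_{2p,p}$ satisfies $n>2p-1$ and is covered by the lemma.
\end{enumerate}
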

\begin{proof}
    Follows from \cite[Tabelle~1]{Kr79}.
\end{proof}

\begin{definition}
    Let $i\in I_\circ$ be such that $I[i]$ is of type $\mathsf{AIV}_m$ (i.e.
    one of the two nodes that are touching black nodes).
    Let
    \[
	Y' := \set{h\in Y^\Theta\where \langle h,\omega_i\rangle=0}.
    \]
    Define $\mathbf{C},\mathbf{C}'$ to be those subalgebras of $\uqb,\uqb'$
    generated by
    \[
	U_\bullet,\qquad K_h \quad (h\in Y'),\qquad B_i \quad (i\in I_\circ).
    \]
    These are the quantisations of $[\mathfrak{h},\mathfrak{h}]$ in the
    notation of Lemma~\ref{lem-sph-lie-alg}.
\end{definition}

\begin{lemma}\label{lem-all-chars-equal-C}
    For every character $\chi$ of $\uqb$ we have $\chi|_{\mathbf{C}}=\epsilon|_{\mathbf{C}}$.
\end{lemma}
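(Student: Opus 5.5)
Since a character is multiplicative, it suffices to check that $\chi$ agrees with $\epsilon$ on each generator of $\mathbf{C}$: on $U_\bullet$, on $K_h$ for $h\in Y'$, and on $B_j$ for $j\in I_\circ$. I would split the argument along these three families.

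For $U_\bullet$, I would argue that any character of $\uqb$ restricts to a one-dimensional representation of $\uq_\bullet$. The only such representations compatible with the relations $E_jF_j-F_jE_j=[K_{\alpha_j}]_{q_j}$ have $E_j,F_j\mapsto0$ and $K_{\alpha_j}\mapsto\pm1$. Because $\uqb$ acts integrably on finite-dimensional modules (the setting of \cite{Wat23}, where sign twists are excluded by the type-$1$ convention), this forces $K_{\alpha_j}\mapsto1$, so $\chi|_{\uq_\bullet}=\epsilon$.

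For the $B_j$, I would use the classification of characters of $\uqb$ from \cite{Mee25sym}. The relevant cases are those containing a subdiagram of type $\mathsf{AIV}$, namely $\mathsf{AIII}_{n,p}$, $\mathsf{AIV}_n$, $\mathsf{DIII}_{2p+1}$ and $\mathsf{EIII}$. These have $\mathcal{S}=\emptyset$, so all $\bm{s}_j=0$. Consider a character $\chi$ and any $j\in I_\circ$. If $j\neq\tau(j)$, or if $j$ is adjacent to $I_\bullet$, then the relations of $\uqb$ force $\chi(B_j)=0$. Concretely, $B_j$ is a weight vector of nonzero weight under $\adr(K_h)$ for some $h\in Y^\Theta$, since $K_hB_jK_{-h}=q^{-\langle h,\alpha_j\rangle}B_j$ and one can choose $h\in Y^\Theta$ with $\langle h,\alpha_j\rangle\ne0$. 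Applying $\chi$ and using $\chi(K_h)\ne0$ gives $\chi(B_j)(1-q^{-\langle h,\alpha_j\rangle})=0$, hence $\chi(B_j)=0=\epsilon(B_j)$. I expect this is exactly how \cite[Lemma~4.5]{Mee25sym} describes the characters in this setting, consistent with the $\mathsf{AIV}_n$ description above, where $B_1,B_n\mapsto0$. If some $j$ with $\tau(j)=j$ and $\langle h,\alpha_j\rangle=0$ for all $h\in Y^\Theta$ remains, the quantum Serre-type relations linking $B_j$ to a neighbouring white node should again force $\chi(B_j)=0$, since $\bm{s}_j=0$.

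For the torus part, the key observation is that characters differ only through $\chi(K_h)$ for $h\in Y^\Theta$, and by the classification these values depend only on $\langle h,\omega_i\rangle$. In the $\mathsf{AIV}_n$ description this means $\chi_\ell$ is determined by the value on $K_1K_n^{-1}$. I would make this precise by observing that $\chi\mapsto(h\mapsto\chi(K_h))$ is a homomorphism from $Y^\Theta$ to $k^\times$. Its values on $Y^\Theta\cap Q^\vee$ are governed by the relations $\comm{B_j}{B_{\tau(j)}}$, which are Cartan-type elements $\frac{K_{\alpha_j-\Theta\alpha_j}-K_{-(\alpha_j-\Theta\alpha_j)}}{q-q^{-1}}$ up to $\uq_\bullet$ corrections. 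Applying $\chi$, and using $\chi(B_j)=0$, forces $\chi$ to be trivial on those $K$'s apart from the one-dimensional direction detected by $\omega_i$. I expect this torus step to be the main obstacle: one has to check that $Y'$ lies in the span of coweights on which these relations force $\chi=1$ (up to the integrability condition $q^{\Z}$), i.e.\ that the character lattice of $Y^\Theta$ modulo these relations is exactly dual to $\Z\omega_i$. I would first try to reduce to the explicit $\mathsf{AIV}_m$ rank-1 computation via $I[i]$, and then handle the remaining directions by orthogonality to $\omega_i$, invoking \cite[Lemma~2.7]{Mee25}.
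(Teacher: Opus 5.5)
Your reduction to generators is fine, and so are your treatments of $\uq_\bullet$ (one-dimensional representations plus integrability) and of the $B_j$. The weight argument $\chi(B_j)(1-q^{-\langle h,\alpha_j\rangle})=0$ covers every white node: in $\mathsf{AIII}_{n,p}$ ($n>2p-1$), $\mathsf{AIV}_n$, $\mathsf{DIII}_{2p+1}$ and $\mathsf{EIII}$ one has $I_{\mathrm{ns}}=\emptyset$, so your fallback case never occurs. This part is a self-contained substitute for what the paper takes from \cite[Lemma~4.16]{Mee25sym}.

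The gap is the torus step. It is the real content of the lemma, you leave it open, and the mechanism you sketch does not work as stated.

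First, $K_{\pm(\alpha_j-\Theta(\alpha_j))}\notin\uqb$, because $\alpha_j-\Theta(\alpha_j)$ is $\Theta$-anti-invariant. The invariant direction is $\alpha_j+\Theta(\alpha_j)$.

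Second, $[B_j,B_{\tau(j)}]$ is a pure torus element, namely $\bm{c}_j(K_{\alpha_{\tau(j)}-\alpha_j}-K_{\alpha_j-\alpha_{\tau(j)}})/(q_j-q_j^{-1})$, only when $j$ and $\tau(j)$ are orthogonal to $I_\bullet$. Consider a pair touching black nodes, in particular the $\mathsf{AIV}$ pair $(i,\tau(i))$ itself. There the right-hand side is made of elements of $\uq_\bullet^+$ of strictly positive weight times torus elements of $\uqb$. So $\chi$ kills both sides (it kills every commutator anyway), and the relation says nothing. The black coroot directions are controlled by $\uq_\bullet$, not by these relations.

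To finish along your route you would have to prove the spanning statement you only flag:
\begin{itemize}
\item $Y'\otimes\Q$ is spanned by the black coroots together with the $h_{\alpha_j+\Theta(\alpha_j)}$ for white pairs orthogonal to $I_\bullet$; such pairs exist only in $\mathsf{AIII}_{n,p}$.
\item $\tau$-fixed white nodes contribute only black directions, since $\alpha_j-w_\bullet\alpha_j\in Q(R_\bullet)$.
\item Then use $\chi(K_h)=q^{\langle h,\nu\rangle}$, for a weight $\nu$ of a module containing $\chi$, to upgrade $\pm1$ to $1$ and to pass from the rational span to the lattice $Y'$.
\end{itemize}
Your closing plan does not supply this. \enquote{Orthogonality to $\omega_i$} is the definition of $Y'$, not a reason why $\chi(K_h)=1$. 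And \cite[Lemma~2.7]{Mee25} only describes the shape $b+2L^+$ of $X^+(\chi)$, so it would still have to be combined with an identification of the bottom weight.

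The paper closes this step without any relation-chasing:
\begin{itemize}
\item \cite[Lemma~4.16]{Mee25sym} gives $\chi(K_h)=q^{\langle h,\lambda\rangle}$ for some $\lambda\in X^+$.
\item Section~\ref{sec-char-rank1} shows that $\lambda$ can be taken in $\N_0\omega_i$ or in $\N_0\omega_{\tau(i)}$.
\item $\Theta(\omega_{\tau(i)})=-\omega_i$ gives $\langle h,\omega_{\tau(i)}\rangle=-\langle h,\omega_i\rangle=0$ for all $h\in Y'\subset Y^\Theta$.
\end{itemize}
Your sketch never addresses the family of characters with $\lambda\in\N_0\omega_{\tau(i)}$. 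This last identity is exactly what makes your assertion that the torus values \enquote{depend only on $\langle h,\omega_i\rangle$} correct.
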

\begin{proof}
    By \cite[Lemma~4.16]{Mee25sym}, there exists $\lambda\in X^+$ such that
    $\chi(B_j)=0$ ($j\in I_\circ$), $\chi(E_j)=\chi(F_j)=0$ ($j\in I$),
    and $\chi(K_h)=q^{\langle h,\lambda\rangle}$.

    From Section~\ref{sec-char-rank1} we know that $\lambda$ can be chosen to
    be a multiple of $\omega_i$ or $\omega_{\tau(i)}$.
    Note that $\Theta(\omega_{\tau(i)})=-\omega_i$, so that every element of
    $Y'$ is orthogonal to $\lambda$.
    As a consequence, we have $\chi(K_h)=1$ for all $K_h\in\mathbf{C}$,
    and hence the claim follows.
\end{proof} 

\begin{lemma}
    In the specialisation procedure outlined in \cite[\S 10]{Kol14},
    $\mathbf{C}$ specialises to $[\mathfrak{h},\mathfrak{h}]$.
\end{lemma}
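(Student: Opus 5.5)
The statement concerns Letzter's/Kolb's specialisation of $\uq$ at $q\to1$, described in \cite[\S10]{Kol14}. I would compare generators on both sides: compute the classical limits of the generators of $\mathbf{C}$, check that they lie in $[\mathfrak{h},\mathfrak{h}]$, and show that together they span it.

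Recall the procedure. One passes to the $\mathbf{A}$-form $\uq_{\mathbf{A}}$ over the localisation $\mathbf{A}$ of $\C[q^{\pm1}]$ at $q=1$. One sets $\uqb_{\mathbf{A}}=\uqb\cap\uq_{\mathbf{A}}$ and specialises $\bar{x}$ via $\uq_{\mathbf{A}}\otimes_{\mathbf{A}}\C$, modulo the ideal generated by $K_h-1$. Under this procedure $\uqb$ specialises to $U(\mathfrak{h})$, with $\mathfrak{h}=\mathfrak{g}^\theta$ for the involution $\theta$ attached to $(I_\bullet,\tau)$ and the specialised parameters. The generators specialise as follows:
\begin{enumerate}
\item The generators of $\uq_\bullet$ specialise to $e_j,f_j,h_j$ for $j\in I_\bullet$, which span $\mathfrak{m}=\mathfrak{g}_\bullet$.
\item Each $B_j$ specialises to $f_j+\bar{c}_j\,\theta(f_j)$, up to the conventions of loc.cit.
\item The elements $(K_h-1)/(q-1)$ with $h\in Y'$ specialise to $h\in\mathfrak{t}^\Theta$ with $\langle h,\omega_i\rangle=0$.
\end{enumerate}
So $\mathbf{C}$ specialises to $U(\mathfrak{c})$, where $\mathfrak{c}$ is the Lie subalgebra generated by $\mathfrak{m}$, the $B_j$-limits, and $\mathfrak{t}':=\{h\in\mathfrak{t}^\Theta:\omega_i(h)=0\}$. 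Here I would reuse Kolb's argument for $\uqb$ verbatim with the torus restricted, since the PBW-type basis is compatible with this restriction.

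It remains to show $\mathfrak{c}=[\mathfrak{h},\mathfrak{h}]$.

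For $\mathfrak{c}\subseteq[\mathfrak{h},\mathfrak{h}]$, the subalgebra $\mathfrak{m}$ is semisimple, so it lies in $[\mathfrak{h},\mathfrak{h}]$. The limit of $B_j$ is a $\theta$-fixed root-vector combination of nonzero $\mathfrak{t}^\Theta$-weight, namely the restriction of $-\alpha_j$, which is nonzero because $\Theta\alpha_j\neq\alpha_j$ on $\mathfrak{t}^\Theta$ for the relevant $j$. Hence it equals $[t,\cdot]/\alpha_j(t)$ for suitable $t\in\mathfrak{t}^\Theta$ and lies in the derived algebra. For $\mathfrak{t}'$, I would use that in these Hermitean cases $\mathfrak{h}=\mathfrak{z}\oplus[\mathfrak{h},\mathfrak{h}]$ with a one-dimensional centre $\mathfrak{z}$. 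Then $\mathfrak{t}'$ is precisely the part of $\mathfrak{t}^\Theta$ killed by the character of $\mathfrak{h}$ coming from $\omega_i$, which is the classical counterpart of Lemma~\ref{lem-all-chars-equal-C}. Since $[\mathfrak{h},\mathfrak{h}]\cap\mathfrak{t}^\Theta$ is the common kernel of the characters of $\mathfrak{h}$, and these are spanned by $\omega_i$ (equivalently $\omega_{\tau(i)}$, since $\Theta\omega_{\tau(i)}=-\omega_i$), we get $\mathfrak{t}'=[\mathfrak{h},\mathfrak{h}]\cap\mathfrak{t}^\Theta$.

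For $\mathfrak{c}\supseteq[\mathfrak{h},\mathfrak{h}]$, I would use that $\mathfrak{h}$ is generated by $\mathfrak{m}$, $\mathfrak{t}^\Theta$ and the $B_j$-limits, and that $\mathfrak{t}^\Theta=\mathfrak{t}'\oplus\C t_0$ with $t_0$ central in $\mathfrak{h}$. Then every bracket of generators lies in $\mathfrak{c}$. The main obstacle is showing that $t_0$ can be chosen central rather than merely complementary, i.e.\ that the ``extra'' toral direction is exactly $\mathfrak{z}$. I would first check this via dimension: the codimension of $[\mathfrak{h},\mathfrak{h}]$ in $\mathfrak{h}$ equals the number of independent characters, which is one by Section~\ref{sec-char-rank1}. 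Then $\mathfrak{c}\subseteq[\mathfrak{h},\mathfrak{h}]$ and $\mathfrak{c}+\mathfrak{t}^\Theta=\mathfrak{h}$ force equality. A secondary technical point is confirming that the intersection $\mathbf{C}\cap\uq_{\mathbf{A}}$ specialises onto all of $U(\mathfrak{c})$ and not something smaller. For this I would invoke the same basis argument as \cite[Theorem~10.8]{Kol14}.
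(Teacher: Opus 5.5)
\paragraph{The Lie-algebra step.}
Your Lie-algebra argument is sound, and it supplies what the paper only asserts. The paper gets the inclusion into $U([\mathfrak{h},\mathfrak{h}])$ by specialising the characters of $\uqb$ and invoking Lemma~\ref{lem-all-chars-equal-C}. For the reverse inclusion it simply claims, ``in analogy to'' \cite[Lemma~2.8]{Kol14}, that $[\mathfrak{h},\mathfrak{h}]$ is generated by $e_j,f_j$ ($j\in I_\bullet$), $f_j+\theta(f_j)$ and $\mathfrak{t}'$. Your $\ad(\mathfrak{t}^\Theta)$-weight argument for $f_j+\theta(f_j)$, together with $\mathfrak{c}+\mathfrak{t}^\Theta=\mathfrak{h}$ and the codimension count, proves exactly that claim. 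Three corrections:
\begin{enumerate}
\item The condition you need is $\alpha_j|_{\mathfrak{t}^\Theta}\neq 0$, i.e.\ $\Theta(\alpha_j)\neq-\alpha_j$. This holds here because no white node is both $\tau$-fixed and orthogonal to $I_\bullet$.
\item $t_0$ can in general not be chosen central. For $\mathsf{AIV}_2$, $\mathfrak{t}^\Theta=\C(h_1-h_2)$ acts on $f_1+\theta(f_1)$ by $-3$, so $\mathfrak{z}(\mathfrak{h})\cap\mathfrak{t}^\Theta=0$. Your dimension count does not need centrality, so this does no harm.
\item Codimension one is the classical fact $\dim\mathfrak{z}(\mathfrak{h})=1$ for simple $\mathfrak{g}$. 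Section~\ref{sec-char-rank1} does not provide it.
\end{enumerate}

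\paragraph{The gap.}
The genuine gap is the step you call secondary. In \cite[\S10]{Kol14} the specialisation of $\mathbf{C}$ is the image of $\mathbf{C}\cap\uq_{\mathbf{A}}$. The danger is not that this image is smaller than $U(\mathfrak{c})$ but that it is larger. Kolb's basis argument does not carry over ``with the torus restricted'': straightening monomials in the $B_j$ produces torus factors outside $k[K_h\mid h\in Y']$.

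Take $\mathsf{AIV}_2$. There $Y'=0$ and $\mathbf{C}$ is generated by $B_1,B_2$. A direct computation gives
\[
B_1^2B_2-[2]_qB_1B_2B_1+B_2B_1^2=ZB_1,\qquad Z=aK+bK^{-1},\quad K=K_1K_2^{-1},
\]
with $a=-[2]_q\,q\,\bm{c}_2$ and $b=-[2]_q\,q^{-2}\bm{c}_1$. Write
\[
Z-(a+b)=(a-b)(K-1)+bK^{-1}(K-1)^2 .
\]
Divide by $q-1$ if $\bm{c}_1(1)\neq\bm{c}_2(1)$, and by $(q-1)^2$ otherwise. This yields a $k$-linear combination of $ZB_1$ and $B_1$, hence an element of $\mathbf{C}$, which lies in $\uq_{\mathbf{A}}$. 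It specialises to $p(H)\,b_1$, where $H=h_1-h_2$, $p$ is nonconstant, and $b_1$ is the limit of $B_1$.

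Now $H\notin[\mathfrak{h},\mathfrak{h}]$, so $H=\lambda z+s$ with $\lambda\neq0$, $z$ central and $s\in[\mathfrak{h},\mathfrak{h}]$. Also $U(\mathfrak{h})=\bigoplus_{m}z^mU([\mathfrak{h},\mathfrak{h}])$. Hence $p(H)b_1$ has a nonzero component in $z$-degree $\deg p\geq 1$, so $p(H)b_1\notin U([\mathfrak{h},\mathfrak{h}])$.

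So with Kolb's definition the inclusion $\overline{\mathbf{C}\cap\uq_{\mathbf{A}}}\subseteq U(\mathfrak{c})$ is false, and the statement holds only in the weaker sense that the specialised generators of $\mathbf{C}$ generate $[\mathfrak{h},\mathfrak{h}]$. That weaker statement is what your Lie-level argument proves. It is also all the paper's proof establishes: the paper only compares generators, and its character argument cannot detect elements like $p(H)b_1$, on which every character of $\mathfrak{h}$ agrees with $\epsilon$.
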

\begin{proof}
    As a subalgebra of $\mathfrak{h}$, its semisimple part $[\mathfrak{h},\mathfrak{h}]$ is characterised as the kernel of all characters.
    Note that a sublattice of these characters can be obtained by specialising
    the characters of $\uqb$.
    In particular, by Lemma~\ref{lem-all-chars-equal-C}, the specialisation
    of $\mathbf{C}$ is contained in the universal enveloping algebra of
    $\comm{\mathfrak{h}}{\mathfrak{h}}$.

    Conversely, we note in analogy to \cite[Lemma~2.8]{Kol14} that
    $\comm{\mathfrak{h}}{\mathfrak{h}}$ is generated by
    $e_j, f_j$ ($j\in I_\bullet$), $f_j+\theta(f_j)$ ($j\in I_\circ$),
    and the $\C$-span of $Y'$.
    Conveniently, $E_j, F_j$ ($j\in I_\bullet$), $B_j$ ($j\in I_\circ$), 
    and $\frac{K_h-1}{q-1}$ ($h\in Y'$) specialise to
    these particular elements.
\end{proof}

\begin{remark}
    Note that $\mathbf{C}$ fails to be a right coideal.
    In fact, any subalgebra $\mathbf{C}\subset\uqb$ on which all of $\uqb$'s
    characters are trivial and which contains $B_i$, fails to be a right coideal.
    
    Assume $\mathbf{C}\subset\uqb$ is a coideal subalgebra on which all
    characters of $\uqb$ are trivial.
    Consider the shift of base point $x\mapsto \chi(x_{(1)})x_{(2)}$
    (cf. \cite[(2.6)]{Mee25}), which maps $B_i$ with parameter $\bm{c}_i$ to
    $B_i$ with parameter $\chi(K_{-\alpha_i-\Theta(\alpha_i)})\bm{c}_i$,
    which lies outside of $\uqb$ and hence also of $\mathbf{C}$.
    At the same time we have $\Delta(\mathbf{C})\subset\mathbf{C}\otimes\uq$
    and $\chi|_{\mathbf{C}}=\epsilon$, so that
    \[
        \chi(x_{(1)}) x_{(2)} = \epsilon(x_{(1)})x_{(2)} = x
    \]
    for $x\in\mathbf{C}$.
    As a consequence, $\mathbf{C}$ cannot contain $B_i$.
\end{remark}

Despite $\mathbf{C},\mathbf{C}'$ not being coideal subalgebras of $\uq$, we
can still wonder what its zonal spherical functions are.
\begin{proposition}\label{prop-C-zsf}
    The zonal spherical functions for
    $(\uq,\mathbf{C},\mathbf{C}')$ are precisely the character spherical
    functions for $(\uq,\uqb,\uqb')$.
\end{proposition}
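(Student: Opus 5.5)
We have to show two inclusions. Write $E_{\mathbf{C}}$ for the space of $f\in\mathcal{A}$ with $f(c'xc)=\epsilon(c')\epsilon(c)f(x)$ for all $c'\in\mathbf{C}'$, $c\in\mathbf{C}$. The claim is that $E_{\mathbf{C}}=\bigoplus_{\chi',\chi}E^{\chi',\chi}$, where $\chi'$ and $\chi$ range over the characters of $\uqb'$ and $\uqb$.

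The inclusion $\supseteq$ is immediate from Lemma~\ref{lem-all-chars-equal-C}. If $f\in E^{\chi',\chi}$, then for $c\in\mathbf{C}$ and $c'\in\mathbf{C}'$ we have $f(c'xc)=\chi'(c')\chi(c)f(x)=\epsilon(c')\epsilon(c)f(x)$. The lemma applies verbatim to $\uqb'$ and $\mathbf{C}'$. Sums over distinct character pairs are therefore also in $E_{\mathbf{C}}$.

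For $\subseteq$ we reduce to elementary functions. $E_{\mathbf{C}}$ is stable under the decomposition $\mathcal{A}=\bigoplus_\lambda\mathcal{A}(L(\lambda))$, since the conditions are checked matrix-coefficient-wise. So it suffices to identify $E_{\mathbf{C}}\cap\mathcal{A}(L(\lambda))\cong (L(\lambda)^*)^{\mathbf{C}'}\otimes L(\lambda)^{\mathbf{C}}$, where the superscript means vectors on which $\mathbf{C}$ acts by $\epsilon$. The key step is to show that
\[
    L(\lambda)^{\mathbf{C}}=\bigoplus_\chi \{v\in L(\lambda)\where bv=\chi(b)v\ \forall b\in\uqb\},
\]
and likewise for $\mathbf{C}'$ on the dual; tensoring these decompositions then gives the result. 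The algebra $\uqb$ is generated by $\mathbf{C}$ together with $K_h$ for $h\in Y^\Theta$. Since $Y'$ has corank one in $Y^\Theta$, a single extra $K_{h_0}$ with $\langle h_0,\omega_i\rangle\ne0$ suffices, after rationally enlarging the lattice if necessary.

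We first show that $K_{h_0}$ preserves $L(\lambda)^{\mathbf{C}}$, i.e. that $\mathbf{C}$ is normalised by the torus $K_{Y^\Theta}$. Conjugation by $K_{h_0}$ fixes $\uq_\bullet$ and $K_{Y'}$ when $h_0$ is taken orthogonal to $\alpha_j$ for $j\in I_\bullet$, and it rescales $B_j$ by a scalar. The point needing care is that $F_j$ and $T_{w_\bullet}(E_{\tau(j)})K_{-\alpha_j}$ scale by the same factor $q^{-\langle h_0,\alpha_j\rangle}$. This holds because $h_0\in Y^\Theta$, which gives $\langle h_0,w_\bullet\alpha_{\tau(j)}\rangle=-\langle h_0,\alpha_j\rangle$. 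The $\bm{s}_j$-term causes no problem, since $\bm{s}_j\neq0$ only for $j\in\mathcal{S}$, and the $\mathsf{AIV}$ situation has standard parameters. Hence $K_{h_0}$ acts on $L(\lambda)^{\mathbf{C}}$. It acts semisimply, because its eigenspaces are unions of weight spaces. Each common eigenvector $v$ then spans a one-dimensional $\uqb$-module, i.e. it transforms under a character $\chi$ that is trivial on $\mathbf{C}$. This gives the displayed decomposition; the reverse containment is again Lemma~\ref{lem-all-chars-equal-C}.

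The main obstacle is the normaliser computation above, in particular matching the twist of $B_j$ under $K_{h_0}$ so that $B_j$ is rescaled rather than sent outside $\mathbf{C}$. If that fails for the right-hand factor, I would fall back on the weight-space argument. By multiplicity-freeness (\cite[Lemma~4.2]{Mee25sym}) and the bottom-element description of $X^+(\chi_\ell)$ from Section~\ref{sec-char-rank1}, distinct characters occur in distinct $K_{h_0}$-weights. One would then check directly that a $\mathbf{C}$-invariant vector of fixed $K_{h_0}$-weight is $\uqb$-semi-invariant.
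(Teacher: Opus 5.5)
Your proposal is correct and takes the same route as the paper. The easy inclusion comes from Lemma~\ref{lem-all-chars-equal-C}. For the converse, the $K_h$ with $h\in Y^\Theta$ normalise $\mathbf{C}$, so these commuting semisimple operators preserve the $\mathbf{C}$-fixed vectors in $L(\lambda)$ (and the $\mathbf{C}'$-fixed vectors in $L(\lambda)^*$). Diagonalising them splits those vectors into $\uqb$- (resp.\ $\uqb'$-) character vectors.

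Your explicit check that $F_j$ and $T_{w_\bullet}(E_{\tau(j)})K_{-\alpha_j}$ both rescale by $q^{-\langle h,\alpha_j\rangle}$, with $\bm{s}=0$ in these Satake types, fills in the normalisation step that the paper only asserts. The fallback in your last paragraph is not needed.
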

\begin{proof}
    Let $f\in E^{\chi',\chi}_{\uqb',\uqb}$, let $c'\in\mathbf{C}',c\in\mathbf{C}$,
    and $x\in \uq$, then
    \[
        f(c'xc) = \chi'(c') f(x) \chi(c) = \epsilon(c') f(x) \epsilon(c)
    \]
    by Lemma~\ref{lem-all-chars-equal-C} applied to $\mathbf{C}$ and $\mathbf{C}'$,
    i.e. $f\in E^{\epsilon,\epsilon}_{\mathbf{C}',\mathbf{C}}$.

    Conversely, let $M$ be a finite-dimensional $\uq$-module
    and let $v\in M$ be $\mathbf{C}$-fixed.
    Let $h\in Y^\Theta$, then $K_h$ normalises $\mathbf{C}$, and hence
    maps $v$ to another $\mathbf{C}$-fixed vector.
    Since $M$ is integrable, all $K_h$ ($h\in Y^\Theta$)
    are simultaneously diagonalisable on the $\mathbf{C}$-fixed vectors. 
    Hence, every $\mathbf{C}$-fixed
    vector can be decomposed as a sum of $\uqb$-character-spherical vectors
    (i.e. into a sum of $v_\chi\in M$, for a character $\chi$ of $\uqb$,
    satisfying $xv_\chi=\chi(x)v_\chi$ for $x\in\uqb$).

    A similar statement holds for $\mathbf{C}'$-fixed elements of
    $M^*$.

    We thus conclude that every zonal spherical function
    for $(\uq,\mathbf{C},\mathbf{C}')$ can be decomposed as a sum of
    character spherical functions for $(\uq,\uqb,\uqb')$.
\end{proof}

\begin{remark}
    Since $\mathbf{C}$ and $\mathbf{C}'$ are no coideals, the multiplication
    on $\mathcal{A}$ does not restrict to a multiplication on
    $E^{\epsilon,\epsilon}_{\mathbf{C}',\mathbf{C}}$.
    One might hope that the relation from \cite[Lemma~4.1]{Mee25}
    can be combined with an appropriate twist, but alas, comparing
    \cite[(2.7)]{Mee25} with \cite[(31)]{qmsf}, we find that
    $\rho_\chi(\uqb)$ is not related to $\uqb$ via a conjugation automorphism 
    from $\tilde{H}$ (cf. \cite[\S3.2]{Kol14}).
\end{remark}

\section{Macdonald Polynomials in the Wild}\label{sec-classification-MK}
From Section~\ref{sec-char-higher-rank} we can now overall conclude 
that the following Koornwinder polynomials can be realised as
character spherical functions for a non-standard case QSP:
\begin{enumerate}
    \item In Rank 1 with parameters $(a,b,c,d)$ where
    $ac,bd\in q^\N$.
    \item In Rank 2 with parameters $(a,b,c,d,t)$ for
    $ac,bd\in q^\N$ and $t\in q^{\N/2}$.
    \item In Rank 3 with parameters $(a,b,c,d,t)$ for
    $ac, bd\in q^{\N}$ and $t\in \set{q^{1/2},q,q^2,q^4}$.
    \item In higher rank with parameters $(a,b,c,d,t)$ for
    $ac,bd\in q^{\N}$ and $t\in \set{q^{1/2},q,q^2}$.
\end{enumerate}

We let $ac=:q^{1+a_-}$ and $bd=:q^{1+a_+}$ for $a_\pm\in\N_0$, and
\[
    m:= a_++a_-,\qquad l:= a_+-a_-,
\]
integers of the same parity.
Moreover, pick $u_1,u_2$ such that
\[
    c = q^{1/2}u_1^{-1}u_2,\qquad
    d = -q^{1/2}u_1^{-1}u_2^{-1}.
\]
Consider $(\uq,\uqb,\uqb')$ of the following Satake types:
\begin{itemize}
    \item In rank 1: $\mathsf{AI}_1$ with $i:=1$.
    \item In rank 2 if $t$ is an integral power of $q$, write
    $t=q^{n-2}$.
    If $n\ge4$: $\mathsf{DI}_{n,2}$ with $i:=1$.
    If $n=3$: $\mathsf{AIII}_{3,2}$ with $i:=2$.
    \item In rank 2 if $t$ is a half-integer power of $q$, write
    $t=q^{n-\frac{3}{2}}$. Then: $\mathsf{BI}_{n,2}$ with $i:=1$
    \item In rank 3 if $t=q^4$: $\mathsf{EVII}$ with $i:=7$.
    \item In rank $n\ge3$ if $t=q^{1/2}$: $\mathsf{CI}_n$ with $i:=n$.
    \item In rank $n\ge3$ if $t=q$: $\mathsf{AIII}_{2n-1,n}$ with
    $i:=n$.
    \item In rank $n\ge3$ if $t=q^2$: $\mathsf{DIII}_{2n}$ with
    $i:= 2n$.
\end{itemize}
Fix the inner product on $X$ to satisfy $\abs{\alpha_i}^2=\abs{\tilde{\alpha}_i}^2=1$
(so that $q_i=q^{1/2}$),
and let
\[
    (\bm{c}_j,\bm{s}_j,\bm{d}_j,\bm{t}_j)= \begin{cases}
        (q^{-1},[u_2]_{q^{1/2}}, q^{-1}, [u_1]_{q^{1/2}}) &
        j=i\\
        (1,0,1,0) & j\ne i.
    \end{cases}
\]
Then the $(\chi_{\bm{t},l},\chi_{\bm{s},m})$-spherical functions
restrict to a multiple of the desired Koornwinder polynomials
(i.e. multiplied by the respective bottom function).

Beyond the 1-dimensional $(\uqb',\uqb)$-types, symmetric Koornwinder--Macdonald polynomials
can always be obtained if $\#\mathfrak{B}^+(\gamma',\gamma)=1$
(for a suitable simple $\uqb'$-module $\gamma'$ and
$\uqb$-module $\gamma$).
These cases are abundant.
For examples we refer to \cite{EK94, OS05}, and to \cite[\S7.1]{qmsf}.

\printbibliography
\end{document}